\documentclass[12pt,a4paper, twoside]{book}
\title{Bundle gerbes and the Weyl map}
\author{Kimberly Becker}
\date{\today}

 \usepackage[shortlabels]{enumitem}   
\usepackage{graphicx}

\makeatletter
\renewcommand{\@chapapp}{}
\newenvironment{chapquote}[2][2em]
{\setlength{\@tempdima}{#1}%
	\def\chapquote@author{#2}%
	\parshape 1 \@tempdima \dimexpr\textwidth-2\@tempdima\relax%
	\itshape}
{\par\normalfont\hfill--\ \chapquote@author\hspace*{\@tempdima}\par\bigskip}
\makeatother
\usepackage{mathscinet} 
\usepackage[thinlines]{easytable}
\usepackage{scalerel}
\usepackage{epigraph}
\usepackage{array}
\usepackage[]{graphics}

 \usepackage{amsmath,amsthm,amssymb,amsfonts}
 \usepackage{xfrac}

\usepackage[T1]{fontenc}
 
 \newcommand{\R}{\mathbb{R}}
 \newcommand{\N}{\mathbb{N}}
 \newcommand{\Z}{\mathbb{Z}}
 \newcommand{\C}{\mathbb{C}}
 
  \newcommand{\su}{{SU(n)}/{T}}
   \newcommand{\basicbg}{\left(P_b, Y,SU(n), \pi  \right)}
      \newcommand{\basicbgnopi}{\left(P_b, Y,SU(n)\right)}
         \newcommand{\cupbg}{\left(P_c, X,T\times \su\right)}

      \newcommand{\pullbackbasicbg}{ p^{-1}\left(P_b, Y,SU(n)  \right)}
         \newcommand{\Pbasic}{  P_{b}}
         
                  \newcommand{\PbasicT}{  P_{b, T}}
         
    \newcommand{\basicbgtorus}{\left( P_{b, T}, Y_T, T  \right)}
    
     \newcommand{\nablabasic}{\nabla_b}
      \newcommand{\pnablabasic}{\nabla_{p^*b}}
            \newcommand{\omegabasic}{\omega_b}
                
                  \newcommand{\fbasic}{f_b}
                     \newcommand{\pfbasic}{f_{p^*b}}
                        \newcommand{\nubasic}{\nu_b}
                              
        \newcommand{\LL}{\left(}
            \newcommand{\RR}{\right)}
\DeclareMathOperator*{\Motimes}{\text{\raisebox{0.25ex}{\scalebox{0.8}{$\bigotimes$}}}}
\DeclareMathOperator*{\Moplus}{\text{\raisebox{0.25ex}{\scalebox{0.8}{$\bigoplus$}}}}

\usepackage{accents}
\newlength{\dhatheight}
\newcommand{\doublehat}[1]{%
	\settoheight{\dhatheight}{\ensuremath{\hat{#1}}}%
	\addtolength{\dhatheight}{-0.35ex}%
	\hat{\vphantom{\rule{1pt}{\dhatheight}}%
		\smash{\hat{#1}}}}



\usepackage{xcolor}
\usepackage{graphicx,thesis,amsmath,amsthm} 
\usepackage[twoside,left=2cm,right=2cm,bindingoffset=1cm]{geometry}
\newlength{\mylen}
\setbox1=\hbox{$\bullet$}\setbox2=\hbox{\tiny$\bullet$}
\setlength{\mylen}{\dimexpr0.5\ht1-0.5\ht2}
\usepackage{tikz-cd}
   \usepackage{tikz}
   \usepackage{caption}
\usetikzlibrary{automata,positioning,arrows}


\newcommand\dra{\stackrel{\displaystyle 
		\rightarrow}{\rightarrow}}
	
\theoremstyle{plain}
\newtheorem{theorem}{Theorem}[chapter]
\newtheorem{corollary}[theorem]{Corollary}
\newtheorem{proposition}[theorem]{Proposition}
\newtheorem{lemma}[theorem]{Lemma}

\theoremstyle{definition}
\newtheorem{definition}[theorem]{Definition} 
\newtheorem{example}[theorem]{Example}

\newtheorem{remark}[theorem]{Remark}
\newtheorem{remarks}[theorem]{Remarks}
\numberwithin{equation}{chapter}

\newcommand*\bigcdot{\mathpalette\bigcdot@{.5}}
\def\smath#1{\text{\scalebox{.7}{$#1$}}}




\begin{document}
   \def\hmath#1{\text{\scalebox{1.6}{$#1$}}}
\def\lmath#1{\text{\scalebox{1.4}{$#1$}}}
\def\mmath#1{\text{\scalebox{1.2}{$#1$}}}
\def\smath#1{\text{\scalebox{.8}{$#1$}}}

\def\hfrac#1#2{\hmath{\frac{#1}{#2}}}
\def\lfrac#1#2{\lmath{\frac{#1}{#2}}}
\def\mfrac#1#2{\mmath{\frac{#1}{#2}}}
\def\sfrac#1#2{\smath{\frac{#1}{#2}}}
\def\pow{^\mmath}
\frontmatter

\maketitle

\tableofcontents

\chapter{Signed Statement}
{\parindent=0pt\parskip=4ex

%
%
%

I certify that this work contains no material which has been accepted for the award of any other degree or diploma in my name, in any
university or other tertiary institution and, to the best of my knowledge and belief, contains no material previously published or written
by another person, except where due reference has been made in the text. In addition, I certify that no part of this work will, in the
future, be used in a submission in my name, for any other degree or diploma in any university or other tertiary institution without the
prior approval of the University of Adelaide and where applicable, any partner institution responsible for the joint-award of this degree.\\ \text{}\\
I give permission for the digital version of my thesis to be made available on the web, via the University’s digital research repository,
the Library Search and also through web search engines, unless permission has been granted by the University to restrict access for a
period of time.\\ \text{}\\ 
I acknowledge the support I have received for my research through the provision of an Australian Government Research Training
Program Scholarship.

Signed: \dotfill\quad 
Date: \dotfill

}
\chapter{Acknowledgements}
\label{ch:acknowledgements}
First and foremost, thank you to my supervisors, Prof. Michael Murray and Dr~Daniel Stevenson. Thank you for enduring the endless drafting and reference writing that I subjected you to. Most of all, thank you for your expert supervision, without which this thesis would not have been possible. Thank you to my family for encouraging and supporting me through all my pursuits. To my mother and sister, I treasure your love and support, thank you. Thank you to the Hallams, for giving me a home away from home. Thank you to the staff and students at The University of Adelaide and Kathleen Lumley College for a wonderful past couple of years. Finally, to Michael Hallam, thank you for the boundless love and joy you bring me every day, even from other side of the world. Here's to our new chapter together. 
\chapter{Dedication}
\label{ch:dedication}
\begin{center}
	\vspace*{5cm}
\large{\textit{To Michael Hallam}} \end{center} 
\chapter{Abstract}
\label{ch:abstract}

This thesis reviews bundle gerbe theory and the well-known basic bundle gerbe over $SU(n)$. We introduce the cup product bundle gerbe, and show it is stably isomorphic to the pullback of the basic bundle gerbe by the Weyl map. This result enriches our understanding of the basic bundle gerbe, which has numerous applications in physics.

\mainmatter

\setcounter{chapter}{-1}
\chapter{Introduction}
\indent \indent A \textit{bundle gerbe} is a differential geometric object first developed by M. K. Murray in his 1996 paper \textit{Bundle Gerbes} \cite{1996bundlegerbes}. Based on J. Giraud's theory of \textit{gerbes,} bundle gerbes provide us with a geometric realisation of degree-three integer cohomology. This is similar to the geometric realisation of degree-two integer cohomology by line bundles. For this reason, bundle gerbes can be understood as `higher-dimensional' analogues of line bundles. Just as line bundles have become foundational to differential geometry, so too have bundle gerbes, with Murray's paper \cite{1996bundlegerbes} marking the beginning of what would become a deeply rich and fruitful field.\\
\indent The applications of bundle gerbes are far reaching, permeating many fields of mathematics and physics. For instance, they offer geometric interpretations of the `twists' in twisted $K$-theory \cite{ktheorybg}, the $B$-field in mathematical physics \cite{bfield}, and the Wess-Zumino-Witten models \cite{INTROTOBG}. Other applications of bundle gerbes to physical problems appear in the studies of quantum field theory \cite{carey2000, mickleson2006}, topological insulators \cite{bundlegerbesfortopologicalinsulators, insulator2}, Chern-Simons theory \cite{chernsimmons, mickler}, and mirror symmetry \cite{hitchenmirrorsymmetry}. They have also been exploited to good effect in the study of $D$-brane charges in string theory \cite{ktheorybg, dbranes1, dbranes2, dbranes3, dbranes4}. These constitute just a few of the many applications of bundle gerbes that we will explore throughout this thesis. \\
\indent Roughly, a bundle gerbe is a triple of manifolds $(P, Y, M)$ for $Y\to M$ a surjective submersion and $P$ a hermitian line bundle over $Y^{[2]}\subset Y^2$, the fibre product of $Y$ with itself. Additionally, we require an associative \textit{bundle gerbe multiplication} on fibres of $P$ $$m: P_{(y_1, y_2)}\otimes P_{(y_2, y_3)} \to P_{(y_1, y_3)}.$$ Just as with line bundles, there are notions of bundle gerbe constructions (pullback, product, dual, etc.), {connections, curvature}, and holonomy. Unlike line bundles, to define the \textit{three-curvature} and holonomy of a bundle gerbe connection, we require additional data besides the connection, namely a \textit{curving} or $B$-field in the physics literature. We call the pair consisting of a connection and curving \textit{connective data}. There are several types of bundle gerbe morphisms, including \textit{isomorphisms, stable isomorphisms,} and \textit{$D$-stable isomorphisms}. Crucially, stable isomorphism (respectively $D$-stable isomorphism) classes of bundle gerbes over $M$ are in bijective correspondence with $H^3(M, \Z)$ ($H^3(M, \Z(3)_D)$) via the \textit{Dixmier-Douady class} (\textit{Deligne class}). Moreover, the real part of the {Dixmier-Douady class} of a bundle gerbe can be represented by a multiple of its three-curvature, and the Deligne class of a bundle gerbe can be related to its three-curvature and holonomy. An in-depth treatment of each of these topics will be provided in Chapter \ref{ch:two}. \\
\indent In this work, we will focus on two specific bundle gerbes. The first of these bundle gerbes we call the \textit{cup product bundle gerbe}, defined over $T\times {SU(n)}/{T}$ for $T$ a maximal torus of $SU(n)$. This construction is based on the work of S. Johnson in \cite{stuartthesis}, allowing us to realise certain {wedge products} of differential forms as the {three-curvature} of a bundle gerbe. The second bundle gerbe of interest to us is the popularly studied \textit{basic bundle gerbe} introduced in \cite{higherbgs} and described in greater detail in \cite{dannythesis}. This is the classic example of a bundle gerbe defined over a compact, simple, simply connected Lie group $G$. It is particularly useful to physicists \cite{branesandgerbes}, allowing them to realise the Wess-Zumino-Witten term of a Sigma model as the holonomy of a geometric object. Our work will be concerned with the basic bundle gerbe {over $SU(n)$} constructed in \cite{53UNITARY}. The cup and basic bundle gerbes are commonly studied in isolation in the literature. We will show that there is a powerful result relating the cup and basic bundle gerbes, thereby deepening our understanding of these objects and enriching the field as a whole.   \\
\indent In this thesis, we aim to prove the following, for $T$ the subgroup of $SU(n)$ consisting of diagonal matrices which acts on $SU(n)$ by left multiplication. \\ \text{}\\
\textit{The pullback of the basic bundle gerbe over} $SU(n)$ \textit{by the Weyl map $p: T\times {SU(n)}/{T}\to SU(n),$ $(t, gT)\mapsto gtg^{-1},$ is $SU(n)$-stably isomorphic to the cup product bundle gerbe over }$T\times \su$.\\ \text{}\\ 
The crude interpretation of this result is that, under a transformation by the Weyl map, the basic and cup product bundle gerbes are `the same' as bundle gerbes enriched with an $SU(n)$-action. In addition to the above statement, we will also prove that the cup and basic bundle gerbes are \textit{not} $D$-stably isomorphic with respect to their naturally induced connective data (Theorem \ref{finalresult}). While this result is significant in of itself, it also opens up many avenues for further research. These include replacing the group $SU(n)$ by an arbitrary compact, simple, simply-connected Lie group $G$, considering this result in the holomorphic category, or proving an analogous result for gerbes in the sense of Giraud \cite{giraud}. These and other possible extensions will be discussed in Chapter \ref{ch:four}.\\
\indent There is a straightforward reason as to why we expect our main result to be true. As we will see, the three-curvature of the basic bundle gerbe is $2\pi i \nu$ for $\nu := -\frac{1}{24\pi^2}\text{tr}(g^{-1}dg)^3$ the basic $3$-form on $SU(n)$. Now, the pullback of the basic $3$-form by the Weyl map, $p^*\nu$, is an element of $H^3\left(T\times \su\right)$. By the Kunneth formula, and noting that the cohomology of $\su$ vanishes in odd degree \cite{odd}, we see that 
$$[p^*\nu] \in H^3(T)\oplus \left(H^2\left(\su\right)\otimes H^1(T)\right).$$ It follows from $T$ being abelian that the restriction of $\nu$ to $T$ vanishes. Therefore we have $$[p^*\nu] \in H^2\left(\su\right)\otimes H^1(T).$$ For this reason, we expect $p^*\nu$ (the three-curvature of the pullback of the basic bundle gerbe by the Weyl map) to equal a wedge product of forms (modulo exact forms). This wedge product is the three-curvature of a cup product bundle gerbe, which we suspect will be stably isomorphic to the pullback of the basic bundle gerbe. Our aim is to identify this cup product bundle gerbe, and construct explicitly the stable isomorphism. In doing so, we will understand the geometry underlying this topological result. \\
\indent As `higher versions' of line bundles, it is unsurprising that bundle gerbes require a foundational knowledge of {vector bundles}. This is the purpose of Chapter \ref{ch:intro}, \textit{Vector bundles}. Here, we shall review elementary vector bundle definitions, constructions, and connections (Sections \ref{a1}--\ref{section constructions}). The geometry of subbundles of the trivial bundle will be considered in Section \ref{a3} to be used later in this work. The classification of line bundles via the {first Chern class} will then be recalled (Section \ref{a4}), followed by a brief study of {line bundle holonomy} (Section \ref{a5}). We conclude this chapter by introducing equivariant vector bundles, and considering the categorical equivalence of homogeneous vector bundles and linear representations, which will aid later computations (Section \ref{homogeneous vector bundles}). Each of these topics were thoughtfully chosen to prepare the reader for analogous topics on bundle gerbes discussed in the next chapter. \\
\indent In Chapter \ref{ch:two}, \textit{Bundle gerbes}, we present the general theory of bundle gerbes in preparation for our later work. We are especially fond of this chapter, as it appears to fill a void in the student literature on bundle gerbes, particularly with its inclusion of bundle gerbe holonomy and Deligne cohomology. Moreover, it is the only introduction to bundle gerbes that we know of relying on the theory of line bundles, rather than principal $U(1)$-bundles. With the addition of historical remarks throughout and the refinement of several well-known definitions, we hope this chapter will also prove engaging to the expert reader. The format of this chapter proceeds similarly to Chapter \ref{ch:intro}. Namely, we discuss bundle gerbe preliminaries, definitions, constructions, and morphisms (Sections \ref{b1}--\ref{section morphisms}), before progressing to bundle gerbe connections and classification (Sections \ref{b3}--\ref{section dd class}). The more substantial topics of bundle gerbe holonomy and Deligne cohomology, together with a complete classification result for bundle gerbes (with and without connective data), will conclude this chapter (Sections \ref{section holonomy}--\ref{deligne cohomology}). \\ 
\indent Chapter \ref{cupproductchapter}, \textit{The cup product bundle gerbe}, marks the first of our more technically demanding chapters. Almost all of the definitions and computations in this chapter are our own, and provide excellent examples of the theory in Chapter \ref{ch:two}. We begin by considering the geometry of $\su$ (Section \ref{flagmanifold}). Our objective in this section is to realise the space $\su$ in terms of orthogonal projections, which will be useful to later calculations. In the remainder of this chapter, we define the cup product bundle gerbe, show it is $SU(n)$-equivariant, and calculate its connective data and three-curvature. We do so via a three-tiered approach. First, we introduce the \textit{general cup product bundle gerbe} (Section \ref{difference cup}), extending constructions of \cite{stuartthesis}, and calculate its connective data and three-curvature. After this, we consider general cup product bundle gerbes defined over $T\times \su$, called the \textit{$i$-th cup product bundle gerbes}, for $i=1, ..., n$. We show these bundles gerbes are $SU(n)$-equivariant, and apply the results from Section \ref{difference cup} to compute their connective data and three-curvature (Section \ref{ithbg}). Finally, the cup product bundle gerbe is defined as the product of the $i$-th cup product bundle gerbes, and is shown to be $SU(n)$-equivariant. Its connective data and three-curvature are presented using computations from the previous sections (Section \ref{cupprodbgsection}). \\ 
\indent In our penultimate chapter, \textit{The basic bundle gerbe and the Weyl map}, we define the pullback of the basic bundle gerbe by the Weyl map, show it is $SU(n)$-equivariant, and present its connective data and three-curvature. Unlike the previous chapter, we will additionally consider stable isomorphisms of the pullback of the basic bundle gerbe, which will be applied in later computations. The results in this chapter rely heavily on the work of Murray--Stevenson in \cite{53UNITARY}. We begin by introducing the Weyl map, due to H. Weyl \cite{weyl1, weyl2, weyl3} (Section \ref{c2}). The basic bundle gerbe over $SU(n)$ and its pullback by the Weyl map are then constructed, and their connective data presented, using \cite{53UNITARY} (Sections \ref{section: definition of basic bg}--\ref{c3}). We choose to present this connective data without proof, due to its reliance on delicate arguments in holomorphic functional calculus. To conclude this chapter, we present a series of original computations, culminating in a stable isomorphism of the pullback of the basic bundle gerbe with a product of general cup product bundle gerbes (Section \ref{subsection stable isos}). This result will be crucial to solving our central research problem in the final chapter.\\
%
%
\indent In Chapter \ref{ch:four}, \textit{The stable isomorphism}, we combine the work of Chapters \ref{ch:intro}--\ref{ch:three} to solve the central problem of this thesis. Using results on general cup product bundle gerbes from Section \ref{difference cup}, together with the stable isomorphism from Section \ref{subsection stable isos}, we show that the research problem simplifies significantly (Section \ref{d1}). We then compare the connective data of our bundle gerbes computed in Chapters \ref{cupproductchapter} and \ref{ch:three} with the connective data we would expect to obtain if these bundle gerbes were stably isomorphic (Section \ref{d2}). Doing so allows us to find an explicit stable isomorphism between the cup product bundle gerbe and the pullback of the basic bundle gerbe (Proposition \ref{prp}). Finally, by considering the holonomy of our bundle gerbes, we show that the cup product bundle gerbe and pullback of the basic bundle gerbe are not $D$-stably isomorphic (Section \ref{d3}), and summarise our findings in  Theorem \ref{finalresult}.

\chapter{Vector bundles\label{ch:intro}}
\begin{chapquote}{Paul A. M. Dirac}
	Theoretical physicists accept the need for mathematical beauty as an act of faith... For example, the main reason why the theory of relativity is so universally accepted is its mathematical beauty.
\end{chapquote}
%
%
\indent Vector bundles and connections are ubiquitous in mathematics and physics, forming the backbone of General Relativity and Gauge theory. Outside of these fields, the language of vector bundles appears in elliptic operator theory, PDE theory, and classical mechanics, to name a few. 
 The historical development of vector bundles and connections spans a century, catalysed by Einstein's special and general theories of relativity between 1905 and 1916. Consequently, the theory is deeply intertwined in physics. For example, in 1918, H. Weyl discovered that electromagnetism can be described as a connection on a real line bundle.
Not long after, Dirac considered this result in a complex setting, leading him to discover magnetic monopoles. 
  Similar applications continue well into modern times. We refer the interested reader to \cite{conceptualhistory, freemanthesis, VBapplications, historicalnotes} for a detailed history of vector bundles and connections.

%

%
%

We begin this chapter by defining vector bundles, connections, curvature, and constructions (Sections \ref{a1}--\ref{section constructions}). Although bundle gerbes rely chiefly on the theory of line bundles (vector bundles of rank $1$), the general theory is presented here as it requires little extra effort and is relevant to later constructions. In Section \ref{a3}, we turn our attention to subbundles of the trivial bundle. Here we will consider the naturally induced connection on a subbundle of a trivial bundle, and compute its two-curvature, which will appear in later computations. The more robust topics of line bundle classification and holonomy will then be recalled in Sections \ref{a4}--\ref{a5}. The final portion of this chapter, Section \ref{homogeneous vector bundles}, details equivariant and homogeneous vector bundles, concluding with the categorical equivalence of homogeneous vector bundles and linear representations. The relevance of this section will become clear in due course. For now, it suffices to say that the bundle gerbes of interest to this project can naturally be described in terms of equivariant line bundles, and that the categorical equivalence will simplify later calculations. \\
\indent The purpose of this chapter is two-fold. Our main objective is to present the prerequisites for the following chapters, and ease the reader into bundle gerbes, so to speak. We also hope that this chapter will make clear the parallels between line bundle and bundle gerbe theories later on. It is for this reason that we advise even the expert reader to skim read this chapter, which has intentionally been formatted in a similar way to Chapter \ref{ch:two}. We assume the reader has encountered vector bundles before, and present many standard results without proof.

\section{Definitions and morphisms}   \label{a1}
\indent \indent In this section, we will consider vector bundles and vector bundle morphisms.
 For the sake of brevity, we provide limited examples, and refer the reader to \cite{LEE, TU} for a holistic approach. We start with the definition of a vector bundle.

\begin{definition}[\cite{TU}]\label{vectorbundledefinition}
	Let $M$ be a smooth manifold. A \textit{smooth complex vector bundle on $M$} consists of a smooth manifold $E$ together with a smooth map $\pi: E\to M$ such that \begin{enumerate}[(1),font=\upshape]
	\item for all $p\in M$, the set $E_p:= \pi^{-1}(p)$ is a complex vector space;
	\item for all $p\in M$, there exists an open neighbourhood $U$ of $p$ and a diffeomorphism $$\phi_U: \pi^{-1}(U)\to U\times \C^n $$ for some $n$ that is \textit{fibre-preserving}, that is, 
	 \[\begin{tikzcd}[column sep=small]
	\pi^{-1}(U)\arrow[dr, "\pi", swap] \arrow{rr}{\phi_U} & & U\times \C^n\arrow[dl, "
	pr_1"] \\
	& U&
	\end{tikzcd}\] 
	commutes (for $pr_1$ projection onto the first factor); 
	\item for all $q\in U$, the map $\phi_U$ restricts to a linear isomorphism $E_q \to \{q\}\times \C^n$. 
	\end{enumerate}
\end{definition}
\noindent \noindent We call $E$ the \textit{total space}, $M$ the \textit{base space}, $\pi$ the \textit{projection}, and $E_p$ the \textit{fibre over} $p$. The \textit{rank} of $E_p$, rank$(E_p)$, is defined to be the dimension of the vector space $E_p$.
\begin{remark}
	By replacing $\C$ by $\R$ in Definition \ref{vectorbundledefinition} we obtain the definition of a \textit{smooth real vector bundle}. Throughout this text we will be working in the complex setting. 
\end{remark}
\begin{remark}
Consider a vector bundle $E\to M$. By Definition \ref{vectorbundledefinition} (2), the rank is constant over connected components of $M$. That is, if $U\subset M$ is connected, rank$(E_p)$ is constant for all $p\in U$. If rank$(E_p) =n \in \N$ for all $p\in M$, we call $E\to M$ a \textit{rank $n$} vector bundle. Several of the vector bundles introduced later will not have constant rank. 
\end{remark}
\begin{definition}
A smooth complex vector bundle of rank $1$ is called a \textit{smooth complex line bundle.}
\end{definition}

\begin{example}\textbf{(The trivial vector bundle)}\label{trivialvb} 
	Let $M$ be a smooth manifold and $n\in \N$. Then projection onto the first factor makes $M\times \C^n\to M$ into a smooth complex vector bundle of rank $n$, called the \textit{trivial complex vector bundle of rank $n$ on $M$}, or simply the \textit{trivial bundle}. 
\end{example}
We will proceed rather quickly with the following definitions and remarks, which we assume the reader to be familiar with. Here we will define sections, frames, hermitian vector bundles, bundle maps and subbundles. 
%
\begin{definition}
	A \textit{smooth section} of a vector bundle $\pi: E\to M$ is a smooth map $s: M\to E$ such that $\pi\circ s = id_M$. The space of all sections of $E\to M$ is denoted $\Gamma(M, E)$ or $\Gamma(E)$ when the base space is understood.   
\end{definition}
\begin{definition}
	Let $E\to M$ be a vector bundle of rank $r$. A \textit{frame} for $E$ over an open set $U\subset M$ is a collection of sections $e_1, ..., e_r:U\to E$ such that, for all $m\in U$, $\{e_1(m), ..., e_r(m)\}$ is a basis of $E_m$. 
\end{definition}

\indent As we have mentioned, our work will primarily be concerned with complex vector bundles. In this setting, a vector bundle can be endowed with some additional structure to become a \textit{hermitian vector bundle.}

\begin{definition} Let $E\to M$ be a smooth complex vector bundle. \begin{enumerate}[(1),font=\upshape]
\item A \textit{hermitian metric} on $E\to M$ is a smooth assignment of a hermitian inner product $\langle \cdot, \cdot\rangle_p$ on the vector space $E_p$ to all $p\in M$. 
\item The vector bundle $E\to M$ is a \textit{hermitian vector bundle} if it is equipped with a hermitian metric.
\item  A hermitian vector bundle of rank one is called a \textit{hermitian line bundle.}\end{enumerate}
\end{definition}

\begin{remark}
	The assignment $p\mapsto \langle \cdot, \cdot\rangle_p$ is said to be smooth if, given $s, t \in \Gamma(M, E)$, the complex-valued function $\langle s, t\rangle$ is smooth on $M$.
\end{remark}
\begin{remark}
	The standard inner product endows the trivial vector bundle of rank $n$ with a hermitian vector bundle structure. 
\end{remark}
\begin{definition}\label{unitsection}
Call a non-vanishing section $s$ of a hermitian vector bundle $E\to M$ a \textit{unit section} if $||s||=1$ with respect to the hermitian inner product.
\end{definition}
Throughout the remainder of this thesis, we shall refer to hermitian vector bundles simply as vector bundles, unless otherwise stated. We can now progress to the study of vector bundle morphisms, beginning with the definition of a \textit{smooth bundle map}. Naturally, a morphism of vector bundles consists of maps between the base and total spaces of the vector bundles, which preserve the vector bundle structure in the appropriate sense.
\begin{definition}[\cite{TU}]
	Let $\pi_1:E\to M$ and $\pi_2:F\to N$ be vector bundles. A \textit{smooth bundle map from $E$ to $F$} is a pair of smooth maps ($\hat{f}:E\to F, f: M\to N$) such that \begin{enumerate}[(1),font=\upshape]
		\item the diagram \[
		\begin{tikzcd}
		E\arrow{r}{\hat{f}} \arrow[swap]{d}{\pi_1} &F \arrow{d}{\pi_2} \\
	M \arrow[swap]{r}{{f}} & N
		\end{tikzcd}
		\] commutes;
		\item the map $\hat{f}$ restricts to a $\C$-linear map $\hat{f}_p: E_p\to F_{{f}(p)}$ for all $p\in M$. 
	\end{enumerate} 
	\end{definition} 
\begin{example}
	The \textit{identity map} of a vector bundle $E\to M$ is the pair of identity maps $(id_E: E\to E, id_M:M\to M)$, which is clearly a smooth bundle map. 
\end{example}
\begin{remarks} Let $E\to M$ and $F\to M$ be vector bundles.
	 \begin{enumerate}[(1),font=\upshape]
\item When the map $f:M\to N$ between base spaces is clear, we will denote a bundle map $(\hat{f}:E\to F, f:M\to N)$ by $\hat{f}:E\to F$. 

\item A smooth bundle map of the form $(\hat{f}: E\to F,  id_M)$ is called a \textit{(smooth) bundle map over }$M$. 

\item A bundle map $\hat{f}:E\to F$ over $M$ is called a \textit{bundle isomorphism} if $\hat{f}$ is a diffeomorphism. The vector bundles $E\to M$ and $F\to M$ are then said to be \textit{isomorphic over} $M$, or simply \textit{isomorphic}.\end{enumerate} 
 \end{remarks} 
As one might expect, a vector bundle is called \textit{trivial} if it is isomorphic to a trivial vector bundle of rank $n$ for some $n\in \N$. We conclude this section with one more foundational definition.
\begin{definition}[\cite{TU}]
	A \textit{smooth subbundle} of a smooth complex vector bundle $E\to M$ is a smooth complex vector bundle $F\to M$ such that \begin{enumerate}[(1),font=\upshape]
		\item the manifold $F$ is a submanifold of $E$;
		\item the inclusion map $F\to E$ is a smooth bundle map. 
	\end{enumerate}
\end{definition}
\begin{remark}[{\cite[p.$\,$13]{Hatchervectorbundles}}]\label{allvbaresubbundleoftrivbundle} 
	Every vector bundle is a subbundle of the trivial bundle of rank $n$ for some $n$. 
\end{remark}
\section{Connections and curvature}\label{a2}
\indent \indent To perform coordinate-invariant differential calculus on a vector bundle, we require the notion of a \textit{vector bundle connection}. Roughly speaking, a vector bundle connection generalises the directional derivative on $\R^n$ to an arbitrary manifold. It does this by providing us with means to `differentiate' a section $s$ of a vector bundle in the direction of a vector field $X$. Vector bundle connections will form the basis of \textit{bundle gerbe connections}, defined in the next chapter.\\
\indent In this section, we define vector bundle connections and curvature. Several standard results will then be presented without proof. We refer the reader to \cite{botttu, LEE, TU} for details. Familiarity with the tangent space to a manifold and smooth vector fields will be assumed. Let $C^{\infty}(M)$, $\mathfrak{X}(M)$ and $\Omega^k(M)$ denote the spaces of smooth maps $M\to \C$, smooth vector fields on $M$, and $k$-forms on $M$ respectively. Let us begin by defining a connection on a vector bundle.

\begin{definition} [\cite{TU}] \label{connectionvectorbundle}
A \textit{connection} on a complex vector bundle $E\to M$ is a map \begin{align*} \nabla: \mathfrak{X}(M)\times \Gamma(E)&\to \Gamma(E)\\	(X, s) &\mapsto \nabla_Xs \end{align*} such that, for all $X\in \mathfrak{X}(M)$ and $s\in \Gamma(E)$, \begin{enumerate}[(1),font=\upshape]
		\item $\nabla_Xs$ is $C^\infty(M)$-linear in $X$ and $\C$-linear in $s$;
		\item the \textit{Leibniz rule} holds, i.e. for $f\in C^{\infty}(M)$, $\nabla_X(fs) = df(X)s + f\nabla_Xs. $
	\end{enumerate}
\end{definition}
\begin{remark}
If $E\to M$ is a hermitian vector bundle, then we assume that the connection $\nabla$ is also hermitian. That is, for all $s, t \in \Gamma(M, E)$, $$d\langle s, t\rangle = \langle \nabla s, t\rangle + \langle s, \nabla t\rangle.$$
\end{remark}
\begin{remark}[{\cite[Theorem 10.6]{TU}}]
Using a partition of unity argument, it can be shown that every vector bundle admits a connection. 
\end{remark} 

To perform local computations, it is critical to know how a connection on a vector bundle $E\to M$ restricts to a connection on $E|_U \to U$ for $U\subseteq M$. This is given to us by the following remark.
\begin{remark}[{\cite[p.$\,$77]{TU}}]\label{newremark} Let $E\to M$ be a vector bundle and $(U, \psi)$ be a coordinate chart. Then there is a unique connection $\nabla^U: \mathfrak{X}(U)\times \Gamma(U, E)\to \Gamma(U, E)$ such that $\nabla^U_{X|_U}(s|_U) = \left(\nabla_Xs\right)\big\rvert_U$ for all $s\in \Gamma(M, E)$ and $X\in \mathfrak{X}(M)$. In particular, if $\partial_i = \frac{\partial}{\partial \psi^i}$ is the $i$-th standard basis vector of the tangent space at $U$, we can define $\nabla_i := \nabla^U_{\partial_i}$.
\end{remark}
%
We conclude this section by discussing the \textit{curvature tensor} $R$ of a connection, along with the (local) \textit{connection $1$-forms} and  (local) \textit{curvature $2$-forms}.
\begin{definition}
	Given a vector bundle $E\to M$ with connection $\nabla$, the \textit{curvature tensor} of $\nabla$ is the $\C$-multilinear map $R: \mathfrak{X}(M)\times \mathfrak{X}(M)\times \Gamma(E)\to \Gamma(E)$ defined by $$R(X, Y)s:= \nabla_X\nabla_Y s - \nabla_Y\nabla_X s - \nabla_{[X, Y]}s. $$
	We often denote $R$ by $F_{\nabla}$ and call it the \textit{curvature} of the connection $\nabla$. 
\end{definition}
\begin{definition}
Let $E\to M$ be a vector bundle with connection $\nabla$. Suppose $U\subseteq M$ is a trivialising open set and $e_1, ..., e_n$ is a frame on $U$. Let $i, j \in \{1, ..., n\}$ and $X\in \mathfrak{X}(U)$. The \textit{connection $1$-forms} of $\nabla$, denoted $A_j^i$, are defined implicitly by $$\nabla_X e_j = \sum_{i=1}^n A_j^i(X)e_i.$$ The matrix $A := (A_j^i)$ is called the \textit{connection matrix} of $\nabla$. 
\end{definition}
\begin{remark}\label{proposition two line bundle connections differ} 
	Any connection $\nabla$ on a line bundle $L\to M$ can be written locally as $\nabla = d + A$ for $d$ the ordinary differential and a local $1$-form $A\in \Omega^1(U)$ for some open $U\subset M$. To make this explicit, if $s:U\to L$ is a unit section and $\zeta = \zeta_Us$ for $\zeta_U:U\to \C$ is a section of $L$, then $\nabla \zeta = (d\zeta_U+ A\zeta_U)s$ where $\nabla^U s = As$.  It follows that the difference of any two connections on $L$ is a $1$-form on $M$. 
\end{remark}

\begin{proposition}[\cite{TU}]
	Let $E\to M$ be a vector bundle with connection $\nabla$ and $A_j^i$ be the connection $1$-forms on a trivialising open set $U\subseteq M$ relative to a frame $e_1, ..., e_n$. For each $j= 1, ..., n$ and $X, Y \in \mathfrak{X}(U)$, the curvature tensor can be expressed as \begin{align}\label{asdz} R(X, Y)e_j = \sum_{i=1}^n F_j^i(X, Y)e_i \end{align} for $2$-forms $F_j^i$ defined by \begin{align}\label{eqnforf} F_j^i := dA_j^i + \sum_{k=1}^n A_k^i\wedge A_j^k.\end{align}
\end{proposition}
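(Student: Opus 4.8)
The plan is to compute $R(X,Y)e_j$ directly from its definition $R(X,Y)s = \nabla_X\nabla_Y s - \nabla_Y\nabla_X s - \nabla_{[X,Y]}s$, specialised to the frame section $s = e_j$, and then to read off the coefficients relative to $e_1, \ldots, e_n$. The only inputs needed are the defining relation $\nabla_X e_j = \sum_i A_j^i(X)e_i$ for the connection $1$-forms, the Leibniz rule and $\C$-linearity from Definition \ref{connectionvectorbundle}, and two standard identities from the calculus of differential forms that I would recall at the point of use.

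First I would expand the iterated derivative $\nabla_X\nabla_Y e_j$. Writing $\nabla_Y e_j = \sum_k A_j^k(Y)e_k$ and observing that each $A_j^k(Y)$ is a smooth function on $U$, the Leibniz rule gives $\nabla_X\nabla_Y e_j = \sum_k X\!\left(A_j^k(Y)\right)e_k + \sum_{i,k} A_j^k(Y)A_k^i(X)e_i$. The expansion of $\nabla_Y\nabla_X e_j$ is identical with $X$ and $Y$ interchanged, and the final term is simply $\nabla_{[X,Y]}e_j = \sum_i A_j^i([X,Y])e_i$.

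Next I would substitute these three expansions into the definition of $R$, relabel the summation indices so that each term appears as a scalar multiple of $e_i$, and collect. The coefficient of $e_i$ then splits into a derivative part $X\!\left(A_j^i(Y)\right) - Y\!\left(A_j^i(X)\right) - A_j^i([X,Y])$ and a quadratic part $\sum_k \left(A_k^i(X)A_j^k(Y) - A_k^i(Y)A_j^k(X)\right)$. At this stage I would invoke the invariant formula $d\alpha(X,Y) = X(\alpha(Y)) - Y(\alpha(X)) - \alpha([X,Y])$ for the exterior derivative of a $1$-form to identify the derivative part with $dA_j^i(X,Y)$, and the convention $(\beta\wedge\gamma)(X,Y) = \beta(X)\gamma(Y) - \beta(Y)\gamma(X)$ for $1$-forms to identify the quadratic part with $\sum_k (A_k^i\wedge A_j^k)(X,Y)$. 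Together these reproduce $F_j^i(X,Y)$ exactly as in \eqref{eqnforf}, establishing \eqref{asdz}.

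Since every step is an algebraic manipulation of known identities, I do not expect a genuine obstacle; the only real care required is bookkeeping. The points to watch are keeping the index placement of $A_k^i$ against $A_j^k$ consistent through the relabelling, so that the contracted index $k$ lands in the position dictated by \eqref{eqnforf}, and ensuring the sign and ordering conventions adopted for $d\alpha$ and $\beta\wedge\gamma$ match those fixed elsewhere in the text, since any discrepancy there would flip the sign of the quadratic term.
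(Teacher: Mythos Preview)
Your proof is correct and is precisely the standard direct computation. Note, however, that the paper does not actually supply its own proof of this proposition: it is stated with a citation to \cite{TU} and no argument, consistent with the chapter's stated convention of presenting standard vector bundle results without proof. What you have written is essentially the argument one finds in Tu's text, so there is nothing to compare beyond observing that you have filled in what the paper deliberately omitted.
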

\begin{definition} The $2$-forms $F_j^i$ in (\ref{eqnforf}) are called the \textit{curvature $2$-forms}, and equation (\ref{eqnforf}) is called the \textit{second structural equation}. \end{definition}
\begin{remark}
If $L\to M$ is a line bundle with connection $\nabla$, there is a single curvature $2$-form $F_\alpha$ over each trivialising open set $U_\alpha$ of $M$ (with respect to a fixed frame). By Remark \ref{proposition two line bundle connections differ} and equation (\ref{eqnforf}), $F_\alpha = dA_\alpha$ for $A_\alpha$ the connection $1$-form over $U_\alpha$. It is a standard fact that $dA_\alpha$ defines a global $2$-form $F$, and by equation (\ref{asdz}), $R = F = F_\nabla$. 
\end{remark}

\begin{remark} \label{connectiononeformsrelationship} 
	Let $L\to M$ be a line bundle and $\{U_\alpha\}_{\alpha \in I}$ be an open cover of $M$ with unit sections $s_\alpha: U_\alpha\to L$. Define transition functions $g_{\alpha \beta}: U_\alpha\cap U_\beta\to U(1)$ by $s_\alpha = g_{\alpha \beta}s_\beta$. Then it is not difficult to check that the connection $1$-forms $A_\alpha$ and $A_\beta$ over $U_\alpha$ and $U_\beta$ respectively satisfy $$A_\beta + g_{\alpha \beta}^{-1}dg_{\alpha \beta} = A_{\alpha} \text{ over } U_\alpha\cap U_\beta. $$
	%
\end{remark}
\section{Constructions} \label{section constructions} 
\indent \indent We will now consider {vector bundle constructions}.
In each of the following examples, we $(1)$ define (or recall) a vector bundle construction, $(2)$ define a natural connection on this vector bundle, and $(3)$ state the curvature of this connection in terms of the given data. These constructions will give rise to analogous constructions for bundle gerbes in the following chapter.  The reader is once again encouraged to seek further details in \cite{botttu, LEE, TU}, including proofs that these constructions yield vector bundles.  \\
\begin{example}\textbf{\textup{(The trivial vector bundle)}}
	\begin{enumerate}[(1)]
\item 	Recall the trivial vector bundle of rank $n$ over $M$, $M\times \C^n\to M$ (Example \ref{trivialvb}).
\item The derivative $d$ defines a connection on $M\times \C^n\to M$ called the \textit{flat connection}.
\item The curvature of this connection is zero.\\
	\end{enumerate}
%
\end{example}
\begin{example}\label{remarkonprojectionconnection}\textbf{(Subbundle of the trivial vector bundle)}
	\begin{enumerate}[(1)]
\item Consider a vector bundle $E\to M$ that is a subbundle of $\C^n\times M\to M$, the trivial bundle of rank $n$.
\item 	 The flat connection on the trivial bundle induces a connection $\nabla$ on $E\to M$ by $\nabla = P\circ d$ for $P$ orthogonal projection onto $E$.
\item The curvature of this connection is $F_{\nabla} = PdPdP$ (Proposition \ref{linebundleconnection}).\\
	\end{enumerate}
\end{example}
\begin{example}\textbf{\textup{(Dual)}} \label{example dual line bundle} 
	\begin{enumerate}[(1)]
		\item 
		The \textit{dual} of a vector bundle $E\to M$, denoted $E^*\to M$, is defined fibrewise by $(E^*)_x := (E_x)^*$, where $(E_x)^*$ denotes the dual of the vector space $E_x$. 
		\item Let $\nabla$ be a connection on $E\to M$. There is a connection $\nabla^*$ on $E^*\to M$ called the \textit{dual connection}, defined on sections $\alpha \in \Gamma(E)$ and $\zeta \in \Gamma(E^*)$ by $$(\nabla^{ *}\zeta)(\alpha) = d(\zeta(\alpha)) - \zeta(\nabla\alpha).$$
		\item If $E\to M$ is a line bundle, the curvature of $\nabla^*$ satisfies $F_{\nabla^*} = -F_{\nabla}$. 
	\end{enumerate} \newpage 
	%
\end{example}
	\begin{example}\textbf{\textup{(Product)}}\label{product line bundle}   \begin{enumerate}[(1)]
\item 	The \textit{tensor product} of vector bundles $E_1\to M$ and $E_2\to M$, denoted $E_1\otimes E_2\to M$, is a rank $mn$ vector bundle defined fibrewise by $(E_1\otimes E_2)_x := (E_1)_x\otimes (E_2)_x$.
\item  Let $\nabla_{E_1}$ and $\nabla_{E_2}$ be connections on $E_1\to M$ and $E_2\to M$, respectively. There is a connection $\nabla_{E_1\otimes E_2}$ on the product vector bundle $E_1\otimes E_2\to M$, called the \textit{product connection}, defined on sections $\alpha\in \Gamma(E_1)$ and $\beta\in  \Gamma(E_2)$ by $$\nabla_{E_1\otimes E_2}(\alpha\otimes \beta) = \nabla_{E_1}\alpha\otimes \beta + \alpha\otimes \nabla_{E_2} \beta.$$
\item The curvature of $\nabla_{E_1\otimes E_2}$ satisfies $F_{\nabla_{E_1\otimes E_2}} = F_{\nabla_{E_1}}\otimes id_{E_2} + id_{E_1}\otimes F_{\nabla_{E_2}}$. \\
		\end{enumerate}
	\end{example}
\begin{example}\label{powerconnection} \textbf{(Power)}
	\begin{enumerate}[(1)]
		\item Let $k\in \Z$. The \textit{$k$-th power} of a line bundle $L\to M$, denoted $L^k \to M$, is the line bundle defined by $L^k := \otimes_k L$ if $k>0$, $L^k := \otimes_{-k} L^*$ if $k<0$, and $\C$ if $k=0$.  
		\item Let $\nabla$ be a connection on $L\to M$. The product connection from Example \ref{product line bundle} induces a connection $\nabla^{k}$ on $L^k\to M$, defined on sections $\alpha_1, ..., \alpha_k \in \Gamma(L)$ by  $$\nabla^k(\alpha_1\otimes \cdots \otimes \alpha_k) = \sum_{i=1}^k \alpha_1 \otimes \cdots \otimes \nabla(\alpha_i)\otimes \cdots \otimes \alpha_k$$ if $k>0$, $d$ if $k=0$, and $(\nabla^{-k})^*$ if $k<0$.  
		\item The curvature of $\nabla^k$ satisfies $F_{\nabla^k} = k F_{\nabla}$.  \text{}\\
	\end{enumerate}
%
%
\end{example}
\begin{example}\textbf{\textup{(Determinant)}} \label{determinant line bundle } 
	\begin{enumerate}[(1)]
		\item Let $E\to M$ be a vector bundle of rank $n$. The \textit{determinant bundle},  denoted det$(E)\to M$, is the line bundle defined fibrewise by det$(E)_x := \bigwedge^n(E_x)$, the $n$-th exterior power of $E_x$.
		\item Let $\nabla$ be a connection on $E\to M$. The connection $\nabla$ induces a connection $\textup{det}(\nabla)$ on the determinant bundle $\textup{det}(E)\to M$, called the \textit{determinant connection}, defined on sections $\alpha_1, ..., \alpha_n \in \Gamma(E)$ by $$\textup{det}(\nabla)(\alpha_1\wedge \cdots \wedge \alpha_n) = \sum_{i=1}^n \alpha_1 \wedge \cdots \wedge \nabla \alpha_i \wedge \cdots \wedge \alpha_n.$$
		\item The curvature of det$(\nabla)$ satisfies $F_{\textup{det}(\nabla)} = \textup{tr}(F_{\nabla})$.\\
	\end{enumerate}
\begin{example} \label{line bundle pullback} \textbf{\textup{(Pullback)}}
	\begin{enumerate}[(1)]
		\item The \textit{pullback} of a vector bundle $E\to M$ by a smooth map $f:N\to M$, denoted $pr_1: f^{-1}(E)\to N$ (for $pr_1$ projection onto the first factor), is defined by $$f^{-1}(E) := \{(n, e) \in N\times E \ | \ f(n) = \pi(e) \}. $$ 
		\item Let $\nabla$ be a connection on $E\to M$. The connection $\nabla$ induces a connection $f^*\nabla$ on $f^{-1}(E)\to M$, called the \textit{pullback connection}, defined on $X\in \mathfrak{X}(M)$, $\alpha \in \Gamma(E)$ by $$\left(f^{*}\nabla\right)_X\left(f^{-1}\alpha \right) = f^{*}\left({\nabla}_{df(X)} \alpha\right).$$
		\item The curvature of $f^*\nabla$ satisfies $F_{f^*\nabla} = f^*F_{\nabla}$.\\  
	\end{enumerate}
%
%
 \end{example} 

%
	 \end{example}

\begin{example}\textbf{(Function powers (i))} \label{tereq2}\begin{enumerate}[(1)]
		\item Let $E\to M$ be a line bundle and $f:M\to \Z$ be a smooth map. On each connected component of $M$, $f$ is a constant integer value, so we can define $E^f\to M$ fibrewise by $E^f|_{m} := E_m^{f(m)}$ using Example \ref{product line bundle}.
		\item Let $\nabla$ be a connection on $E\to M$. Suppose $f(m) = c$ for all $m\in U_c\subset N$. Then $\nabla^c$ from Example \ref{powerconnection} defines a connection on $E^f|_{U_c}.$  Since we can do this on each connected component of $M$, this defines a global connection $\nabla^f$.
		\item The curvature of $\nabla^f$ satisfies $F_{\nabla^f} = fF_\nabla$. \\
	\end{enumerate}
\end{example}
\begin{example}\textbf{(Function powers (ii))} \label{tereq}\begin{enumerate}[(1)]
\item Let $E\to M$ be a line bundle and $f:N\to \Z$ be a smooth map. Define the vector bundle $E^f\to M\times N$ fibrewise by $E^f|_{(m, n)} := E_m^{f(n)}$ using Example \ref{product line bundle}.
\item Let $\nabla$ be a connection on $E\to M$. By almost identical arguments to Example \ref{tereq2}, $\nabla$ induces a connection $\nabla^f$ on $E^f\to M\times N$.
\item The curvature of $\nabla^f$ satisfies $F_{\nabla^f} = f\pi^*F_\nabla$ for $\pi:M\times N\to M$ projection. \\
\end{enumerate}
\end{example}
\begin{remark}\label{choccake}
The function power constructions in Examples \ref{tereq2} and \ref{tereq} can be related as follows. Consider a line bundle $E\to M$ and a smooth map $f: M\to \Z$. From Example \ref{tereq2}, we obtain a line bundle $E^f\to M$, which is the pullback of the line bundle $E^f\to M\times M$ from Example \ref{tereq} by the map $M\to M\times M$, $m\mapsto (m, m)$. \\
\end{remark}
\begin{example}\textbf{(Function powers (iii))}\label{function power} 
\begin{enumerate}[(1)]
	\item For $i=1,..., n$, let $E_i\to M$ be a line bundle, and $f_i:N\to \Z$ be a smooth map. By Examples \ref{product line bundle} and \ref{tereq}, $E_1^{f_1}\otimes \cdots \otimes E_n^{f_n}\to N\times M$ is a line bundle.
	\item Let $\nabla_{E_i}$ be a connection on $E_i\to M$ for each $i=1, ..., n$. By Examples \ref{product line bundle} and \ref{tereq}, there is a product connection $\nabla_{\Motimes E_i^{f_i}}$ on $E_1^{f_1}\otimes \cdots \otimes E_n^{f_n}\to M\times N$.
	\item The curvature of $\nabla_{\Motimes E_i^{f_i}}$ satisfies $F_{\nabla_{\Motimes E_i^{f_i}}} = \sum_{i=1}^n f_i\, \pi^{*}F_{\nabla_{E_i}}$ for $\pi:M\times N\to M$ projection.\\
\end{enumerate}
\end{example}
\begin{remark}
Unlike the other constructions, Examples \ref{tereq}--\ref{function power} are non-standard constructions that will be useful to our later work. 
\end{remark}
%

\section{Subbundles of the trivial bundle} \label{a3}
\indent \indent We diverge from our standard discussion of vector bundles for a moment, to consider the induced connection from Example \ref{remarkonprojectionconnection} in more detail. Our aim in this section is to prove that this connection has the claimed curvature. We do so for two reasons - the first being that this proof is more involved than what is required to show the other, more standard curvature results above. Secondly, this result will be crucial to our bundle gerbe calculations in Chapters \ref{cupproductchapter} and \ref{ch:three}, which ultimately enable us to prove the main result of this thesis. The reader may skip the proofs in this section, and return to them at a later time when their relevance is made clear.\\
\indent This section consists of three related results. We begin with a technical lemma, Lemma \ref{lemmalemon}. This will be used to prove Proposition \ref{linebundleconnection}, which states that the naturally induced connection on a subbundle of a trivial bundle has the curvature claimed in Example \ref{remarkonprojectionconnection}. Finally, Corollary \ref{linebundlecurvature} considers this result for line bundles. It is this corollary that will be used throughout our later work. 
\begin{lemma}\label{lemmalemon} 
Let $E\to M$ be a vector subbundle of the trivial bundle of rank $n$, and $P: \C^n\times M\to E$ be orthogonal projection. If $s$ is a local section of $E$, $$P\partial_i(P)\partial_j s = P\partial_i(P)\partial_j(P)s.$$
\end{lemma}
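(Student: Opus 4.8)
The plan is to work purely algebraically with the two defining relations of the orthogonal projection $P$: it is idempotent, $P^2 = P$, and a local section $s$ of $E$ satisfies $Ps = s$ (since $s(m)\in E_m$ for each $m$, and $P$ projects onto $E_m$). The strategy is to form the difference of the two sides of the claimed identity and exhibit it as something that manifestly vanishes.

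First I would establish the key pointwise identity $P\,\partial_i(P)\,P = 0$. This follows by differentiating $P^2 = P$: the Leibniz rule gives $\partial_i(P)\,P + P\,\partial_i(P) = \partial_i(P)$. Left-multiplying by $P$ and using $P^2 = P$ then yields $P\,\partial_i(P)\,P + P\,\partial_i(P) = P\,\partial_i(P)$, whence $P\,\partial_i(P)\,P = 0$. This identity is the crux of the argument.

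Next I would rewrite the term involving $\partial_j s$. Differentiating the relation $Ps = s$ in the $j$-th coordinate direction gives $\partial_j(P)\,s + P\,\partial_j s = \partial_j s$, so that $\partial_j s - \partial_j(P)\,s = P\,\partial_j s$. Substituting this into the difference of the two sides of the claim,
$$P\,\partial_i(P)\,\partial_j s - P\,\partial_i(P)\,\partial_j(P)\,s = P\,\partial_i(P)\bigl(\partial_j s - \partial_j(P)\,s\bigr) = P\,\partial_i(P)\,P\,\partial_j s,$$
which vanishes by the identity above. This gives the claimed equality.

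The computation is short, so there is no serious analytic obstacle; the only care needed is bookkeeping — keeping straight which $\partial$ differentiates the matrix-valued $P$ and which differentiates the $\C^n$-valued section $s$, and applying the Leibniz rule correctly to the matrix–section products throughout. It is worth noting that self-adjointness of $P$ is not actually needed here; only idempotency and the relation $Ps = s$ enter the proof.
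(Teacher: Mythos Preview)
Your proof is correct and follows essentially the same route as the paper: differentiate $Ps=s$ to rewrite $\partial_j s$, and use the identity $P\,\partial_i(P)\,P=0$ obtained from $P^2=P$ to kill the remaining term. The only cosmetic difference is ordering---the paper substitutes first and then invokes $P\,\partial_i(P)\,P=0$, while you establish that identity up front and then take the difference of the two sides---and your derivation of $P\,\partial_i(P)\,P=0$ (differentiate once, left-multiply by $P$) is arguably cleaner than the paper's phrasing.
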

\begin{proof}
Applying $\partial_j$ to the equation $Ps = s$, we find $\partial_j s=\partial_j(P) s + P\partial_j s.$ Inputting this expression for $\partial_js$ into $P\partial_i(P)\partial_j s$, we obtain \begin{align}\label{eqn0} P\partial_i(P)\partial_js = P\partial_i(P)\partial_j(P)s + P\partial_i(P)P\partial_j s. \end{align} By applying $\partial_i$ twice to the equation $P^2 = P$, we find that $P\partial_i(P)P = 0$. Therefore the last term in equation (\ref{eqn0}) is zero and we obtain the required result. 
\end{proof}
With this lemma, we can prove the connection from Example \ref{remarkonprojectionconnection} has the claimed curvature.
	\begin{proposition} \label{linebundleconnection}
Let $E\to M$ be a vector subbundle of the trivial bundle of rank $n$. If $P: \C^n\times M\to E$ is orthogonal projection,  then the induced connection $\nabla = P\circ d$ on $E$ has curvature $$F_{\nabla}s = PdPdPs$$ for $s\in \Gamma(E)$.
	\end{proposition}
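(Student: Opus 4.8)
The plan is to compute the curvature directly from its definition in a local coordinate chart, collapse the second-derivative terms using the Leibniz rule together with the idempotency $P^2 = P$, and then invoke Lemma \ref{lemmalemon} to recognise the antisymmetrised expression as the matrix-valued two-form $PdPdP$.

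First I would fix a coordinate chart $(U, \psi)$ on $M$ with coordinate vector fields $\partial_i = \partial/\partial \psi^i$, and recall that for a section $s\in \Gamma(E)$ the induced connection acts by $\nabla_i s = P\partial_i s$. Since $[\partial_i, \partial_j] = 0$, the curvature tensor reduces to $F_\nabla(\partial_i, \partial_j)s = \nabla_i\nabla_j s - \nabla_j\nabla_i s$. Expanding $\nabla_i\nabla_j s = P\partial_i(P\partial_j s)$ via the Leibniz rule gives $P\partial_i(P)\partial_j s + P\partial_i\partial_j s$, where I have used $P^2 = P$ to collapse the second term. Because mixed partials commute, the symmetric terms $P\partial_i\partial_j s$ cancel upon antisymmetrising, leaving $F_\nabla(\partial_i, \partial_j)s = P\partial_i(P)\partial_j s - P\partial_j(P)\partial_i s$.

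Next I would apply Lemma \ref{lemmalemon} to each summand, rewriting $P\partial_i(P)\partial_j s = P\partial_i(P)\partial_j(P)s$, so that
$$F_\nabla(\partial_i, \partial_j)s = \left(P\partial_i(P)\partial_j(P) - P\partial_j(P)\partial_i(P)\right)s.$$
Finally I would identify the right-hand side with the evaluation of the matrix-valued two-form $PdPdP$ on $(\partial_i, \partial_j)$: writing $dP(\partial_i) = \partial_i(P)$ and using the convention $(\alpha\wedge\beta)(X, Y) = \alpha(X)\beta(Y) - \alpha(Y)\beta(X)$ for matrix-valued forms, the bracketed expression is precisely $(PdPdP)(\partial_i, \partial_j)$. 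As $i, j$ were arbitrary and such coordinate fields span the tangent spaces, the identity $F_\nabla s = PdPdPs$ follows globally.

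The main obstacle will be the careful bookkeeping of the matrix-valued one-forms $dP$ and their wedge products, in particular making precise the convention under which $PdPdP$ is read as a two-form and checking that the antisymmetrisation built into the curvature tensor matches this wedge product exactly. The genuinely algebraic input, namely Lemma \ref{lemmalemon} and the identity $P\partial_i(P)P = 0$ underlying it, is already in hand, so the residual work is organisational rather than conceptual.
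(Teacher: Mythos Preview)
Your proof is correct and follows essentially the same route as the paper: compute $F_\nabla(\partial_i,\partial_j)s = \nabla_i\nabla_j s - \nabla_j\nabla_i s$ in coordinates, expand via the Leibniz rule and $P^2=P$ so the symmetric second-derivative terms cancel, then apply Lemma~\ref{lemmalemon} to replace $\partial_j s$ by $\partial_j(P)s$ and recognise the antisymmetrised result as $PdPdP$ acting on $s$. The paper's write-up is slightly terser (it skips the explicit Leibniz expansion and records the result as $P[\partial_i P,\partial_j P]$ before introducing the notation $PdPdP$), but the argument is identical.
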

	\begin{proof}
	Note that, for a local section $s$ of $E$, $\nabla_i s = P\partial_i s$. Let $A_i$ and $F_i^j$ be the connection $1$-forms and curvature $2$-forms of $\nabla$ with respect to some trivialising open cover $\{U_i\}_{i\in I}$ of $M$. Since $[\partial_i, \partial_j] = 0$, we have \begin{align*}
		F_{\nabla}(\partial_i, \partial_j)s 
		&= \nabla_i\nabla_j s - \nabla_j \nabla_i s\\
		&= P\partial_i(P(\partial_j s)) - P\partial_j(P(\partial_i s))\\ 
		&= P\partial_i(P)\partial_j s -P\partial_j(P)\partial_i s \\
		&=  P\partial_i(P)\partial_j(P)s - P\partial_j(P)\partial_i(P)s \ \text{(Lemma \ref{lemmalemon})}\\
		& = (P[\partial_i P, \partial_j P])s.
		\end{align*}
Denote the matrix $(P[\partial_i P, \partial_j P])$ by $PdPdP$, so $F_\nabla s = PdPdPs$ for $s\in \Gamma(E)$. Note that if $Ps= 0$, $PdPdPs = 0$ also (this can be seen by differentiating $Ps=0$ twice). \end{proof}
We conclude this subsection by considering the previous proposition in the case when $E\to M$ is a line bundle. 
		\begin{corollary}\label{linebundlecurvature}
			Let $L\to M$ be a line bundle that is a subbundle of the trivial bundle of rank $n$. Let $P$ be orthogonal projection $\C^n\times M\to L$. Then the induced connection $\nabla = P\circ d$ on $L$ has curvature $F_\nabla = \textup{tr}(PdPdP). $
					\end{corollary}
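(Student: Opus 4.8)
The plan is to deduce the result directly from Proposition \ref{linebundleconnection} by exploiting the rank-one hypothesis. By that proposition, the curvature of $\nabla = P\circ d$ satisfies $F_\nabla s = PdPdP\,s$ for every $s\in\Gamma(L)$, where $PdPdP$ denotes the endomorphism-valued $2$-form $(P[\partial_i P,\partial_j P])$ on the trivial bundle $\C^n\times M$. Since $L$ has rank one, its curvature over any trivialising open set is a single scalar-valued $2$-form $F_\nabla$ (as recorded in the remark following the second structural equation). Thus the identity $F_\nabla s = PdPdP\,s$ says precisely that the matrix $PdPdP$ acts on the line $L$ as multiplication by the scalar $2$-form $F_\nabla$.

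The key additional input is the final observation in the proof of Proposition \ref{linebundleconnection}: if $Ps=0$ then $PdPdP\,s=0$. Equivalently, the endomorphism $PdPdP$ of $\C^n$ annihilates the orthogonal complement $L^\perp$. Hence, with respect to the orthogonal splitting $\C^n = L\oplus L^\perp$, the matrix $PdPdP$ acts as $F_\nabla$ on the one-dimensional summand $L$ and as $0$ on $L^\perp$.

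It remains to compute the trace. I would fix a local unit section $s$ of $L$ and extend it to an orthonormal frame $\{s,e_2,\dots,e_n\}$ of the trivial bundle with $e_2,\dots,e_n$ spanning $L^\perp$. Then
\[
\textup{tr}(PdPdP) = \langle PdPdP\,s,\, s\rangle + \sum_{k=2}^n \langle PdPdP\,e_k,\, e_k\rangle .
\]
Each summand in the sum vanishes since $e_k\in L^\perp$, while the first term equals $\langle F_\nabla s,\, s\rangle = F_\nabla\,\|s\|^2 = F_\nabla$ because $s$ is a unit section. This yields $F_\nabla = \textup{tr}(PdPdP)$, as claimed.

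There is no serious obstacle here: the corollary is essentially a bookkeeping consequence of the proposition. The only point requiring care is the identification of the scalar curvature $2$-form with the matrix trace, which hinges on the fact that $PdPdP$ kills $L^\perp$; once this is in hand, the adapted-frame computation is immediate.
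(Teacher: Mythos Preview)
Your proof is correct and follows essentially the same approach as the paper: both invoke Proposition~\ref{linebundleconnection} to get $F_\nabla s = PdPdP\,s$ on $\Gamma(L)$, observe that $PdPdP$ annihilates $L^\perp$, and conclude that the trace of $PdPdP$ equals the scalar curvature. You are simply more explicit about the adapted-frame computation of the trace, whereas the paper compresses this into ``It follows that''.
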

		\begin{proof}	By Proposition \ref{linebundleconnection}, the induced connection $\nabla = P\circ d$ has curvature $F_\nabla$ satisfying $F_\nabla s = PdPdPs$	for all $s\in \Gamma(M, L)$. 
			Since $L\to M$ is a line bundle, $F_\nabla$ is a $2$-form and $PdPdP$ is a matrix of $2$-forms. If $\varphi \in \Gamma(M, L^\perp)$, then $P\varphi = 0$ so that $PdPdP\varphi = 0$. It follows that $F_\nabla = \text{tr}(PdPdP).$\end{proof}

\section{The first Chern class}\label{section the first chern class} \label{a4}
\indent \indent We will now consider the \textit{first Chern class}, $c_1(L)\in H^2(M, \Z)$, associated to a line bundle $L\to M$, which allows us to classify line bundles. In general, there are $n$ non-trivial Chern classes $c_1(E), ..., c_n(E)$ associated to a rank $n$ vector bundle $E\to M$. Since the general definition of the $n$-th Chern class would take up too much time to introduce, we focus only on the first Chern class here, which is all that is needed for our later work. A familiarity with sheaves and \v{C}ech cohomology is assumed, see \cite{botttu} for details. \\ 
\indent We construct the first Chern class, $c_1(L)$, of a line bundle $L\to M$ as follows.
 Denote the sheaves of smooth functions with values in $\R$, $\Z$, and $U(1)$ by $\underline{\R}$, $\underline{\Z}$, and $\underline{U(1)}$ respectively. Let $\{U_\alpha\}_{\alpha \in I}$ be a trivialising open cover of $M$ and $s_\alpha: U_\alpha \to L$ be unit sections. As before, the \textit{transition functions} $g_{\alpha \beta}: U_\alpha \cap U_\beta \to U(1)$ of $L$ are smooth maps defined by $s_\alpha = g_{\alpha \beta}\, s_\beta$. They satisfy the \v{C}ech cocycle condition $$g_{\beta\gamma} \, g_{\alpha \gamma}^{-1}\, g_{\alpha \beta}=1$$ on $U_\alpha \cap U_\beta \cap U_\gamma$, and hence determine a class in $H^1(M, \underline{U(1)})$. Now, the short exact sequence of sheaves $$0\xrightarrow{} \underline{\Z} \xrightarrow{} \underline{\R}\xrightarrow{} \underline{U(1)}\xrightarrow{} 0$$ induces a long exact sequence in cohomology \begin{align}\label{erw} \cdots \xrightarrow{} 0 = H^1(M, \underline{\R}) \xrightarrow{}H^1(M, \underline{U(1)})\xrightarrow{} H^2(M, \Z) \xrightarrow{} 0=H^2(M, \underline{\R}) \xrightarrow{} \cdots,\end{align} where $c_1$ denotes the connecting homomorphism. We remark that $H^k(M, \underline{\R}) = 0$ for all $k$ since $\underline{\R}$ is a \textit{fine sheaf}. With this information, we can define the first Chern class of $L$.
\begin{definition}
	Let $L\to M$ be a line bundle with transition functions $\{g_{\alpha \beta}\}_{\alpha, \beta \in I}$ over a trivialising open cover $\{U_\alpha\}_{\alpha \in I}$ of $M$. The \textit{first Chern class of $L\to M$}, denoted $c_1(L)\in H^2(M, \Z)$, is defined to be the image of $[g_{\alpha \beta}] \in H^1(M, \underline{U(1)})$ under the connecting homomorphism $H^1(M, \underline{U(1)}) \xrightarrow{\sim} H^2(M, \Z)$ from (\ref{erw}). 
\end{definition}
\begin{proposition}\label{ta}
	The first Chern class classifies line bundles up to isomorphism.
\end{proposition}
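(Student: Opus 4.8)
The plan is to factor $c_1$ as a composite of two bijections. By the very definition of the first Chern class, $c_1(L)$ is the image of the \v{C}ech class $[g_{\alpha\beta}]\in H^1(M,\underline{U(1)})$ under the connecting homomorphism in~(\ref{erw}); since $\underline{\R}$ is a fine sheaf we have $H^1(M,\underline{\R})=H^2(M,\underline{\R})=0$, so this connecting homomorphism is an isomorphism $H^1(M,\underline{U(1)})\xrightarrow{\sim}H^2(M,\Z)$. It therefore suffices to establish that the assignment $L\mapsto[g_{\alpha\beta}]$ descends to a \emph{bijection} between isomorphism classes of line bundles over $M$ and $H^1(M,\underline{U(1)})$; composing with the connecting isomorphism then proves the proposition.

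First I would verify that $L\mapsto[g_{\alpha\beta}]$ is well defined on isomorphism classes. Replacing the unit sections $s_\alpha$ by $s'_\alpha=h_\alpha s_\alpha$ for $h_\alpha:U_\alpha\to U(1)$ changes the transition functions to $g'_{\alpha\beta}=h_\alpha\, g_{\alpha\beta}\, h_\beta^{-1}$, which differs from $g_{\alpha\beta}$ by a \v{C}ech coboundary and hence represents the same class. Independence of the trivialising cover is automatic, since $H^1(M,\underline{U(1)})$ is the direct limit over covers and passing to a common refinement identifies the relevant classes.

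Next I would prove injectivity and surjectivity of the induced map on isomorphism classes. For injectivity, given an isomorphism $\phi:L\to L'$ over $M$, I would first normalise it fibrewise to a unitary isomorphism (its polar part), which is legitimate as both bundles are hermitian; then $\phi(s_\alpha)=\lambda_\alpha s'_\alpha$ for some $\lambda_\alpha:U_\alpha\to U(1)$, and comparing transition functions yields $g_{\alpha\beta}=\lambda_\alpha^{-1}\,g'_{\alpha\beta}\,\lambda_\beta$, exhibiting the two cocycles as cohomologous. Conversely, cohomologous cocycles give isomorphic bundles by running this argument backwards. For surjectivity I would use the clutching construction: from any cocycle $\{g_{\alpha\beta}\}$ form
\[
L:=\Big(\bigsqcup_{\alpha} U_\alpha\times\C\Big)\big/\!\sim,\qquad (x,v)_\alpha\sim\big(x,g_{\alpha\beta}(x)v\big)_\beta \text{ on } U_\alpha\cap U_\beta,
\]
where the cocycle condition $g_{\beta\gamma}\,g_{\alpha\gamma}^{-1}\,g_{\alpha\beta}=1$ guarantees that $\sim$ is an equivalence relation, so that $L\to M$ is a line bundle realising the prescribed transition functions.

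The main obstacle is the surjectivity step: one must check that the clutched quotient $L$ is genuinely a \emph{smooth} manifold, that the maps $U_\alpha\times\C\to L$ descend to smooth local trivialisations, and that the fibrewise complex-linear (and hermitian) structure is well defined. This, together with the bookkeeping needed to compare cocycles defined over different covers by passing to a common refinement, constitutes the technical heart of the argument; it is routine but must be carried out with care. The conceptual content, by contrast, is light: $H^1(M,\underline{U(1)})$ is precisely the group of line bundles under tensor product, and the connecting homomorphism identifies it with $H^2(M,\Z)$.
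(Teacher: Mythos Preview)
Your proposal is correct and follows the same approach as the paper: the paper's entire justification is the one sentence ``Proposition~\ref{ta} follows from exactness of~(\ref{erw}) and the vanishing of $H^k(M,\underline{\R})$,'' which implicitly takes for granted the standard bijection between isomorphism classes of line bundles and $H^1(M,\underline{U(1)})$ that you spell out in detail. Your argument simply unpacks what the paper leaves to the reader.
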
 Proposition \ref{ta} follows from exactness of (\ref{erw}) and the vanishing of $H^k(M, \underline{\R})$. The next standard result details relations obeyed by the first Chern class under the dual, product, and pullback operations.
\begin{proposition}\label{proposition chern class operations} Let $L\to M$ and $J\to M$ be line bundles and $f:N\to M$ be a smooth map. Then \begin{enumerate}[(1),font=\upshape]
		\item $c_1(L^*) = -c_1(L);$
		\item $c_1(L\otimes J) = c_1(L)+ c_1(J);$
		\item $c_1(f^{-1}(L)) = f^{*}c_1(L)$ where $f^*:H^2(M, \Z)\to H^2(N, \Z)$ is the induced map on cohomology.
	\end{enumerate}
\end{proposition}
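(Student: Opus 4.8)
The plan is to reduce all three identities to a single structural fact: the connecting homomorphism $c_1\colon H^1(M,\underline{U(1)})\to H^2(M,\Z)$ from (\ref{erw}) is a homomorphism of abelian groups. Granting this, it suffices to record how transition functions transform under each operation. So the first task is, relative to a common trivialising cover $\{U_\alpha\}$ of $M$ (which I can always arrange by passing to a common refinement without altering the relevant cohomology classes), to express the transition functions of $L^*$, $L\otimes J$, and $f^{-1}(L)$ in terms of those of $L$ and $J$.

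For the dual, I would take unit sections $s_\alpha\colon U_\alpha\to L$ with $s_\alpha=g_{\alpha\beta}s_\beta$ and let $s_\alpha^*$ denote the dual unit sections determined by $s_\alpha^*(s_\alpha)=1$. Evaluating $s_\alpha^*$ on $s_\beta=g_{\alpha\beta}^{-1}s_\alpha$ shows $s_\alpha^*=g_{\alpha\beta}^{-1}s_\beta^*$, so $L^*$ has transition functions $g_{\alpha\beta}^{-1}$. For the tensor product, choosing unit sections $t_\alpha$ of $J$ with $t_\alpha=h_{\alpha\beta}t_\beta$, the sections $s_\alpha\otimes t_\alpha$ satisfy $s_\alpha\otimes t_\alpha=g_{\alpha\beta}h_{\alpha\beta}(s_\beta\otimes t_\beta)$, so $L\otimes J$ has transition functions $g_{\alpha\beta}h_{\alpha\beta}$. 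Since inversion and multiplication of $U(1)$-valued cocycles correspond to negation and addition in $H^1(M,\underline{U(1)})$, applying the homomorphism $c_1$ yields $c_1(L^*)=-c_1(L)$ and $c_1(L\otimes J)=c_1(L)+c_1(J)$ at once.

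For the pullback I would use the cover $\{f^{-1}(U_\alpha)\}$ of $N$ together with the pulled-back unit sections $n\mapsto (n,s_\alpha(f(n)))$ of $f^{-1}(L)$, whose transition functions are the composites $g_{\alpha\beta}\circ f=f^{*}g_{\alpha\beta}$. At the level of \v{C}ech cohomology this says precisely that $f^{*}[g_{\alpha\beta}]=[f^{*}g_{\alpha\beta}]$, so the only remaining point is that the induced map $f^{*}$ on cohomology commutes with the connecting homomorphism $c_1$.

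The main obstacle is exactly this last compatibility: it is the naturality of the long exact sequence (\ref{erw}) with respect to $f$, which holds because $f$ induces a morphism from the short exact sequence $0\to\underline{\Z}\to\underline{\R}\to\underline{U(1)}\to 0$ on $M$ to the corresponding sequence on $N$, and connecting homomorphisms are natural for such morphisms. Invoking this standard homological fact gives $c_1(f^{-1}(L))=f^{*}c_1(L)$. The remaining bookkeeping — checking that the dual, tensor, and pullback sections are genuinely unit sections, and that refining covers leaves the classes unchanged — is routine and can be dispatched quickly.
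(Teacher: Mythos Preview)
Your argument is correct and is the standard proof of this proposition. Note that the paper itself does not supply a proof here: the proposition is introduced as ``the next standard result'' and is stated without proof, so there is nothing to compare against. Your reduction to the behaviour of transition cocycles under dual, tensor, and pullback, together with the fact that the connecting homomorphism $c_1$ is a group homomorphism and is natural in the long exact sequence~(\ref{erw}), is exactly the expected line of reasoning.
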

We conclude this section by relating the curvature $F_\nabla$ of a connection $\nabla$ on a line bundle $L\to M$ to its first Chern class. Recall, by the fundamental theorem of finitely generated groups, that $H^2(M, \Z)$ can be decomposed into the sum of a free group, $\Z^b$, and torsion groups. It is a standard fact that $$H^2(M, \R)\cong H^2(M, \Z)\otimes_\Z \R\cong \R^b,$$ where $\otimes_\Z$ denotes the product of $\Z$-modules. Let $r: H^2(M, \Z)\to H^2(M, \R)$ be the map that includes the free part of $H^2(M, \Z)$ into $H^2(M, \R)$, and sends all torsion elements to zero. We call the image of the first Chern class $c_1(L)\in H^2(M, \R)$ in $H^2(M, \Z)$ under $r$ the \textit{real part} of $c_1(L)$, denoted $r(c_1(L))$. It is this quantity that we can relate to the curvature $F_\nabla$ of a connection $\nabla$, thereby concluding this section. 
\begin{proposition}\label{relationship between class and curvature} 
	Let $L\to M$ be a line bundle with connection $\nabla$ and curvature $F_\nabla$. Then the real part of $c_1(L)\in H^2(M, \R)$ is equal to the image of $[\tfrac{1}{2\pi i} F_\nabla]$ under the \v{C}ech-de Rham isomorphism $H^2_{\text{dR}}(M)\xrightarrow{\sim} H^2(M, \R)$.
\end{proposition}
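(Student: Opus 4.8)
The plan is to compare the two descriptions of the class in $H^2(M,\R)$ by running both through the Čech--de Rham double complex $C^{p,q} = C^p(\mathcal{U}, \Omega^q)$, equipped with the Čech differential $\delta$ and the exterior derivative $d$. After passing to a good cover $\mathcal{U} = \{U_\alpha\}$ (so that all finite intersections are contractible and the Poincaré lemma applies), the augmentation by the constant sheaf $\underline{\R}$ along the rows and by global forms along the columns both induce isomorphisms from the total cohomology of this complex onto $H^2(M, \underline{\R}) \cong H^2(M,\R)$ and $H^2_{\text{dR}}(M)$ respectively; the Čech--de Rham isomorphism is their composite, computed by the usual staircase. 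I would establish the result by chasing a representative of $[\tfrac{1}{2\pi i}F_\nabla]$ down the staircase and checking that it lands on a representative of $r(c_1(L))$.

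Concretely, I would proceed as follows. Since $F_\nabla = dA_\alpha$ locally (Remark \ref{proposition two line bundle connections differ} and the standard fact that these patch to the global curvature), the global closed form $\tfrac{1}{2\pi i}F_\nabla$ lifts to the $0$-cochain of local $1$-forms $\tfrac{1}{2\pi i}A_\alpha \in C^{0,1}$. Applying $\delta$ and invoking the transition relation $A_\alpha - A_\beta = g_{\alpha\beta}^{-1}dg_{\alpha\beta}$ from Remark \ref{connectiononeformsrelationship}, I obtain the $1$-cochain $\tfrac{1}{2\pi i}\,g_{\alpha\beta}^{-1}dg_{\alpha\beta} = \tfrac{1}{2\pi i}\,d\log g_{\alpha\beta} \in C^{1,1}$. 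On a good cover each $g_{\alpha\beta}$ admits a smooth real logarithm, i.e. we may write $g_{\alpha\beta} = \exp(2\pi i\, h_{\alpha\beta})$ with $h_{\alpha\beta}: U_\alpha \cap U_\beta \to \R$ smooth, whence $\tfrac{1}{2\pi i}\,d\log g_{\alpha\beta} = dh_{\alpha\beta}$. Thus this $1$-cochain equals $d$ of the function-valued $1$-cochain $(h_{\alpha\beta}) \in C^{1,0}$. One final application of $\delta$ produces the $2$-cochain of locally constant functions $n_{\alpha\beta\gamma} := h_{\beta\gamma} - h_{\alpha\gamma} + h_{\alpha\beta} \in C^{2,0}$, which is integer-valued precisely because the $g_{\alpha\beta}$ satisfy the cocycle condition.

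By construction, $[n_{\alpha\beta\gamma}]$ is exactly the image of $[g_{\alpha\beta}] \in H^1(M, \underline{U(1)})$ under the connecting homomorphism of the exponential sequence $0 \to \underline{\Z} \to \underline{\R} \to \underline{U(1)} \to 0$, so it represents $c_1(L)$ viewed in $H^2(M, \underline{\R}) \cong H^2(M,\R)$. Since the change-of-coefficients map $H^2(M,\Z) \to H^2(M,\R)$ kills torsion and injects the free part, this real image coincides with $r(c_1(L))$. Reading the same staircase from the opposite end identifies $[n_{\alpha\beta\gamma}]$ with the de Rham class $[\tfrac{1}{2\pi i}F_\nabla]$, which gives the claim.

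I expect the main obstacle to be bookkeeping rather than conceptual: one must fix conventions (the sign of $\delta$, the direction of the staircase, and the normalisation in the exponential sequence) so that the factors of $2\pi i$ and the signs cancel exactly, and one must justify the smooth real logarithms $h_{\alpha\beta}$, which is where passing to a good cover is essential. Once the cover is good and the conventions are pinned down, each descent is a one-line verification.
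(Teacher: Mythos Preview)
Your proposal is correct and is precisely the standard \v{C}ech--de Rham staircase argument. The paper does not actually give a proof of this proposition; it is stated as a standard fact. However, the paper later sketches exactly the same computation for the degree-three analogue (Remark~\ref{remark real part of DD class}), writing down the tuple $(\tfrac{1}{2\pi i}\omega|_{U_\alpha}, dA_{\alpha\beta}, d\log g_{\alpha\beta\gamma}, \delta'\log g_{\alpha\beta\gamma})$ in the total complex and invoking the \v{C}ech--de Rham isomorphism from \cite{botttu}. Your argument is the one-degree-lower version of that sketch, so your approach agrees with the paper's intended method.
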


\section{Line bundle holonomy} \label{a5}
\indent \indent In this section, we introduce \textit{line bundle holonomy}. In doing so, we hope to motivate the definition of \textit{bundle gerbe holonomy} to come later. Just as with the last section, we focus on line bundles, instead of vector bundles more generally, as to not over-complicate things.\\
\indent Let $L\to M$ be a line bundle with connection $\nabla$. Fix a curve $\gamma: [0,1]\to M$. The \textit{parallel transport} of $v\in L_{\gamma(0)}$ along $\gamma$ is the vector in $L_{\gamma(1)}$ which we obtain by translating $v$ along $\gamma$ without changing it (with respect to $\nabla$). That is, if $v(t) \in L_{\gamma(t)}$ is the translate of $v$ such that $$\nabla_{\dot{\gamma}}v \equiv 0$$ for $\dot{\gamma}$ the tangent vector field to $\gamma$, then  we call $P_\gamma(v):= v(1)$ the parallel transport of $v$. If $L$ is hermitian with hermitian connection $\nabla$, then $P_\gamma: L_{\gamma(0)}\to L_{\gamma(1)}$ preserves the inner product.
\begin{remark}
A connection on a vector bundle induces a notion of parallel transport, which in turn allows us to compare, or \textit{connect} vectors in different fibres of $L$ which would otherwise have no natural means of comparison. It is for this reason connections are named as such.
\end{remark}

It is a standard result that parallel transport defines an isomorphism $L_{\gamma(0)}\cong L_{\gamma(1)}$. In particular, if $\gamma$ is a closed loop, i.e. $\gamma(0)=\gamma(1)$, then it is possible that $v$ and $P_\gamma(v)$ are \textit{not} equal, but are complex (unit length) multiples of one another. A standard example of this occurs on the two-sphere, as described in \cite{TU}. With this, we can define line bundle holonomy, and relate the holonomy of a connection to its curvature.
\begin{definition}
Let $L\to M$ be a hermitian line bundle with hermitian connection $\nabla$. Let $\gamma:[0, 1]\to M$ be a loop. The \textit{holonomy of $\nabla$ along $\gamma$}, denoted $\text{hol}(\gamma, \nabla)\in U(1)$, is defined by $$P_\gamma(v) = \text{hol}(\gamma, \nabla) v$$ for $v\in L_{\gamma(0)}$.   
\end{definition}
\begin{remark}\label{hol of line bundle in terms of forms} 
If $L\to M$ is a line bundle with connection $\nabla$ and the loop $\gamma$ is the boundary of a compact submanifold $\Sigma$, then the holonomy is given by $$\text{hol}(\nabla, \gamma) = \text{exp}\left(-\int_\Sigma F_\nabla \right). $$
\end{remark}
\section{Homogeneous vector bundles}\label{homogeneous vector bundles} 
\indent \indent We conclude this chapter with the study of \textit{homogeneous vector bundles}. Here, we detail $G$-equivariant vector bundles for $G$ a Lie group, which allow us to consider group actions on vector bundles. Homogeneous vector bundles, which are certain kinds of  equivariant vector bundles, will then be introduced. After presenting some preliminary definitions, the rest of this section will be dedicated to proving the categorical equivalence of homogeneous vector bundles and linear representations. The bundle gerbes of interest to our research can naturally be defined in terms of $G$-equivariant vector bundles, and consequently, this section will be crucial to our later work. Let us begin by recalling the definition of a $G$-action, transitive $G$-action, and $G$-equivariant map. 
\begin{definition}
Let $G$ be a Lie group. Then a \textit{smooth left action by $G$} on a smooth manifold $M$ is a smooth map $G\times M\to M$, $(g, m)\mapsto g\cdot m$, such that \begin{enumerate}[(1)]
	\item $g\cdot(h\cdot m) = (gh)\cdot m$ for all $g, h \in G$ and $m\in M$;
	\item $e\cdot m = m$ for $e$ the identity of $G$ and $m\in M$.
\end{enumerate}
\end{definition}
\begin{definition}
A smooth left action by $G$ on $M$ is \textit{transitive} if, for all $x, y \in M$, there exists $g\in G$ such that $g\cdot x = y$.
\end{definition}
\begin{definition}
	Let $G$ be a Lie group and $M, N$ be smooth manifolds endowed with a smooth left action by $G$. Then a map $F:M\to N$ is $G$-\textit{equivariant} if, for all $g\in G$ and $m\in M$, $F(g\cdot m) = g\cdot F(m)$.
	\end{definition} 
\begin{definition}
	Let $M$ be a manifold endowed with a smooth left action by a Lie group $G$. The \textit{isotropy group} at a point $p\in M$ is the subgroup $$G_p := \{g\in G: g\cdot p = p\} < G.$$ 
\end{definition} \begin{definition}
Let $M$ be a smooth manifold and $G$ be a Lie group. If $G$ acts smoothly and transitively on $M$, we call $M$ a \textit{homogeneous} $G$-\textit{space}.
\end{definition}
It turns out that a homogeneous $G$-space can always be viewed as a quotient space of $G$ by some subgroup.
\begin{theorem}[{\cite[Theorem 7.19]{LEE}}]\label{theorem} 
Let $G$ be a Lie group and $M$ be a homogeneous $G$-space. Then for $p\in M$, the isotropy group $G_p$ is a closed subgroup of $G$, and the map $F: G/G_p \to M$, $gG_p\mapsto g\cdot p$, is an equivariant diffeomorphism.
\end{theorem}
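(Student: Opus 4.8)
The plan is to follow the classical orbit–stabiliser argument in the smooth category, constructing $F$ from the \emph{orbit map} and then upgrading it to a diffeomorphism by exploiting equivariance. Write $\theta_p : G \to M$ for the orbit map $\theta_p(g) = g\cdot p$; this is smooth, being the composite of $g\mapsto (g,p)$ with the (smooth) action map. The whole proof hinges on recognising $G_p$ and $F$ in terms of $\theta_p$.

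First I would show that $G_p$ is closed. Since $M$ is a manifold it is Hausdorff, so $\{p\}$ is closed, and $G_p = \theta_p^{-1}(\{p\})$ is the preimage of a closed set under the continuous map $\theta_p$, hence closed in $G$. Invoking the Closed Subgroup Theorem, $G_p$ is then an embedded Lie subgroup of $G$, the quotient $G/G_p$ carries a canonical smooth manifold structure, and the projection $\pi : G\to G/G_p$ is a smooth surjective submersion.

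Next I would define $F(gG_p) := g\cdot p$ and dispatch the elementary properties. Well-definedness and injectivity follow simultaneously from the chain of equivalences $gG_p = hG_p \iff h^{-1}g\in G_p \iff g\cdot p = h\cdot p$; surjectivity is exactly the transitivity hypothesis; and equivariance is the one-line computation $F(g'\cdot gG_p) = (g'g)\cdot p = g'\cdot F(gG_p)$. For smoothness I would use the factorisation $\theta_p = F\circ\pi$: since $\theta_p$ is smooth and $\pi$ is a surjective submersion, the characteristic property of submersions forces $F$ to be smooth.

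The genuine obstacle is the last step, promoting the smooth equivariant bijection $F$ to a diffeomorphism, and this is where the smooth structure and Cartan's theorem do the real work rather than any algebraic bookkeeping. My approach is to show $dF$ has constant rank and then apply the global rank theorem. Writing $\ell_g$ for the diffeomorphism $xG_p\mapsto gxG_p$ of $G/G_p$, equivariance gives $F\circ \ell_g = (g\cdot(-))\circ F$; differentiating and using that both flanking maps are diffeomorphisms yields $\mathrm{rank}(dF_{\ell_g(y)}) = \mathrm{rank}(dF_y)$. As $G$ acts transitively on $G/G_p$, every point is of the form $\ell_g(y)$, so $\mathrm{rank}(dF)$ is constant. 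A constant-rank smooth map that is also bijective is, by the global rank theorem, a diffeomorphism, which completes the proof.
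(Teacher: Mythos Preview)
Your proof is correct and follows the standard orbit--stabiliser argument in the smooth category; the paper itself does not supply a proof of this statement, citing instead \cite[Theorem 7.19]{LEE}, whose proof is essentially the one you have reconstructed (closedness of $G_p$ via the preimage $\theta_p^{-1}(\{p\})$, smoothness of $F$ from the characteristic property of the submersion $\pi$, and the constant-rank argument via equivariance to upgrade the bijection to a diffeomorphism). There is nothing to compare beyond noting that you have filled in what the paper leaves to the reference.
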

We next define $G$-equivariant vector bundles. As one might expect, this is a vector bundle endowed with a $G$-action on both the total and base spaces, such that the action respects the vector bundle structure.
\begin{definition}\label{defn 1}
Let $G$ be a Lie group. Call a vector bundle $\pi: E\to X$ a $G$-\textit{equivariant vector bundle} if there are left actions of $G$ on $E$ and $X$ such that  \begin{enumerate}[(1),font=\upshape]
	\item $\pi$ is $G$-equivariant;
	\item the action of $G$ on $E$ restricts to a linear map on the fibres of $E$, i.e. the map $E_x\to E_{g\cdot x}, v \mapsto g\cdot v$ is linear. 
\end{enumerate}
\end{definition}

\begin{remark}
	Condition (2) of Definition \ref{defn 1} is equivalent to the action of $G$ being a linear isomorphism on fibres. The inverse of the action is defined by $v\mapsto g^{-1}\cdot v$, which is linear by definition of a group action and linearity of $v\mapsto g\cdot v$. 
\end{remark}
\begin{remark}
A $G$-equivariant bundle map of $G$-equivariant vector bundles is a bundle map $(\hat{f}, f)$ such that $\hat{f}$ and $f$ are $G$-equivariant with respect to the $G$-actions on the total and base spaces of the vector bundles. A $G$-equivariant bundle isomorphism can be defined similarly. 
\end{remark}
\begin{definition}
Let $G$ be a Lie group and $\pi:E\to X$ be a $G$-equivariant vector bundle. If $X$ is a homogeneous $G$-space, then $\pi:E\to X$ is a \textit{homogeneous vector bundle}.
\end{definition}
In the remainder of this section, we follow Segal \cite{SEGAL} to prove the categorical equivalence of homogeneous vector bundles over $G/H$ and linear representations of $H$ (Theorem \ref{thrmbundles}). We begin by constructing a vector bundle $G\times_HV\to G/H$, and show that any homogeneous vector bundle over $G/H$ is isomorphic to a vector bundle of this form. Let $G$ be a Lie group and $\pi:E\to X$ be a homogeneous vector bundle. By Theorem \ref{theorem}, $X\cong G/H$ where $H$ is the isotropy subgroup of some $p\in X$.  Let $V:= \pi^{-1}(p)$. Note that $H$ fixes $V$ by definition of $H$ and $G$-equivariance of $\pi$. Therefore there is a linear representation of $H$ defined by 
\begin{align*}
\chi: H&\to GL(V)\\
h &\mapsto (v\mapsto v\cdot h).
\end{align*}
  Let $G\times_HV$ denote the orbits of $G\times V$ under the $H$-action $$(g, v)\cdot h = (gh, \chi(h^{-1})v).$$ Define $\pi': G\times_HV\to G/H$ by $[(g, v)]\mapsto Hg$, which is well-defined and $G$-equivariant with respect to the $G$-action on $G\times_HV$ defined by $g'\cdot [(g, v)] = [(g'g, v)]$. 
  \begin{proposition}
Let $G\times_HV$ be the space defined above. Then $G\times_HV\to G/H$ is a $G$-equivariant vector bundle. 
  \end{proposition}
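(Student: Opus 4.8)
The plan is to verify the three defining conditions of Definition~\ref{vectorbundledefinition} and then the two equivariance conditions of Definition~\ref{defn 1}. The one structural fact I would lean on throughout is that $G \to G/H$ is a principal $H$-bundle. This holds because $H$ is a closed subgroup of $G$ by Theorem~\ref{theorem}, so the right $H$-action on $G$ is free and proper, $G/H$ is a smooth manifold, and the quotient map admits smooth local sections $\sigma : U \to G$ over an open cover of $G/H$.

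First I would give $G\times_H V$ its smooth structure. The $H$-action $(g,v)\cdot h = (gh, \chi(h^{-1})v)$ inherits freeness and properness from the right $H$-action on the $G$-factor alone, so the orbit space is a smooth manifold and $\pi'$ is a smooth submersion. Next I would equip each fibre with a vector space structure. Fixing a representative $g$, the fibre over $\pi'([(g,v)])$ is $\{[(g,v)] : v\in V\}$, and I would set $\lambda[(g,v)] + \mu[(g,w)] := [(g,\lambda v + \mu w)]$. The only subtlety is independence of the representative: changing $g$ to $gh$ replaces $v$ by $\chi(h)v$, and since $\chi(h)\in GL(V)$ is linear, the vector-space operations are unaffected. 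This is precisely the step that uses the representation $\chi$, and it identifies each fibre linearly with $V$, which will give condition~(1).

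For conditions (2) and (3), local triviality, I would use a local section $\sigma : U \to G$ to define
$$\phi_U : (\pi')^{-1}(U) \to U \times V, \qquad [(g,v)] \mapsto \bigl(\pi'([(g,v)]),\ \chi(h)\,v\bigr),$$
where $h\in H$ is the unique element with $g = \sigma\bigl(\pi'([(g,v)])\bigr)\,h$. I would then check that $\phi_U$ is well defined, fibre-preserving, and restricts to a linear isomorphism on each fibre, with smooth inverse $(x,w)\mapsto [(\sigma(x),w)]$. Finally, for Definition~\ref{defn 1}, the action $g'\cdot[(g,v)] = [(g'g,v)]$ covers the natural $G$-action on $G/H$, so $\pi'$ is $G$-equivariant; and because this action leaves the $V$-coordinate fixed, the induced map between fibres is, under the identifications above, the identity on $V$, hence a linear isomorphism.

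I expect the main obstacle to be the local-triviality step: confirming that the charts $\phi_U$ are smooth with smooth inverses and that, on overlaps, they differ by the $GL(V)$-valued transition functions $\chi \circ (\text{transition functions of } G\to G/H)$. Both the smoothness of $\phi_U$ and the form of these transition functions rest on the existence and smoothness of the local sections $\sigma$, that is, on the principal-bundle structure of $G\to G/H$. Once this is in hand, the vector-space and equivariance verifications are routine.
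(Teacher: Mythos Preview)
Your proposal is correct and follows the same approach as the paper: both arguments equip the fibres with a vector space structure via a choice of representative in $G$, and both obtain local triviality from the existence of local sections of $G\to G/H$. The paper's proof is considerably terser---it states these two ingredients in one line each and leaves $G$-equivariance as an exercise---whereas you spell out the local charts $\phi_U$, the smooth structure on the orbit space, and the equivariance check explicitly.
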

  \begin{proof}
We show this is a vector bundle and leave $G$-equivariance as an exercise. Local triviality follows by noting that $G\to G/H$ admits local sections. The fibre $(G\times_HV)_{Hg}$ can be equipped with a vector space structure as follows. Let $[p, v], [q, w] \in (G\times_HV)_{Hg}$. A simple computation shows that there exists $x\in H$ such that $[q, w] = [p, x].$ Let $k \in \C$. Then the scalar multiplication and addition operations defined by $$ [p, v] + k[p, x] := [p, vh + kx]$$ equip $(G\times_HV)_{Hg}$ with the structure of a vector space.\end{proof} The construction of $G\times_HV\to G/H$ from the linear representation $\chi: H\to GL(V)$ is an instance of the \textit{associated fibre bundle construction} (for more on this see \cite{SEGAL}). We can now prove the following key result.
\begin{proposition}
There is a $G$-equivariant isomorphism from $G\times_HV\to G/H$ to $E\to G/H$.
\end{proposition}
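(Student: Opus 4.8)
The plan is to write down the natural map explicitly and verify it has all the required properties. Using the $G$-action on $E$, define
$$\Phi\colon G\times_H V \to E, \qquad [(g,v)] \mapsto g\cdot v.$$
Since $v\in V = E_p$ and the action carries $E_p$ to $E_{g\cdot p}$, the image $g\cdot v$ lies in $E_{g\cdot p}$, and under the equivariant diffeomorphism $G/H\cong X$ of Theorem \ref{theorem} this is exactly the fibre of $E$ over the point that $\pi'$ assigns to $[(g,v)]$. Thus, once shown to be well-defined, $\Phi$ will automatically cover the identity on $G/H$.

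First I would check well-definedness. If $(g,v)$ and $(gh,\chi(h^{-1})v)$ represent the same class, then because $\chi(h^{-1})v$ is precisely the action of $h^{-1}\in H$ on $v\in E_p$, the group-action law gives $(gh)\cdot\chi(h^{-1})v = g\cdot\big(h\cdot(h^{-1}\cdot v)\big) = g\cdot v$, so $\Phi$ descends to the quotient. Next, $G$-equivariance is immediate from $\Phi(g'\cdot[(g,v)]) = \Phi([(g'g,v)]) = (g'g)\cdot v = g'\cdot\Phi([(g,v)])$. For linearity on fibres, note that for fixed $g$ the assignment $v\mapsto g\cdot v$ is linear by the fibrewise-linearity clause in the definition of a $G$-equivariant vector bundle, and this matches the vector-space structure placed on $(G\times_H V)_{Hg}$ in the previous proposition, so $\Phi$ restricts to a linear map on each fibre.

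To see $\Phi$ is an isomorphism, I would exhibit an inverse: for $e\in E$ with $\pi(e)=g\cdot p$, set $\Psi(e) = [(g, g^{-1}\cdot e)]$, noting $g^{-1}\cdot e\in E_p = V$. This is independent of the chosen representative $g$ of the coset, since replacing $g$ by $gh$ with $h\in H$ changes $(g,g^{-1}\cdot e)$ to $(gh, h^{-1}\cdot(g^{-1}\cdot e)) = (gh,\chi(h^{-1})(g^{-1}\cdot e))$, which is $H$-equivalent to $(g,g^{-1}\cdot e)$; and $\Phi,\Psi$ are visibly mutually inverse. The remaining and genuinely technical point is smoothness of both maps, which I would verify locally using that $G\to G/H$ admits local sections $\sigma\colon U\to G$ (as already invoked for local triviality in the preceding proposition): in the trivialisation determined by $\sigma$ both $\Phi$ and $\Psi$ become smooth fibrewise-linear maps, so $\Phi$ is a diffeomorphism and hence a $G$-equivariant vector bundle isomorphism. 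I expect this smoothness/diffeomorphism verification to be the main obstacle; the algebraic properties (well-definedness, equivariance, fibrewise linearity) are formal consequences of the group-action axioms.
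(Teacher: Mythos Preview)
Your proposal is correct and follows essentially the same route as the paper: define the map $[(g,v)]\mapsto g\cdot v$, check it is well-defined, $G$-equivariant, and fibrewise linear, then write down the explicit inverse $e\mapsto[(g,g^{-1}\cdot e)]$ where $\pi(e)=gH$. You give more detail than the paper does on well-definedness of the inverse and on smoothness via local sections of $G\to G/H$, whereas the paper simply asserts these and leaves them to the reader.
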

\begin{proof} Define $\alpha: G\times_HV \to E$ by 
$\alpha: [(g, v)]\mapsto v\cdot g.$ Then $\alpha$ is well-defined, $G$-equivariant, and smooth. It can be verified that $\alpha$ restricts to a linear map on fibres and makes 

\[
\begin{tikzcd}
G\times_HV\arrow{r}{\alpha} \arrow[swap]{d}{\pi'} & E \arrow{d}{\pi} \\
G/H \arrow[swap]{r}{id} & G/H 
\end{tikzcd}
\]
commute, hence is a $G$-equivariant bundle map. It follows that $\alpha$ is a $G$-equivariant bundle isomorphism by defining an inverse $\beta: E\to G\times_HV$ by $$\beta: v\mapsto [(g, vg^{-1})]$$ where $\pi(v) = gH$.  \end{proof} We conclude that any homogeneous vector bundle over $G/H$ is isomorphic to a vector bundle of the form $G\times_H V$ defined for some representation of $H$ on a finite-dimensional vector space $V$. 
Conversely, given a linear representation $H\to GL(V)$ for $V$ a finite-dimensional vector space, there is a standard construction of a homogeneous vector bundle associated to the linear representation. As a result, any homogeneous vector bundle $E\to G/H$ is described completely by the representation of $H$ on the fibre of $E$ over the identity coset $H$. By considering the space of all homogeneous vector bundles over $G/H$ and the space of linear representations of $H$ as categories, the preceding discussion can be summarised as follows. 
\begin{theorem}\label{thrmbundles} 
The category of homogeneous vector bundles over $G/H$ is equivalent to the category of linear representations of $H$.
\end{theorem}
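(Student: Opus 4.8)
The plan is to exhibit explicit functors in both directions and show they are mutually quasi-inverse. Write $\mathcal{B}$ for the category of homogeneous vector bundles over $G/H$ (with $G$-equivariant bundle maps covering $\mathrm{id}_{G/H}$) and $\mathcal{R}$ for the category of linear representations of $H$ (with $H$-intertwiners). Define $\Phi\colon \mathcal{B}\to\mathcal{R}$ by sending a bundle $\pi\colon E\to G/H$ to its fibre $V:=\pi^{-1}(H)$ over the identity coset, equipped with the isotropy representation $\chi$ constructed above; and define $\Psi\colon\mathcal{R}\to\mathcal{B}$ by sending a representation $(V,\chi)$ to the associated bundle $G\times_H V\to G/H$. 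The preceding two propositions already show that $\Psi$ lands in $\mathcal{B}$ and that every object of $\mathcal{B}$ is isomorphic to one in the image of $\Psi$, so the object-level content is in hand; what remains is to promote both assignments to functors and to supply the two natural isomorphisms $\Psi\circ\Phi\cong\mathrm{id}_{\mathcal{B}}$ and $\Phi\circ\Psi\cong\mathrm{id}_{\mathcal{R}}$.

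First I would define the functors on morphisms. Given a $G$-equivariant bundle map $\hat f\colon E\to E'$ over $\mathrm{id}_{G/H}$, its restriction $\hat f_H\colon E_H\to E'_H$ to the fibre over the identity coset is $\C$-linear, and $G$-equivariance forces it to commute with the two isotropy actions, so $\hat f_H$ is an intertwiner; setting $\Phi(\hat f):=\hat f_H$ visibly preserves identities and composition. In the other direction, an intertwiner $T\colon V\to W$ induces $\Psi(T)\colon G\times_H V\to G\times_H W$ by $[(g,v)]\mapsto[(g,Tv)]$; here the key point is that $T$ intertwines $\chi$ with the representation on $W$, which is exactly what makes $\Psi(T)$ well-defined on $H$-orbits, after which $G$-equivariance and functoriality are immediate from the definitions.

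It then remains to check naturality of the two comparison isomorphisms. For $\Psi\circ\Phi\cong\mathrm{id}_{\mathcal{B}}$ I would use the isomorphism $\alpha\colon G\times_H V\to E$, $[(g,v)]\mapsto v\cdot g$, already constructed, and verify that for each $\hat f$ the square relating $\alpha_E$, $\alpha_{E'}$, $\Psi(\Phi(\hat f))$ and $\hat f$ commutes; this reduces to the identity $\hat f(v\cdot g)=(\hat f_H v)\cdot g$, which is just $G$-equivariance of $\hat f$ together with the definition of $\hat f_H$. For $\Phi\circ\Psi\cong\mathrm{id}_{\mathcal{R}}$ I would use the canonical identification of the fibre of $G\times_H V$ over $H$ with $V$ via $w\mapsto[(e,w)]$, and compute that the recovered isotropy action of $h$ sends $w$ to $\chi(h)w$, so that $\Phi(\Psi(V,\chi))$ is canonically $(V,\chi)$, with naturality again immediate. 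The main obstacle I anticipate is bookkeeping rather than conceptual: the relation $(g,v)\sim(gh,\chi(h^{-1})v)$ defining $G\times_H V$ carries an inverse, and the paper's mixed left/right notation $v\cdot g$ for the action means one must track the conventions carefully to confirm that the representation recovered from $G\times_H V$ is $\chi$ on the nose rather than, say, a contragredient or conjugate. Getting this identification exactly right, and checking that the fibre-restriction of an equivariant map genuinely recovers the inducing intertwiner (the full-faithfulness content), is where the care is required.
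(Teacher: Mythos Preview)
Your proposal is correct and follows the same architecture as the paper: the functor $\Psi$ is the associated-bundle construction $G\times_H V$, the functor $\Phi$ takes the fibre over the identity coset with its isotropy representation, and the comparison isomorphism $\alpha\colon [(g,v)]\mapsto v\cdot g$ is exactly the one the paper constructs in the proposition immediately preceding the theorem. The paper, however, does not actually write out a proof of the categorical equivalence; it states Theorem~\ref{thrmbundles} as a summary of the preceding discussion, which establishes only the object-level correspondence (every homogeneous bundle is $G$-equivariantly isomorphic to some $G\times_H V$) and leaves the morphism-level and naturality checks implicit. Your write-up therefore supplies precisely the details the paper omits --- functoriality on morphisms, full faithfulness via fibre restriction, and naturality of $\alpha$ --- so it is more complete than the paper's own treatment rather than divergent from it. Your caution about the left/right conventions in $v\cdot g$ and the inverse in $(g,v)\sim(gh,\chi(h^{-1})v)$ is well placed; tracking these consistently is indeed the only substantive hazard.
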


\chapter{Bundle gerbes\label{ch:two}}
\begin{chapquote}{M. K. Murray}
I recall that [Brylinski's] book took some months to makes its away across the sea to Australia during which time I pondered the advertising material I had which said that gerbes were fibrations of groupoids. Trying to interpret this lead to a paper on bundle gerbes...
\end{chapquote}
In 1971,  J. Giraud developed a general theory of  \textit{gerbes} \cite{giraud}, defined as certain kinds of sheaves of groupoids on $M$. 
 The study of gerbes attracted the attention of the international mathematical community upon publication of J.-L. Brylinski's 1993 book \cite{brylinski}. Drawing on Giraud's work, Brylinski detailed a bijection between cohomology classes in $H^3(M, \Z)$ and equivalence classes of abelian gerbes. Brylinski's ambition was best summarised in his own words: \\ \text{}\\
\textit{It would of course be highly desirable to have a theory for integer cohomology of any degree, which is as clear, geometric, and as explicit as the Weil-Kostant theory for line bundles. This book intends to be a first step in that direction.} -- J.-L. Brylinski.\\ \text{}\\   
\indent Brylinski refers here to the classical Weil-Kostant theory of  Proposition \ref{relationship between class and curvature}. In 1996, Brylinski's wish would be fulfilled in degree three. Motivated only by a summary of \cite{brylinski}, M. K. Murray developed the highly geometric, sheaf-free theory of \textit{bundle gerbes} \cite{1996bundlegerbes}. Murray showed that any bundle gerbe gives rise to a gerbe (though the converse need not hold), and that many interesting examples of gerbes could be realised as bundle gerbes. This paper also provided an explicit description of the cohomology class in $H^3(M, \Z)$ (called the \textit{Dixmier-Douady class}) associated to a bundle gerbe. In 2000, once a suitable notion of equivalence was developed (namely \textit{stable isomorphisms}), it was shown that equivalence classes of bundle gerbes are in bijective correspondence with $H^3(M, \Z)$. Remarkably, almost all of the fundamental aspects of bundle gerbe theory were described in Murray's original publication, making it a landmark paper that remains foundational to this field.\\
\indent Roughly, a bundle gerbe is a triple of manifolds $(P, Y, M)$ such that $P$ is a line bundle over a submanifold $Y^{[2]}$ of $Y^{2}$ and $Y\to M$ is a surjective submersion. Moreover, we require $P\to Y^{[2]}$ to be equipped with a fibrewise multiplication called the \textit{bundle gerbe multiplication.} Bundle gerbes are, in many ways, akin to line bundles. Each of the topics pertaining to line bundles in the previous chapter, including constructions, the first Chern class, triviality, connections, and holonomy, have higher-dimensional analogues for bundle gerbes. Notably, the curvature of a bundle gerbe is a \textit{$3$-form}, and the holonomy of a bundle gerbe is defined by exponentiating the integral of the three-curvature over a \textit{surface}. We will study each of these topics in detail. \\
\indent Bundle gerbes and gerbes have a variety of applications in differential geometry, topology and mathematical physics. Gerbes were used by Hitchin to study mirror symmetry \cite{hitchenmirrorsymmetry} and generalised geometry \cite{hitchengeneralisedgeometry}. Applications of gerbes to quantum field theory were studied in \cite{carey2000, mickleson2006}. Brylinski worked extensively with gerbes throughout the later part of his academic career, even using holomorphic gerbes to understand the so-called {B}e\u{\i}linson regulator map \cite{brylinskihologerbes}. The benefit of working with bundle gerbes, as opposed to gerbes, is due to their explicit geometric structure. For example, bundle gerbes provide a geometric realisation of degree-three cohomology, the twist in twisted K-theory \cite{ktheorybg} and the $B$-field in mathematical physics \cite{bfield}. They have also become popular in the study of $D$-brane charges in string theory \cite{ktheorybg, dbranes1, dbranes2, dbranes3, dbranes4}.  More recently, bundle gerbes have been used to study topological insulators \cite{insulator2, bundlegerbesfortopologicalinsulators}. Even a certain well-known lifting problem of principal bundles can be phrased in terms of bundle gerbes. Given a central extension of Lie groups $$0\to U(1)\to \hat{{G}}\to {G} \to 0,$$ one may ask, when does a principal ${G}$ bundle $Y\to M$ lift to a principal $\hat{{G}}$ bundle? To answer this, one can use the \textit{lifting bundle gerbe}, defined in \cite{1996bundlegerbes}, whose Dixmier-Douady class is precisely the topological obstruction to the existence of a lift. In particular, this problem can be used to understand \textit{string structures} when the central extension is the \textit{Kac-Moody central extension}, see \cite{INTROTOBG}. Bundle gerbes also arise naturally in the study of Wess-Zumino-Witten models \cite{INTROTOBG, konrad}, as we will see later, and in the study of Chern-Simons theory \cite{chernsimmons, mickler}. These are just a few of the many examples in mathematics and physics that utilise gerbes and bundle gerbes.  \\
\indent It should be noted that there has been some progress made to Brylinski's original question of defining a geometric representative of degree-$p$ cohomology. These are called bundle $p$-gerbes which (for historical reasons) define degree-$(p+2)$ cohomology. D. Stevenson motivates the problem of defining bundle $p$-gerbes by noting that it would be helpful if physicists could understand $p$-forms as the curvature of some geometric object \cite{dannybundle2gerbes}. In this work,  Stevenson defines bundle 2-gerbes as certain kinds of bundles of \textit{bigroupoids}, based on the original definition presented in \cite{higherbgs}. Although there is a notion of a bundle $p$-gerbe, there is not, as of yet, a straightforward geometric interpretation of this object when $p>2$.

We begin this chapter with some preliminaries needed to define bundle gerbes. Our discussion thereafter progresses similarly to the previous chapter. After defining bundle gerbes, we describe several key bundle gerbe constructions. Morphisms of bundle gerbes will then be detailed, along with the theory of connections and curvature for bundle gerbes.  In the latter half of this chapter, we discuss the Dixmier-Douady class, holonomy and Deligne cohomology class of a bundle gerbe. Assume throughout that all spaces are smooth manifolds and all maps (and in particular sections) are smooth. 
\section{Preliminaries}\label{b1} \indent \indent In this section, we present the preliminary definitions needed to define bundle gerbes, including \textit{surjective submersions} and \textit{fibre products.} The reader familiar with bundle gerbes may wish to skip this section. We assume all maps to be smooth throughout. Let us begin with surjective submersions. 
\begin{definition}
	A map $\pi:X\to N$ is a \textit{submersion} if $d\pi|_x$ is surjective for all $x\in X$. 
\end{definition}

\begin{definition}\label{morphismsurjsub}
	Let $\pi_X: X\to N$ and $\pi_Y: Y\to M$ be surjective submersions. If $f:N\to M$ is a smooth map, a \textit{(smooth) morphism of surjective submersions covering $f$} from $\pi_X: X\to N$ to $\pi_Y: Y\to M$, denoted $(\hat{f}, f)$, is a smooth map $\hat{f}: X\to Y$ making \[\begin{tikzcd}
	X\arrow[d, "\pi_X", swap] \arrow{r}{\hat{f}} & Y\arrow[d, "
	\pi_Y"] \\
	N \arrow{r}{f} &M
	\end{tikzcd}\] commute. A morphism of surjective submersions $\hat{f}:X\to Y$ covering $f$ is an \textit{isomorphism of surjective submersions covering $f$} if $f$ and $\hat{f}$ are diffeomorphisms. If $N=M$ and $f = id_M$, we call $\hat{f}$ a \textit{morphism of surjective submersions (over $M$).}
\end{definition}
 \begin{definition}\label{fprod} Let $\pi_X: X\to M$ and $\pi_Y: Y\to M$ be surjective submersions.  \begin{enumerate}[(1),font=\upshape]
\item The \textit{fibre product of $X$ and $Y$ over $M$}, denoted $X\times_M Y$, is the submanifold whose underlying space is defined by $$X\times_M Y:= \left\{(x,y) \in X\times Y \ \rvert\  \pi_X(x) = \pi_Y(y)\right\}.$$
\item For $p\in \N$, the \textit{p-fold fibre product} of $\pi_Y:Y\to M$ is the submanifold $Y^{[p]}$ whose underlying space is defined by $$Y^{[p]} = \left\{(y_1, ..., y_p) \in Y^p \ \rvert \  \pi_Y(y_1) = \cdots = \pi_Y(y_p)\right \}.$$
\item  For $i = 1, ..., p-1$, the \textit{i-th projection map} $\pi_i: Y^{[p]}\to Y^{[p-1]}$ is defined by $$(y_1, ..., y_p)\mapsto (y_1, ..., \hat{y}_i, ..., y_p).$$ 
\end{enumerate}  \end{definition}

\begin{example}\label{fibreproductexample} 
	Suppose $Y$ is the product manifold $Y := M\times N$ and consider the surjective submersion $Y\to M$ given by projection onto the first factor. Then $Y^{[2]} \cong M\times N^2$. More generally, $Y^{[p]} \cong M\times N^p$. \end{example}
Given surjective submersions $\pi_1:X\to M$, $\pi_2:Y\to M$, it is not difficult to see that $X\times_MY$ is a submanifold and hence a manifold, and $\pi_1:X\times_MY\to M$ is a surjective submersion. Further, $Y^{[p]}$ is a submanifold of $Y^p$ and $Y^{[p]}\to M$ is a surjective submersion (Corollary \ref{do}). 

\begin{proposition}\label{prop submersion} 
	Suppose $f:N\to M$ is smooth and $\pi: Y\to M$ is a surjective submersion. Then $f^{-1}(Y)\subseteq N\times Y$ is a smooth submanifold and $f^{-1}(Y)\to N$ is a surjective submersion. 
\end{proposition}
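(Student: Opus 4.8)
The plan is to recognise $f^{-1}(Y)$ as the fibre product $N\times_M Y$ and to exhibit it as a transverse preimage, which simultaneously yields the submanifold structure and sets up the submersion claim. Concretely, I would first form the smooth map
\[
q: N\times Y \to M\times M, \qquad q(n,y) = (f(n), \pi(y)),
\]
and observe that $f^{-1}(Y) = q^{-1}(\Delta)$, where $\Delta\subset M\times M$ is the diagonal. Since $\Delta$ is an embedded submanifold, it suffices to check that $q$ is transverse to $\Delta$; the preimage (transversality) theorem then gives that $f^{-1}(Y)$ is an embedded submanifold of $N\times Y$, of codimension $\dim M$.

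The transversality check is where the hypothesis on $\pi$ enters, and this is the step I expect to require the most care. At a point $(n,y)$ with $f(n)=\pi(y)=m$, the image of $dq_{(n,y)}$ is $df_n(T_nN)\times d\pi_y(T_yY)$, and because $\pi$ is a submersion we have $d\pi_y(T_yY)=T_mM$. One then checks that this image together with $T_{(m,m)}\Delta=\{(u,u):u\in T_mM\}$ spans $T_mM\times T_mM$: any $(a,b)$ decomposes as $(0,\,b-a)+(a,a)$, where the first summand lies in the image of $dq$ (its first component is $0\in df_n(T_nN)$) and the second lies in the diagonal. Notably only the surjectivity of $d\pi_y$ is used, so no assumption on $f$ is required. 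Alternatively, one could bypass transversality and argue locally via the normal form for submersions, under which $\pi$ looks like a projection and $f^{-1}(Y)$ visibly becomes a graph-type submanifold.

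It remains to show that the projection $pr_N: f^{-1}(Y)\to N$ is a surjective submersion. Surjectivity is immediate: given $n\in N$, surjectivity of $\pi$ supplies some $y\in Y$ with $\pi(y)=f(n)$, so that $(n,y)\in f^{-1}(Y)$ projects to $n$. For the submersion property I would show that $pr_N$ admits a smooth local section through every point; since a right inverse $\sigma$ of $pr_N$ forces $d(pr_N)\circ d\sigma=\mathrm{id}$, this makes $d(pr_N)$ surjective. To build such a section near $(n,y)$, I would use that the submersion $\pi$ admits a local section $s:U\to Y$ on a neighbourhood $U$ of $m=\pi(y)$ with $s(m)=y$, and set $\sigma(n')=(n',\,s(f(n')))$ on $f^{-1}(U)$; the section condition $\pi\circ s=\mathrm{id}_U$ guarantees $\sigma(n')\in f^{-1}(Y)$, and $pr_N\circ\sigma=\mathrm{id}$ by construction. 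The main obstacle is thus the transversality/submanifold step; once that is secured, surjectivity and the local section construction are routine.
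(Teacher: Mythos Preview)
Your proof is correct. The paper itself states this proposition without proof, so there is no argument to compare against; your transversality-to-the-diagonal argument together with the local-section construction for the submersion property is a standard and complete way to establish the result.
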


\begin{corollary}\label{do}
Let $\pi:Y\to M$ be a surjective submersion. Then for all $p \in \N$, $Y^{[p]}$ is a smooth submanifold of $Y^p$, and $Y^{[p]}\to M$ is a surjective submersion.
\end{corollary}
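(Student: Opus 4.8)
The plan is to argue by induction on $p$, using Proposition \ref{prop submersion} to carry out the inductive step. The key observation is that the $(p+1)$-fold fibre product $Y^{[p+1]}$ is naturally a pullback of the surjective submersion $\pi: Y \to M$ along a surjective submersion built from $Y^{[p]}$, so the proposition applies essentially verbatim. For the base case $p = 1$ we have $Y^{[1]} = Y$, which is trivially a submanifold of $Y$ and a surjective submersion onto $M$ by hypothesis.

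For the inductive step, suppose $Y^{[p]}$ is a smooth submanifold of $Y^p$ and that $\rho: Y^{[p]} \to M$, $(y_1, \dots, y_p) \mapsto \pi(y_1)$, is a surjective submersion. I would then apply Proposition \ref{prop submersion} with $N = Y^{[p]}$, the smooth map $f = \rho$, and the surjective submersion $\pi: Y \to M$. This yields that $\rho^{-1}(Y) \subseteq Y^{[p]} \times Y$ is a smooth submanifold and that the projection $\rho^{-1}(Y) \to Y^{[p]}$ is a surjective submersion. By definition, $\rho^{-1}(Y) = \{((y_1, \dots, y_p), y) : \pi(y_1) = \pi(y)\}$, and since $\pi(y_1) = \cdots = \pi(y_p)$ holds on $Y^{[p]}$, the evident identification $((y_1, \dots, y_p), y) \mapsto (y_1, \dots, y_p, y)$ matches $\rho^{-1}(Y)$ with $Y^{[p+1]}$.

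Two small bookkeeping steps then finish the argument. First, to conclude that $Y^{[p+1]}$ is a submanifold of $Y^{p+1}$, I note that $Y^{[p]} \times Y$ is a submanifold of $Y^p \times Y = Y^{p+1}$ (being the product of the inductive submanifold $Y^{[p]} \subseteq Y^p$ with $Y$), while $Y^{[p+1]} \cong \rho^{-1}(Y)$ is a submanifold of $Y^{[p]} \times Y$; since a submanifold of a submanifold is again a submanifold, we land inside $Y^{p+1}$. Second, to see that $Y^{[p+1]} \to M$ is a surjective submersion, I would compose the surjective submersion $\rho^{-1}(Y) \to Y^{[p]}$ with $\rho: Y^{[p]} \to M$, using that a composite of surjective submersions is again a surjective submersion (surjectivity is immediate, and the chain rule gives surjectivity of the differential). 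This composite is exactly the map $Y^{[p+1]} \to M$, $(y_1, \dots, y_{p+1}) \mapsto \pi(y_1)$.

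I do not expect a genuine obstacle here: the substance is entirely carried by Proposition \ref{prop submersion}, and the remainder is an induction. The only points requiring a little care are the identification of $Y^{[p+1]}$ with the pullback $\rho^{-1}(Y)$, so that the proposition applies directly, and the transitivity of the submanifold relation, which is needed to realise the result inside $Y^{p+1}$ rather than merely inside $Y^{[p]} \times Y$.
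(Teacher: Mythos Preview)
Your proof is correct and follows essentially the same approach as the paper: both argue by induction on $p$, applying Proposition~\ref{prop submersion} at each step to realise $Y^{[p+1]}$ as the pullback of $\pi:Y\to M$ along the surjective submersion $Y^{[p]}\to M$, and then composing surjective submersions. You are simply more explicit than the paper about the bookkeeping (the identification $\rho^{-1}(Y)\cong Y^{[p+1]}$ and the transitivity of the submanifold relation), which the paper compresses into the phrase ``repeating this argument $p-1$ times.''
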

\begin{proof} By setting $N = Y$ and $f = \pi$ in Proposition \ref{prop submersion}, we see that $\pi^{-1}(Y) \cong Y^{[2]}$ is a smooth submanifold of $Y^2$ and $\pi_2:Y^{[2]}\to Y$ is a surjective submersion. Since the composition of surjective submersions is a surjective submersion, $\pi\circ \pi_2:Y^{[2]}\to M$ is a surjective submersion. Repeating this argument $p-1$ times shows that $Y^{[p]}$ is a smooth submanifold of $Y^p$ and $Y^{[p]}\to M$ is a surjective submersion for all $p\in \N$. \end{proof} 
\begin{remark}
	From this proof, we see that $\pi_i: Y^{[p]}\to Y^{[p-1]}$ is a (smooth) surjective submersion for all $i$. 
\end{remark}
\begin{remark}\label{submersionnotation} 
	If $f:X\to Y$ is a morphism of surjective submersions over $M$ then there is an induced morphism $f^{[p]}: X^{[p]}\to Y^{[p]}$ of surjective submersions over $M$ defined in the obvious way.

\end{remark}
We conclude this section by defining $\delta$ on smooth maps, line bundles, and sections. 
\noindent \noindent \begin{definition}\label{deltadefinition} Let $P\to Y^{[p-1]}$ be a line bundle, $\pi:Y\to M$ be a surjective submersion and $A$ be an abelian Lie group. 
	\begin{enumerate}[(1),font=\upshape]
		\item Let $g:Y^{[p-1]}\to A$ be a smooth map. Define $\delta(g):Y^{[p]}\to A$ by $$\delta(g) := (g\circ \pi_1) - (g\circ\pi_2)+(g\circ \pi_3)+\cdots +(-1)^{p+1}g\circ \pi_p.$$
		\item The line bundle $\delta(P)\to Y^{[p]}$ is defined by $$\delta(P) := \pi_1^{-1}(P)\otimes \pi_2^{-1}(P)^*\otimes \pi_{3}^{-1}(P)\otimes \cdots $$ where the final term in this product is $\pi_p^{-1}(P)$ if $p$ is odd and $\pi_p^{-1}(P)^*$ if $p$ is even.
		\item Let $s\in \Gamma(Y^{[p-1]}, P)$ be a unit section, and define $s^*(y)\in P_y$ by $s^*(y)(s(y)) = 1$. Then $$\delta(s) := (s\circ \pi_1)\otimes (s^*\circ \pi_2)\otimes \cdots $$ where the final term is $(s\circ \pi_p)$ if $p$ is odd and $(s^*\circ \pi_p)$ if $p$ is even.
	 \end{enumerate} 
\end{definition} \begin{remark}\label{deltaistrivial} 
	It is easily verified that $\delta(\delta(P))$ is canonically trivial, with canonical section denoted by $1$. It can be shown that $1 = \delta(\delta(s))$ for $s$ a non-zero section of $P$. 
\end{remark} 
\section{Definition} \label{b2} \indent \indent In this section, we present the long awaited definition of a \textit{bundle gerbe.}
We shall also remark upon several equivalent descriptions of bundle gerbes, including those in terms of \textit{groupoids} and \textit{\v{C}ech cocycles}. These remarks are provided for the reader's enrichment, and are the first of many comparisons to be made between bundle gerbes and line bundles throughout this chapter. We conclude by presenting three standard examples of bundle gerbes, in an effort to solidify the theory from this section.\\ 
 \indent As remarked earlier, bundle gerbes can be viewed as `higher' versions of line bundles, in the sense that bundle gerbes are classified by degree-three cohomology, while line bundles are classified by degree-two cohomology. With this in mind, it is perhaps not surprising that a bundle gerbe consists of a line bundle with some additional (algebraic) structure.

\begin{definition}[\cite{INTROTOBG}] \label{bgdefn} 
	A \textit{bundle gerbe} $(P, Y, M)$ consists of a surjective submersion $Y\to M$, a hermitian line bundle $P\to Y^{[2]}$, and a \textit{bundle gerbe multiplication} $$m: \pi_3^{-1}(P)\otimes \pi_1^{-1}(P)\to \pi_2^{-1}(P)$$  which is a smooth isomorphism of hermitian line bundles over $Y^{[3]}$. Moreover, the bundle gerbe multiplication is \textit{associative}, that is, for any $(y_1, y_2, y_3, y_4)\in Y^{[4]}$, the bundle gerbe multiplication makes \begin{center} 
		\begin{tikzcd}
		P_{(y_1, y_2)}\otimes P_{(y_2, y_3)}\otimes P_{(y_3, y_4)}  \arrow[r] \arrow[d]
		& P_{(y_1, y_3)}\otimes P_{(y_3, y_4)} \arrow[d] \\
		P_{(y_1, y_2)}\otimes P_{(y_2, y_4)} \arrow[r]
		& P_{(y_1, y_4)}
		\end{tikzcd} \end{center}
	commute.
\end{definition} 
\begin{remark} In Murray's original 1996 paper \cite{1996bundlegerbes}, bundle gerbes were defined in terms of a \textit{fibration} $Y\to M$, i.e. a locally trivial fibre bundle. It was quickly observed by way of example that this was too restrictive a condition. However, the progression from fibrations to surjective submersions in the literature appears somewhat non-linear. For instance, the definition was refined in terms of (surjective) submersions in Carey--Murray's 1996 paper \cite{fadeevanomoly}, but fibrations reappear in the 1997 paper \cite{higherbgs}. In other early papers, including \cite{fadeevanomoly}, bundle gerbes are defined in terms of submersions, which are implicitly assumed to be surjective. The 2005 paper of Murray et~al. \cite{chernsimmons} appears to be the first instance of the explicit use of the term `surjective submersion'. We should also mention that in the 2000 paper \cite{stableiso}, in place of surjective submersions, Murray--Stevenson consider `locally split' maps, i.e. maps $\pi:Y\to M$ that admit local sections. This however was only required in a topological setting and is unrelated to the above discussion.
\end{remark}
When the base space $M$ is understood, we denote a bundle gerbe $(P, Y, M)$ by $(P, Y)$, and say $(P, Y)$ is a bundle gerbe \textit{over} $M$. At times, we will need to emphasise the surjective submersion $\pi:Y\to M$ present in a bundle gerbe. In these instances, we denote the bundle gerbe by $(P, Y, M, \pi)$. A pictorial description of a bundle gerbe is presented in Figure \ref{fig:bundlegerbe}. Here, the arrows from $Y^{[2]}$ to $Y$ depict the first and second projection maps $\pi_1$ and $\pi_2$ (Definition \ref{fprod}). 

\begin{figure}[bth!]
	$$ \begin{array}{ccc}
	P      &          &        \\
	\downarrow &         &         \\
	Y^{[2]}  & \stackrel{}{\dra} &  Y\\
	&               &      \downarrow \\
	&               &       M      \\
	\end{array} $$ 
	\caption{A bundle gerbe $(P, Y, M)$}
\label{fig:bundlegerbe}
\end{figure}
\indent The definition of a bundle gerbe presented above is, in the author's opinion, the easiest to digest at a first encounter. Of course, it is instructive to consider equivalent descriptions, which we explore in the following remarks.
\begin{remark} Let $(P, Y, M)$ be a bundle gerbe. The existence and associativity requirements for bundle gerbe multiplication in Definition \ref{bgdefn} can be restated more concisely in terms of a unit section of $\delta(P)\to Y^{[3]}$ (Definition \ref{deltadefinition}) as follows. Define a section $s$ of $\delta(P)\to Y^{[3]}$ by the bundle gerbe multiplication $m$. That is, for $(y_1, y_2, y_3)\in Y^{[3]}$, set $$s(y_1, y_2, y_3) = p_{23}\otimes m(p_{12}\otimes p_{23})^*\otimes p_{12}\in P_{(y_2, y_3)}\otimes P_{(y_1, y_3)}^*\otimes P_{(y_1, y_2)} $$ for any $p_{12} \in P_{(y_1, y_2)}$ and $p_{23} \in P_{(y_2, y_3)}$ with $p_{12}, p_{23}\neq 0$. Since $\delta(\delta(P))$ is canonically trivial (Remark \ref{deltaistrivial}), we can ask that $\delta(s)$ is the canonical section $1$. A simple computation shows that $\delta(s) = 1$ if and only if associativity of bundle gerbe multiplication holds. Therefore the existence of the bundle gerbe multiplication in Definition \ref{bgdefn} is equivalent to the existence of a non-zero section $s$ of  $\delta(L)\to Y^{[3]}$, and associativity of this multiplication is equivalent to $\delta(s) = 1$.
\end{remark} 
\begin{remark} \label{principalbundle} 
	Let $(P, Y, M)$ be a bundle gerbe. In the literature, $P\to Y^{[2]}$ is often defined to be a \textit{principal $U(1)$-bundle}, rather than a hermitian line bundle. A principal $U(1)$-bundle is a type of \textit{principal $G$-bundle} defined for a Lie group $G$. Principal $G$-bundles are conceptually similar to vector bundles, but are enriched by a group action of $G$ on the total space (see \cite{TU} for more on this). The choice of structure on $P\to Y^{[2]}$ results in two definitions of a bundle gerbe, which can be shown to be equivalent. This is a consequence of the equivalence between the categories of principal $U(1)$-bundles and hermitian line bundles, which can be proved via the \textit{associated bundle construction}. 
\end{remark}

\begin{remark}
	A \textit{groupoid} is a category in which every morphism is invertible. In particular, a \textit{$U(1)$-groupoid} is a transitive groupoid (i.e. a groupoid in which any two objects are connected by at least one morphism) such that the automorphism groups of objects are identified with $U(1)$. A bundle gerbe over a point $m\in M$ can be thought of as a $U(1)$-groupoid. A bundle gerbe over $M$ is then a `bundle of $U(1)$-groupoids', with $U(1)$-groupoids parametrised by the base manifold $M$ (more precisely, this is a certain kind of sheaf of $U(1)$-groupoids on $M$).  For more on this, we refer the reader to \cite{1996bundlegerbes}. 
\end{remark}

\begin{remark}\label{cocycles bundle gerbes} 
In Section \ref{section dd class}, we will show that associated to any bundle gerbe $(P, Y, M)$ are \v{C}ech $2$-cocycles $g_{\alpha \beta \gamma}:U_\alpha \cap U_\beta\cap U_\gamma \to U(1)$ for $\{U_\alpha\}_{\alpha \in I}$ an open cover of $M$. The cocycles $\{g_{\alpha \beta \gamma}\}_{\alpha, \beta, \gamma \in I}$ determine $(P, Y, M)$ (up to \textit{stable isomorphism}), hence can be used to define a bundle gerbe. This is analogous to the description of line bundles in terms of transition functions $g_{\alpha \beta}:U_\alpha \cap U_\beta \to U(1)$. 
\end{remark}
These remarks give us a first insight into the rich field of bundle gerbe theory. We have, however, neglected one fundamental question: what does a bundle gerbe \textit{look like}? We provide four straightforward examples of bundle gerbes below, thereby concluding this section.
\begin{example}\label{almosttrivial}
	This example aims to demystify the associativity condition required of bundle gerbe multiplication. Let $M$ be a smooth manifold and consider the simplest of surjective submersions, the identity map ${id}: M\to M$. Then $M\times \C \to M^{[2]}\cong M$ is a trivial line bundle and $(M\times \C, M, M, id)$ is a bundle gerbe, with multiplication defined fibrewise by \begin{gather*} m_x: (M\times \C)_{x}\otimes (M\times \C)_x \to (M\times \C)_x \\
	((x, u), (x, v)) \mapsto (x, uv).
	\end{gather*} In this instance, commutativity of the diagram in Definition \ref{bgdefn} (2) holds if and only if $(x, (uv)w) = (x, u(vw))$ for all $x\in M, u, v, w\in \C$. That is, associativity of bundle gerbe multiplication is precisely associativity of multiplication in $\C$ in this case.
\end{example}
The reader might expect the bundle gerbe in Example \ref{almosttrivial} to be called `the trivial bundle gerbe', because it appears to generalise the trivial line bundle $M\times \C\to M$. It is, indeed, part of a class of bundle gerbes that we call trivial, however it is not `\textit{the} trivial bundle gerbe'. The trivial bundle gerbe defined by a line bundle $R\to Y$ is constructed as follows.
\begin{example} \textbf{(Trivial bundle gerbes)} \label{trivialbundlegerbe} 
	Let $Y\to M$ be a surjective submersion and $R\to Y$ be a line bundle. Consider the line bundle $\delta(R)\to Y^{[2]}$ (Definition \ref{deltadefinition}). There is a natural multiplication map $$(R_{y_2}\otimes R_{y_1}^*)\otimes (R_{y_3}\otimes R_{y_2}^*)\to R_{y_3}\otimes R_{y_1}^*$$ for all $(y_1, y_2, y_3)\in Y^{[3]}$ which can be shown to be associative. This makes $(\delta(R), Y, M)$ into a bundle gerbe which we call \textit{the trivial bundle gerbe defined by $R$}. 
\end{example} 
Next, we define \textit{Hitchin-Chatterjee gerbes}, or \textit{local bundle gerbes}, in terms of cocycles $g_{\alpha \beta \gamma}$. This is analogous to the construction of a line bundle from its transition functions.
\begin{example}[\cite{INTROTOBG}]\label{hitchinchatconstruction}
Consider an open cover $\mathcal{U}= \{U_\alpha\}_{\alpha \in I}$ of a manifold $M$. Define $Y_{\mathcal{U}} = \{(x, \alpha) \ | \ x\in U_\alpha\},$ the disjoint union of these open sets. Then $Y_{\mathcal{U}}\to M$ is a surjective submersion under the obvious projection, and $Y_\mathcal{U}^{[2]}$ consists of triples $(x, \alpha, \beta)$ with $x\in U_\alpha \cap U_\beta.$ Let $P = Y_\mathcal{U}^{[2]} \times U(1)$ be the trivial $U(1)$-bundle over $Y_\mathcal{U}^{[2]}$. There is a multiplication map $\pi_3^{-1}(P)\otimes \pi_1^{-1}(P)\to \pi_2^{-1}(P)$ given by $$(x, \alpha, \beta, v)\otimes (x, \beta, \gamma, w)\mapsto (x, \alpha, \gamma, g_{\alpha \beta \gamma}(x) vw) $$ for $(x, \alpha, \beta, \gamma)\in Y_\mathcal{U}^{[3]}$, $v, w\in U(1)$ and some $g_{\alpha \beta \gamma}: U_{\alpha}\cap U_{\beta}\cap U_\gamma \to U(1)$. This multiplication will be associative if and only if $g_{\alpha \beta \gamma}$ is a cocycle. When this is the case, $(P, Y_{\mathcal{U}}, M)$ is a bundle gerbe (in the sense of Remark \ref{principalbundle}), called a \textit{Hitchin-Chatterjee gerbe} or a \textit{local bundle gerbe}.
\end{example}
Finally, we will consider a non-trivial bundle gerbe called the \textit{lifting bundle gerbe.} The following example assumes familiarity with the concept of a principal $G$-bundle, see for example \cite[p.$\,$244]{TU}. 
\begin{example}[{\cite[p.$\,$13]{INTROTOBG}}]\label{exampleliftingbundlegerbe} 
Let $$1 \xrightarrow{} U(1) \xrightarrow{}\hat{\mathcal{G}} \xrightarrow{} \mathcal{G}\xrightarrow{} 1$$ 
be a central extension of Lie groups and $P\to M$ be a (principal) $\mathcal{G}$ bundle. If $\hat{P}\to M$ is a $\hat{\mathcal{G}}$ bundle, then $\hat{P}/{U(1)}$ is a $\mathcal{G}$ bundle and we call $\hat{P}$ a \textit{lift} of $P$ if $\hat{P}/{U(1)}\cong P$ as $\mathcal{G}$ bundles. Define $\tau:P^{[2]}\to \mathcal{G}$ by $p_1\tau(p_1, p_2) = p_2$. Then as $\hat{\mathcal{G}}\to \mathcal{G}$ is a $U(1)$ bundle, $\tau^{-1}(\hat{\mathcal{G}}) \to P^{[2]}$ is a $U(1)$ bundle. There is a natural bundle gerbe multiplication on this $U(1)$ bundle induced by the group product on $\hat{\mathcal{G}}$. Hence $(\tau^{-1}(\hat{\mathcal{G}}), P)$ is a bundle gerbe that we call the \textit{lifting bundle gerbe}.  It is shown in \cite{brylinskimcglaughlin} that the lifting bundle gerbe $(\tau^{-1}(\hat{\mathcal{G}}), P)$ is trivial if and only if $P$ has a lift.
\end{example}
\section{Constructions}\label{sectionconstructions} 
\indent \indent We now study how to make `new bundle gerbes from old'. The reader may notice that we are deviating from the structure of the previous chapter here, by leaving bundle gerbe morphisms for a later section. We do this in an effort to further familiarise the reader with bundle gerbes, before moving on to the more complicated topic of their morphisms.\\
\indent  Expectedly, the usual constructions for line bundles (products, duals, pullbacks, etc.) can be adapted to bundle gerbes. Several subtleties arise in these constructions (particularly in the pullback and product bundle gerbes). As a result, we approach this section gingerly, and point out some commonly ignored details in the literature. Each of the constructions described in the examples below will be used extensively in this work. Let us begin with the \textit{dual bundle gerbe,} based on the dual of line bundles (Example \ref{example dual line bundle}).
\begin{example} \textbf{(Dual)} Let $(P, Y, M)$ be a bundle gerbe. By taking the dual line bundle $P^{*}\to Y^{[2]}$, we obtain a bundle gerbe $(P, Y, M)^* := (P^{*}, Y, M)$, depicted in Figure \ref{figbanana}, called the \textit{dual bundle gerbe of $(P, Y, M)$}. The bundle gerbe multiplication on $P$ induces a bundle gerbe multiplication on $P^*$ on passing to duals.	
	
		\begin{align*} \begin{array}{ccc}
		P^*     &          &        \\
		\downarrow &         &         \\
		Y^{[2]} & \stackrel{}{\dra} &  Y\\
		&               &      \downarrow \\
	&               &       M     \\
	\end{array} \end{align*}
		
\captionof{figure}{A dual bundle gerbe}
			\label{figbanana}
\end{example}

Next, we detail the \textit{pullback bundle gerbe} by a morphism of surjective submersions. This appears to be the first construction of its kind in the literature, as we will remark in a moment. 

\begin{example}\textbf{{(Pullback)}}\label{example pullback} 
Let $(P, Y, M)$ be a bundle gerbe, $X\to N$ be a surjective submersion, and $(\hat{f}:X\to Y, f:N\to M)$ be a morphism of the surjective submersions. Pulling back $P\to Y^{[2]}$ by $\hat{f}^{[2]}$ we obtain a line bundle $(\hat{f}^{[2]})^{-1}(P)\to X^{[2]}$. The triple $$(\hat{f}, f)^{-1}(P, Y, M):=( (\hat{f}^{[2]})^{-1}(P), X, N)$$ is the \textit{pullback bundle gerbe by $\hat{f}$ covering $f$}, or the \textit{pullback bundle gerbe by $(\hat{f}, f)$}, depicted in Figure \ref{fig:pullback}. It is straightforward to check that the associative bundle gerbe multiplication on $(P, Y, M)$ endows the pullback bundle gerbe with an associative multiplication. 
\end{example} 
	\begin{center} 
		\begin{tikzcd}[row sep = 2em, column sep=3em]
			(\hat{f}^{[2]})^{-1}(P) \arrow[r]  \arrow[d]		  &P \ \  \arrow[d, shift right = .7ex]   \\
			\ 	X^{[2]} \arrow[r, "\hat{f}^{[2]}"] \arrow[d, shift right=1.2ex] \arrow[d,shift left=.1ex]& \ Y^{[2]}  \arrow[d, shift right=1.2ex] \arrow[d,shift left=.1ex] \\
			X \ \arrow[r, "\hat{f}"] \arrow[d] & \ Y \  \  \arrow[d, shift right = .7ex] \\ 
			N \arrow[r, "  f"] & M \ \ 
		\end{tikzcd}  \captionof{figure}{A pullback bundle gerbe\\}	\label{fig:pullbackexample33} 
		\label{fig:pullback} \end{center} 
	 \begin{remark}\label{remark 2} In the literature, the pullback of a bundle gerbe is almost always defined along a map of base spaces, rather than a map of surjective submersions as we have done. This is a special instance of our definition. To see this, consider a map $f:N\to M$ and a bundle gerbe $(P, Y, M)$. By Proposition \ref{prop submersion}, $f^{-1}(Y)\to N$ is a  surjective submersion. Let ${pr}_2: f^{-1}(Y)\to Y$ be projection onto the second factor. Then ${pr}_2$ covers $f$, and the \textit{pullback of $(P, Y, M)$ by $f:N\to M$}, denoted $f^{-1}(P, Y, M)$, is defined as the pullback of $(P, Y, M)$ by $({pr}_2,f)$. That is, $f^{-1}(P, Y, M) := ({pr}_2, f)^{-1}(P, Y, M)$. It is not difficult to check that this definition agrees with the definition in the literature. 
\end{remark}

\begin{remark}\label{remark 1} \label{remark on pullbacks of sub} 
The base spaces in Example \ref{example pullback} need not be different. Let $(P, Y, M)$ be a bundle gerbe and $X\to M$ be a surjective submersion. Consider a morphism of surjective submersions $\hat{f}:X\to Y$ over $M$ (Definition \ref{morphismsurjsub}). The \textit{pullback of $(P, Y, M)$ by $\hat{f}$}, denoted $\hat{f}^{-1}(P, Y, M)$, is defined to be the pullback of $(P, Y, M)$ by $\hat{f}$ covering the identity. That is, $\hat{f}^{-1}(P, Y, M) := (\hat{f}, id_M)^{-1}(P,Y, M)$.
\end{remark}
\begin{remark}
The two previous remarks encompass all possible pullback bundle gerbes. That is, the pullback of any bundle gerbe $(P, Y, M)$ by $\hat{f}:X\to Y$ covering $f:N\to M$ can be written as a composition of pullback bundle gerbes of the forms in Remarks \ref{remark 2} and \ref{remark 1}. This follows by commutativity of the diagram in Figure \ref{fig:pullbackexample34} and by noting that $\hat{f} = {pr}_2 \circ (\pi\times \hat{f})$ for ${pr}_2:f^{-1}(Y)\to Y$ projection onto the second factor. 
	\begin{center} 
		\begin{tikzcd}
			X \arrow[r, "\pi \times \hat{f}"] \arrow[d] & f^{-1}(Y)  \arrow[d]  \arrow[r, "{pr}_2"] & Y\arrow[d] \\ 
			N \arrow[r, "id_N"] & N \arrow[r, "f"] & M
		\end{tikzcd} \captionof{figure}{Composition of surjective submersions\\}	\label{fig:pullbackexample34} 
\end{center}  \end{remark}

Next, we study the \textit{reduced product bundle gerbe} and \textit{product bundle gerbe} constructions. The reduced product bundle gerbe can be defined for bundle gerbes with the same surjective submersion, and is a straightforward generalisation of Example \ref{product line bundle} (products of line bundles). However, to define the product of bundle gerbes $(P, Y, M)$ and $(Q, X, M)$ more generally requires some extra effort.  We detail these constructions below. 
\begin{example}\textbf{{(Reduced product)}}\label{examplereducedproducts} 
	Let $(P, Y, M)$ and $(Q, Y, M)$ be bundle gerbes. Recall that, since $P\to Y^{[2]}$ and $Q\to Y^{[2]}$ are line bundles, so too is $P\otimes Q \to Y^{[2]}$ (Example \ref{product line bundle}). The \textit{reduced product bundle gerbe} is the triple $$(P, Y, M)\otimes_{\text{red}} (Q, Y, M) := (P\otimes Q, Y, M). $$ This is clearly a bundle gerbe, with an associative multiplication induced from the associative multiplication on $(P, Y, M)$ and $(Q, Y, M)$. 
\end{example}

\begin{example} \textbf{(Product)}
	Consider bundle gerbes $(Q, X, M)$ and $(P, Y, M)$. The surjective submersion present in the product bundle gerbe will be $X\times_MY\to M$. We construct a line bundle over $(X\times_MY)^{[2]}$ via a two-step process. First, we pullback the line bundles $Q\to X^{[2]}$ and $P\to Y^{[2]}$ by the projections $$\pi_X^{[2]}: X^{[2]}\times_MY^{[2]}\to X^{[2]}, \ \ \pi_Y^{[2]}: X^{[2]}\times_MY^{[2]}\to Y^{[2]}$$ respectively, to obtain line bundles $(\pi_X^{[2]})^{-1}(Q)$ and $(\pi_Y^{[2]})^{-1}(P)$ over $X^{[2]}\times_MY^{[2]}$. Since these line bundles are defined over the same base space, we can take their product in the sense of Example \ref{product line bundle}, to define the line bundle \begin{align}\label{eqneee} Q\otimes P :=  (\pi_X^{[2]})^{-1}(Q)\otimes (\pi_Y^{[2]})^{-1}(P)\end{align} over $X^{[2]}\times_MY^{[2]}\cong (X\times_MY)^{[2]}$. It follows that the triple $$(Q, X, M) \otimes (P, Y, M) := (Q\otimes P, X\times_MY, M)$$ is a bundle gerbe, depicted in Figure \ref{figpear}, called the \textit{product bundle gerbe of $(Q, X, M)$ and $(P, Y, M)$.} Here, the bundle gerbe multiplication is taken to be the tensor product of the multiplication maps on $(Q, X, M)$ and $(P, Y, M)$.
	
	$$ \begin{array}{ccc}
	Q\otimes P     &          &        \\
	\downarrow &         &         \\
	X^{[2]}\times_M Y^{[2]} & \stackrel{}{\dra} &  X\times_MY\\
	&               &      \downarrow \\
	&               &       M     \\
	\end{array} $$
	\captionof{figure}{A product bundle gerbe\\}
	\label{figpear}
	
\end{example}
\begin{remark}\label{mug}
	The product and reduced product of bundle gerbes can be related as follows. Consider bundle gerbes $(P, Y, M)$ and $(Q, Y, M)$ and let $\pi_Y: Y\to Y\times_MY$ be the map $y\mapsto (y, y)$. Then $$(P, Y, M)\otimes_{\text{red}} (Q, Y, M) \cong \pi_Y^{-1}( (P, Y, M)\otimes (Q, Y, M))$$ where we pullback the product bundle gerbe in the sense of Remark \ref{remark 1}. This is similar to the situation for vector bundles in Remark \ref{choccake}. Alternatively, if $\pi_X: X\times_MY\to X$ and $\pi_Y:X\times_MY\to Y$ are the natural projections, then it is not difficult to see that $(P, Y, M)\otimes (Q, X, M) = \pi_Y^{-1}(P, Y, M)\otimes_{\text{red}} \pi_X^{-1}(Q, X, M)$. 
\end{remark}
This concludes our study of bundle gerbe constructions for the time being. The reader may have noticed that the \textit{determinant} and \textit{function power} constructions (Examples \ref{determinant line bundle }, \ref{tereq}--\ref{function power}) did not give rise to a bundle gerbe construction in this section. We will revisit these constructions later and see that they are key to defining the cup product bundle gerbe and basic bundle gerbe.
\section{Morphisms and equivariance}\label{section morphisms} 
\indent \indent In this section, we detail morphisms of bundle gerbes, which are foundational to the rest of this chapter. In contrast to line bundles, there are several kinds of bundle gerbe morphisms, including \textit{bundle gerbe isomorphisms,} \textit{stable isomorphisms} and \textit{$D$-stable isomorphisms}. In particular, stable isomorphisms will be used to classify bundle gerbes, and $D$-stable isomorphisms will be used to classify bundle gerbes \textit{with connective data}.\\
\indent We begin by defining bundle gerbe morphisms and isomorphisms. Trivial bundle gerbes are then introduced, enabling us to define stable isomorphisms. 
We then define \textit{$K$-equivariant bundle gerbes}, for $K$ a compact Lie group, together with $K$-equivariant bundle gerbe isomorphisms and stable isomorphisms. To conclude this section, we will show that $K$-equivariance is preserved under bundle gerbe products and pullbacks, which will be relevant to our later work. The definition of $D$-stable isomorphisms will be postponed until we discuss \textit{connective data}. \\
\indent Loosely, a bundle gerbe morphism $(P, Y, M) \to (Q, X, N)$ is a collection of maps $P\to Q$, $Y\to X$ and $M\to N$ that \textit{respect} the bundle gerbe structure in an appropriate way. We make this definition precise below. 
\begin{definition}[\cite{1996bundlegerbes}] \label{bundle gerbe morphism}
	A \textit{morphism of bundle gerbes} $(P, Y, M)\to(Q, X, N)$ is  a triple of maps $(\doublehat{f}, \hat{f}, f)$ such that: \begin{enumerate}[(1)]
		\item the map $\hat{f}:Y\to X$ is a morphism of surjective submersions $Y\to M$ and $X\to N$ covering $f: M\to N$;
		\item the map $\doublehat{f}:P\to Q$ is a morphism of hermitian line bundles covering $\hat{f}^{[2]}$ that commutes with the bundle gerbe product. That is, if $m_P$, $m_Q$ are the bundle gerbe products on $P$ and $Q$ respectively, then for all $v\otimes w \in \pi_3^{-1}(P)\otimes \pi_1^{-1}(P)$, $$\doublehat{f}(m_P(v\otimes w)) = m_Q(\doublehat{f}(v)\otimes \doublehat{f}(w)).$$
 	\end{enumerate}
\end{definition} 
\begin{definition} A morphism $(\doublehat{f}, \hat{f}, f)$ of bundle gerbes is an \textit{isomorphism} if each of $\doublehat{f}, \hat{f},$ and $f$ are diffeomorphisms. If $(P, Y, M)$ and $(Q, X, N)$ are isomorphic we write $(P, Y, M)\cong (Q, X, N)$. \end{definition}
\indent  We now divert our attention to \textit{trivial bundle gerbes}, which will be required to define stable isomorphisms of bundle gerbes. Equipped with the definition of a bundle gerbe isomorphism, and having defined the so-called trivial bundle gerbe defined by $R$ (Example \ref{trivialbundlegerbe}), the definition of triviality is not surprising. 
\begin{definition}[\cite{INTROTOBG}]A bundle gerbe $(P, Y, M)$ is \textit{trivial} if there is a line bundle $R\to Y$ such that $(P, Y, M)$ is isomorphic to $(\delta(R), Y, M)$. The choice of $R$ together with the bundle gerbe isomorphism $(P, Y, M)\xrightarrow{\sim} (\delta(R), Y, M)$ is called a \textit{trivialisation} of $(P, Y, M)$. We will sometimes call $R\to Y$ a \textit{trivialising line bundle} of $(P, Y,  M)$.
\end{definition}
A bundle gerbe trivialisation is not unique. This bears the question, how do two bundle gerbe trivialisations differ? It turns out that two trivialisations of a bundle gerbe differ by a line bundle. We state this formally below without proof.
\begin{proposition}[{\cite[Proposition 5.2]{INTROTOBG}}]\label{difference of trivialisations} 
 If $(P, Y, M, \pi)$ is a trivial bundle gerbe with trivialising line bundles $R$ and $R'$, then there exists a line bundle $Q\to M$ for which $R = R'\otimes \pi^{-1}(Q).$
\end{proposition}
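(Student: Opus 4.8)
The plan is to reduce the statement to a descent problem along the surjective submersion $\pi\colon Y\to M$. A trivialisation of $(P,Y,M)$ is, by definition, a bundle gerbe isomorphism onto $(\delta(R),Y,M)$ (Example \ref{trivialbundlegerbe}); since the surjective submersion and the base are fixed, such an isomorphism is simply an isomorphism of hermitian line bundles $\psi\colon P\to\delta(R)$ over $Y^{[2]}$ commuting with the bundle gerbe multiplications. Given the two trivialising line bundles $R$ and $R'$, with isomorphisms $\psi\colon P\to\delta(R)$ and $\psi'\colon P\to\delta(R')$, I would first form $\psi\circ(\psi')^{-1}\colon\delta(R')\to\delta(R)$, an isomorphism of line bundles over $Y^{[2]}$ that intertwines the two induced bundle gerbe multiplications.

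Next set $S:=R\otimes(R')^*\to Y$. Using that $\delta$ is compatible with tensor products and duals (Examples \ref{example dual line bundle}, \ref{product line bundle}), one has a canonical identification $\delta(S)\cong\delta(R)\otimes\delta(R')^*$, under which $\psi\circ(\psi')^{-1}$ becomes a nowhere-vanishing section $s$ of $\delta(S)\to Y^{[2]}$. The key point is that, because $\psi$ and $\psi'$ intertwine the bundle gerbe multiplications, $s$ is \emph{multiplicative}: it is compatible with the composition multiplication on $\delta(S)$, equivalently $\delta(s)=1$, the canonical section of $\delta(\delta(S))$ (Remark \ref{deltaistrivial}). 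Thus $S$ carries descent data: writing $s(y_1,y_2)\in S_{y_1}\otimes S_{y_2}^*=\operatorname{Hom}(S_{y_2},S_{y_1})$, each $s(y_1,y_2)$ is an isomorphism and the multiplicativity of $s$ is exactly the cocycle identity $s(y_1,y_2)\circ s(y_2,y_3)=s(y_1,y_3)$ over $Y^{[3]}$.

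I would then isolate and prove the descent lemma: \emph{a line bundle $S\to Y$ equipped with a multiplicative nowhere-vanishing section of $\delta(S)$ descends to a line bundle $Q\to M$ with $\pi^{-1}(Q)\cong S$.} The construction defines $Q$ fibrewise by $Q_m:=S_y/\!\sim$ for any $y\in\pi^{-1}(m)$, where for $y,y'\in\pi^{-1}(m)$ we identify $v\in S_{y'}$ with $s(y,y')(v)\in S_y$; the cocycle identity guarantees this relation is well defined and independent of the representative $y$. Smoothness and local triviality of $Q\to M$ follow by choosing local sections of $\pi$ (a surjective submersion admits local sections), which identify $Q$ locally with the pullback of $S$ and so endow $Q$ with a smooth structure, the cocycle condition ensuring that the descriptions coming from different local sections agree. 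The quotient map $S\to\pi^{-1}(Q)$ is then the required isomorphism. Applying this with $S=R\otimes(R')^*$ gives $R\otimes(R')^*\cong\pi^{-1}(Q)$, and tensoring both sides by $R'$ yields $R\cong R'\otimes\pi^{-1}(Q)$, as claimed.

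I expect the main obstacle to be the descent lemma, and specifically the verification that $Q$ is a genuine \emph{smooth} line bundle rather than merely a fibrewise gadget. The set-theoretic quotient and the cocycle bookkeeping are routine, but confirming that the smooth structures obtained from different local sections of $\pi$ coincide, so that $Q$ is a well-defined smooth manifold and $Q\to M$ a smooth line bundle, is the delicate step, and is precisely where the surjectivity and submersion properties of $\pi$ are indispensable.
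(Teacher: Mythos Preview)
Your argument is correct and is precisely the standard one: form $S=R\otimes(R')^*$, use the two trivialisations to produce a multiplicative section of $\delta(S)$, and then invoke descent along the surjective submersion $\pi$ to obtain $Q\to M$ with $\pi^{-1}(Q)\cong S$. Note, however, that the paper does not actually prove this proposition; it is stated without proof and attributed to \cite[Proposition~5.2]{INTROTOBG}. Your proposal is essentially the argument given in that reference, so there is nothing to compare against in the present paper and no gap to report.
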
 
To reinforce the definition of bundle gerbe triviality, we provide the following example, which is analogous to a well-known result for line bundles.
\begin{example}[{\cite[Example 5.1]{INTROTOBG}}]\label{teaq}
	Let $(P, Y, M)$ be a bundle gerbe and suppose $Y\to M$ admits a global section $s$. We claim $(P, Y, M)$ is trivial. Consider the map $f:Y\to Y^{[2]}$ defined by $f(y) = (s(\pi(y)), y)$. Then $R:=f^{-1}(P)\to Y$ is a line bundle, and for any $(y_1, y_2)\in Y^{[2]}$, \begin{align*}
		\delta(R)_{(y_1, y_2)} &\cong P_{(s(\pi(y_1)), y_1)}^*\otimes P_{(s(\pi(y_2)), y_2)} \\
		&= P_{(y_1, s(\pi(y_1)))}\otimes P_{(s(\pi(y_2)), y_2)}  \\
		&= P_{(y_1, y_2)},
	\end{align*} using that $s(\pi(y_1)) = s(\pi(y_2))$ and $P_{(s(\pi(y_1)), y_1)} = P_{(y_1, s(\pi(y_1)))}^*$. It can be verified this isomorphism preserves the bundle gerbe multiplication, hence is a bundle gerbe isomorphism. Therefore $R$ together with this isomorphism is a trivialisation of $(P, Y, M)$.
\end{example}
\begin{remark}
Recall the bundle gerbe $(M\times \C, M, M, id)$ from Example \ref{almosttrivial}. At the time, we cryptically called this bundle trivial, but not `the trivial bundle gerbe'. By Example \ref{teaq}, we see that $(M\times \C, M, M)$ is a trivial bundle gerbe, hence is isomorphic to a bundle gerbe of the form described in Example \ref{trivialbundlegerbe}.
\end{remark}

As we have alluded, the definition of a bundle gerbe isomorphism presented above is too restrictive to classify bundle gerbes. This motivates the notion of a \textit{stable isomorphism.} We will see later why stable isomorphisms are the correct notion of equivalence for bundle gerbes.
\begin{definition}\label{stableisodefn} 
	Two bundle gerbes $(P, Y, M)$ and $(Q, X, M)$ are \textit{stably isomorphic} if $(P, Y)^*\otimes (Q, X)$ is trivial. We write $(P, Y) \cong_{\text{stab}} (Q, X)$. A choice of trivialisation of $(P, Y)^*\otimes(Q, X)$ is called a \textit{stable isomorphism from $(P, Y)$ to $(Q, X)$}. 
\end{definition}
\begin{remark} Stable isomorphisms of bundle gerbes were developed in 2000 by Murray--Stevenson \cite{stableiso}, four years after Murray's original bundle gerbe paper \cite{1996bundlegerbes}. The former paper details the significance of stable isomorphisms to the study of \text{local bundle gerbes} and to the classification of bundle gerbes in terms of $B\C^*$ bundles.
	\end{remark}

\begin{remark}\label{transitivityofstableiso}
	It is shown in  \cite[Proposition 3.9]{dannythesis} that stable isomorphisms can be composed.
\end{remark}
\begin{remark}\label{remark isoimplies stable iso} 
	If two bundle gerbes are isomorphic, they are stably isomorphic. The converse is not in general true. 
\end{remark}

\begin{remark}\label{remarkexample2} 
Consider a bundle gerbe $(P, Y, M, \pi)$ and a submanifold $X$ of $Y$. If $\pi|_X:X\to M$ is a surjective submersion, then $(P, Y, M) \cong_{\text{stab}}(P|_{X^{[2]}}, X, M)$. This is a special case of \cite[Proposition 3.4]{stableiso}. 
\end{remark}
\begin{remark}\label{fibreproductsimplifies}
Consider bundle gerbes $(P_i, Y, M)$ for $i=1,..., n$. By taking their product we obtain the bundle gerbe $\left(\Motimes_{i=1}^nP_i, Y\times_M\cdots \times_MY, M\right).$ Since $Y$ is a submanifold of $Y\times_M\cdots \times_MY$, and $Y\to M$ is a surjective submersion, by Remark \ref{remarkexample2} $$\bigotimes_{i=1}^n\left(P_i, Y, M\right) \cong_{\text{stab}} \sideset{}{_\mathrm{red}}\bigotimes_{i=1}^n \left(P_i, Y, M\right).$$
\end{remark}

The bundle gerbes central to our research come endowed with a natural group action. To make sense of this, we extend the definition of a $K$-equivariant line bundle to define a $K$-equivariant bundle gerbe for $K$ a compact Lie group. Equivariant bundle gerbes first appeared in the 2003 paper by Meinrenken \cite{Meinrenken}.
\begin{definition} [\cite{53UNITARY}]\label{equivariantbundlegerbedefn} 
	Let $(P, Y, M, \pi)$ be a bundle gerbe and $K$ be a compact Lie group that acts smoothly on $M$. Then $(P, Y, M, \pi)$ is a (strongly) $K$-\textit{equivariant bundle gerbe} if the following conditions hold:
	\begin{enumerate}[(1)]
		\item there is an action of $K$ on $Y$ making $\pi$ a $K$-equivariant map;
		\item the line bundle $P\to Y^{[2]}$ is $K$-equivariant for the induced $K$-action on $Y^{[2]}$;
		\item the section $s$ of $\delta(P)\to Y^{[2]}$ induced by bundle gerbe multiplication is $K$-equivariant.
	\end{enumerate}
\end{definition}

\begin{remark}
There is also the notion of a \textit{weakly $K$-equivariant bundle gerbe.} We will not define this as the two bundle gerbes central to our research are strongly equivariant. For more information, see \cite{equivariantbundlegerbes}.
\end{remark}
\begin{remark}
	Condition $(3)$ in Definition \ref{equivariantbundlegerbedefn} can be interpreted as the requirement that the bundle gerbe multiplication map is $K$-equivariant. Therefore a bundle gerbe is $K$-equivariant if each of the geometric \textit{and} algebraic structures present in the bundle gerbe are $K$-equivariant in the appropriate sense.
	\end{remark}
\begin{remark}\label{greenapple} 
	The standard constructions of pullbacks, products, and duals of bundle gerbes can be extended to the $K$-equivariant setting. In particular, if $R\to Y$ is a $K$-equivariant line bundle, and $\pi:Y\to M$ is $K$-equivariant, then the trivial bundle gerbe $(\delta(R), Y, M, \pi)$ is $K$-equivariant. 
\end{remark}
Naturally, an (isomorphism/stable isomorphism) of $K$-equivariant bundle gerbes is an (isomorphism/stable isomorphism)  that preserves the group action in the appropriate sense. We formalise this below. 
\begin{definition}
	Let $(P, Y, M)$ and $(Q, X, M)$ be $K$-equivariant bundle gerbes for $K$ a compact Lie group.	\begin{enumerate}[(1)]
		\item An isomorphism $(\doublehat{f}, \hat{f}, f)$ is a \textit{$K$-equivariant isomorphism}, or \textit{$K$-isomorphism}, if each of $\doublehat{f}$, $\hat{f}$, and $f$ are $K$-equivariant. If $(P, Y, M)$ and $(Q, X, M)$ are $K$-isomorphic, we write $(P, Y, M)\cong_K(Q, X, N)$.
		\item A stable isomorphism $(\doublehat{f}, \hat{f}, f)$ is a \textit{$K$-equivariant stable isomorphism}, or \textit{$K$-stable isomorphism}, if each of $\doublehat{f}$, $\hat{f}$, and $f$ are $K$-equivariant, the trivialising line bundle $R\to X\times_M Y$ is $K$-equivariant, and the isomorphism $\delta(R)\cong (P, Y)^*\otimes (Q, X)$ is $K$-equivariant. If $(P, Y, M)$ and $(Q, X, M)$ are $K$-stably isomorphic we write $(P, Y, M)\cong_{K\text{-stab}}(Q, X, M)$.\end{enumerate}
\end{definition}

\indent We conclude this section by showing that $K$-equivariance is preserved under bundle gerbe products and pullbacks, thereby partially proving Remark \ref{greenapple}. We focus on these constructions in particular, since they will be used later in this work. 
\begin{proposition}\label{equivariantbundlegerbeproduct}
Let $K$ be a compact Lie group	and $(P_i, Y_i, M, \pi_i)$ be $K$-equivariant bundle gerbes for $i=1, ..., n$. The product bundle gerbe $\Motimes_{i=1}^n( P_i, Y_i, M)$ is $K$-equivariant.
\end{proposition}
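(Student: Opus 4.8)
The plan is to verify the three conditions of Definition~\ref{equivariantbundlegerbedefn} for the product bundle gerbe, after first reducing to the case $n=2$. Since the $n$-fold product is built by iterating the two-fold product construction, and the product of bundle gerbes is associative up to canonical isomorphism, an induction on $n$ reduces the statement to the following: if $(P, Y, M)$ and $(Q, X, M)$ are $K$-equivariant, then so is $(Q, X, M)\otimes(P, Y, M) = (Q\otimes P, X\times_M Y, M)$. The base case $n=1$ is immediate, and the inductive step applies the two-fold result to $\Motimes_{i=1}^{n-1}(P_i, Y_i, M)$, which is $K$-equivariant by the inductive hypothesis, and to $(P_n, Y_n, M)$.

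First I would equip the surjective submersion $X\times_M Y\to M$ with a $K$-action. Each of $X$ and $Y$ carries a $K$-action for which $\pi_X$ and $\pi_Y$ are equivariant, so $K$ acts diagonally on $X\times Y$; since $\pi_X(k\cdot x)=k\cdot\pi_X(x)$ and similarly for $Y$, the fibre-product condition $\pi_X(x)=\pi_Y(y)$ is preserved, and the diagonal action restricts to $X\times_M Y$. The projection $X\times_M Y\to M$ is then equivariant, giving condition~(1). The same argument shows that the induced $K$-action on the iterated fibre products is compatible with all projection maps, and in particular that the canonical diffeomorphism $X^{[2]}\times_M Y^{[2]}\cong (X\times_M Y)^{[2]}$ is $K$-equivariant.

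Next I would treat the line bundle $Q\otimes P = (\pi_X^{[2]})^{-1}(Q)\otimes(\pi_Y^{[2]})^{-1}(P)$ of equation~(\ref{eqneee}). The projections $\pi_X^{[2]}$ and $\pi_Y^{[2]}$ are $K$-equivariant by the previous step, and $Q\to X^{[2]}$, $P\to Y^{[2]}$ are $K$-equivariant by hypothesis; since the pullback of a $K$-equivariant line bundle along a $K$-equivariant map is again $K$-equivariant, and tensor products of $K$-equivariant line bundles are $K$-equivariant, the line bundle $Q\otimes P\to (X\times_M Y)^{[2]}$ is $K$-equivariant, which is condition~(2). For condition~(3), the bundle gerbe multiplication on the product is by definition the tensor product of the multiplications $m_Q$ and $m_P$; as each of these is $K$-equivariant, and the induced section of $\delta(Q\otimes P)$ is the tensor product of the induced sections of $\delta(Q)$ and $\delta(P)$ pulled back along the equivariant projections, it too is $K$-equivariant.

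The routine parts are the elementary facts that pullback and tensor product of $K$-equivariant line bundles remain $K$-equivariant, which I would cite or verify in a line. The main obstacle is purely bookkeeping: one must check that the single diagonal $K$-action on $X\times_M Y$ induces actions on $(X\times_M Y)^{[2]}$, on $X^{[2]}\times_M Y^{[2]}$, and on the various factors that are all intertwined by the canonical identifications and projection maps, so that the equivariance established factor by factor actually assembles into equivariance of the product bundle gerbe. Keeping track of these identifications, rather than any genuinely hard step, is where the care lies.
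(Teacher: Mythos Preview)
Your proposal is correct and follows essentially the same approach as the paper: a direct verification of the three conditions of Definition~\ref{equivariantbundlegerbedefn} for the product. The only organisational difference is that you reduce to the case $n=2$ by induction and then use the pullback description~(\ref{eqneee}) of $Q\otimes P$, whereas the paper works with the $n$-fold product directly and checks the fibrewise linearity of the $K$-action by hand; neither choice involves any additional idea.
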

\begin{proof} Consider the induced surjective submersion $\pi: Y_1\times_M\cdots \times_MY_n \to M$. Certainly the action of $K$ on each $Y_i$ induces an action of $K$ on their fibre product, and equivariance of $\pi$ will follow by equivariance of each $\pi_i$. We next verify the line bundle \begin{align}\label{linebundle1} \Motimes_{i=1}^n P_i\to (Y_1\times_M\cdots \times_M Y_n)^{[2]} \cong Y_1^{[2]}\times_M\cdots \times_MY_n^{[2]}\end{align} is $K$-equivariant (Definition \ref{defn 1}). Once again, the action of $K$ on each $P_i$ and $Y_i^{[2]}$ induces an action on the product line bundle. Equivariance of each projection $P_i\to Y_i^{[2]}$ implies $K$-equivariance of the projection in (\ref{linebundle1}).
	 Now, the $K$-action restricts to a linear map $(P_i)_{(y_i, y_i')}\to (P_i)_{k\cdot (y_i, y_i')}$ for each $i$, so the $K$-action also restricts to a linear map in the product $$(P_1)_{(y_1, y_i')} \otimes \cdots \otimes (P_n)_{(y_n, y_n')}\to (P_1)_{k\cdot (y_1, y_i')} \otimes \cdots \otimes (P_n)_{k\cdot (y_n, y_n')}.$$ This shows that the product line bundle (\ref{linebundle1}) is $K$-equivariant. It remains to show part $(3)$ of Definition \ref{equivariantbundlegerbedefn}, i.e. that the section defined by bundle gerbe multiplication is $K$-equivariant. By equivariance of each $(P_i, Y_i, M)$, the section $s_i:\delta(P_i)\to Y_i^{[2]}$ induced by the bundle gerbe multiplication on $(P_i, Y_i, M)$ is $K$-equivariant. Therefore their product $s_1\otimes \cdots \otimes s_n$ is also $K$-equivariant. The section $s_1\otimes \cdots \otimes s_n$ is precisely the section of $$\delta(\otimes_{i=1}^n P_i)\to Y_1^{[2]}\times_M\cdots \times_M Y_n^{[2]}$$ defined by the bundle gerbe multiplication on the product bundle gerbe, hence the product bundle gerbe is $K$-equivariant. \end{proof}
 \begin{proposition}\label{proposition pullback equivariant bg is equivariant} 
 	Let $K$ be a compact Lie group and $(P, Y, M)$ be a $K$-equivariant bundle gerbe. Suppose $K$ acts smoothly on a manifold $N$, and $f:N\to M$ is a $K$-equivariant map. Then the pullback bundle gerbe $f^{-1}(P, Y, M)$ is $K$-equivariant. 
 \end{proposition}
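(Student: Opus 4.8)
The plan is to verify the three conditions of Definition \ref{equivariantbundlegerbedefn} for the pullback bundle gerbe, which by Remark \ref{remark 2} is
$$f^{-1}(P, Y, M) = \big((pr_2^{[2]})^{-1}(P),\ f^{-1}(Y),\ N\big),$$
where $pr_2: f^{-1}(Y)\to Y$ is projection onto the second factor and $f^{-1}(Y)=\{(n,y)\in N\times Y \mid f(n)=\pi(y)\}$. The whole argument hinges on endowing $f^{-1}(Y)$ with a natural diagonal $K$-action, after which every map in sight is $K$-equivariant and the pullback construction simply transports the equivariant structures on $(P,Y,M)$ to its pullback.

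First I would define the $K$-action on $f^{-1}(Y)\subseteq N\times Y$ by $k\cdot(n,y):=(k\cdot n, k\cdot y)$, using the given action on $N$ together with the action on $Y$ supplied by condition (1) of Definition \ref{equivariantbundlegerbedefn} for $(P,Y,M)$. The one substantive point is \emph{well-definedness}: one must check that $(k\cdot n, k\cdot y)$ again lies in $f^{-1}(Y)$, and this is exactly where both equivariance hypotheses are used, since
$$f(k\cdot n)=k\cdot f(n)=k\cdot \pi(y)=\pi(k\cdot y),$$
the outer equalities being $K$-equivariance of $f$ and of $\pi$ respectively. With the action in place, projection onto the first factor $f^{-1}(Y)\to N$ is manifestly $K$-equivariant, giving condition (1), while projection onto the second factor $pr_2:f^{-1}(Y)\to Y$ satisfies $pr_2(k\cdot(n,y))=k\cdot y = k\cdot pr_2(n,y)$, so it too is $K$-equivariant; this is what lets the pullback do its work.

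Next I would treat conditions (2) and (3) together as instances of the single principle that \emph{pullback along a $K$-equivariant map preserves $K$-equivariance}. The induced map $pr_2^{[2]}:(f^{-1}(Y))^{[2]}\to Y^{[2]}$ (as in Example \ref{example pullback}) is $K$-equivariant because $pr_2$ is, so writing $\pi_P:P\to Y^{[2]}$ for the line-bundle projection, the total space
$$(pr_2^{[2]})^{-1}(P)=\{(w,p)\mid pr_2^{[2]}(w)=\pi_P(p)\}$$
carries the well-defined diagonal action $k\cdot(w,p):=(k\cdot w, k\cdot p)$, which restricts to a linear isomorphism on fibres by inheriting linearity from the $K$-action on the $K$-equivariant line bundle $P$; this verifies condition (2). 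For condition (3), the section of $\delta\big((pr_2^{[2]})^{-1}(P)\big)\to (f^{-1}(Y))^{[3]}$ induced by the pulled-back multiplication is precisely the pullback by $pr_2^{[3]}$ of the section $s$ of $\delta(P)\to Y^{[3]}$ coming from the multiplication on $(P,Y,M)$. Since $s$ is $K$-equivariant by condition (3) for $(P,Y,M)$ and $pr_2^{[3]}$ is $K$-equivariant, its pullback is $K$-equivariant, completing the verification.

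The main obstacle is not any single calculation but the need to pin down—in the form used here—the equivariant pullback principle that Remark \ref{greenapple} only asserts. Concretely, the crux is the well-definedness of the diagonal action on $f^{-1}(Y)$ (and then on $(pr_2^{[2]})^{-1}(P)$), which is the sole place where the compatibility of the two equivariance hypotheses is genuinely exploited; once that is secured, conditions (1)--(3) follow by transporting the corresponding structures on $(P,Y,M)$ along the $K$-equivariant maps $pr_2$, $pr_2^{[2]}$, and $pr_2^{[3]}$. A secondary check, which I would dispatch only briefly, is smoothness of these actions, which is immediate from smoothness of the given actions and of $f$.
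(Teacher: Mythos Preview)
Your proposal is correct and follows essentially the same approach as the paper: define the diagonal $K$-action on $f^{-1}(Y)$ (using equivariance of $f$ and $\pi$ for well-definedness), note that $pr_2$ (the paper's $\hat{f}$) is then $K$-equivariant, and conclude that the pulled-back line bundle and multiplication section inherit $K$-equivariance. The paper's proof is much terser---it asserts the action pulls back, observes $\hat{f}$ is equivariant, invokes that pullback of an equivariant line bundle by an equivariant map is equivariant, and leaves the rest to the reader---whereas you spell out each step, but the logical skeleton is identical.
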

\begin{proof}
By $K$-equivariance of $f$, the $K$-action on $Y$ pulls back to a $K$-action on $f^{-1}(Y)$ covering the action on $N$. It follows that the induced map $\hat{f}: f^{-1}(Y)\to Y$ is $K$-equivariant. Since the pullback of a $K$-equivariant line bundle by a $K$-equivariant map is a $K$-equivariant line bundle, $(\hat{f}^{[2]})^{-1}(P)\to p^{-1}(Y)^{[2]}$ is a $K$-equivariant line bundle. The remaining details can be checked easily. 
\end{proof}

 \section{Connections and curvature}\label{b3}
 \indent \indent Just as with line bundles, a bundle gerbe $(P, Y, M)$ can be endowed with a \textit{bundle gerbe connection}. As one might expect, a bundle gerbe connection is an extension of a line bundle connection, which we are well acquainted with from Chapter 1. In this section, we shall define bundle gerbe connections, curvings, and three-curvatures. The induced connective data on the dual, pullback, and product bundle gerbes will then be considered. We will conclude this section by considering connective data on the trivial bundle gerbe, along with the relationship between the connective data of stably isomorphic bundle gerbes. Let us begin with a preliminary proposition that will be used throughout the remainder of this chapter.
  \begin{proposition}[\cite{INTROTOBG}] \label{deltamap} 
 	Let $\Omega^q\left(Y^{[p]}\right)$ be the space of differential $q$-forms on $Y^{[p]}$. Define $\delta:\Omega^q\left(Y^{[p-1]}\right)\to \Omega^q\left(Y^{[p]}\right)$ by $$\delta = \sum_{i=1}^p (-1)^{i+1}\pi_i^*. $$ Then $\delta^2 = 0$ and the so-called \textit{fundamental complex} \begin{align}\label{thefundamentalcomplex}
 	0\xrightarrow{} \Omega^q(M) \xrightarrow{\pi^*} \Omega^q(Y) \xrightarrow{\delta} \Omega^q\left(Y^{[2]}\right) \xrightarrow{\delta} \Omega^q\left(Y^{[3]}\right)\xrightarrow{} \cdots 
 	\end{align} is exact. In particular, $\pi^*$ is injective.
 \end{proposition}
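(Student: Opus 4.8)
The plan is to establish the two assertions separately: the identity $\delta^2 = 0$ by a combinatorial cancellation, and exactness by constructing an explicit contracting homotopy out of local sections of the surjective submersion $\pi \colon Y \to M$.

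For $\delta^2 = 0$, I would first record the simplicial identities obeyed by the face maps of Definition \ref{fprod}: for $1 \le i < j \le p$ one checks directly on points that $\pi_i \circ \pi_j = \pi_{j-1} \circ \pi_i$ as maps $Y^{[p+1]} \to Y^{[p-1]}$, since both sides simply omit the entries $y_i$ and $y_j$. Expanding $\delta^2 \omega = \sum_{i,j} (-1)^{i+j} (\pi_i \circ \pi_j)^* \omega$ and splitting the double sum into the ranges $i < j$ and $i \ge j$, I would use this identity to reindex the first range (setting $a = i$, $b = j-1$) and match it term-by-term with the negative of the second range, so that the whole sum collapses to zero.

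The substance lies in exactness. The first observation is that $\delta$ is $\pi^* C^\infty(M)$-linear: since every $\pi_i$ covers $\mathrm{id}_M$, one has $\pi_i \circ \pi^{[p-1]} = \pi^{[p]}$, so for $\rho \in C^\infty(M)$ the function $\rho \circ \pi^{[\,\cdot\,]}$ passes across each $\pi_i^*$ unchanged and hence commutes with $\delta$. Next, because $\pi$ is a surjective submersion it admits local sections; I would choose an open cover $\{U_\alpha\}$ of $M$ with sections $s_\alpha \colon U_\alpha \to Y$ and a subordinate partition of unity $\{\rho_\alpha\}$. Writing $\sigma_\alpha \colon Y^{[p-1]} \to Y^{[p]}$ for the insertion map that adjoins $s_\alpha(\pi(y_1))$ as a new first factor (defined wherever $\rho_\alpha \circ \pi \neq 0$), I would define a global operator $P \colon \Omega^q(Y^{[p]}) \to \Omega^q(Y^{[p-1]})$ by $P\omega = \sum_\alpha (\rho_\alpha \circ \pi)\, \sigma_\alpha^* \omega$, extended by zero off the supports; the prefactor $\rho_\alpha \circ \pi$ is a degree-zero function, so $P$ preserves form degree.

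The key computation is the homotopy identity $\delta P + P\delta = \mathrm{id}$. I would verify the insertion relations $\pi_1 \circ \sigma_\alpha = \mathrm{id}$ and $\pi_{i+1} \circ \sigma_\alpha = \sigma_\alpha \circ \pi_i$ for $i \ge 1$, feed them through the two copies of $\delta$ appearing in $\delta P$ and $P\delta$, and use $\pi^* C^\infty(M)$-linearity to pull the weights $\rho_\alpha \circ \pi$ across $\delta$. The shifted face terms then telescope against one another, leaving only the $\pi_1 \circ \sigma_\alpha = \mathrm{id}$ contribution weighted by $\sum_\alpha \rho_\alpha = 1$, which yields $\omega$. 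This gives exactness at every $\Omega^q(Y^{[p]})$ with $p \ge 2$; the same operator at the bottom of the complex produces $\omega = \pi^*\!\big(\sum_\alpha \rho_\alpha\, s_\alpha^* \omega\big)$ whenever $\delta\omega = 0$ on $\Omega^q(Y)$, giving exactness there, while injectivity of $\pi^*$ follows from $s_\alpha^* \pi^* = \mathrm{id}$ on $U_\alpha$ together with $\sum_\alpha \rho_\alpha = 1$. The main obstacle I anticipate is organisational rather than conceptual: pinning down the index bookkeeping so that the shift $\pi_{i+1} \circ \sigma_\alpha = \sigma_\alpha \circ \pi_i$ matches the two $\delta$'s with correct signs, and handling the support boundaries where $\sigma_\alpha$ is only partially defined—harmless, since each term carries the factor $\rho_\alpha \circ \pi$, which vanishes there and lets the products extend smoothly by zero.
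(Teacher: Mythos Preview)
Your argument is correct and is essentially the standard proof of this fact, as found in the reference the paper cites. The paper itself states this proposition without proof, attributing it to \cite{INTROTOBG}; your contracting-homotopy construction using local sections of the surjective submersion and a partition of unity is precisely the approach taken there, so there is nothing to compare against in the paper proper.
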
 
Consider a bundle gerbe $(P, Y, M)$. It is natural that we should consider a bundle gerbe connection in terms of a line bundle connection $P\to Y^{[2]}$. Recall that bundle gerbes are enriched with an algebraic structure via the bundle gerbe multiplication map $m$. We call a line bundle connection $\nabla$ on $P\to Y^{[2]}$ a \textit{bundle gerbe connection} if it respects the bundle gerbe multiplication in an appropriate way. We make this precise in the proceeding definition. \\ 
  \indent Let $\pi^{*}_i \nabla$ be the induced pullback connection on $\pi_i^{-1}(P)$ for $i= 1, 2, 3$ (Example \ref{line bundle pullback}), and $\pi_3^*\nabla\otimes id_{1}+ id_3\otimes \pi_1^*\nabla$ be the induced tensor product connection on $\pi_3^{-1}(P)\otimes \pi_1^{-1}(P)$ (Example \ref{product line bundle}). Here, $id_1$ and $id_3$ denote the identity maps on $\pi_1^{-1}(P)$ and $\pi_3^{-1}(P)$, respectively. 
 \begin{definition}\label{bundlegerbeconnection} 
 	Consider a bundle gerbe $(P, Y, M)$ and a line bundle connection $\nabla$ on $P\to Y^{[2]}$. Let $m: \pi_3^{-1}(P)\otimes \pi_1^{-1}(P)\to \pi_2^{-1}(P)$ be the bundle gerbe multiplication map. Then $\nabla$ is a \textit{bundle gerbe connection} if  \begin{align}\label{respectbgmultiplication}  \pi_2^{*}\nabla m(\alpha\otimes \beta)= m\left((\pi_3^*\nabla\otimes id_1+ id_3\otimes \pi_1^*\nabla) (\alpha\otimes \beta)\right)\end{align} for all $\alpha \in \Gamma\left(\pi_3^{-1}(P)\right)$ and $\beta\in \Gamma\left(\pi_1^{-1}(P)\right)$.
 \end{definition}
 \begin{remark}[{\cite[p.$\,$9]{INTROTOBG}}]\label{bundle gerbe connections exist} 
Every bundle gerbe $(P, Y, M)$ has a bundle gerbe connection. 
 \end{remark}
 \begin{remark}
 	 Recall that a bundle gerbe can alternatively be defined by asking that $P\to Y^{[2]}$ be a principal $U(1)$-bundle (Remark \ref{principalbundle}). In this case, we would not have a line bundle connection on $P\to Y^{[2]}$, but instead, a \textit{connection $1$-form} $A$. The $1$-form $A$ induces a connection $1$-form on $\delta(P)\to Y^{[3]}$ that we denote by $\delta(A)$. We call $A$ a bundle gerbe connection if $s^{*}(\delta(A))=0$ for $s$ the section of $\delta(P)\to Y^{[3]}$ defined by the bundle gerbe multiplication. This is equivalent to Definition \ref{bundlegerbeconnection}. In fact, it is much easier to prove Remark \ref{bundle gerbe connections exist} with this definition. For more on this, see \cite{INTROTOBG}. 
 \end{remark} 
Before considering the connective data on a bundle gerbe, we present one elementary result that will be used later. Recall that two line bundle connections differ by a $1$-form on the base space (Proposition \ref{proposition two line bundle connections differ}). We have an analogous proposition for bundle gerbes. 
\begin{proposition} \label{differenceofbundlegerbeconnections} 
	Let $\nabla, \nabla'$ be bundle gerbe connections on $(P, Y, M)$. Then there exists $\alpha \in \Omega^1(Y^{[2]})$ such that $\delta(\alpha)=0$ and $$\nabla - \nabla' = \alpha.$$ 
\end{proposition}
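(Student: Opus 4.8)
The plan is to first reduce the statement to an ordinary fact about line bundle connections, and then extract the closure condition $\delta(\alpha)=0$ from compatibility with the bundle gerbe multiplication.

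First I would observe that $\nabla$ and $\nabla'$ are, in particular, two connections on the single hermitian line bundle $P\to Y^{[2]}$. By Remark \ref{proposition two line bundle connections differ}, the difference of any two connections on a line bundle is a globally defined $1$-form on the base, so there is a unique $\alpha\in\Omega^1(Y^{[2]})$ with $\nabla-\nabla'=\alpha$. This already yields the second displayed equality, so the entire content of the proposition is the condition $\delta(\alpha)=0$.

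To obtain this, I would substitute $\nabla=\nabla'+\alpha$ into the defining identity (\ref{respectbgmultiplication}) for a bundle gerbe connection. The key bookkeeping fact is that pulling back respects the difference of connections, so that $\pi_i^*\nabla=\pi_i^*\nabla'+\pi_i^*\alpha$ for $i=1,2,3$ (by Example \ref{line bundle pullback} together with the local description in Remark \ref{proposition two line bundle connections differ}). Writing out (\ref{respectbgmultiplication}) for $\nabla$ and using that $m$ is $C^\infty$-linear, so that it commutes with multiplication by the scalar $1$-forms $\pi_i^*\alpha$, the left-hand side becomes $\pi_2^*\nabla'\,m(u\otimes v)+(\pi_2^*\alpha)\,m(u\otimes v)$, while the right-hand side becomes $m\big((\pi_3^*\nabla'\otimes id_1+id_3\otimes\pi_1^*\nabla')(u\otimes v)\big)+(\pi_1^*\alpha+\pi_3^*\alpha)\,m(u\otimes v)$, for arbitrary sections $u\in\Gamma(\pi_3^{-1}(P))$ and $v\in\Gamma(\pi_1^{-1}(P))$. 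Subtracting the identity (\ref{respectbgmultiplication}) for $\nabla'$, which holds because $\nabla'$ is itself a bundle gerbe connection, the $\nabla'$-terms cancel and leave $(\pi_2^*\alpha)\,m(u\otimes v)=(\pi_1^*\alpha+\pi_3^*\alpha)\,m(u\otimes v)$. Since $m$ is a line bundle isomorphism and $u,v$ may be chosen non-vanishing, $m(u\otimes v)$ is an arbitrary non-vanishing local section of $\pi_2^{-1}(P)$, so the scalar $1$-forms must agree: $\pi_2^*\alpha=\pi_1^*\alpha+\pi_3^*\alpha$. By the definition of $\delta$ on forms (Proposition \ref{deltamap}), this is precisely $\delta(\alpha)=\pi_1^*\alpha-\pi_2^*\alpha+\pi_3^*\alpha=0$, completing the argument.

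The only delicate point, and the step I would treat most carefully, is the interaction of the difference $1$-form with the tensor and pullback structures in (\ref{respectbgmultiplication}): one must check that adding $\alpha$ to $\nabla$ contributes exactly $\pi_3^*\alpha+\pi_1^*\alpha$ on the product side and $\pi_2^*\alpha$ on the image side, with the correct signs and no stray curvature terms. Everything else is routine linear bookkeeping.
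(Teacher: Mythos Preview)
Your proof is correct and follows exactly the same approach as the paper's: obtain $\alpha$ from the line-bundle fact (Remark \ref{proposition two line bundle connections differ}), then substitute $\nabla=\nabla'+\alpha$ into (\ref{respectbgmultiplication}) and use that $\nabla'$ already satisfies it to deduce $\delta(\alpha)=0$. The paper's version is terser, merely asserting that the substitution yields $\delta(\alpha)=0$ without writing out the cancellation you have carefully performed; your explicit bookkeeping is a faithful unpacking of that step.
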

\begin{proof}
	Since a bundle gerbe connection is a line bundle connection, $\nabla - \nabla' = \alpha$ for some $\alpha \in \Omega^1(Y^{[2]})$. As $\nabla$ is a bundle gerbe connection, $\nabla' + \alpha$ must also be a bundle gerbe connection, i.e. $\nabla' + \alpha$ must satisfy equation (\ref{respectbgmultiplication}). Using that $\nabla'$ is a bundle gerbe connection, we see that the latter holds if and only if $\delta(\alpha) =0$. 
\end{proof}

 We now move on to define the connective data and three-curvature of a bundle gerbe. Although the connection and curvature of bundles gerbes and line bundles appear superficially similar, we will find that the construction of the bundle gerbe curvature is quite different to what we have seen in Chapter \ref{ch:intro}. 

Consider a bundle gerbe connection $\nabla$ on $(P, Y, M, \pi)$. We construct the connective data and three-curvature associated to $\nabla$ as follows. By (\ref{respectbgmultiplication}), the connection $1$-form of $\pi_2^{*}\nabla$ will be the sum of connection $1$-forms of $\pi_3^{-1}(P)$ and $\pi_1^{-1}(P)$. Therefore the two-curvatures satisfy $F_{\pi_1^*\nabla} + F_{\pi_3^*\nabla} = F_{\pi_2^*\nabla},$ i.e. $\delta(F_{\nabla}) = 0$. By exactness of the fundamental complex, there exists $f\in \Omega^2(Y)$ for which $F_\nabla = \delta(f)$. Now, since $\delta$ commutes with $d$, $$\delta(df) = d\delta(f) = dF_\nabla = 0.$$ By exactness of the fundamental complex, this implies $df = \pi^*(\omega)$ for a unique $\omega \in \Omega^3(M)$ with $d\omega = 0$. We summarise this data below, and depict it in Figure \ref{figbanana2}.

\begin{definition}
	Let  $(P, Y, M, \pi)$ be a bundle gerbe with connection $\nabla$. \begin{enumerate}[(1)]
		\item A choice of $f \in \Omega^2(Y)$ for which $F_{\nabla} = \delta(f)$ is called a \textit{curving} for $\nabla$; 
		\item The pair $(\nabla, f)$ is called \textit{connective data} for $(P, Y, M, \pi)$; 
		\item The unique $\omega \in \Omega^3(M)$ for which $df = \pi^*\omega$ is called the \textit{three-curvature} of $(\nabla, f)$. 
	\end{enumerate}  \begin{figure}[thb!]	\begin{align*} \begin{array}{ccc}
\ \ \ \ \ \ \ \ \ \ \ \ \ \ \ \ P     &          &        \\
\ \ \ \ \ \ \ \ \ \ \ \ \ \  \ \ \downarrow &         &         \\
F_\nabla = \delta(f) \ \ \ \ Y^{[2]}  & \stackrel{}{\dra} &  Y\ \ \ \  f, \ df = \pi^*\omega\\
&               &      \downarrow \ \ \ \ \ \ \ \ \ \ \ \  \ \ \ \ \ \ \ \ \ \\
&               &       M \ \ \ \  \omega  \ \ \ \ \ \ \ \ \ \ \ \ \ \ \  \\
\end{array} \end{align*}
%
\captionof{figure}{Connective data on a bundle gerbe\\}
\label{figbanana2}\end{figure}
\end{definition}

\begin{remark}
We advise the reader to be wary of different terminology to ours in the bundle gerbe literature. In some papers, authors refer to the pair $(\nabla, f)$ as the \textit{connective structure} of a bundle gerbe. Our convention was chosen to remain compatible with Brylinski \cite{brylinski}, where connections on gerbes were called \textit{connective structures}. 
\end{remark}
\begin{remark}\label{remarkonequiariantconnectivestructure}
 There is a notion of \textit{equivariant connective data} on an equivariant bundle gerbe. We will not discuss this here, and refer the reader to \cite{Meinrenken} for details. 
\end{remark}
\begin{remark}
We will sometimes use the phrase `the three-curvature of the bundle gerbe' instead of `the three-curvature of the connective data of the bundle gerbe' when the connective data is understood. 
\end{remark}
With this definition in mind, we can consider the induced connective data and three-curvature on the dual, pullback, and product bundle gerbes. We do so via a series of propositions. Let us begin with the simplest construction, the dual bundle gerbe. 
\begin{proposition}\label{connectivestructureondualbg} \textbf{\textup{(Connective data on duals)}}
	Let $(P, Y, M)$ be a bundle gerbe with connective data $(\nabla, f)$ and three-curvature $\omega$. Then $(\nabla, f)^* := (\nabla^*, -f)$ is the canonical connective data on $(P, Y, M)^*$, with three-curvature $-\omega$.
\end{proposition}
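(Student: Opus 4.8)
The plan is to verify the three defining properties of connective data in turn: that $\nabla^*$ is a bundle gerbe connection on $(P^*, Y, M)$, that $-f$ is a curving for $\nabla^*$, and that the resulting three-curvature is $-\omega$. The second and third properties will follow immediately from linearity of $\delta$ and $d$, so the substance of the argument lies entirely in the first.

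First I would show that $\nabla^*$ respects the dual bundle gerbe multiplication $m^*$, i.e. that it satisfies condition (\ref{respectbgmultiplication}) with respect to $m^*$. The key input is that the dual construction is compatible with both the pullback and the tensor product of connections: one has $\pi_i^*(\nabla^*) = (\pi_i^*\nabla)^*$ for $i = 1,2,3$, and the dual of the tensor product connection $\pi_3^*\nabla\otimes id_3 + id_1\otimes \pi_1^*\nabla$ on $\pi_3^{-1}(P)\otimes \pi_1^{-1}(P)$ is the analogous tensor product connection on $\pi_3^{-1}(P^*)\otimes \pi_1^{-1}(P^*)$. Granting these naturality identities, and recalling that $m^*$ is the canonical isomorphism of duals induced by $m$ (via $(m^{-1})^*$), the compatibility condition (\ref{respectbgmultiplication}) for $\nabla^*$ and $m^*$ can be obtained by dualising the condition (\ref{respectbgmultiplication}) for $\nabla$ and $m$, which holds by hypothesis. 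Concretely: since $\nabla$ being a bundle gerbe connection says that $m$ intertwines the relevant connections, the transpose of $m^{-1}$ intertwines the dual connections, which is precisely the bundle gerbe condition for $\nabla^*$.

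For the curving, by Example \ref{example dual line bundle}(3) the two-curvature of the dual connection satisfies $F_{\nabla^*} = -F_\nabla$. Since $f$ is a curving for $\nabla$ we have $F_\nabla = \delta(f)$, and as $\delta$ is linear, $F_{\nabla^*} = -\delta(f) = \delta(-f)$; hence $-f$ is a curving for $\nabla^*$. Finally, the three-curvature of $(\nabla^*, -f)$ is the unique $\omega' \in \Omega^3(M)$ with $\pi^*\omega' = d(-f) = -df = -\pi^*\omega = \pi^*(-\omega)$, so by injectivity of $\pi^*$ (Proposition \ref{deltamap}) we conclude $\omega' = -\omega$, as claimed.

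The main obstacle is the first step: establishing rigorously that $\nabla^*$ respects the dual multiplication. This is a functoriality statement that is morally clear but requires care in tracking how the dual interacts with the pullback and tensor constructions underlying Definition \ref{bundlegerbeconnection}. Phrasing the argument in terms of $m$ and $m^{-1}$ being parallel isomorphisms, rather than unwinding (\ref{respectbgmultiplication}) into fibrewise Leibniz identities, is likely the most economical route and avoids a proliferation of sign-tracking computations.
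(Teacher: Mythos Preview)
Your proposal is correct and follows essentially the same approach as the paper: verify that $\nabla^*$ is a bundle gerbe connection, use $F_{\nabla^*}=-F_\nabla$ from Example~\ref{example dual line bundle} to see that $-f$ is a curving, and then read off the three-curvature. The only difference is one of detail: the paper dispatches the first step in a single sentence (``It can be shown easily from Definition~\ref{bundlegerbeconnection}\ldots''), whereas you spell out the functoriality argument via duals of pullback and tensor connections and the induced dual multiplication $m^*$ --- your added care is not misplaced, but the underlying argument is the same.
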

\begin{proof}
	It can be shown easily from Definition \ref{bundlegerbeconnection} that the dual connection $\nabla^*$ is a bundle gerbe connection on the dual bundle gerbe. By Example \ref{example dual line bundle}, the curvature $F_{\nabla^*}$ is equal to $- F_{\nabla}$. Therefore a curving $f^*$ on $(P, Y, M)^*$  must satisfy $\delta(f^*) = -F_{\nabla}$, hence we can choose $f^*= -f$. The claim about the three-curvature follows easily. 
\end{proof}
In the next proposition, we consider the connective data on the pullback of a bundle gerbe by a morphism of surjective submersions (Example \ref{example pullback}). Of course, this proposition could be adapted easily to define the connective data on the pullback of a bundle gerbe by a morphism of surjective submersions over $M$, or by a map on the base space.
\begin{proposition}\label{curving of pullback}\textbf{\textup{(Connective data on pullbacks)}}
Let $(P, Y, M)$ be a bundle gerbe with connective data $(\nabla, f)$ and three-curvature $\omega$. Suppose $X\to N$ is a surjective submersion, and $(\hat{g}: X\to Y, g:N\to M)$ is a morphism of surjective submersions. Then there is canonical connective data $(\hat{g}, g)^*(\nabla, f) := ((\hat{g}^{[2]})^*\nabla, \hat{g}^*f)$ on $(\hat{g}, g)^{-1}(P, Y, M)$, with three-curvature $g^*\omega$. 
\end{proposition}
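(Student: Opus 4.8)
The plan is to verify the three assertions in turn: that $(\hat{g}^{[2]})^*\nabla$ is a bundle gerbe connection, that $\hat{g}^*f$ is a curving for it, and that the resulting three-curvature equals $g^*\omega$. The engine driving all three computations is the naturality of the simplicial projection maps under $\hat{g}$, namely the identities $\pi_i\circ \hat{g}^{[p]} = \hat{g}^{[p-1]}\circ \pi_i$ (immediate from the ``obvious'' construction of the induced morphism in Remark \ref{submersionnotation}), together with the covering relation $\pi_Y\circ \hat{g} = g\circ \pi_X$ expressing that $\hat{g}$ is a morphism of surjective submersions over $g$. Throughout I would freely use that pullback of line bundle connections is functorial and that the curvature of a pullback connection is the pullback of the curvature (Example \ref{line bundle pullback}).

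First I would check that $(\hat{g}^{[2]})^*\nabla$ satisfies the bundle gerbe connection condition (\ref{respectbgmultiplication}) relative to the pulled-back multiplication on $(\hat{g}^{[2]})^{-1}(P)$. By construction (Example \ref{example pullback}), this multiplication is obtained by pulling back $m$ along $\hat{g}^{[3]}$, after identifying $\pi_i^{-1}\big((\hat{g}^{[2]})^{-1}(P)\big)\cong (\hat{g}^{[3]})^{-1}\big(\pi_i^{-1}(P)\big)$ via the naturality identities on $X^{[3]}$. These same identities ensure that the three pullback connections $\pi_j^*(\hat{g}^{[2]})^*\nabla$ appearing on $X^{[3]}$ coincide with the $\hat{g}^{[3]}$-pullbacks of $\pi_j^*\nabla$. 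The required compatibility is then simply the $(\hat{g}^{[3]})^*$-pullback of the identity (\ref{respectbgmultiplication}) already holding on $Y^{[3]}$, so it follows by functoriality.

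Next I would verify the curving condition $F_{(\hat{g}^{[2]})^*\nabla} = \delta(\hat{g}^*f)$. Using $F_{(\hat{g}^{[2]})^*\nabla} = (\hat{g}^{[2]})^*F_\nabla = (\hat{g}^{[2]})^*\delta(f)$ and expanding $\delta(f) = \pi_1^*f - \pi_2^*f$, the naturality identities $\pi_i\circ\hat{g}^{[2]} = \hat{g}\circ\pi_i$ give
\begin{align*}
(\hat{g}^{[2]})^*\delta(f) = (\hat{g}\circ\pi_1)^*f - (\hat{g}\circ\pi_2)^*f = \pi_1^*(\hat{g}^*f) - \pi_2^*(\hat{g}^*f) = \delta(\hat{g}^*f),
\end{align*}
as required. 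For the three-curvature I would compute $d(\hat{g}^*f) = \hat{g}^*(df) = \hat{g}^*\pi_Y^*\omega = (\pi_Y\circ\hat{g})^*\omega = (g\circ\pi_X)^*\omega = \pi_X^*(g^*\omega)$, invoking the covering relation in the penultimate step. Since $\pi_X^*$ is injective (Proposition \ref{deltamap}), the unique closed three-form pulling back to $d(\hat{g}^*f)$ is $g^*\omega$, which is therefore the three-curvature.

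I expect the only genuine subtlety to lie in the first step: keeping the bookkeeping of the three distinct projection maps, their pullbacks, and the canonical identifications $\pi_i^{-1}(\hat{g}^{[2]})^{-1}\cong (\hat{g}^{[3]})^{-1}\pi_i^{-1}$ straight while transporting the multiplication-compatibility condition (\ref{respectbgmultiplication}) from $Y^{[3]}$ to $X^{[3]}$. The curving and three-curvature claims are essentially formal consequences of the commuting squares and the functoriality of pullback, requiring only careful application of the naturality identities.
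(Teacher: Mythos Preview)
Your proposal is correct and follows exactly the approach of the paper's proof, which merely sketches the argument by saying the bundle gerbe connection condition is ``straightforward to check,'' invoking Example \ref{line bundle pullback} for the curvature, appealing to ``naturality of the coboundary map $\delta$'' for the curving, and declaring that the three-curvature claim ``follows easily.'' You have simply written out in full the details the paper leaves implicit.
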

\begin{proof}
	Let $(\hat{g}^{[2]})^*\nabla$ denote the induced pullback connection on $(\hat{g}^{[2]})^{-1}P\to X^{[2]}$. It is straightforward to check that this is a bundle gerbe connection, and by Example \ref{line bundle pullback}, the curvature of this connection is $(\hat{g}^{[2]})^*F_{\nabla}$. It follows by naturality of the coboundary map $\delta$ that $\hat{g}^*f$ is a curving for the pullback connection. Hence $((\hat{g}^{[2]})^{*}\nabla, \hat{g}^*f)$ is connective data on $(\hat{g}, g)^{-1}(P, Y, M)$. The claim about the three-curvature follows easily. 
\end{proof}

Next, we study the connective data on reduced products of bundle gerbes {with the same surjective submersion}. Since the curvings of our bundle gerbes will be forms on the same space, they can be added together. We will use this proposition to find the connective data on the product of bundle gerbes with different surjective submersions in a moment.
\begin{proposition}\label{curving of product} \textbf{\textup{(Connective data on reduced products)}}
Consider bundle gerbes $(P, Y, M)$ and $(Q, X, M)$ with connective data $(\nabla_P, f_P)$ and $(\nabla_Q, f_Q)$ respectively. Denote the associated three-curvatures by $\omega_P$ and $\omega_Q$. Then there is canonical connective data $(\nabla_{P\otimes Q}, f_P+f_Q)$ on $(P\otimes Q, Y, M)$, with three-curvature $\omega_P+\omega_Q$.
\end{proposition}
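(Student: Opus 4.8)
The plan is to verify the three defining properties of connective data in turn: that the product connection $\nabla_{P\otimes Q}$ is a bundle gerbe connection on the reduced product, that $f_P + f_Q$ is a curving for it, and that the resulting three-curvature is $\omega_P + \omega_Q$. The essential inputs are the curvature formula for tensor products of line bundles (Example \ref{product line bundle}), the linearity of the coboundary operator $\delta$ (Proposition \ref{deltamap}), and the injectivity of $\pi^*$ supplied by the fundamental complex. Throughout I work over the common surjective submersion $Y\to M$, so that $P\to Y^{[2]}$, $Q\to Y^{[2]}$, and $P\otimes Q\to Y^{[2]}$ share the same base.

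First I would confirm that $\nabla_{P\otimes Q}$ is a bundle gerbe connection, i.e.\ that it satisfies \eqref{respectbgmultiplication}. The bundle gerbe multiplication on the reduced product is the tensor product of the multiplications $m_P$ and $m_Q$ (Example \ref{examplereducedproducts}), and each of $\nabla_P$ and $\nabla_Q$ satisfies \eqref{respectbgmultiplication} relative to its own multiplication. Expanding the Leibniz rule that defines the product connection (Example \ref{product line bundle}) and applying these two identities factorwise shows that $\nabla_{P\otimes Q}$ respects $m_P\otimes m_Q$. This is the only step that uses the full bundle gerbe structure rather than pure linear algebra, and it is the step I expect to require the most care in bookkeeping of the tensor-product factors.

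Next I would verify the curving condition $F_{\nabla_{P\otimes Q}} = \delta(f_P + f_Q)$. Since $P$ and $Q$ are line bundles, the curvature formula of Example \ref{product line bundle} collapses to $F_{\nabla_{P\otimes Q}} = F_{\nabla_P} + F_{\nabla_Q}$ as $2$-forms on $Y^{[2]}$. Combining this with $F_{\nabla_P} = \delta(f_P)$, $F_{\nabla_Q} = \delta(f_Q)$, and the linearity of $\delta$ gives $\delta(f_P + f_Q) = \delta(f_P) + \delta(f_Q) = F_{\nabla_P} + F_{\nabla_Q} = F_{\nabla_{P\otimes Q}}$, so $f_P + f_Q$ is a genuine curving for $\nabla_{P\otimes Q}$.

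Finally, the three-curvature is obtained by applying $d$: because $d(f_P + f_Q) = df_P + df_Q = \pi^*\omega_P + \pi^*\omega_Q = \pi^*(\omega_P + \omega_Q)$, the injectivity of $\pi^*$ from Proposition \ref{deltamap} forces the unique three-curvature to be $\omega_P + \omega_Q$. The argument is thus a chain of linearity observations for $\delta$ and $d$, with the multiplicativity check of the first paragraph being the only portion carrying real content; I anticipate no obstacle beyond tracking the tensor structure there carefully.
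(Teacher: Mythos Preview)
Your proof is correct and follows essentially the same route as the paper's: verify $\nabla_{P\otimes Q}$ is a bundle gerbe connection, use the tensor-product curvature formula together with linearity of $\delta$ to confirm $f_P+f_Q$ is a curving, and then read off the three-curvature from linearity (and injectivity) of $\pi^*$. The paper is terser but makes the identical moves.
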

\begin{proof}
It is straightforward to check that the connection $\nabla_{P\otimes Q}$ from Example \ref{product line bundle} is a bundle gerbe connection.	By Example \ref{product line bundle}, $\nabla_{P\otimes Q}$ has curvature $F_{\nabla_P}\otimes id_Q+id_P\otimes F_{\nabla_Q}$. Since $\delta$ is a homomorphism it follows that $f_P+f_Q$ is a curving for $\nabla_{P\otimes Q}$. The claim about the three-curvature follows using that $\pi^*$ is a homomorphism.
\end{proof}

\begin{proposition}\label{connectivestructureofproductbg} \textbf{\textup{(Connective data on products)}}
Consider bundle gerbes $(P, Y, M)$ and $(Q, X, M)$ with connective data $(\nabla_P, f_P)$ and $(\nabla_Q, f_Q)$ respectively. Denote the associated three-curvatures by $\omega_P$ and $\omega_Q$.  Let $\pi_Y: X\times_MY\to Y$ and $\pi_X: X\times_MY\to X$ be the canonical projections. Then there is canonical connective data $(\nabla_{P\otimes Q}, \pi_Y^*f_P+\pi_X^*f_Q)$ on $(P, Y, M)\otimes (Q, X, M)$, with three-curvature $\omega_P+\omega_Q$.
\end{proposition}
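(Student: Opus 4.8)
The plan is to avoid any direct computation on the product bundle gerbe by reducing the statement to the two preceding propositions, namely connective data on pullbacks (Proposition \ref{curving of pullback}) and on reduced products (Proposition \ref{curving of product}), via the identification of the product with a reduced product of pullbacks recorded in Remark \ref{mug}. Concretely, writing $\pi_Y: X\times_MY\to Y$ and $\pi_X: X\times_MY\to X$ for the canonical projections, Remark \ref{mug} gives
$$(P, Y, M)\otimes (Q, X, M) = \pi_Y^{-1}(P, Y, M)\otimes_{\text{red}} \pi_X^{-1}(Q, X, M),$$
where each pullback is taken along a morphism of surjective submersions over $M$ in the sense of Remark \ref{remark 1}. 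Both $\pi_Y$ and $\pi_X$ cover $\mathrm{id}_M$, since a point $(x,y)\in X\times_MY$ is sent to the common image of $x$ and $y$ in $M$.

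The first step is to apply Proposition \ref{curving of pullback} (in its over-$M$ form) to each factor. Since $\pi_Y$ covers $\mathrm{id}_M$, the pullback $\pi_Y^{-1}(P, Y, M)$ inherits canonical connective data $\left((\pi_Y^{[2]})^*\nabla_P,\ \pi_Y^*f_P\right)$ with three-curvature $\mathrm{id}_M^*\omega_P=\omega_P$; likewise $\pi_X^{-1}(Q, X, M)$ inherits $\left((\pi_X^{[2]})^*\nabla_Q,\ \pi_X^*f_Q\right)$ with three-curvature $\omega_Q$. The second step is to feed these into Proposition \ref{curving of product}: the reduced product of two bundle gerbes over the same surjective submersion $X\times_MY\to M$ carries canonical connective data whose connection is the product connection of Example \ref{product line bundle}, whose curving is the sum $\pi_Y^*f_P+\pi_X^*f_Q$, and whose three-curvature is $\omega_P+\omega_Q$. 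This already produces exactly the curving and three-curvature asserted in the statement.

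The one point that needs care is to confirm that the connection produced by this two-step construction is literally the product connection $\nabla_{P\otimes Q}$ on $Q\otimes P=(\pi_X^{[2]})^{-1}(Q)\otimes (\pi_Y^{[2]})^{-1}(P)$ named in the statement, and that the isomorphism $X^{[2]}\times_MY^{[2]}\cong (X\times_MY)^{[2]}$ intertwines all the projections consistently. By definition the product connection of the two pullback connections \emph{is} $\nabla_{P\otimes Q}$, so no genuine discrepancy can arise; the only real work is the (purely formal) bookkeeping of the various projection maps across that isomorphism, which I expect to be the main obstacle, since it is easy to conflate the maps $\pi_Y,\pi_X$ on $X\times_MY$ with their lifts $\pi_Y^{[2]},\pi_X^{[2]}$ on the fibre products.

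As a cross-check, and to sidestep the identification issue entirely, one can argue directly: by Examples \ref{line bundle pullback} and \ref{product line bundle} the curvature of $\nabla_{P\otimes Q}$ is $(\pi_Y^{[2]})^*F_{\nabla_P}+(\pi_X^{[2]})^*F_{\nabla_Q}$, while naturality of $\delta$ gives $\delta(\pi_Y^*f_P)=(\pi_Y^{[2]})^*\delta(f_P)=(\pi_Y^{[2]})^*F_{\nabla_P}$ and similarly for the $X$-factor, so $\pi_Y^*f_P+\pi_X^*f_Q$ is indeed a curving; applying $d$, using that $d$ commutes with pullback, and invoking injectivity of $\pi^*$ from Proposition \ref{deltamap} then pins down the three-curvature as $\omega_P+\omega_Q$.
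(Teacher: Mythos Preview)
Your proof is correct and follows essentially the same route as the paper: reduce to the reduced product of the two pullbacks via Remark \ref{mug}, apply Proposition \ref{curving of pullback} to each factor, and then Proposition \ref{curving of product} to the reduced product. Your extra bookkeeping about the projections and the direct cross-check are fine but go beyond what the paper does; the paper simply defines $\nabla_{P\otimes Q}$ as the tensor product of the two pullback connections and invokes Proposition \ref{curving of product}.
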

\begin{proof}
Recall that $(P, Y, M)\otimes (Q, X, M) = \pi_Y^{-1}(P, Y, M)\otimes_{\text{red}} \pi_X^{-1}(Q, X, M)$ (Remark \ref{mug}). By Proposition \ref{curving of pullback}, $\pi_Y^{-1}(P, Y, M)$ has connective data $((\pi_Y^{[2]})^* \nabla_P, \pi_Y^* f_P)$ and similarly for $\pi_X^{-1}(Q, X, M)$. Let \begin{align*}\nabla_{P\otimes Q}:= ({\pi_Y^{[2]}})^* \nabla_P\otimes id_Q+ id_P\otimes ({\pi_X^{[2]}})^*\nabla_Q\end{align*} where we abuse notation and let $id_P, id_Q$ denote the identity maps on $({\pi_Y^{[2]}})^{-1}(P)$ and $({\pi_X^{[2]}})^{-1}Q$, respectively. By Proposition \ref{curving of product}, $\nabla_{P\otimes Q}$ is a bundle gerbe connection on the reduced product of these pullback bundle gerbes with the claimed curving and three-curvature. 
\end{proof}
In the remainder of this section, we study the connective data on trivial and stably isomorphic bundle gerbes. We will be reminded of the following proposition in Section \ref{section holonomy} when we define the \textit{holonomy} of a bundle gerbe.
\begin{proposition}\label{bgconnectioninducedbylinebundleconnection}\textbf{\textup{(Trivial connective data)}}
Let $R\to Y$ be a line bundle and consider a trivial bundle gerbe	$(\delta(R), Y, M, \pi)$. If $\nabla_R$ is a line bundle connection on $R\to Y$, then $(\delta(\nabla_R), F_{\nabla_R})$ is connective data on $(\delta(R), Y, M, \pi)$ with trivial three-curvature. \end{proposition}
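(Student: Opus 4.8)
The plan is to verify in turn the three assertions packaged into the statement: that $\delta(\nabla_R)$ is a bundle gerbe connection on $(\delta(R), Y, M)$, that $F_{\nabla_R}$ is a curving for it, and that the resulting three-curvature vanishes. Throughout I read $\delta(\nabla_R)$ as the connection on $\delta(R) = \pi_1^{-1}(R)\otimes \pi_2^{-1}(R)^*$ (Definition \ref{deltadefinition}) obtained by applying the pullback, dual, and tensor product connection constructions (Examples \ref{line bundle pullback}, \ref{example dual line bundle}, \ref{product line bundle}) in the same alternating pattern that defines $\delta$ on line bundles; that is, $\delta(\nabla_R) = \pi_1^*\nabla_R \otimes \mathrm{id} + \mathrm{id}\otimes(\pi_2^*\nabla_R)^*$.

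First I would check that $\delta(\nabla_R)$ is a bundle gerbe connection, i.e. that it satisfies the compatibility equation (\ref{respectbgmultiplication}) of Definition \ref{bundlegerbeconnection}. The bundle gerbe multiplication on $(\delta(R), Y, M)$ is the natural contraction of Example \ref{trivialbundlegerbe}: over $Y^{[3]}$ it pairs the unique $R_{y_2}$ and $R_{y_2}^*$ factors occurring in $\pi_3^{-1}(\delta R)\otimes \pi_1^{-1}(\delta R)$ against one another, landing in $\pi_2^{-1}(\delta R)$. The dual connection is defined (Example \ref{example dual line bundle}) precisely so that the evaluation pairing $R\otimes R^*\to \C$ is parallel, while the tensor product connection differentiates each factor separately (Example \ref{product line bundle}); consequently the contraction defining $m$ is a parallel bundle map, which is exactly the content of (\ref{respectbgmultiplication}). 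Making this informal remark (``$m$ is contraction, hence parallel'') fully precise on $Y^{[3]}$ — by pulling the three copies of $\delta(R)$ back along $\pi_1,\pi_2,\pi_3$ and tracking which factors are contracted — is the part requiring the most care, and I expect it to be the main obstacle.

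Given Step 1, the curving condition is a short curvature computation. By the pullback and dual curvature formulas, $F_{\pi_1^*\nabla_R} = \pi_1^* F_{\nabla_R}$ and $F_{(\pi_2^*\nabla_R)^*} = -\pi_2^* F_{\nabla_R}$, and since the tensor-product curvature of line bundles adds (Example \ref{product line bundle}), $F_{\delta(\nabla_R)} = \pi_1^* F_{\nabla_R} - \pi_2^* F_{\nabla_R}$. This is exactly $\delta(F_{\nabla_R})$ for the coboundary $\delta = \pi_1^* - \pi_2^*$ on $2$-forms (Proposition \ref{deltamap}), so $F_{\nabla_R}$ is indeed a curving and $(\delta(\nabla_R), F_{\nabla_R})$ is connective data. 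Finally, for the three-curvature I would invoke that the curvature of a line bundle connection is closed (the Bianchi identity), so $d F_{\nabla_R} = 0 = \pi^* 0$; by injectivity of $\pi^*$ (Proposition \ref{deltamap}) the unique three-curvature is $\omega = 0$, as claimed.
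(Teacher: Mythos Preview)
Your proof is correct and follows essentially the same approach as the paper's own argument: verify compatibility with the bundle gerbe multiplication, compute $F_{\delta(\nabla_R)} = \delta(F_{\nabla_R})$ to identify the curving, and use $dF_{\nabla_R}=0$ together with injectivity of $\pi^*$ for the three-curvature. You supply more detail in the first step (the parallelness of the contraction map) where the paper simply calls it a trivial computation, but the strategy and the ingredients are the same.
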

\begin{proof} 
The proof that $\delta(\nabla_R)$ is a bundle gerbe connection is a trivial computation using equation (\ref{bundlegerbeconnection}). Since $\delta$ is defined by pullbacks, $\delta(F_{\nabla_R}) = F_{\delta(\nabla_R)}$ and $F_{\nabla_R}$ is a curving of $\delta(\nabla_R)$. Finally, the pullback of the three-curvature by $\pi$ is equal to $d F_{\nabla_R} = 0$. By injectivity of $\pi^*$ (Proposition \ref{deltamap}), the three-curvature is trivial. 
\end{proof} 

\begin{definition}\label{definition of trivial connective structure}
	Consider a trivial bundle gerbe $(P, Y, M)\cong (\delta(R), Y, M)$ for some line bundle $R\to Y$. We say $(P, Y, M)$ has \textit{trivial connective data} if its connective data arises as $(\delta(\nabla_R), F_{\nabla_R})$ for some connection $\nabla_R$ on $R\to Y$ with curvature $F_{\nabla_R}$. 
\end{definition}
The next proposition will be crucial for defining bundle gerbe holonomy.
\begin{proposition}\label{connectionsontrivialbundlegerbes} Consider the trivial bundle gerbe $(\delta(R), Y, M)$ with bundle gerbe connection $\nabla$. There exists a line bundle connection $\nabla_R$ on $R\to Y$ such that $\delta(\nabla_R) = \nabla$. 
\end{proposition}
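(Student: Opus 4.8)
The plan is to reduce the statement to the two structural facts already available: the exactness of the fundamental complex (Proposition \ref{deltamap}) and the description of the difference of two bundle gerbe connections (Proposition \ref{differenceofbundlegerbeconnections}). The idea is that the map $\nabla_R \mapsto \delta(\nabla_R)$ takes line bundle connections on $R$ to bundle gerbe connections on $\delta(R)$, and that the ambiguity in a bundle gerbe connection is governed by exactly the part of $\Omega^1(Y^{[2]})$ that is hit by the coboundary $\delta$ acting on $\Omega^1(Y)$.

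First I would fix any line bundle connection $\nabla_R^0$ on $R \to Y$, which exists since every line bundle admits a connection. By Proposition \ref{bgconnectioninducedbylinebundleconnection}, the induced connection $\delta(\nabla_R^0)$ is a bundle gerbe connection on $(\delta(R), Y, M)$. Since $\nabla$ and $\delta(\nabla_R^0)$ are both bundle gerbe connections on the same bundle gerbe, Proposition \ref{differenceofbundlegerbeconnections} supplies a $1$-form $\alpha \in \Omega^1(Y^{[2]})$ with $\delta(\alpha) = 0$ and $\nabla - \delta(\nabla_R^0) = \alpha$. Exactness of the fundamental complex at $\Omega^1(Y^{[2]})$ then gives $\beta \in \Omega^1(Y)$ with $\alpha = \delta(\beta) = \pi_1^*\beta - \pi_2^*\beta$, since $\alpha$ lies in $\ker(\delta \colon \Omega^1(Y^{[2]}) \to \Omega^1(Y^{[3]}))$. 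I would then define $\nabla_R := \nabla_R^0 + \beta$, which is again a line bundle connection on $R$ by Remark \ref{proposition two line bundle connections differ}.

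It remains to verify $\delta(\nabla_R) = \nabla$. Writing $\delta(\nabla_R) = \pi_1^*\nabla_R \otimes id + id \otimes (\pi_2^*\nabla_R)^*$ and using that the dual connection of $d + A$ on a line bundle is $d - A$, the local connection form of $\delta(\nabla_R^0 + \beta)$ is the connection form of $\delta(\nabla_R^0)$ plus $\pi_1^*\beta - \pi_2^*\beta = \delta(\beta) = \alpha$. Hence $\delta(\nabla_R) = \delta(\nabla_R^0) + \alpha = \nabla$, as required. The main point to get right, and the only place any real care is needed, is this last identity $\delta(\nabla_R^0 + \beta) = \delta(\nabla_R^0) + \delta(\beta)$: one must track the sign contributed by the dual connection in the second tensor factor, so that the $1$-form added by passing from $\nabla_R^0$ to $\nabla_R^0 + \beta$ is precisely the coboundary $\delta(\beta) = \pi_1^*\beta - \pi_2^*\beta$ and therefore matches $\alpha$ exactly, rather than some other combination of pullbacks of $\beta$.
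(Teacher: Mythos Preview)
Your argument is correct and follows exactly the same route as the paper's proof: pick an arbitrary connection $\nabla_R^0$ on $R$, compare $\delta(\nabla_R^0)$ with $\nabla$ via Proposition \ref{differenceofbundlegerbeconnections} to get a $\delta$-closed $1$-form on $Y^{[2]}$, use exactness of the fundamental complex to write it as $\delta(\beta)$, and correct $\nabla_R^0$ by $\beta$. Your extra care in verifying $\delta(\nabla_R^0 + \beta) = \delta(\nabla_R^0) + \delta(\beta)$ is a welcome elaboration of the one step the paper leaves implicit.
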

\begin{proof}
	Let $\nabla_R$ be a line bundle connection on $R\to Y$. Recall, by Proposition \ref{bgconnectioninducedbylinebundleconnection}, that $\delta(\nabla_R)$ is a bundle gerbe connection on $(\delta(R), Y, M)$. By Proposition \ref{differenceofbundlegerbeconnections}, there exists $\alpha \in \Omega^1(Y^{[2]})$ such that $$\delta(\nabla_R) - \nabla = \alpha $$ and $\delta(\alpha) = 0$. By exactness of the fundamental complex, there exists $\alpha' \in \Omega^1(Y)$ with $\alpha = \delta(\alpha')$. So $\delta(\nabla_R -\alpha') = \nabla$. Replacing $\nabla_R$ with $\nabla_R - \alpha'$ we obtain the result. 
\end{proof}
\begin{remark}
Upon presenting Proposition \ref{connectionsontrivialbundlegerbes}, it is tempting to say that trivial bundle gerbes admit trivial connective data. However, $\nabla = \delta(\nabla_R)$ does not imply the curving $f$ of $\nabla$ equals $F_{\nabla_R}$ (since there is not, in general, a unique choice of curving).
\end{remark}
It is natural to ask how the connective data of stably isomorphic bundle gerbes are related. We will explore this in greater depth once we have studied Deligne cohomology. For now, we present the following proposition, thereby concluding this section.
\begin{proposition}\label{outofnamesforthings} 
	Let $(P, Y, M)$ and $(Q, X, M)$ be stably isomorphic bundle gerbes with connective data $(\nabla_P, f_P)$ and $(\nabla_Q, f_Q)$ respectively. Denote the associated three-curvatures by $\omega_P$ and $\omega_Q$. Let $R\to X\times_MY$ be a trivialising line bundle with connection $\nabla_R$. Suppose the stable isomorphism preserves connections, i.e. $\nabla_P + \nabla_Q^* = \delta(\nabla_R)$. Then there exists $\gamma \in \Omega^2(M)$ such that \begin{align}\label{caramel1} f_P -f_Q = F_{\nabla_R}+\pi^* \gamma\end{align} and \begin{align}\label{caramel2}\omega_P - \omega_Q= d\gamma\end{align} for $\pi:X\times_M Y\to M$ the natural projection.
\end{proposition}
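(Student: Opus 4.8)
The plan is to realise both sides of the stable isomorphism as two pieces of connective data on a single trivial bundle gerbe, and then exploit the fact that two curvings for a fixed bundle gerbe connection can differ only by a form pulled back from the base.

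First I would form the product bundle gerbe $(P, Y, M)\otimes(Q, X, M)^*$ over $X\times_M Y$. This is the dual of $(P, Y)^*\otimes(Q, X)$, which is trivial by Definition \ref{stableisodefn}, so the product itself is trivial with $R\to X\times_M Y$ as trivialising line bundle. By Proposition \ref{connectivestructureondualbg} the dual $(Q, X)^*$ carries connective data $(\nabla_Q^*, -f_Q)$ with three-curvature $-\omega_Q$, and then Proposition \ref{connectivestructureofproductbg} equips the product with the bundle gerbe connection $\nabla_P + \nabla_Q^*$, the curving $\pi_Y^* f_P - \pi_X^* f_Q$ (which, following the statement, I abbreviate $f_P - f_Q$, keeping in mind that $f_P$ and $f_Q$ must first be pulled back to $X\times_M Y$), and three-curvature $\omega_P - \omega_Q$.

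Next, since the hypothesis gives $\nabla_P + \nabla_Q^* = \delta(\nabla_R)$, Proposition \ref{bgconnectioninducedbylinebundleconnection} shows that $(\delta(\nabla_R), F_{\nabla_R})$ is a \emph{second} piece of connective data on this same trivial bundle gerbe, sharing the connection $\delta(\nabla_R) = \nabla_P + \nabla_Q^*$ but having trivial three-curvature. Thus $f_P - f_Q$ and $F_{\nabla_R}$ are two curvings for one and the same bundle gerbe connection, so $\delta(f_P - f_Q) = F_{\nabla_P + \nabla_Q^*} = \delta(F_{\nabla_R})$, whence $\delta\big((f_P - f_Q) - F_{\nabla_R}\big) = 0$. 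Exactness of the fundamental complex at $\Omega^2(X\times_M Y)$ (Proposition \ref{deltamap}) then furnishes a unique $\gamma \in \Omega^2(M)$ with $(f_P - f_Q) - F_{\nabla_R} = \pi^*\gamma$, which is precisely equation (\ref{caramel1}).

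Finally I would differentiate (\ref{caramel1}). By the computation of the product three-curvature above, $d(f_P - f_Q) = \pi^*(\omega_P - \omega_Q)$, while $dF_{\nabla_R} = 0$ since the curvature of a line bundle connection is closed (equivalently, the three-curvature of $(\delta(\nabla_R), F_{\nabla_R})$ vanishes). Applying $d$ to (\ref{caramel1}) therefore gives $\pi^*(\omega_P - \omega_Q) = \pi^* d\gamma$, and injectivity of $\pi^*$ (Proposition \ref{deltamap}) yields equation (\ref{caramel2}). The only real care needed is bookkeeping: matching the dual-and-product conventions so that the product connection is genuinely $\nabla_P + \nabla_Q^*$, and remembering that $f_P, f_Q, \omega_P, \omega_Q$ live on $Y, X, M$ and must be pulled back before being compared on $X\times_M Y$. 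The genuine mathematical content — that two curvings of a fixed bundle gerbe connection differ by a pullback from the base — is immediate from the fundamental complex, so no individual step presents a serious obstacle.
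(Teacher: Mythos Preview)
Your proof is correct and follows essentially the same approach as the paper: from $\nabla_P + \nabla_Q^* = \delta(\nabla_R)$ one deduces $\delta(f_P - f_Q - F_{\nabla_R}) = 0$, applies exactness of the fundamental complex to obtain $\gamma$, then differentiates and invokes injectivity of $\pi^*$. You frame the first step more conceptually (two curvings for one connection), while the paper just passes directly to curvatures, but the argument is the same.
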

\begin{proof}
Since $\nabla_P + \nabla_Q^* = \delta(\nabla_R)$, the two-curvatures satisfy $F_{\nabla_P}  -F_{\nabla_Q} = \delta(F_{\nabla_R})$. Note that we are abusing notation here, since really these connections and forms should be pulled back to $(X\times_MY)^{[2]}$. By the previous equation, $\delta(f_P - f_Q-F_{\nabla_R}) = 0$, which, by exactness of the fundamental complex, implies there exists $\gamma\in \Omega^2(M)$ satisfying $f_P - f_Q = F_{\nabla_R} +\pi^*\gamma$, thereby proving (\ref{caramel1}). Differentiating this equation, we find $\pi^*(\omega_P - \omega_Q) = \pi^*(d\gamma)$, so by injectivity of $\pi^*$, $\omega_P-\omega_Q = d\gamma$, proving (\ref{caramel2}).
\end{proof}

\begin{remark}
Proposition \ref{outofnamesforthings} will be a crucial component in the proof of our main result in Chapter \ref{ch:four}. Namely, by inputting the connective data of the cup product bundle gerbe and the pullback of the basic bundle gerbe by the Weyl map into equations (\ref{caramel1}) and (\ref{caramel2}), we will be able to find a trivialising line bundle $R$ for our bundle gerbes, and hence prove they are stably isomorphic.
\end{remark}

\section{The Dixmier-Douady class}\label{section dd class} 
\indent \indent  As we have mentioned, bundle gerbes are classified by degree-three cohomology with integer coefficients. We will now formalise this claim. There are many parallels between this section and Section \ref{section the first chern class} on the first Chern class of a line bundle. Just as line bundles are determined by $1$-cocycles $g_{\alpha \beta}\in H^1(M, \underline{U(1)})$, we will show that there exist $2$-cocycles $g_{\alpha \beta \gamma} \in H^2(M, \underline{U(1)})$ which determine a bundle gerbe (up to stable isomorphism). We call the cohomology class of $g_{\alpha \beta \gamma}$ the \textit{Dixmier-Douady class}, which is in bijective correspondence with {stable} isomorphism classes of bundle gerbes. \\
 \indent After constructing the Dixmier-Douady class of a bundle gerbe, we will present some standard results, leading us to the main classification result in this section, Theorem \ref{main prop of this section}. We will conclude our study of the Dixmier-Douady class with some remarks provided for general interest. Let us begin by constructing the cocycles $g_{\alpha \beta \gamma}$ associated to a bundle gerbe. We follow the construction in \cite{INTROTOBG}.  
\begin{example} \label{DDconstruction} Consider a bundle gerbe $(P, Y, M)$ and local sections $s_\alpha:U_\alpha\to Y$ defined on a good open cover $\mathcal{U} = \{U_\alpha\}_{\alpha \in I}$ of $M$ (i.e. finite intersections of sets in $\mathcal{U}$ are contractible). Then $$(s_\alpha, s_\beta): U_\alpha\cap U_\beta \to Y^{[2]}$$ is a section of $Y^{[2]}\to M$ over $U_\alpha\cap U_\beta$. Define $P_{\alpha \beta} := (s_\alpha, s_\beta)^{-1}P$, a line bundle over $U_\alpha\cap U_\beta$. Choose a unit section $\sigma_{\alpha \beta}$ of $P_{\alpha \beta}$, i.e. $||\sigma_{\alpha \beta}(x)||^2=1$ (Definition \ref{unitsection}). There exists $g_{\alpha \beta \gamma}: U_\alpha \cap U_\beta \cap U_\gamma \to U(1)$ such that, over $U_\alpha \cap U_\beta \cap U_\gamma$, \begin{align} \label{shorts} m(\sigma_{\alpha \beta}(x), \sigma_{\beta \gamma}(x)) = g_{\alpha \beta \gamma}(x) \sigma_{\alpha \gamma}(x). \end{align} It can be verified that $g_{\alpha \beta \gamma}$ is a \v{C}ech $2$-cocycle that is independent of all choices made above and $g_{\beta \gamma \delta}\, g_{\alpha \gamma \delta}^{-1}\, g_{\alpha \beta \delta}\, g_{\alpha \beta \gamma}^{-1}=1$. The resulting class $$[g_{\alpha \beta \gamma}] \in H^2(M, U(1))$$ defines the {characteristic class} of the bundle gerbe $(P, Y, M)$.\end{example} 
We saw in Section \ref{section the first chern class} that the short exact sequence of sheaves $$0\xrightarrow{} \underline{\Z} \xrightarrow{} \underline{\R}\xrightarrow{} \underline{U(1)}\xrightarrow{} 0$$ induces a long exact sequence in cohomology $$\cdots\xrightarrow{} 0 = H^2(M, \underline{\R}) \xrightarrow{}H^2(M, \underline{U(1)})\xrightarrow{} H^3(M, \Z) \xrightarrow{} 0=H^3(M, \underline{\R}) \xrightarrow{} \cdots,$$ so $H^2(M, \underline{U(1)})\cong H^3(M, \Z)$. With this in mind, together with the above example, we can define the Dixmier-Douady class of a bundle gerbe.
\begin{definition}[\cite{INTROTOBG}]
 The \textit{Dixmier-Douady class} of a bundle gerbe $(P, Y, M)$, denoted $DD(P,Y, M)$ or $DD(P, Y)$, is the image of the characteristic class $[g_{\alpha \beta \gamma}]\in H^2(M, U(1))$ constructed in Example \ref{DDconstruction} under the isomorphism $H^2(M, U(1))\xrightarrow{\sim} H^3(M, \Z)$.
\end{definition} 
\begin{remark}
The mathematicians J. Dixmier and A. Douady, after which the Dixmier-Douady class is named, never worked with bundle gerbes directly. Their namesake here can be attributed to Brylinski's use of the term `Dixmier-Douady sheaves of groupoids' to describe the gerbes in \cite{brylinski}, which generalised the work of Dixmier and Douady in \cite{dixmierdouady}.
\end{remark}
For a simple example let us compute the Dixmier-Douady class of the trivial bundle gerbe defined by $R$ (Example \ref{trivialbundlegerbe}). As one would hope, its Dixmier-Douady class is zero. 
\begin{example}
	Consider a trivial bundle gerbe $(\delta(R), Y, M)$ for $R\to Y$ a line bundle and $Y\to M$ a surjective submersion. Let $\{U_\alpha\}_{\alpha \in I}$ be a good open cover of $M$ with local sections $s_\alpha:U_\alpha \to Y$. Choose local sections $\nu_\alpha$ of $s_\alpha^{-1}(R)\to U_\alpha$. Then for $m\in U_\alpha \cap U_\beta$, $$(s_\alpha, s_\beta)^{-1}\delta(R)_{m} = R_{s_\beta(m)} \otimes R^*_{s_\alpha(m)}, $$ and $\sigma_{\alpha \beta} = \nu_\beta\otimes \nu^*_\alpha$ defines a section of $\delta(R)_{\alpha \beta}$. It follows immediately from equation (\ref{shorts}) that $g_{\alpha \beta \gamma} = 1$ and $DD(\delta(R), Y) = 0$. 
\end{example} 
It is a standard fact that the Dixmier-Douady class of the dual, product, and pullback bundle gerbes obeys similar relations to those in Proposition \ref{proposition chern class operations} for the first Chern class.
\begin{proposition}[{\cite[p.$\,$8]{INTROTOBG}}] \label{DDclass}
	Let $(P, Y, M)$ and $(Q, X, M)$ be bundle gerbes and $f:N\to M$ be a smooth map. The Dixmier-Douady class satisfies \begin{enumerate}[(1),font=\upshape]
		\item $DD((P, Y)^*) = - DD(P, Y);$
		\item $DD((P, Y)\otimes (Q, X)) = DD(P, Y)+ DD(Q, X);$
		\item $f^*(DD(P, Y)) = DD(f^{*}(P, Y)),$ where $f^*:H^3(M, \Z)\to H^2(N, \Z)$ is the induced map on cohomology. 
	\end{enumerate}
\end{proposition}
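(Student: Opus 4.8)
The plan is to reduce all three identities to statements about the \v{C}ech $2$-cocycles $g_{\alpha\beta\gamma}$ produced in Example \ref{DDconstruction}, and then push them through the isomorphism $H^2(M, \underline{U(1)}) \xrightarrow{\sim} H^3(M, \Z)$. Two features of this isomorphism do all the work: it is a \emph{group homomorphism} (being the connecting map of a long exact sequence of sheaf cohomology), and it is \emph{natural} with respect to pullback by smooth maps. Thus it suffices to compute, for each construction, the cocycle of the resulting bundle gerbe in terms of the cocycle(s) of the input, using a single compatible choice of local sections $s_\alpha : U_\alpha \to Y$ on a good cover of $M$ together with unit sections $\sigma_{\alpha\beta}$ of $P_{\alpha\beta}$.

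For (1), I would reuse the sections $s_\alpha$ and take the dual unit sections $\sigma_{\alpha\beta}^*$ as the local data for $(P^*, Y, M)$, noting that $(s_\alpha, s_\beta)^{-1}(P^*) = P_{\alpha\beta}^*$. The dual bundle gerbe multiplication is the transpose of $m^{-1}$, so applying it to $\sigma_{\alpha\beta}^* \otimes \sigma_{\beta\gamma}^*$ and pairing the result against $\sigma_{\alpha\gamma}$ returns $g_{\alpha\beta\gamma}^{-1}$ by (\ref{shorts}). Hence the cocycle of the dual is $g_{\alpha\beta\gamma}^{-1}$, and since $[g_{\alpha\beta\gamma}^{-1}] = -[g_{\alpha\beta\gamma}]$ in $H^2(M, \underline{U(1)})$, property (1) follows.

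For (2), writing $h_{\alpha\beta\gamma}$ and $g_{\alpha\beta\gamma}$ for the cocycles of $(Q, X)$ and $(P, Y)$, I would choose sections $t_\alpha : U_\alpha \to X$ and $s_\alpha : U_\alpha \to Y$ and form $(t_\alpha, s_\alpha) : U_\alpha \to X\times_M Y$. Under the identification $(X\times_M Y)^{[2]} \cong X^{[2]}\times_M Y^{[2]}$ used in (\ref{eqneee}), the product line bundle restricts to $Q_{\alpha\beta}\otimes P_{\alpha\beta}$ with unit section $\tau_{\alpha\beta}\otimes\sigma_{\alpha\beta}$, and the multiplication is the tensor product of the two multiplications. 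A direct application of (\ref{shorts}) in each factor then gives the cocycle $h_{\alpha\beta\gamma}\, g_{\alpha\beta\gamma}$, and additivity follows from $[h_{\alpha\beta\gamma}\, g_{\alpha\beta\gamma}] = [h_{\alpha\beta\gamma}] + [g_{\alpha\beta\gamma}]$. For (3), I would pull the whole construction back along $f$: using the cover $\{f^{-1}(U_\alpha)\}$ of $N$, the maps $n \mapsto (n, s_\alpha(f(n)))$ are local sections of $f^{-1}(Y) \to N$ (in the sense of Remark \ref{remark 2}), and the $\sigma_{\alpha\beta}$ pull back to unit sections of the pulled-back bundle. As the multiplication on $f^{-1}(P,Y,M)$ is the pullback of $m$, the cocycle of the pullback is $f^*g_{\alpha\beta\gamma}$, and naturality of the connecting isomorphism yields $DD(f^{-1}(P, Y)) = f^* DD(P, Y)$.

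The only delicate point is the bookkeeping in (2): one must check that the chosen local data is genuinely compatible with the identification in (\ref{eqneee}), so that the tensor-product multiplication sends $\tau_{\alpha\beta}\otimes\sigma_{\alpha\beta}$ to exactly $(h_{\alpha\beta\gamma}\, g_{\alpha\beta\gamma})(\tau_{\alpha\gamma}\otimes\sigma_{\alpha\gamma})$ without an extraneous scalar. Each identity is otherwise routine once the cocycles are in hand, so this compatibility verification is where care is most needed.
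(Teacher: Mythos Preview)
The paper does not prove this proposition; it is stated as a standard fact with a citation to \cite[p.\,8]{INTROTOBG} and no argument is given. Your proposal is therefore not competing with any proof in the text, and the approach you outline --- compute the \v{C}ech $2$-cocycle of each construction and then push through the natural group isomorphism $H^2(M,\underline{U(1)})\xrightarrow{\sim} H^3(M,\Z)$ --- is exactly the standard one and is correct.

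One small technicality worth flagging in part (3): the pulled-back cover $\{f^{-1}(U_\alpha)\}$ of $N$ need not be good even when $\{U_\alpha\}$ is a good cover of $M$. This does not break the argument, since the class $[f^*g_{\alpha\beta\gamma}]$ computed on $\{f^{-1}(U_\alpha)\}$ still maps to the correct element of $H^2(N,\underline{U(1)})$ under refinement (or, equivalently, in the direct limit defining \v{C}ech cohomology), and naturality of the connecting homomorphism then gives the result. But if you want the construction of Example~\ref{DDconstruction} to apply verbatim on $N$, you should pass to a good refinement of $\{f^{-1}(U_\alpha)\}$ and note that the cocycle restricts compatibly.
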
  
As we have seen, trivial bundle gerbes have trivial Dixmier-Douady class. The converse of this statement is also true. We present this result without proof in an effort to streamline this discussion, and refer the reader to \cite{INTROTOBG} for details. It will then be used immediately to prove Proposition \ref{time}.
\begin{proposition}[{\cite[Proposition 5.2]{INTROTOBG}}]\label{triviffclassiszero} A bundle gerbe $(P, Y, M)$ is trivial if and only if $DD(P, Y)=0$.
\end{proposition}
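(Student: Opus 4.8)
The plan is to prove the two implications separately, with the converse carrying essentially all of the content. For the forward direction I would simply invoke the computation immediately preceding the statement, where it was shown that $DD(\delta(R),Y)=0$ for any line bundle $R\to Y$. The Dixmier--Douady class is built from the cocycle $g_{\alpha\beta\gamma}$ of Example~\ref{DDconstruction}, which (as noted there) is independent of the choices of local sections and unit sections and is natural, hence an isomorphism invariant of bundle gerbes. So if $(P,Y,M)\cong(\delta(R),Y,M)$ then $DD(P,Y)=DD(\delta(R),Y)=0$.

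For the converse, suppose $DD(P,Y)=0$. Since the characteristic class $[g_{\alpha\beta\gamma}]\in H^2(M,\underline{U(1)})$ maps isomorphically onto $DD(P,Y)\in H^3(M,\Z)$, vanishing of the latter forces $[g_{\alpha\beta\gamma}]=0$; that is, there exist smooth maps $h_{\alpha\beta}:U_\alpha\cap U_\beta\to U(1)$ with $g_{\alpha\beta\gamma}=h_{\beta\gamma}\,h_{\alpha\gamma}^{-1}\,h_{\alpha\beta}$ on triple overlaps. I would then replace the unit sections $\sigma_{\alpha\beta}$ of $P_{\alpha\beta}=(s_\alpha,s_\beta)^{-1}P$ by $\hat\sigma_{\alpha\beta}:=h_{\alpha\beta}^{-1}\sigma_{\alpha\beta}$. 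Feeding this into equation~(\ref{shorts}) and using the coboundary relation together with commutativity of $U(1)$, a short calculation gives $m(\hat\sigma_{\alpha\beta},\hat\sigma_{\beta\gamma})=\hat\sigma_{\alpha\gamma}$, so the bundle gerbe multiplication becomes strictly compatible with the chosen sections.

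With multiplication trivialised in this way, I would construct the trivialising line bundle $R\to Y$ of Example~\ref{trivialbundlegerbe} by descent. Over $\pi^{-1}(U_\alpha)\subseteq Y$ set $R_\alpha|_y:=P_{(s_\alpha(\pi(y)),\,y)}$, a line bundle by restriction of $P$. On $\pi^{-1}(U_\alpha\cap U_\beta)$ the multiplication $P_{(s_\alpha,s_\beta)}\otimes P_{(s_\beta,y)}\to P_{(s_\alpha,y)}$ applied to the unit section $\hat\sigma_{\alpha\beta}$ defines an isomorphism $R_\beta|_y\to R_\alpha|_y$; associativity of $m$ and the identity $m(\hat\sigma_{\alpha\beta},\hat\sigma_{\beta\gamma})=\hat\sigma_{\alpha\gamma}$ are precisely the cocycle condition needed to glue the $R_\alpha$ into a global line bundle $R\to Y$. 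It then remains to exhibit the isomorphism $\delta(R)\cong P$: using $\delta(R)_{(y_1,y_2)}=R_{y_2}\otimes R_{y_1}^*$ (Definition~\ref{deltadefinition}) and the local identifications, applying $m$ yields $P_{(s_\alpha,y_2)}\otimes P_{(s_\alpha,y_1)}^*\cong P_{(y_1,y_2)}$, which one checks is independent of $\alpha$ and intertwines the two multiplications, exactly as in Example~\ref{teaq}.

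The main obstacle is this last descent step. The cohomological input — vanishing of $[g_{\alpha\beta\gamma}]$, and the good cover which makes each $P_{\alpha\beta}$ trivial so that the $\sigma_{\alpha\beta}$ exist — is routine. The real work is organising the local data into a genuine line bundle on $Y$ rather than on the cover, namely verifying that the gluing isomorphisms satisfy the cocycle condition, and then checking that the induced $\delta(R)\cong P$ truly respects the bundle gerbe multiplication; associativity of $m$ is used repeatedly at both stages.
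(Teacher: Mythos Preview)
The paper does not actually supply a proof of this proposition: it is stated with the remark ``We present this result without proof in an effort to streamline this discussion, and refer the reader to \cite{INTROTOBG} for details.'' So there is no in-paper argument to compare against.

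That said, your proposal is the standard argument one finds in the cited reference. The forward direction is exactly what the paper sets up with the preceding example showing $DD(\delta(R),Y)=0$. For the converse, your outline --- trivialise the \v{C}ech cocycle via $h_{\alpha\beta}$, adjust the sections $\sigma_{\alpha\beta}$ so that multiplication becomes strict, then build $R$ locally as $R_\alpha|_y=P_{(s_\alpha(\pi(y)),y)}$ and glue using $m(\hat\sigma_{\alpha\beta},-)$ --- is correct and is precisely the descent construction in \cite{INTROTOBG}. Your identification of the main obstacle is accurate: the cocycle condition for the gluing maps is exactly associativity of $m$ combined with $m(\hat\sigma_{\alpha\beta},\hat\sigma_{\beta\gamma})=\hat\sigma_{\alpha\gamma}$, and the final isomorphism $\delta(R)\cong P$ is the same computation as in Example~\ref{teaq}, now done locally and checked to be independent of $\alpha$.
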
  
\begin{proposition}\label{time} 
	Two bundle gerbes $(P, Y, M)$ and $(Q, X, M)$ are stably isomorphic if and only if $DD(P, Y) = DD(X, Q)$. 
\end{proposition}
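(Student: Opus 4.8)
The plan is to reduce the statement to the definition of stable isomorphism (Definition \ref{stableisodefn}) and then apply the two main facts already established: the triviality criterion (Proposition \ref{triviffclassiszero}) and the additivity and duality of the Dixmier-Douady class (Proposition \ref{DDclass}). The whole argument should come out as a single chain of logical equivalences, so I would phrase it accordingly rather than as two separate implications.

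Concretely, I would start from the fact that, by Definition \ref{stableisodefn}, $(P, Y, M)$ and $(Q, X, M)$ are stably isomorphic if and only if the product bundle gerbe $(P, Y)^*\otimes (Q, X)$ is trivial. Next I would invoke Proposition \ref{triviffclassiszero} to replace triviality of this bundle gerbe by the vanishing of its Dixmier-Douady class, i.e.
$$DD\big((P, Y)^*\otimes (Q, X)\big) = 0.$$
Then I would apply parts (2) and (1) of Proposition \ref{DDclass} in turn to compute
$$DD\big((P, Y)^*\otimes (Q, X)\big) = DD\big((P, Y)^*\big) + DD(Q, X) = -DD(P, Y) + DD(Q, X).$$
Combining these, the product bundle gerbe has vanishing Dixmier-Douady class precisely when $DD(P, Y) = DD(Q, X)$, which is exactly the desired equivalence. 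Each link in this chain is an \emph{if and only if}, so no separate converse argument is needed.

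I do not expect a genuine obstacle here, since every ingredient is quoted from earlier results; the proof is essentially a formal manipulation. The only point deserving a word of care is that $(P, Y)^*\otimes (Q, X)$ is a bundle gerbe over the fibre product $X\times_M Y$ rather than over $Y$ or $X$ individually, so I would make sure the reader understands that Propositions \ref{triviffclassiszero} and \ref{DDclass} are being applied to this product bundle gerbe as a bundle gerbe in its own right. Beyond flagging this, the argument requires no computation, and I would keep the write-up to a few lines.
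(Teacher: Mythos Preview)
Your proposal is correct and follows essentially the same approach as the paper's proof, which is a two-line application of Definition \ref{stableisodefn}, Proposition \ref{triviffclassiszero}, and Proposition \ref{DDclass}. One minor terminological slip: the product bundle gerbe $(P, Y)^*\otimes (Q, X)$ is a bundle gerbe \emph{over $M$} (with surjective submersion $X\times_M Y\to M$), not over $X\times_M Y$; this is precisely why its Dixmier-Douady class lives in $H^3(M,\Z)$ and can be compared directly with $DD(P,Y)$ and $DD(Q,X)$.
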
 
\begin{proof}
By definition, $(P, Y) \cong_{\text{stab}} (Q, X)$ if and only if $(P, Y)\otimes (Q,X)^*$ is trivial. By Propositions \ref{DDclass} and \ref{triviffclassiszero},  this is true if and only if $DD(P, Y) =DD(Q, X)$. 
\end{proof}
\indent The reader may recall from Section \ref{section morphisms} that we regard stable isomorphisms as the `correct' notion of equivalence for bundle gerbes. We are now in a position to formalise this claim in the following theorem. 
\begin{theorem}[{\cite[Proposition 5.4]{INTROTOBG}}]\label{main prop of this section}
	The Dixmier-Douady class defines a bijection between stable isomorphism classes of bundle gerbes on $M$ and $H^3(M, \Z)$. 
\end{theorem}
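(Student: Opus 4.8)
The plan is to show that the assignment sending a stable isomorphism class $[(P,Y,M)]_{\text{stab}}$ to the Dixmier-Douady class $DD(P,Y,M)\in H^3(M,\Z)$ is a well-defined bijection. The key observation is that almost all of the content is already contained in Proposition \ref{time}, which asserts that $(P,Y)\cong_{\text{stab}}(Q,X)$ if and only if $DD(P,Y)=DD(Q,X)$. The forward implication shows that the Dixmier-Douady class depends only on the stable isomorphism class of a bundle gerbe, so the assignment descends to a genuine map on stable isomorphism classes; the reverse implication shows that equality of Dixmier-Douady classes forces the gerbes to be stably isomorphic, which is exactly injectivity of this map. Hence well-definedness and injectivity are both immediate, and the only substantive task remaining is surjectivity.

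For surjectivity I would exploit the isomorphism $H^2(M,\underline{U(1)})\xrightarrow{\sim}H^3(M,\Z)$ recalled before the definition of the Dixmier-Douady class, together with the Hitchin-Chatterjee construction of Example \ref{hitchinchatconstruction}. Given an arbitrary class $c\in H^3(M,\Z)$, let $[g_{\alpha\beta\gamma}]\in H^2(M,\underline{U(1)})$ be its preimage under this isomorphism, represented by a \v{C}ech $2$-cocycle $g_{\alpha\beta\gamma}\colon U_\alpha\cap U_\beta\cap U_\gamma\to U(1)$ relative to a good open cover $\mathcal{U}=\{U_\alpha\}$ of $M$. Feeding this cocycle into the Hitchin-Chatterjee construction yields a local bundle gerbe $(P,Y_{\mathcal{U}},M)$. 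It then remains to confirm that the characteristic class extracted from $(P,Y_{\mathcal{U}},M)$ by the recipe of Example \ref{DDconstruction} is precisely $[g_{\alpha\beta\gamma}]$, whence $DD(P,Y_{\mathcal{U}},M)=c$ and the map is onto.

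The main obstacle lies in this final verification inside the surjectivity argument. One must trace the definition of the characteristic cocycle through the explicit Hitchin-Chatterjee gerbe: choosing the obvious unit sections $\sigma_{\alpha\beta}$ of the trivial $U(1)$-bundles $P_{\alpha\beta}$ appearing in Example \ref{DDconstruction}, one applies the bundle gerbe multiplication of Example \ref{hitchinchatconstruction} and checks that equation (\ref{shorts}) returns exactly the chosen $g_{\alpha\beta\gamma}$. A subsidiary point requiring care is that every class in $H^2(M,\underline{U(1)})$ admits a representative cocycle on a \emph{good} cover; this is legitimate because good covers are cofinal in the directed system defining \v{C}ech cohomology, so no information is lost by restricting to them. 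Once these routine but delicate checks are carried out, surjectivity is established and, combined with the well-definedness and injectivity inherited from Proposition \ref{time}, the Dixmier-Douady class is seen to furnish the desired bijection.
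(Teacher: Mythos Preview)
Your proposal is correct and follows essentially the same approach as the paper's sketch: well-definedness and injectivity are read off directly from Proposition~\ref{time}, and surjectivity is obtained by feeding a representative \v{C}ech cocycle into the Hitchin-Chatterjee construction of Example~\ref{hitchinchatconstruction}. If anything, you supply more detail than the paper does, making explicit the verification that the characteristic class of the local bundle gerbe recovers the chosen $g_{\alpha\beta\gamma}$ and flagging the cofinality of good covers.
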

\begin{proof}[Sketch of Proof] We provide an outline of its proof and refer the reader to \cite{INTROTOBG} for details. In Example \ref{DDconstruction}, we saw how to construct a cohomology class in $H^3(M, \Z)$ from a bundle gerbe. It is a consequence of the \textit{Hitchin-Chatterjee construction} (Example \ref{hitchinchatconstruction}) that any bundle gerbe can be constructed in this way (see \cite{chatthesis, INTROTOBG} for details). Therefore the assignment of a stable isomorphism class of a bundle gerbe to a class in $H^3(M, \Z)$ is surjective. This assignment is well-defined and injective by Proposition \ref{time}. \end{proof} 

The results in the remainder of this section are provided for interest, and will not be needed for our research. The reader will recall that, given a line bundle $L\to M$ with connection $\nabla$ and curvature $F_\nabla$, the image of $[\tfrac{1}{2\pi i}F_\nabla]$ under the \v{C}ech-de Rham isomorphism $H^2_{\text{dR}}(M)\xrightarrow{\sim} H^2(M, \R)$ equals the real part of the first Chern class, $r(c_1(L))$ (Proposition \ref{relationship between class and curvature}). We have an analogous result for bundle gerbes. Consider the map $r:H^3(M, \Z)\to H^3(M, \R)$ defined similarly to that in Section \ref{section the first chern class}. We call $r(DD(P, Y))$ the \textit{real} Dixmier-Douady class. Then we have the following.
\begin{proposition}\label{curvatureisrealpartofclass} 
	Let $(P, Y, M)$ be a bundle gerbe with connective data and three-curvature $\omega\in \Omega^3(M)$. The image of $\left[\frac{1}{2\pi i}\omega \right]$ under the \v{C}ech-de Rham isomorphism $H^3_{\text{dR}}(M)\xrightarrow{\sim} H^3(M, \R)$ equals $r(DD(P, Y)) \in H^3(M, \R)$.

\end{proposition}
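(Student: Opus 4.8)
The plan is to mimic the proof of the line bundle analogue (Proposition \ref{relationship between class and curvature}), realising both $[\tfrac{1}{2\pi i}\omega]$ and $r(DD(P,Y))$ as the two ends of a single zig-zag in the \v{C}ech--de Rham double complex. Throughout I fix a good open cover $\{U_\alpha\}_{\alpha\in I}$ of $M$ with local sections $s_\alpha:U_\alpha\to Y$, and adopt the local data of Example \ref{DDconstruction}: the line bundles $P_{\alpha\beta}=(s_\alpha,s_\beta)^{-1}P$ over $U_\alpha\cap U_\beta$, unit sections $\sigma_{\alpha\beta}$, and the cocycle $g_{\alpha\beta\gamma}$ defined by $m(\sigma_{\alpha\beta},\sigma_{\beta\gamma})=g_{\alpha\beta\gamma}\,\sigma_{\alpha\gamma}$.

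First I would push the connective data down to the cover. Writing $f_\alpha:=s_\alpha^*f\in\Omega^2(U_\alpha)$, the identity $df=\pi^*\omega$ together with $\pi\circ s_\alpha=\mathrm{id}$ gives $df_\alpha=\omega|_{U_\alpha}$, so the $f_\alpha$ are local de Rham primitives of $\omega$. Pulling the bundle gerbe connection $\nabla$ back along $(s_\alpha,s_\beta)$ yields a connection on $P_{\alpha\beta}$; let $A_{\alpha\beta}\in\Omega^1(U_\alpha\cap U_\beta)$ be its connection $1$-form relative to $\sigma_{\alpha\beta}$. Since $F_\nabla=\delta(f)$ and $(s_\alpha,s_\beta)^*\delta(f)=f_\alpha-f_\beta$, the curvature of $P_{\alpha\beta}$ is $f_\alpha-f_\beta$, whence $dA_{\alpha\beta}=f_\alpha-f_\beta$.

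The crucial step is to differentiate the defining relation for $g_{\alpha\beta\gamma}$. Because $\nabla$ is a bundle gerbe connection (Definition \ref{bundlegerbeconnection}), the multiplication $m$ intertwines the tensor product connection on $\pi_3^{-1}(P)\otimes\pi_1^{-1}(P)$ with $\nabla$; applying $\nabla$ to both sides of $m(\sigma_{\alpha\beta}\otimes\sigma_{\beta\gamma})=g_{\alpha\beta\gamma}\,\sigma_{\alpha\gamma}$ and cancelling $g_{\alpha\beta\gamma}$ produces
\[
A_{\alpha\beta}+A_{\beta\gamma}-A_{\alpha\gamma}=g_{\alpha\beta\gamma}^{-1}\,dg_{\alpha\beta\gamma}.
\]
Thus the \v{C}ech coboundary of $(A_{\alpha\beta})$ equals $d\log g_{\alpha\beta\gamma}$. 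Finally, since $g_{\alpha\beta\gamma}$ is $U(1)$-valued, choosing local branches $\tfrac{1}{2\pi i}\log g_{\alpha\beta\gamma}$ and taking one further \v{C}ech coboundary produces the integer cocycle $n_{\alpha\beta\gamma\delta}\in\Z$ representing $DD(P,Y)$ under the connecting homomorphism of $0\to\underline{\Z}\to\underline{\R}\to\underline{U(1)}\to 0$, exactly as in Example \ref{DDconstruction}.

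Assembling these identities, the global closed form $\omega$, the cochains $(f_\alpha)$ and $(A_{\alpha\beta})$, and the functions $(\tfrac{1}{2\pi i}\log g_{\alpha\beta\gamma})$ form (up to signs) a zig-zag in the \v{C}ech--de Rham double complex connecting $\tfrac{1}{2\pi i}\omega$ in de Rham degree $3$, \v{C}ech degree $0$, to the integer cocycle $n_{\alpha\beta\gamma\delta}$ in de Rham degree $0$, \v{C}ech degree $3$. By the standard fact that such a zig-zag computes the \v{C}ech--de Rham isomorphism, the image of $[\tfrac{1}{2\pi i}\omega]$ in $H^3(M,\R)$ is the real class of $n_{\alpha\beta\gamma\delta}$, i.e.\ $r(DD(P,Y))$, as claimed. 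I expect the main obstacle to be bookkeeping: verifying the intertwining relation above from Definition \ref{bundlegerbeconnection} with the correct signs, and matching the normalisation $\tfrac{1}{2\pi i}$ so that the de Rham end of the zig-zag lands on the image of the integral class under $r$ rather than a nonzero multiple of it.
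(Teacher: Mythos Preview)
Your proposal is correct and follows essentially the same route as the paper: a zig-zag in the \v{C}ech--de Rham double complex built from the local data $f_\alpha$, $A_{\alpha\beta}$, $\log g_{\alpha\beta\gamma}$, terminating in the integer cocycle representing $DD(P,Y)$. The paper defers this to a remark after introducing Deligne cohomology (Remark~\ref{remark real part of DD class}), where the relations $dA_{\alpha\beta}=\delta'(f)_{\alpha\beta}$ and $\delta'(A)_{\alpha\beta\gamma}=d\log g_{\alpha\beta\gamma}$ are pulled from the proof of Proposition~\ref{classindeligne} and assembled into the same tuple; your self-contained derivation of these identities is if anything more explicit, and your anticipated sign bookkeeping is indeed the only place requiring care (e.g.\ with the paper's conventions $(s_\alpha,s_\beta)^*\delta(f)=f_\beta-f_\alpha$, not $f_\alpha-f_\beta$).
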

A brief explanation of this fact will be given in Remark \ref{remark real part of DD class} once we have discussed Deligne cohomology. We conclude this section with a brief comment on the types of bundle gerbe surjective submersions we observe in practice.  
\begin{remark}
The space $Y$ in a bundle gerbe $(P, Y, M)$ is commonly infinite dimensional. This is a consequence of the following result: if $Y\to M$ is a fibre bundle with finite dimensional,  $1$-connected fibres, then the Dixmier-Douady class of $(P, Y, M)$ is torsion \cite{INTROTOBG}. A bundle gerbe with torsion Dixmier-Douady class is said to be a \textit{torsion bundle gerbe.} Of course, by definition of $r: H^3(M, \Z)\to H^3(M, \R)$, torsion bundle gerbes have $r(DD(P, Y, M)) = 0$. Therefore by Proposition \ref{curvatureisrealpartofclass}, the three-curvature $\omega$ of a torsion bundle gerbe is exact. In our examples, the fibres of $Y$ are disconnected so this problem does not arise. 
%
\end{remark}

\section{Holonomy} \label{section holonomy}
\indent \indent We dedicate the penultimate section of this chapter to \textit{bundle gerbe holonomy}. In the the next section, we will see that bundle gerbe holonomy is a useful tool in determining when two bundle gerbes are equivalent as bundle gerbes \textit{with connective data}. Like line bundle holonomy, bundle gerbe holonomy is defined in terms of the exponential of the integral of a form. The key difference here is that line bundle holonomy is defined in terms of the integral of a $1$-form over a curve, whereas bundle gerbe holonomy is defined in terms of the integral of a $2$-form over a surface. While bundle gerbe holonomy is a useful tool in physics, and particularly the Wess-Zumino-Witten models (Remark \ref{WZWremark}), the higher dimensions make bundle gerbe holonomy more difficult to motivate geometrically than line bundle holonomy (which was based on parallel transport).\\
\indent The format of this section proceeds as follows. First, we define bundle gerbe holonomy and show it is well-defined. We will then compute the holonomy of the dual, product, and trivial bundle gerbes. These propositions will be used to prove the main result in this section, Proposition \ref{holonomyequalforstabisobg}, which states that stably isomorphic bundle gerbes have the same holonomy. We opt to prove each of these results in detail as they are commonly neglected in the literature. All of our proofs use similar techniques, so the reader could read the first few and skim the rest, if they wish. Let us begin with the definition of bundle gerbe holonomy, which requires a brief construction.\\
\indent Consider a bundle gerbe $(P, Y, \Sigma, \pi)$ for $\Sigma$ a surface. Suppose $(\nabla_P, f)$ is connective data on $(P, Y, \Sigma)$. It is a standard fact that $H^3(\Sigma, \Z) = 0$, so by Proposition \ref{triviffclassiszero}, $(P, Y, \Sigma)$ is trivial. By Proposition \ref{connectionsontrivialbundlegerbes}, there exists a line bundle $R\to Y$ with connection $\nabla_R$ such that $\delta(R) = P$, and $\delta(\nabla_R) = \nabla_P$. Now consider the difference $$f-F_{\nabla_R} \in \Omega^2(Y).$$ Since $\delta(f-F_{\nabla_R}) = F_{\nabla_P} - F_{\delta(\nabla_R)} = 0$, exactness of the fundamental complex implies there exists a unique $\mu\in \Omega^2(\Sigma)$ satisfying $$f - F_{\nabla_R} = \pi^*(\mu). $$ With this construction of the $2$-form $\mu$, we can define bundle gerbe holonomy as follows.
\begin{definition}\label{definition of holonomy on bundle gerbe} 
Consider a bundle gerbe $(P, Y)$ over a surface $\Sigma$ with connective data $(\nabla_P, f)$. Let $\mu\in \Omega^2(\Sigma)$ be the $2$-form constructed above. Then the \textit{holonomy of $(\nabla_P, f)$ over $\Sigma$}, denoted $\text{hol}((\nabla_P, f), \Sigma)$, is defined as $$\text{hol}((\nabla_P, f), \Sigma) := \text{exp}\left(\int_{\Sigma} \mu \right). $$ 
\end{definition}
\begin{remark}
We may at times use the phrase `the holonomy of the bundle gerbe', instead of `the holonomy of the connective data on the bundle gerbe', when the connective data we are working with is clear. 
\end{remark}
\begin{remark}
More generally, consider a bundle gerbe $(P, Y, M)$ with connective data $(\nabla_P, f)$. Let $\Sigma$ be a surface, and suppose there is a map $\varphi:\Sigma \to M$. We can then define $$\text{hol}( (\nabla_p, f), \Sigma) := \text{hol}(\varphi^{-1}(\nabla_P, f), \Sigma) $$ for $\varphi^{-1}(\nabla_P, f)$ the pullback of the connective data (Proposition \ref{curving of pullback}).
\end{remark}
\begin{remark}\label{alternative defn of holn} 
If there exists a three-dimensional submanifold $X\subset M$ such that the boundary $\partial X$ also a submanifold, we can define the holonomy of connective data $(\nabla_P, f_P)$ on $(P, Y, M)$ by $$\text{hol}((\nabla_P, f_P), \partial X) := \text{exp}\left(\int_X \omega \right)$$ for $\omega$ the three-curvature of $(\nabla_P, f_P)$. To see that this agrees with Definition \ref{definition of holonomy on bundle gerbe}, first note that $(P, Y, M)$ can be pulled back and trivialised over all of $X$. Then, for a trivialising line bundle $R\to Y$ of $(P, Y,X)$, there exists $\mu \in \Omega^2(X)$ satisfying $f_P - F_{R} = \pi^*\mu.$ By injectivity of $\pi^*$ and definition of $\mu$, $d\mu = \omega$ over $X$. The result then follows by Stokes' theorem. For more on this, see \cite{INTROTOBG}.

\end{remark}
\begin{remark}[{\cite[p.$\,$415]{1996bundlegerbes}}] 
	Suppose $X$ is a ball with $\partial X = S^2$ in Remark \ref{alternative defn of holn}. Then $\text{hol}((\nabla_P, f_P), S^2) \in U(1)$. In this way we see that the holonomy of a bundle gerbe generalises the holonomy of a line bundle. 
\end{remark}
\begin{remark}\label{WZWremark}
One interesting application of bundle gerbe holonomy arises in the context of Wess-Zumino-Witten (WZW) models. In \cite{witten1984}, Witten was searching for a conformally invariant action relating to his work on linear sigma models, leading him to define the \textit{WZW-term} as follows. Consider a smooth map $g:\Sigma \to G$ for $\Sigma$ a surface and $G$ a compact, simple and simply connected Lie group. Let $X$ be a three-manifold with $\partial X = \Sigma$ and extend $g$ to $\hat{g}:X\to G$. The $\text{WZW}$-term is defined by $$\text{WZW}(g) = \text{exp} \int_X \hat{g}^*(\omega) $$ for $\omega$ the basic $3$-form on $G$ \cite{53UNITARY}. It is not difficult to show that this is well-defined under our choice of extension $\hat{g}$, see, for example, \cite{INTROTOBG}. So $\text{WZW}(g)\in U(1)$ is a well-defined invariant associated to the map $g$.\\
\indent In \cite{1996bundlegerbes}, Murray pointed out that the $\text{WZW}$-term can be described in terms of bundle gerbe holonomy, thereby removing the topological restrictions on $X$. To do this, one can choose connective data $(\nabla, f)$ on the basic bundle gerbe so that the three-curvature of this connective data is the basic form on $G$. It then follows by Remark \ref{alternative defn of holn} that $$\text{WZW}(g) = \text{hol}(g^*(\nabla, f), \Sigma).$$ This removes the condition of simple-connectedness of $G$ needed in Witten's original definition of $\text{WZW}$, since the basic bundle gerbe can be defined even if $G$ is not simply connected (see \cite{bbgovernonsimpleconnectedgroups}). Moreover, this definition is advantageous in that the local formulae used to compute the holonomy of a bundle gerbe (see, for example, \cite{INTROTOBG}) is made available to compute the $\text{WZW}$-term.
\end{remark}

We next prove bundle gerbe holonomy is well-defined. Although we will not explicitly reference this result later, we choose to include its proof here as it is commonly left out of the student literature. Throughout the rest of this section let $\Sigma$ be a surface.
\begin{proposition}\textbf{\textup{(Holonomy is well-defined)}}
	Let $(P, Y, \Sigma, \pi)$ be a bundle gerbe with connective data $(\nabla_P, f)$. Then $\textup{hol}((\nabla_P, f), \Sigma)$ is well-defined, i.e. independent of the choice of trivialising line bundle $R$ and connection $\nabla_R$. 
\end{proposition}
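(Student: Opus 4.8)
The plan is to take two admissible choices $(R,\nabla_R)$ and $(R',\nabla_{R'})$ — each a trivialising line bundle equipped with a connection satisfying $\delta(\nabla_R)=\nabla_P=\delta(\nabla_{R'})$ — and to compare the resulting $2$-forms $\mu,\mu'\in\Omega^2(\Sigma)$ determined by $f-F_{\nabla_R}=\pi^*\mu$ and $f-F_{\nabla_{R'}}=\pi^*\mu'$. Since the holonomy is $\exp\left(\int_\Sigma\mu\right)$, it suffices to show that $\int_\Sigma(\mu-\mu')\in 2\pi i\,\Z$, so that the two exponentials agree. I would isolate the two independent ambiguities, namely the choice of connection for a fixed line bundle and the choice of line bundle itself, and treat them one at a time.

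First I would fix $R$ and suppose $\nabla_R,\nabla_R'$ are two connections on $R$ with $\delta(\nabla_R)=\delta(\nabla_R')=\nabla_P$. By Remark \ref{proposition two line bundle connections differ} their difference is a $1$-form $\beta\in\Omega^1(Y)$, and applying $\delta$ gives $\delta(\beta)=\delta(\nabla_R)-\delta(\nabla_R')=0$; exactness of the fundamental complex (Proposition \ref{deltamap}) then yields $\beta=\pi^*\alpha$ for some $\alpha\in\Omega^1(\Sigma)$. Taking curvatures, $F_{\nabla_R}-F_{\nabla_R'}=d\beta=\pi^*(d\alpha)$, so the defining relations for $\mu,\mu'$ give $\pi^*(\mu-\mu')=F_{\nabla_R'}-F_{\nabla_R}=-\pi^*(d\alpha)$, and injectivity of $\pi^*$ forces $\mu-\mu'=-d\alpha$. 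Since $\Sigma$ is closed, Stokes' theorem gives $\int_\Sigma(\mu-\mu')=0$, so the holonomy is independent of the connection on a fixed trivialising bundle.

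Next I would allow $R$ to vary. By Proposition \ref{difference of trivialisations} any two trivialising line bundles satisfy $R\cong R'\otimes\pi^{-1}(Q)$ for some line bundle $Q\to\Sigma$. Equipping $Q$ with a connection $\nabla_Q$ and forming $\nabla_{R'}\otimes 1+1\otimes\pi^*\nabla_Q$ on $R$ (Examples \ref{product line bundle} and \ref{line bundle pullback}), one checks that because $\delta(\pi^{-1}Q)$ is canonically trivial with induced flat connection $\delta(\pi^*\nabla_Q)$, this connection still satisfies $\delta(\,\cdot\,)=\nabla_P$; by the previous paragraph I may therefore use it to compute $\mu$. Its curvature is $F_{\nabla_{R'}}+\pi^*F_{\nabla_Q}$, so the defining relations give $\mu=\mu'-F_{\nabla_Q}$ and hence $\int_\Sigma(\mu-\mu')=-\int_\Sigma F_{\nabla_Q}$. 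By Proposition \ref{relationship between class and curvature} the class $[\tfrac{1}{2\pi i}F_{\nabla_Q}]$ represents the real first Chern class of $Q$, which is integral, so $\int_\Sigma F_{\nabla_Q}\in 2\pi i\,\Z$ and consequently $\exp\left(\int_\Sigma\mu\right)=\exp\left(\int_\Sigma\mu'\right)$.

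The form manipulations and the appeal to Stokes' theorem are routine; the one step demanding genuine care is verifying that the tensor-product connection built from $\nabla_{R'}$ and $\pi^*\nabla_Q$ is an admissible bundle gerbe connection inducing $\nabla_P$, i.e. that $\delta(\pi^*\nabla_Q)$ is the trivial connection on the canonically trivial bundle $\delta(\pi^{-1}Q)$. The conceptual crux of the argument, however, is the integrality input from Proposition \ref{relationship between class and curvature}: it is precisely the quantisation of the curvature of $Q$ that renders the otherwise nonzero discrepancy $\int_\Sigma F_{\nabla_Q}$ invisible after exponentiation, which is what makes $\textup{hol}((\nabla_P,f),\Sigma)$ well-defined.
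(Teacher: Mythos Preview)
Your proof is correct and follows essentially the same approach as the paper: both arguments split into independence of the connection (handled via the fundamental complex and Stokes' theorem) and independence of the trivialising bundle (handled via Proposition \ref{difference of trivialisations} and the integrality of $\int_\Sigma F_{\nabla_Q}$). The only minor difference is that you invoke Proposition \ref{relationship between class and curvature} for the integrality step, whereas the paper simply cites it as a standard fact; you are also slightly more explicit about why the tensor-product connection on $R'\otimes\pi^{-1}(Q)$ still induces $\nabla_P$, a point the paper passes over silently.
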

\begin{proof}
		 We first show independence of $\nabla_R$. Suppose $\widetilde{\nabla}_R$ is another connection on $R$ satisfying $\delta(\widetilde{\nabla}_R) = \nabla_P$. By Remark \ref{proposition two line bundle connections differ}, the difference $\widetilde{\nabla}_R - \nabla_R = \alpha$ for some $\alpha \in \Omega^1(Y)$. Then $\delta(\widetilde{\nabla}_R - \nabla_R) =\delta(\alpha)=0$, and by exactness of the fundamental complex, $\alpha = \pi^*(\alpha')$ for some $\alpha'\in \Omega^1(\Sigma)$.  Therefore $\widetilde{\nabla}_R := \nabla_R + \pi^*(\alpha')$, and $F_{\widetilde{\nabla}_R} = F_{\nabla_R} + d\pi^*(\alpha')$. So $f-F_{\widetilde{\nabla}_R} = \pi^*(\mu_R + d\alpha')$ and $$ \exp\left(\int_\Sigma \mu_R + d\alpha' \right) = \exp\left(\int_\Sigma \mu_R  \right) $$ since $\int_\Sigma d\alpha'= 0$ by Stokes' Theorem.
	
 It remains to verify that the holonomy is independent of the choice of $R$. Let $R$ and $R'$ be two trivialising line bundles of $P$. Let $\nabla_{R'}$ be a connection on $R'$ and $\mu_{R'}$ be the unique $2$-form on $\Sigma$ satisfying $f-F_{\nabla_R'} =\pi^*(\mu_{R'})$.  By Proposition \ref{difference of trivialisations}, there exists a line bundle $Q\to \Sigma$ for which $R = R'\otimes \pi^{-1}(Q)$. Choose a connection $\nabla_Q$ on $Q$. Since we know the holonomy is independent of the choice of connection, we can define a connection $\nabla_R$ on $R$ by $\nabla_R:= \nabla_{R'} +\pi^*\nabla_Q$. Then $F_{\nabla_R} = F_{\nabla_{R'}} + \pi^{*}F_{\nabla_Q}$, so $$f - F_{\nabla_R} = (f- F_{\nabla_{R'}}) - \pi^{*}F_{\nabla_Q} = \pi^*(\mu_{R'} - F_{\nabla_Q}). $$ Therefore the unique $2$-form $\mu_R$ satisfying $f-F_{\nabla_R} = \pi^*(\mu_R)$ is $\mu_R := \mu_{R'} - F_{\nabla_Q}$. It is a standard fact that the integral of the two-curvature of a line bundle over a closed surface takes values in $2\pi i \Z$. Therefore $$\text{exp}\left(\int_\Sigma \mu_R \right) = \text{exp}\left(\int_\Sigma \mu_{R'}\right)\text{exp}\left(\int_\Sigma F_Q\right) = \text{exp}\left(\int_\Sigma \mu_{R'} \right). $$
We conclude the holonomy is well-defined. 
\end{proof}
Before studying the holonomy of the dual, product, and trivial bundle gerbes, we present one more proposition, which will be used later in this work. 
\begin{proposition}\label{proposition holonomy for different curvings}\textbf{\textup{(Holonomy under change of curving)}}
	Let $(P, Y, \Sigma, \pi)$ be a bundle gerbe with connective data $(\nabla, f)$. Let $\varphi \in \Omega^2(\Sigma)$. Then $f + \pi^*\varphi$ is also a curving for $\nabla$ and $$\textup{hol}((\nabla, f + \pi^* \varphi ), \Sigma) = \textup{exp}\left(\int_\Sigma \varphi \right)\textup{hol}((\nabla, f), \Sigma).$$
\end{proposition}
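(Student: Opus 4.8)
The plan is to split the claim into two independent parts: first verify that $f + \pi^*\varphi$ is a legitimate curving for $\nabla$, and then trace through the holonomy construction to see precisely how the associated $2$-form on $\Sigma$ changes. Because the connection $\nabla$ itself is left untouched, I expect both parts to be short.

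For the first part, recall that a curving for $\nabla$ is any $f \in \Omega^2(Y)$ satisfying $F_\nabla = \delta(f)$. Since $f$ is already a curving we have $F_\nabla = \delta(f)$, so it suffices to check that $\delta(\pi^*\varphi) = 0$. But $\pi^*\varphi$ lies in the image of $\pi^* \colon \Omega^2(\Sigma) \to \Omega^2(Y)$, and by exactness of the fundamental complex (Proposition \ref{deltamap}) the composite $\delta \circ \pi^*$ vanishes. Hence $\delta(f + \pi^*\varphi) = \delta(f) = F_\nabla$, confirming that $f + \pi^*\varphi$ is a curving for $\nabla$.

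For the holonomy, I would reuse a single choice of trivialising data for both curvings. Since $H^3(\Sigma, \Z) = 0$, the bundle gerbe is trivial (Proposition \ref{triviffclassiszero}), and by Proposition \ref{connectionsontrivialbundlegerbes} there is a line bundle $R \to Y$ with connection $\nabla_R$ satisfying $\delta(R) = P$ and $\delta(\nabla_R) = \nabla$. Let $\mu \in \Omega^2(\Sigma)$ be the unique form with $f - F_{\nabla_R} = \pi^*\mu$, so that $\text{hol}((\nabla, f), \Sigma) = \exp\left(\int_\Sigma \mu\right)$. Because $\nabla$ is unchanged, exactly the same $R$ and $\nabla_R$ serve as trivialising data for the curving $f + \pi^*\varphi$. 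Computing the relevant difference gives
$$(f + \pi^*\varphi) - F_{\nabla_R} = (f - F_{\nabla_R}) + \pi^*\varphi = \pi^*(\mu + \varphi),$$
so by injectivity of $\pi^*$ the unique $2$-form associated to $f + \pi^*\varphi$ is $\mu + \varphi$.

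Putting these together, the holonomy of the shifted curving is
$$\text{hol}((\nabla, f + \pi^*\varphi), \Sigma) = \exp\left(\int_\Sigma (\mu + \varphi)\right) = \exp\left(\int_\Sigma \varphi\right)\exp\left(\int_\Sigma \mu\right),$$
which is exactly $\exp\left(\int_\Sigma \varphi\right)\, \text{hol}((\nabla, f), \Sigma)$. There is no genuine obstacle here: the one point requiring care is the observation that, because $\nabla$ does not change, the trivialising line bundle and connection can be held fixed across both computations, so that the defining $2$-form merely shifts by $\varphi$. Well-definedness of the holonomy having already been established, no further independence checks are needed.
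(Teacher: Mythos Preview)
Your proof is correct and follows essentially the same approach as the paper: both verify that $\delta(\pi^*\varphi)=0$ via exactness of the fundamental complex, then reuse the same trivialising line bundle $R$ and connection $\nabla_R$ to see that the $2$-form $\mu$ shifts to $\mu+\varphi$, from which the holonomy formula follows immediately. Your version is slightly more explicit in citing the supporting propositions, but the argument is the same.
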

\begin{proof}
	Recall that $f$ is a curving for $\nabla$ if $\delta(f) = F_{\nabla}$. Since $\ker \delta = \text{im} \pi^*$, $\delta(f+\pi^*\varphi) = F_{\nabla}$, so $f+\pi^*\varphi$ is indeed a curving for $\nabla$. Let $R$ be a trivialisation for $(P, Y, \Sigma)$ with connection $\nabla_R$.  Define $\mu \in \Omega^2(Y)$ by $f - F_{\nabla_R} = \pi^* \mu$. Then $f + \pi^*\varphi - F_{\nabla_R} = \pi^*(\mu + \varphi)$, so $$\text{hol}((\nabla, f+\pi^*\varphi), \Sigma) = \text{exp}\left(\int_\Sigma \mu  + \varphi \right) = \textup{exp}\left(\int_\Sigma \varphi \right)\textup{hol}((\nabla, f), \Sigma). $$
\end{proof}
We now discuss the holonomy of the trivial, dual, and product bundle gerbes. Although we did not mention it in Chapter \ref{ch:intro}, there are analogous results for line bundle holonomy (to see this, just consider the curvature of the trivial, dual, and product line bundle connections). We begin with the holonomy of the trivial bundle gerbe, followed by the holonomy of the dual and product bundle gerbes.
\begin{proposition}\label{hol of trivial bundle gerbe}\textbf{\textup{(Holonomy of a trivial bundle gerbe)}}
	Consider the trivial bundle gerbe $(\delta(R), Y, \Sigma)$ with trivial connective data $(\delta(\nabla_R), F_{\nabla_R})$. Then $$\textup{hol}((\delta(\nabla_R), F_{\nabla_R}), \Sigma) = 1. $$
\end{proposition}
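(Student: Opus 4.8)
The plan is to evaluate the holonomy directly from Definition \ref{definition of holonomy on bundle gerbe}, exploiting the fact that the holonomy has just been shown to be independent of the choice of trivialising line bundle and connection. This freedom lets us make the most convenient choice, which for the bundle gerbe $(\delta(R), Y, \Sigma)$ is $R$ itself together with the given connection $\nabla_R$. Since the connective data is the trivial connective data $(\delta(\nabla_R), F_{\nabla_R})$, we have $\nabla_P = \delta(\nabla_R)$ and curving $f = F_{\nabla_R}$, so $R$ satisfies $\delta(R) = \delta(R)$ and $\delta(\nabla_R) = \nabla_P$ tautologically, qualifying it as an admissible trivialisation in the holonomy construction.

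The key computation is then immediate. Following the construction preceding Definition \ref{definition of holonomy on bundle gerbe}, I would form the difference
$$f - F_{\nabla_R} = F_{\nabla_R} - F_{\nabla_R} = 0 \in \Omega^2(Y).$$
The defining relation for the relevant $2$-form $\mu \in \Omega^2(\Sigma)$ is $f - F_{\nabla_R} = \pi^*(\mu)$, so here $\pi^*(\mu) = 0$. By injectivity of $\pi^*$ (Proposition \ref{deltamap}), this forces $\mu = 0$.

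Finally, substituting into the definition of holonomy gives
$$\textup{hol}((\delta(\nabla_R), F_{\nabla_R}), \Sigma) = \textup{exp}\left(\int_\Sigma \mu\right) = \textup{exp}\left(\int_\Sigma 0\right) = \textup{exp}(0) = 1,$$
as required. There is no genuine obstacle in this argument; the only point requiring care is the justification that $R$ may be used as the trivialising line bundle in the holonomy construction. This is precisely what the well-definedness of holonomy (established in the preceding proposition) licenses: because the value $\textup{hol}((\nabla_P, f), \Sigma)$ does not depend on the choice of trivialisation, we are entitled to select the one built into the hypothesis, at which point the curving and the curvature of $\nabla_R$ cancel exactly and the holonomy collapses to $1$.
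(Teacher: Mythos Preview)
Your argument is correct and follows essentially the same route as the paper's proof: choose $R$ itself as the trivialising line bundle, observe that $f - F_{\nabla_R} = F_{\nabla_R} - F_{\nabla_R} = 0$, invoke injectivity of $\pi^*$ to conclude $\mu = 0$, and hence the holonomy is $1$. The paper's proof is a one-line compression of exactly this; your additional remark justifying the choice of $R$ via the well-definedness of holonomy is a reasonable point to make explicit but is not a different approach.
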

\begin{proof}
	Since the curving of the trivial bundle gerbe is equal to $F_{\nabla_R}$, injectivity of $\pi^*$ implies $\mu = 0$ and the result follows. 
\end{proof}
\begin{proposition}\textbf{\textup{(Holonomy of duals)}}
	Let $(P, Y, \Sigma, \pi)$ be a bundle gerbe with connective data $(\nabla_P, f_P)$. Then the holonomy of the connective data $(\nabla_P^*, -f_P)$ on the dual bundle gerbe $(P, Y, \Sigma, \pi)^*$ from \textup{Proposition \ref{connectivestructureondualbg}} satisfies $$\textup{hol}((\nabla_P^*, -f_P), \Sigma) = \textup{hol}((\nabla_P, f_P), \Sigma)^{-1}. $$
\end{proposition}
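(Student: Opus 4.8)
The plan is to exploit the freedom, guaranteed by well-definedness of holonomy, to compute both quantities from \emph{compatible} trivialisations, choosing for the dual gerbe the literal dual of the trivialisation used for the original. First I would fix a trivialisation of $(P, Y, \Sigma)$ exactly as in Definition \ref{definition of holonomy on bundle gerbe}: a line bundle $R \to Y$ with connection $\nabla_R$ satisfying $\delta(R) = P$ and $\delta(\nabla_R) = \nabla_P$ (these exist by Proposition \ref{connectionsontrivialbundlegerbes}), together with the unique $\mu \in \Omega^2(\Sigma)$ determined by $f_P - F_{\nabla_R} = \pi^*\mu$, so that by definition $\textup{hol}((\nabla_P, f_P), \Sigma) = \exp\left(\int_\Sigma \mu\right)$.

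The key step is to show that $R^* \to Y$, equipped with the dual connection $\nabla_R^*$, is a valid trivialisation of the dual bundle gerbe $(P^*, Y, \Sigma)$. For $p = 2$ the operation $\delta$ on line bundles (Definition \ref{deltadefinition}) reads $\delta(R) = \pi_1^{-1}(R) \otimes \pi_2^{-1}(R)^*$, and since dualising commutes with pullback and with tensor products, one checks directly that $\delta(R^*) = \delta(R)^* = P^*$. An identical piece of bookkeeping at the level of connections, using that the dual connection commutes with the pullback and product connections, yields $\delta(\nabla_R^*) = (\delta(\nabla_R))^* = \nabla_P^*$. Hence $R^*$ with $\nabla_R^*$ trivialises the dual bundle gerbe together with its bundle gerbe connection, as required.

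It then remains to compute the distinguished $2$-form $\mu^*$ attached to the dual. By Example \ref{example dual line bundle} the curvature satisfies $F_{\nabla_R^*} = -F_{\nabla_R}$, so the dual curving $-f_P$ (Proposition \ref{connectivestructureondualbg}) gives
\[
(-f_P) - F_{\nabla_R^*} = -f_P + F_{\nabla_R} = -(f_P - F_{\nabla_R}) = \pi^*(-\mu).
\]
By injectivity of $\pi^*$ the distinguished $2$-form is $\mu^* = -\mu$, whence $\textup{hol}((\nabla_P^*, -f_P), \Sigma) = \exp\left(\int_\Sigma (-\mu)\right) = \exp\left(\int_\Sigma \mu\right)^{-1} = \textup{hol}((\nabla_P, f_P), \Sigma)^{-1}$.

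The only genuine content, and thus the main obstacle, is the verification that $\delta$ commutes with the dual operation on both line bundles and connections; everything else is formal manipulation of the exponential, together with an appeal to well-definedness of holonomy to license using these particular trivialisations. This commutativity is routine but deserves to be spelled out, since it is precisely what permits passing from a trivialisation of $P$ to one of $P^*$ while changing $\mu$ only by a sign.
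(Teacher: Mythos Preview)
Your proof is correct and follows essentially the same approach as the paper: take the dual $R^*$ of a trivialising line bundle for $(P,Y,\Sigma)$, observe that its curvature is $-F_{\nabla_R}$, deduce that the distinguished $2$-form for the dual is $-\mu$, and conclude. You spell out in slightly more detail why $\delta(R^*)=P^*$ and $\delta(\nabla_R^*)=\nabla_P^*$, which the paper simply asserts, but the argument is otherwise identical.
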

\begin{proof}
Let $R$ be a trivialising line bundle of $(P, Y, \Sigma)$ with connection $\nabla_R$ and curvature $F_{\nabla_R}$. Then $R^*$ is a trivialising line bundle of $(P, Y, \Sigma)^*$ with connection $\nabla_R^*$ and curvature $-F_{\nabla_R}$. If $\mu\in \Omega^2(\Sigma)$ satisfies $f_P - F_{\nabla_R} = \pi^*(\mu)$, then $-f_P -F_{\nabla_R^*} = -f_P + F_{\nabla_R} = \pi^*(-\mu)$, and $$\textup{hol}((\nabla_P^*, -f_P), \Sigma) = \exp\left(- \int_\Sigma \mu \right) = \textup{hol}((\nabla_P, f_P), \Sigma)^{-1}.$$
\end{proof}

\begin{proposition}\label{productholonomy} \textbf{\textup{(Holonomy of products)}}
	Let $(P, Y, \Sigma, \pi_1)$ and $(Q, X, \Sigma, \pi_2)$ be bundle gerbes with connective data $(\nabla_P, f_P)$ and $(\nabla_Q, f_Q)$ respectively. Consider the projections $\pi_X: X\times_\Sigma Y\to X$ and $\pi_Y:X\times_\Sigma Y\to Y$. Then the holonomy of the connective data $(\nabla_{P\otimes Q}, \pi_X^*f_P+\pi_Y^*f_Q)$ on the product bundle gerbe $(P\otimes Q, X\times_\Sigma Y, \Sigma, \pi)$ from \textup{Proposition \ref{connectivestructureofproductbg}} satisfies $$\textup{hol}((\nabla_{P\otimes Q}, \pi_X^*f_P+ \pi_Y^*f_Q), \Sigma) = \textup{hol}((\nabla_P, f_P), \Sigma) \cdot \textup{hol}((\nabla_Q, f_Q), \Sigma). $$
\end{proposition}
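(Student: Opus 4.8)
The plan is to exploit the fact, just established, that bundle gerbe holonomy over $\Sigma$ is independent of the chosen trivialising line bundle and connection. This freedom lets me compute the holonomy of the product using a \emph{convenient} trivialisation assembled from trivialisations of the two factors, rather than an arbitrary one. Concretely, since $H^3(\Sigma,\Z)=0$ both factors are trivial, so by Proposition~\ref{connectionsontrivialbundlegerbes} I may fix line bundles $R_P\to Y$ and $R_Q\to X$ with connections $\nabla_{R_P},\nabla_{R_Q}$ satisfying $\delta(R_P)=P$, $\delta(\nabla_{R_P})=\nabla_P$ and $\delta(R_Q)=Q$, $\delta(\nabla_{R_Q})=\nabla_Q$. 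By exactness of the fundamental complex (Proposition~\ref{deltamap}) there are unique $\mu_P\in\Omega^2(\Sigma)$ and $\mu_Q\in\Omega^2(\Sigma)$ with $f_P-F_{\nabla_{R_P}}=\pi_1^*\mu_P$ and $f_Q-F_{\nabla_{R_Q}}=\pi_2^*\mu_Q$, and by Definition~\ref{definition of holonomy on bundle gerbe} the two right-hand factors are $\exp(\int_\Sigma\mu_P)$ and $\exp(\int_\Sigma\mu_Q)$.

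Next I would produce a trivialisation of the product gerbe $(P\otimes Q, X\times_\Sigma Y,\Sigma,\pi)$, where $\pi=\pi_1\circ\pi_Y=\pi_2\circ\pi_X$. The natural candidate is the line bundle $R:=\pi_Y^{-1}(R_P)\otimes\pi_X^{-1}(R_Q)\to X\times_\Sigma Y$ with the product connection $\nabla_R:=\pi_Y^*\nabla_{R_P}\otimes\mathrm{id}+\mathrm{id}\otimes\pi_X^*\nabla_{R_Q}$ (Examples~\ref{product line bundle} and~\ref{line bundle pullback}). The verification that this genuinely trivialises the product rests on the naturality of $\delta$ with respect to the morphisms of surjective submersions $\pi_Y,\pi_X$ (Remark~\ref{submersionnotation}): one checks $\delta(\pi_Y^{-1}R_P)=(\pi_Y^{[2]})^{-1}(\delta R_P)=(\pi_Y^{[2]})^{-1}(P)$ and similarly for the other factor, whence $\delta(R)=(\pi_Y^{[2]})^{-1}(P)\otimes(\pi_X^{[2]})^{-1}(Q)=Q\otimes P$. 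The same naturality, applied to connections, gives $\delta(\nabla_R)=(\pi_Y^{[2]})^*\nabla_P\otimes\mathrm{id}+\mathrm{id}\otimes(\pi_X^{[2]})^*\nabla_Q=\nabla_{P\otimes Q}$, matching the product connection from Proposition~\ref{connectivestructureofproductbg}.

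With this trivialisation in hand the computation of the $2$-form $\mu$ for the product is immediate. The curving of the product is $\pi_Y^*f_P+\pi_X^*f_Q$ (the pullbacks matching Proposition~\ref{connectivestructureofproductbg}), and since $F_{\nabla_R}=\pi_Y^*F_{\nabla_{R_P}}+\pi_X^*F_{\nabla_{R_Q}}$, I obtain
$$(\pi_Y^*f_P+\pi_X^*f_Q)-F_{\nabla_R}=\pi_Y^*(f_P-F_{\nabla_{R_P}})+\pi_X^*(f_Q-F_{\nabla_{R_Q}})=\pi_Y^*\pi_1^*\mu_P+\pi_X^*\pi_2^*\mu_Q.$$
Using $\pi_1\circ\pi_Y=\pi_2\circ\pi_X=\pi$ the right-hand side equals $\pi^*(\mu_P+\mu_Q)$, so by uniqueness the $2$-form for the product is $\mu=\mu_P+\mu_Q$. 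Hence
$$\mathrm{hol}((\nabla_{P\otimes Q},\pi_Y^*f_P+\pi_X^*f_Q),\Sigma)=\exp\Big(\int_\Sigma(\mu_P+\mu_Q)\Big)=\exp\Big(\int_\Sigma\mu_P\Big)\exp\Big(\int_\Sigma\mu_Q\Big),$$
which is the desired product of holonomies by additivity of the integral and multiplicativity of $\exp$.

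The routine but essential obstacle is the naturality bookkeeping in the second paragraph: one must confirm that $\delta$ commutes with pullback along $\pi_Y$ and $\pi_X$ at the level of both line bundles and connections, and that the identification $(X\times_\Sigma Y)^{[2]}\cong X^{[2]}\times_\Sigma Y^{[2]}$ intertwines the relevant projection maps so that $\pi_i\circ\pi_Y^{[2]}=\pi_Y\circ\widetilde{\pi}_i$ (and likewise for $X$). Everything else — existence of the factor trivialisations, well-definedness of the product holonomy, and the final exponential identity — follows directly from results already in hand, so no genuinely new difficulty arises beyond this compatibility check.
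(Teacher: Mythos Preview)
Your proof is correct and follows essentially the same approach as the paper: choose trivialising line bundles with connection for each factor, form their (pulled-back) tensor product as a trivialisation of the product gerbe, and observe that the resulting $2$-form on $\Sigma$ is the sum $\mu_P+\mu_Q$. You are in fact more careful than the paper about the naturality bookkeeping (verifying $\delta(R)=P\otimes Q$ and $\delta(\nabla_R)=\nabla_{P\otimes Q}$), which the paper simply asserts.
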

\begin{proof}
Let $R$ and $R'$ be trivialising line bundles for $(P, Y)$ and $(Q, Y)$ respectively, with connections $\nabla_R$ and $\nabla_{R'}$. So $R\otimes R'$ is a trivialisation of the product bundle gerbe, and $\nabla_R, \nabla_{R'}$ induce a connection $\nabla_{R\otimes R'}$ on $R\otimes R'$ with curvature equal to $\pi_X^*F_{\nabla_R} + \pi_Y^*F_{\nabla_{R'}}$. Define $\mu_1, \mu_2 \in \Omega^2(\Sigma)$ by $f_P - F_{\nabla_R} = \pi_1^*(\mu_1)$ and $f_Q - F_{\nabla_{R'}} = \pi_2^*(\mu_2)$. Noting that $\pi_1\circ \pi_X = \pi_2\circ \pi_Y = \pi$, we have $$(\pi_X^*f_P + \pi_Y^*f_Q) - (\pi_X^*F_{\nabla_R} + \pi_Y^*F_{\nabla_{R'}}) = \pi_X^*\pi_1^*(\mu_1) +\pi_Y^*\pi_2^*(\mu_2) = \pi^*(\mu_1+\mu_2).$$ Therefore $$ \textup{hol}( (\nabla_{P\otimes Q}, \pi_X^*f_P+\pi_Y^*f_Q), \Sigma) = \exp\left(\int_\Sigma \mu_1+\mu_2 \right) =   \textup{hol}((\nabla_P, f_P), \Sigma) \cdot \textup{hol}((\nabla_Q, f_Q), \Sigma). $$ 
\end{proof}

For our final proposition, we require the notion of a stable isomorphism of bundle gerbes \textit{with connective data}, which we call a $D$-stable isomorphism for reasons that will become clear in the next section. Recall that a bundle gerbe is said to have trivial connection and curving if its connective data arises as $(\delta(\nabla_R), F_{\nabla_R})$ for some line bundle connection $\nabla_R$ on $R\to Y$ (Definition \ref{definition of trivial connective structure}).
\begin{definition}  A \textit{$D$-stable isomorphism} of bundle gerbes $(P, Y, M)$ and $(Q, X, M)$ with connective data is a stable isomorphism that preserves connections and curvings (where the trivial bundle gerbe $P^*\otimes Q$ is assumed to have trivial connection and curving). If $(P, Y, M)$ and $(Q, X, M)$ are $D$-stably isomorphic we write $(P, Y, M)\cong_{D\text{-stab}} (Q, X, M)$.\end{definition}
With this definition, we can now prove that $D$-stably isomorphic bundle gerbes have equal holonomy. The proof of this result will be a neat application of the previous propositions in this section. 

\begin{proposition}\label{holonomyequalforstabisobg} \textbf{\textup{(Holonomy of $D$-stably isomorphic bundle gerbes)}}
 	Let $(P, Y, \Sigma)$ and $(Q, X, \Sigma)$ be bundle gerbes with connective data $(\nabla_P, f_P)$ and $(\nabla_Q, f_Q)$ respectively. If $(P, Y, \Sigma)\cong_{\textup{D-stab}} (Q, X, \Sigma)$, then $\textup{hol}((\nabla_P, f_P), \Sigma) = \textup{hol}((\nabla_Q, f_Q), \Sigma)$. 
\end{proposition}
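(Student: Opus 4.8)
The plan is to reduce the claim to the three holonomy computations already established in this section---for trivial, dual, and product bundle gerbes---by exploiting the definition of a $D$-stable isomorphism. Recall that $(P, Y, \Sigma)\cong_{D\text{-stab}} (Q, X, \Sigma)$ means there is a stable isomorphism preserving connections \emph{and} curvings; equivalently, the product bundle gerbe $(P, Y)^*\otimes (Q, X)$ is trivial and carries trivial connective data in the sense of Definition \ref{definition of trivial connective structure}. Writing $(\nabla_P^*, -f_P)$ for the dual connective data on $(P, Y)^*$ (Proposition \ref{connectivestructureondualbg}), the product connective data on $(P, Y)^*\otimes (Q, X)$ is $(\nabla_{P^*\otimes Q}, \pi_Y^*(-f_P) + \pi_X^* f_Q)$, and the content of the $D$-stable isomorphism is precisely that this product connective data agrees with some trivial connective data $(\delta(\nabla_R), F_{\nabla_R})$ arising from a trivialising line bundle $R\to X\times_\Sigma Y$.

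First I would apply Proposition \ref{hol of trivial bundle gerbe}: since $(P, Y)^*\otimes (Q, X)$ is trivial with trivial connective data, its holonomy over $\Sigma$ equals $1$. Next I would evaluate the same holonomy a second way using Proposition \ref{productholonomy}, which gives $\textup{hol}((\nabla_P^*, -f_P), \Sigma)\cdot \textup{hol}((\nabla_Q, f_Q), \Sigma)$. Finally, the preceding proposition on the holonomy of duals identifies $\textup{hol}((\nabla_P^*, -f_P), \Sigma) = \textup{hol}((\nabla_P, f_P), \Sigma)^{-1}$. Equating the two evaluations of the product holonomy yields $\textup{hol}((\nabla_P, f_P), \Sigma)^{-1}\cdot \textup{hol}((\nabla_Q, f_Q), \Sigma) = 1$, and multiplying through by $\textup{hol}((\nabla_P, f_P), \Sigma)$, a unit in $U(1)$, gives the desired equality.

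The main obstacle is the bookkeeping in the first paragraph: I must ensure that the connective data to which Proposition \ref{hol of trivial bundle gerbe} is applied is genuinely the \emph{product} connective data of $(P, Y)^*\otimes (Q, X)$, and not merely some trivial connective data differing from it by a curving. This is exactly where the hypothesis that the stable isomorphism preserves curvings (and not only connections) is essential---otherwise the product and trivial curvings could differ by a pullback $\pi^*\varphi$, and Proposition \ref{proposition holonomy for different curvings} would introduce an unwanted factor $\textup{exp}\left(\int_\Sigma \varphi\right)$ obstructing the conclusion. Once this curving match is confirmed, the remaining steps are purely formal applications of the three cited propositions, and no further analysis is required.
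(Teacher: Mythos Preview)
Your proposal is correct and follows essentially the same route as the paper's proof: identify the product connective data on $(P,Y)^*\otimes(Q,X)$ with trivial connective data via the $D$-stable isomorphism hypothesis, then combine the holonomy propositions for trivial, product, and dual bundle gerbes to obtain $\textup{hol}((\nabla_P, f_P), \Sigma)^{-1}\cdot \textup{hol}((\nabla_Q, f_Q), \Sigma)=1$. The paper's version is terser and uses $(P,Y)\otimes(Q,X)^*$ rather than $(P,Y)^*\otimes(Q,X)$, but this is immaterial; your added paragraph explaining why curving preservation is essential (via Proposition~\ref{proposition holonomy for different curvings}) is a nice clarification that the paper omits.
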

\begin{proof}
	Now, $(P\otimes Q^*, X\times_\Sigma Y) \cong (\delta(R), X\times_\Sigma Y)$, where the latter bundle gerbe is endowed with trivial connective data. Since this is a $D$-stable isomorphism, the connective data is equal, i.e. $(\nabla_{P\otimes Q^*}, \pi_X^*f_P - \pi_Y^*f_Q) = (\delta(\nabla_R), F_{\nabla_R})$, where $\pi_X$ and $\pi_Y$ are the projections from earlier. Hence the respective holonomies should be equal. By the previous propositions, this implies $$\textup{hol}((\nabla_P, f_P), \Sigma) \cdot \textup{hol}((\nabla_Q, f_Q), \Sigma)^{-1} = 1, $$ and the result follows. 
\end{proof}
\begin{remark}
The converse of the above result is most useful in practice. That is, if two bundle gerbes have different holonomies, they cannot be $D$-stably isomorphic. This is how we will show the cup product bundle gerbe and pullback of the basic bundle gerbe are not $D$-stably isomorphic in Chapter \ref{ch:four}.
\end{remark}

\section{Deligne cohomology}\label{deligne cohomology} 
\indent \indent We conclude this chapter with a powerful classification result for bundle gerbes. The reader may recall that the construction of the Dixmier-Douady class in Example \ref{DDconstruction} did not assume the bundle gerbe to have any connective data. This renders our classification result in Theorem \ref{main prop of this section} somewhat incomplete. In this section, we aim to understand how Deligne cohomology can be used to classify stable isomorphism classes of bundle gerbes \textit{with connective data}. Formally, the result we will prove is the following. \begin{theorem}[{\cite[Theorem 4.1]{stableiso}}] \label{delignetheorem} The Deligne class defines a bijection between $D$-stable isomorphism classes of bundle gerbes with connective data on $M$ and $H^3(M, \Z(3)_D)$.
%
\end{theorem}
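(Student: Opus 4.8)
The plan is to mirror the proof of Theorem \ref{main prop of this section} (the Dixmier--Douady classification), refining every step so that it simultaneously tracks the connection and curving, not merely the bundle gerbe multiplication. The target group $H^3(M, \Z(3)_D)$ is computed from the \v{C}ech--Deligne double complex of a good cover $\mathcal{U} = \{U_\alpha\}_{\alpha \in I}$ of $M$: a degree-three Deligne cocycle is a triple $(g_{\alpha\beta\gamma}, A_{\alpha\beta}, B_\alpha)$ with $g_{\alpha\beta\gamma}: U_\alpha\cap U_\beta\cap U_\gamma \to U(1)$, $A_{\alpha\beta}\in \Omega^1(U_\alpha\cap U_\beta)$, and $B_\alpha\in \Omega^2(U_\alpha)$, subject (with the usual normalisation conventions) to
$$\delta g = 1, \qquad (\delta A)_{\alpha\beta\gamma} = g_{\alpha\beta\gamma}^{-1}\, dg_{\alpha\beta\gamma}, \qquad (\delta B)_{\alpha\beta} = dA_{\alpha\beta},$$
for $\delta$ the \v{C}ech coboundary, taken modulo Deligne coboundaries. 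The three data $g$, $A$, $B$ will correspond respectively to the bundle gerbe multiplication, the connection, and the curving.

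First I would construct the Deligne class of a bundle gerbe $(P, Y, M)$ with connective data $(\nabla, f)$. Exactly as in Example \ref{DDconstruction}, choose local sections $s_\alpha: U_\alpha\to Y$ and unit sections $\sigma_{\alpha\beta}$ of $P_{\alpha\beta} := (s_\alpha, s_\beta)^{-1}P$, producing $g_{\alpha\beta\gamma}$ via (\ref{shorts}). Pulling $\nabla$ back along $(s_\alpha, s_\beta)$ gives a connection on $P_{\alpha\beta}$, whose connection $1$-form relative to $\sigma_{\alpha\beta}$ I take as $A_{\alpha\beta}$, and I set $B_\alpha := s_\alpha^* f$. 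I would then verify the three \v{C}ech--Deligne identities: $\delta g = 1$ is the known cocycle condition; the $A$-relation follows from the transition-function formula of Remark \ref{connectiononeformsrelationship} applied to the $\sigma_{\alpha\beta}$ together with the fact that $\nabla$ respects the multiplication, equation (\ref{respectbgmultiplication}); and $\delta B = dA$ follows from the curving condition $F_\nabla = \delta(f)$ combined with the structural equation $F = dA$ for a line bundle. This assigns to $(P,Y,M)$ with $(\nabla,f)$ a class in $H^3(M,\Z(3)_D)$.

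Next I would establish well-definedness and injectivity. Independence of the choices of $s_\alpha$, $\sigma_{\alpha\beta}$ and of the refinement is a direct computation producing explicit Deligne coboundaries, parallel to the argument for the Dixmier--Douady class. The heart of the theorem is the Deligne analogue of Proposition \ref{triviffclassiszero}: a bundle gerbe with connective data has vanishing Deligne class \emph{if and only if} it is $D$-stably isomorphic to a trivial bundle gerbe carrying trivial connective data. The forward implication is the computation above performed on $(\delta(R), Y, M)$ with connective data $(\delta(\nabla_R), F_{\nabla_R})$ (Proposition \ref{bgconnectioninducedbylinebundleconnection}), which manifestly yields a coboundary. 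The converse is the crux: from a Deligne cocycle that is a coboundary one must extract local functions and $1$-forms $(h_\alpha, a_\alpha)$ gluing, over $Y$, into a trivialising line bundle $R\to Y$ with connection $\nabla_R$ satisfying $\delta(R)\cong P$, $\delta(\nabla_R)=\nabla$, and with $F_{\nabla_R}$ matching $f$ in the sense of Definition \ref{definition of trivial connective structure}. Additivity of the Deligne class under duals and products (the Deligne refinement of Proposition \ref{DDclass}) then upgrades this to the statement that $(P,Y)\cong_{D\text{-stab}}(Q,X)$ precisely when their Deligne classes agree, deduced just as Proposition \ref{time} was from Proposition \ref{triviffclassiszero}.

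Finally I would prove surjectivity via the Hitchin--Chatterjee construction (Example \ref{hitchinchatconstruction}): given a \v{C}ech--Deligne cocycle $(g_{\alpha\beta\gamma}, A_{\alpha\beta}, B_\alpha)$, the $g_{\alpha\beta\gamma}$ assemble a local bundle gerbe over $Y_{\mathcal{U}}$, the $A_{\alpha\beta}$ assemble a bundle gerbe connection, and the $B_\alpha$ a curving, the three cocycle identities being exactly the compatibility needed; its Deligne class is the one we began with. I expect the main obstacle to be the converse of the triviality lemma in the previous paragraph -- reconstructing the trivialising pair $(R,\nabla_R)$ over the total space $Y$ from a coboundary living on the base $M$ -- since this is where the base-level \v{C}ech data must be transported to genuine geometric data on $Y$ compatibly with the connection, the curving, and the multiplication all at once.
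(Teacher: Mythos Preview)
Your proposal is correct and follows essentially the same route as the paper's (sketched) proof: both construct the Deligne cocycle $(g_{\alpha\beta\gamma}, A_{\alpha\beta}, s_\alpha^*f)$ from local sections exactly as in Proposition~\ref{classindeligne}, invoke the triviality criterion of Proposition~\ref{zerodeligneclass} for injectivity, and use the Hitchin--Chatterjee local bundle gerbe construction for surjectivity. The paper defers well-definedness to \cite{stuartthesis} rather than working it out, but your description of that step as producing explicit Deligne coboundaries is what one would do.
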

For our purposes, it will suffice to define the Deligne cohomology groups in terms of isomorphic \v{C}ech hypercohomology groups. After doing so, we will detail how a Deligne class can be associated to any bundle gerbe with connective data. The main result in this section, Theorem \ref{delignetheorem}, will then be proved, and a summary of the classification results for bundle gerbes will be provided in Proposition \ref{bgclassification}. We conclude this section by remarking on several interesting consequences of Theorem \ref{delignetheorem}, including the unexpected relationship between the holonomy of a bundle gerbe and its Deligne class. A foundational understanding of sheaves, double complexes of sheaves, and \v{C}ech hypercohomology groups will be assumed throughout. For general information on these topics, we refer the reader to \cite{botttu, brylinski}, and for information on these topics in the context of bundle gerbes we recommend \cite{stuartthesis, stableiso}.

\begin{remark} In Murray's original 1996 publication \cite{1996bundlegerbes}, he detailed how to associate to any bundle gerbe (with connective data) its Dixmier-Douady (Deligne) class. At that time, however, stable isomorphisms had not been developed, so the bijective correspondence in Theorems \ref{main prop of this section} and \ref{delignetheorem} could not be stated. Instead, the Dixmier-Douady (Deligne) class was simply understood as the `obstruction to the bundle gerbe (with connective data) being trivial'. It was not until 2000 (when stable isomorphisms were developed) that Theorems \ref{main prop of this section} and \ref{delignetheorem} could be stated and proved formally by Murray--Stevenson in \cite{stableiso}.
	
\end{remark}

\indent We begin by defining the \v{C}ech hypercohomology groups $\check{H}^q(\mathcal{U}, \mathcal{D}^p)$ that will be used to define Deligne cohomology. Let $\underline{U(1)}_M$ be the sheaf of smooth $U(1)$-valued functions on $M$ and $\underline{\Omega}^p_M$ be the sheaf of real differential $p$-forms on $M$. We will abuse notation and write $\underline{U(1)}$ or $\underline{\Omega}^p$ when the space $M$ is understood. 
Define the complex of sheaves $\mathcal{D}^{p}$ by $$\mathcal{D}^{p} := \underline{U(1)}_M\xrightarrow{d\log} \underline{\Omega}^1_M \xrightarrow{d}\cdots \xrightarrow{d} \underline{\Omega}^{p}_M.$$ Associated to $\mathcal{D}^{p}$ is the \v{C}ech double complex depicted in Figure \ref{fig:cech1}. 
	\begin{center} 
		\begin{tikzcd}
			\vdots	  &\vdots  & &\vdots   \\
			C^2(\mathcal{U}, \underline{U(1)}) \arrow[r, "d\log"] \arrow[u, "\delta'"] & 	C^2(\mathcal{U}, \underline{\Omega}^1)  \arrow[r, "d"] \arrow[u, "\delta'"] & \cdots \arrow[r, "d"]& C^2(\mathcal{U}, \underline{\Omega}^{p})\arrow[u,"\delta'"] \\
			C^1(\mathcal{U}, \underline{U(1)})  \arrow[r, "d\log"] \arrow[u, "
			\delta'"] & C^1(\mathcal{U}, \underline{\Omega}^1) \arrow[r, "d"] \arrow[u, "\delta'"] & \cdots\arrow[r, "d"]  &C^1(\mathcal{U}, \underline{\Omega}^{p}) \arrow[u, "\delta'"] \\		C^0(\mathcal{U}, \underline{U(1)})  \arrow[r, "d\log"] \arrow[u, "\delta'"] &  C^0(\mathcal{U}, \underline{\Omega}^1) \arrow[r, "d"] \arrow[u, "\delta'"] &\cdots \arrow[r, "d"]& C^0(\mathcal{U}, \underline{\Omega}^{p}) \arrow[u, "\delta'"] 
		\end{tikzcd} 		 \captionof{figure}{\v{C}ech double complex associated to $\mathcal{D}^{p}$}
		\label{fig:cech1} \end{center}

	\text{}\\
\noindent \noindent	Here, $\mathcal{U}$ is a Leray cover of $M$ (i.e. finite intersection of sets in $\mathcal{U}$ have trivial cohomology) and the maps $\delta': C^p(\mathcal{U}, \cdot) \to C^{p+1}(\mathcal{U}, \cdot)$ are defined by $$\delta'(\alpha)_{i_0, ..., i_{p+1}} = \sum_{j=0}^{p+1}(-1)^j (\alpha_{i_0, ..., \hat{i}_j, ..., i_{p+1}})\big |_{U_{i_0, ..., i_{p+1}}}.$$ 
	With this, we can define the \v{C}ech hypercohomology groups with coefficients in $\mathcal{D}^p$.
\begin{definition}
Let $\mathcal{U}$ be a Leray cover of $M$. The \textit{$q$-th \v{C}ech hypercohomology group with coefficients in $\mathcal{D}^{p}$}, denoted $\check{{H}}^{q}(\mathcal{U}, \mathcal{D}^{p})$, is the $q$-th cohomology of the total complex of the double complex depicted in Figure \ref{fig:cech1}.
\end{definition}

To provide the general definition of Deligne cohomology groups with coefficients in $\Z(p)_D$ would be a lengthy process. Instead, we circumvent this with the following proposition. Let $\Z(p)=(2\pi \sqrt{-1})^p\cdot \Z$ and let $\Z(p)_M$ be the sheaf of locally constant functions on $M$. Define the complexes of sheaves $\Z(p)_D$ by $$\Z(p)_D := \Z(p)_M \xrightarrow{i} \underline{\Omega}^1_M \xrightarrow{d}\cdots \xrightarrow{d} \underline{\Omega}^{p-1}_M.$$ 

\begin{proposition}
Let $\mathcal{U}$ be a Leray cover of $M$. The \textit{$q$-th Deligne cohomology group with coefficients in $\Z(p)_D$}, denoted $H^{q}(M, \Z(p)_D)$, is isomorphic to the $(q-1)$-th \v{C}ech hypercohomology group with coefficients in $\mathcal{D}^{p-1}$, i.e. $H^{q}(M, \Z(p)_D)\cong \check{{H}}^{q-1}(\mathcal{U}, \mathcal{D}^{p-1}).$
\end{proposition}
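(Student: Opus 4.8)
The plan is to read $H^{q}(M,\Z(p)_D)$ as the sheaf hypercohomology $\mathbb{H}^{q}(M,\Z(p)_D)$ of the complex of sheaves $\Z(p)_D$, and then to establish the stated identity in two moves. First I would replace sheaf hypercohomology by \v{C}ech hypercohomology, using that $\mathcal{U}$ is a Leray cover. Second I would exhibit a quasi-isomorphism of complexes of sheaves $\Z(p)_D \xrightarrow{\sim} \mathcal{D}^{p-1}[-1]$, where $\mathcal{D}^{p-1}[-1]$ denotes $\mathcal{D}^{p-1}$ placed one degree higher, so that $\mathbb{H}^{r}(\mathcal{D}^{p-1}[-1])\cong \mathbb{H}^{r-1}(\mathcal{D}^{p-1})$. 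Since a quasi-isomorphism of complexes of sheaves induces an isomorphism on hypercohomology, these two moves combine to give $H^{q}(M,\Z(p)_D)\cong \check{H}^{q-1}(\mathcal{U},\mathcal{D}^{p-1})$.

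For the first move I would build the \v{C}ech double complex computing $\mathbb{H}^{q}(M,\Z(p)_D)$ from an acyclic resolution of the sheaves in $\Z(p)_D$ against the cover $\mathcal{U}$. Because $\mathcal{U}$ is Leray, every finite intersection $U_{i_0\cdots i_k}$ has trivial higher cohomology in each sheaf appearing in $\Z(p)_D$, so the spectral sequence of this double complex collapses onto the \v{C}ech row and yields $\mathbb{H}^{q}(M,\Z(p)_D)\cong \check{H}^{q}(\mathcal{U},\Z(p)_D)$. The same reasoning, applied to $\mathcal{D}^{p-1}$, recovers the \v{C}ech hypercohomology groups of Figure~\ref{fig:cech1}; this is the hypercohomological analogue of the comparison theorem for a single sheaf \cite{botttu}.

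For the second move the engine is the exponential exact sequence $0\to \Z(p)\to \underline{\R}\xrightarrow{\exp}\underline{U(1)}\to 0$ together with the identity $d\log\circ\exp = d$. Writing $\underline{\R}=\underline{\Omega}^0$, these data give a chain map $\Z(p)_D\to \mathcal{D}^{p-1}[-1]$ that is zero on the bottom term $\Z(p)$, is $\exp$ on $\underline{\R}\to\underline{U(1)}$, and is the identity on each $\underline{\Omega}^{k}$ with $k\ge 1$. To see it is a quasi-isomorphism I would compute cohomology sheaves: at the bottom the exponential sequence kills the kernel $\Z(p)$ and trivialises the cokernel of $\exp$, so both sides have, as cohomology sheaf in that degree, the sheaf of locally constant $U(1)$-valued functions, while in every higher degree both truncated de Rham complexes are locally exact by the Poincar\'e lemma and so have vanishing cohomology sheaves. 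A morphism inducing isomorphisms on all cohomology sheaves is a quasi-isomorphism, and applying $\check{H}^{\bullet}(\mathcal{U},-)$ transports it to $\check{H}^{q}(\mathcal{U},\Z(p)_D)\cong \check{H}^{q-1}(\mathcal{U},\mathcal{D}^{p-1})$, as required.

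The hard part will be the bookkeeping rather than any deep idea. I expect the main obstacle to be pinning down the normalisation of $\exp$ forced by the convention $\Z(p)=(2\pi\sqrt{-1})^{p}\Z$, so that $d\log\circ\exp$ equals exactly the de Rham differential $d$ occurring in $\Z(p)_D$ and the squares of the chain map commute on the nose; coupled with this is the careful tracking of the single degree shift, so that $q$ on the Deligne side matches $q-1$ on the $\mathcal{D}^{p-1}$ side. Once the exponential is normalised correctly the cohomology-sheaf computation is routine, the only substantive external inputs being the Poincar\'e lemma for acyclicity of the truncated de Rham complex and the Leray property of $\mathcal{U}$ for the reduction to \v{C}ech hypercohomology \cite{brylinski}.
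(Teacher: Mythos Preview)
Your proposal is correct and follows essentially the same route as the paper's sketch: both arguments rest on (i) a quasi-isomorphism between the complexes $\Z(p)_D$ and a degree-shifted $\mathcal{D}^{p-1}$, and (ii) the identification of sheaf hypercohomology with \v{C}ech hypercohomology over a Leray cover. The paper applies these in the order (i) then (ii), citing \cite{stuartthesis} and \cite{brylinski} respectively, while you reverse the order and spell out the exponential map and the Poincar\'e lemma explicitly; this is the same proof with more detail filled in.
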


\begin{proof} [Sketch of Proof] 
To streamline this discussion we will present the key elements of this proof and refer the reader to \cite{stuartthesis, brylinski, botttu} for details. The proof amounts to showing that there is a sequence of isomorphisms $$H^q(M, \Z(p)_D) \cong H^{q-1}(M, \mathcal{D}^{p-1}) \cong  \check{{H}}^{q-1}(\mathcal{U}, \mathcal{D}^{p-1})$$ for $q\geq 1$ and $\mathcal{U}$ a Leray cover of $M$. The first of these isomorphisms of Deligne cohomology and sheaf hypercohomology groups is induced from a quasi-isomorphism of sheaves, as shown in \cite{stuartthesis}. It is then shown in \cite{brylinski} that the latter groups are canonically isomorphic to the \v{C}ech hypercohomology groups $\check{{H}}^{q-1}(\mathcal{U}, \mathcal{D}^{p-1})$. 
\end{proof}
Throughout the remainder of this section we will take $H^{q}(M, \Z(p)_D):= \check{{H}}^{q-1}(\mathcal{U}, \mathcal{D}^{p-1})$ to be the definition of the Deligne cohomology groups. We next aim to understand what a general element in $H^3(M, \Z(3)_D)$ looks like.

\begin{proposition}
The triple $(h_{\alpha \beta \gamma}, \theta_{\alpha \beta}, \nu_\alpha)\in C^2(\mathcal{U}, \underline{U(1)})\oplus C^1(\mathcal{U}, \underline{\Omega}^1) \oplus C^0(\mathcal{U}, \underline{\Omega}^2)$ is a  cocycle, and hence representative of a class in $H^3(M, \Z(3)_D)$, if and only if \begin{align}
h_{\beta \gamma \delta} \ h^{-1}_{\alpha \gamma \delta} \ h_{\alpha \beta \delta} \ h^{-1}_{\alpha \beta \gamma} &= 1 \label{eqn11} \\
\theta_{\beta \gamma} - \theta_{\alpha \gamma}  + \theta_{\alpha \beta} &= - d\log(h_{\alpha \beta \gamma})\label{eqn12}  \\
\nu_\beta -  \nu_\alpha  &= d\theta_{\alpha \beta}\label{eqn13} 
\end{align} for $\mathcal{U} = \{U_\alpha\}_{\alpha \in I}$ a Leray cover of $M$. 
\end{proposition}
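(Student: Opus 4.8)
The plan is to unwind the definition $H^3(M, \Z(3)_D) := \check{H}^2(\mathcal{U}, \mathcal{D}^2)$ and read off the cocycle condition directly from the total complex of the double complex in Figure \ref{fig:cech1}. The first step is to identify the group of total-degree-two cochains. Grading the double complex so that the column occupied by $\underline{\Omega}^k$ (with $\underline{U(1)}$ in column $0$) has horizontal degree $k$, and the row $C^q(\mathcal{U}, \cdot)$ has vertical degree $q$, the total-degree-two piece is the direct sum over $k+q=2$, namely
\[ C^2(\mathcal{U}, \underline{U(1)}) \oplus C^1(\mathcal{U}, \underline{\Omega}^1) \oplus C^0(\mathcal{U}, \underline{\Omega}^2). \]
This is precisely why a representative of a class in $\check{H}^2(\mathcal{U}, \mathcal{D}^2) = H^3(M,\Z(3)_D)$ is a triple $(h_{\alpha\beta\gamma}, \theta_{\alpha\beta}, \nu_\alpha)$ of the stated types, so the opening clause of the proposition is accounted for by bookkeeping alone.

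Next I would write down the total differential $D$ explicitly. With the standard sign convention $D = \delta' + (-1)^k d_{\mathcal{D}}$ on the column of horizontal degree $k$ (where $d_{\mathcal{D}}$ denotes $d\log$ out of the $\underline{U(1)}$ column and $d$ thereafter), applying $D$ to the triple yields a total-degree-three cochain. Its only possibly nonzero components lie in $C^3(\mathcal{U}, \underline{U(1)})$, $C^2(\mathcal{U}, \underline{\Omega}^1)$ and $C^1(\mathcal{U}, \underline{\Omega}^2)$ — there is no $\underline{\Omega}^3$ contribution because $\mathcal{D}^2$ truncates after $\underline{\Omega}^2$. Computing these three components gives $\delta'(h)$, then $\delta'(\theta) + d\log(h)$, then $\delta'(\nu) - d(\theta)$ respectively, with $\delta'$ the Čech coboundary of Figure \ref{fig:cech1}.

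The triple is a cocycle exactly when $D$ annihilates it, i.e. when all three components vanish simultaneously. Expanding each via the explicit formula for $\delta'$ finishes the argument: using $\delta'(h)_{\alpha\beta\gamma\delta} = h_{\beta\gamma\delta}\, h_{\alpha\gamma\delta}^{-1}\, h_{\alpha\beta\delta}\, h_{\alpha\beta\gamma}^{-1}$ (written multiplicatively, since this column is valued in $\underline{U(1)}$) recovers (\ref{eqn11}); using $\delta'(\theta)_{\alpha\beta\gamma} = \theta_{\beta\gamma} - \theta_{\alpha\gamma} + \theta_{\alpha\beta}$ and rearranging $\delta'(\theta) + d\log(h) = 0$ recovers (\ref{eqn12}); and using $\delta'(\nu)_{\alpha\beta} = \nu_\beta - \nu_\alpha$ and rearranging $\delta'(\nu) - d(\theta) = 0$ recovers (\ref{eqn13}). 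Since each equivalence is reversible, reading the chain of equivalences backwards supplies the converse direction at no extra cost, establishing the "if and only if".

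I do not expect a genuine obstacle here, only two points of bookkeeping that must be handled consistently. The first is the sign convention for $D$ across columns, which must be tracked so that $d\log(h)$ enters (\ref{eqn12}) with the minus sign exhibited and the derivative term lands on the correct side of (\ref{eqn13}); a uniform choice of $(-1)^k$ on column $k$ makes this automatic. The second is the switch between the multiplicative notation natural for the $\underline{U(1)}$ column — in which the Čech differential appears as an alternating product — and the additive notation induced by $d\log$ in the adjacent column, which is what lets (\ref{eqn11}) and (\ref{eqn12}) sit in the same complex. Once the grading and $D$ are pinned down, verifying the three equations is a direct expansion of $\delta'$ and $d_{\mathcal{D}}$.
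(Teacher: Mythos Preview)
Your proposal is correct and follows essentially the same approach as the paper: write down the relevant piece of the total complex, apply the total differential $D$ to the triple to get $(\delta'(h),\ \delta'(\theta) + d\log h,\ \delta'(\nu) - d\theta)$, and then read off the three equations by setting each component to zero and expanding $\delta'$. The paper's argument is more terse, simply declaring $D$ by formula and equating coordinates, while you spell out the grading, the sign convention, and the multiplicative/additive bookkeeping, but the substance is identical.
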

\begin{proof}
 Consider a segment of the total complex of the double complex in Figure \ref{fig:cech1} with $p = 2$. $$\cdots \xrightarrow{} C^2(\mathcal{U}, \underline{U(1)})\oplus C^1(\mathcal{U}, \underline{\Omega}^1) \oplus C^0(\mathcal{U}, \underline{\Omega}^2) \xrightarrow{D} C^3(\mathcal{U},\underline{U(1)})\oplus C^2(\mathcal{U}, \underline{\Omega}^1) \oplus C^1(\mathcal{U}, \underline{\Omega}^2) \xrightarrow{} \cdots $$ Here $D$ is defined by 
$$D(h_{\alpha \beta \gamma}, \theta_{\alpha \beta}, \nu_\alpha) = (\delta'(h_{\alpha \beta \gamma}), d\log h_{\alpha \beta \gamma} + \delta' (\theta_{\alpha \beta}), \delta'(\nu_\alpha) -d\theta_{\alpha \beta}).$$ The triple $(h_{\alpha \beta \gamma}, \theta_{\alpha \beta}, \nu_\alpha)$ is a cocycle if and only if $D(h_{\alpha \beta \gamma}, \theta_{\alpha \beta}, \nu_\alpha) = (1, 0, 0)$. By equating coordinates and using the above definition of $\delta'$ we obtain the result.
\end{proof}

\indent We now construct the Deligne cocycle associated to any bundle gerbe with connective data, and prove it is a cocycle using the previous proposition. Let $(P, Y, M)$ be a bundle gerbe with connective data $(\nabla, f)$ and $\{U_{\alpha}\}_{\alpha \in I}$ be a good cover of $M$. Following the construction from Example \ref{DDconstruction}, consider sections $s_\alpha: U_\alpha \to Y$ and define the line bundle $P_{\alpha \beta} := (s_\alpha, s_\beta)^{-1}P$ over $U_\alpha\cap U_\beta$.  Since $U_\alpha \cap U_\beta$ is contractible, $P_{\alpha \beta} \to U_\alpha \cap U_\beta$ is trivial, so there exists a unit section $\sigma_{\alpha \beta}$ on $U_\alpha \cap U_\beta$. Recall that we can define \v{C}ech cocycles $g_{\alpha \beta \gamma}$ by $\sigma_{\alpha \beta} \sigma_{\beta \gamma} = g_{\alpha \beta \gamma} \sigma_{\alpha \gamma}.$ Define $A_{\alpha \beta} \in \Omega^1(U_\alpha \cap U_\beta)$ implicitly by $$(s_\alpha, s_\beta)^{*}\nabla \sigma_{\alpha \beta} = A_{\alpha \beta}\sigma_{\alpha \beta}.$$ Finally, set $f_\alpha := s_{\alpha}^{-1}f$. We will prove that the triple $(g_{\alpha \beta \gamma}, -A_{\alpha \beta}, f_\alpha)$ defines a Deligne cocycle in the following proposition.
\begin{proposition}\label{classindeligne} 
	Let $(P, Y, M)$ be a bundle gerbe with connective data $(\nabla, f)$. Then the triple $(g_{\alpha \beta \gamma}, -A_{\alpha \beta}, f_\alpha)$ constructed above defines a class in $H^3(M, \Z(3)_D)$.
\end{proposition}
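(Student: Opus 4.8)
The plan is to invoke the preceding proposition, which identifies Deligne cocycles with triples satisfying the three equations (\ref{eqn11})--(\ref{eqn13}), and then to verify each of these in turn for the triple $(g_{\alpha \beta \gamma}, -A_{\alpha \beta}, f_\alpha)$. Equation (\ref{eqn11}) requires no new work: it is precisely the \v{C}ech $2$-cocycle identity $g_{\beta \gamma \delta}\, g_{\alpha \gamma \delta}^{-1}\, g_{\alpha \beta \delta}\, g_{\alpha \beta \gamma}^{-1} = 1$ that was already established (together with independence from all choices) in the construction of the Dixmier--Douady class in Example \ref{DDconstruction}, since the $g_{\alpha \beta \gamma}$ here are defined by the identical relation $\sigma_{\alpha \beta}\sigma_{\beta \gamma} = g_{\alpha \beta \gamma}\sigma_{\alpha \gamma}$.

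The substance of the proof lies in equation (\ref{eqn12}), which is the bundle gerbe analogue of the line bundle identity in Remark \ref{connectiononeformsrelationship}. First I would pull the defining relation $m(\sigma_{\alpha \beta}\otimes \sigma_{\beta \gamma}) = g_{\alpha \beta \gamma}\sigma_{\alpha \gamma}$ of equation (\ref{shorts}) back to $U_\alpha \cap U_\beta \cap U_\gamma$ along $(s_\alpha, s_\beta, s_\gamma)$, and then differentiate it using the fact that $\nabla$ is a bundle gerbe connection, i.e. equation (\ref{respectbgmultiplication}) in Definition \ref{bundlegerbeconnection}. On the tensor-product side, the induced connection acts on $\sigma_{\alpha \beta}\otimes \sigma_{\beta \gamma}$ by the sum of connection forms $A_{\alpha \beta} + A_{\beta \gamma}$; on the image side, the Leibniz rule applied to $g_{\alpha \beta \gamma}\sigma_{\alpha \gamma}$ produces $d\log(g_{\alpha \beta \gamma}) + A_{\alpha \gamma}$. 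Equating the two, because $m$ is an isomorphism respecting the connection, yields $A_{\alpha \beta} + A_{\beta \gamma} - A_{\alpha \gamma} = d\log(g_{\alpha \beta \gamma})$, which is exactly (\ref{eqn12}) after substituting $\theta_{\alpha \beta} = -A_{\alpha \beta}$.

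For equation (\ref{eqn13}) I would pull the curving identity $F_\nabla = \delta(f)$ back along $(s_\alpha, s_\beta)$. The left-hand side becomes the curvature of the pulled-back line bundle connection on $P_{\alpha \beta}$, which for a line bundle is $dA_{\alpha \beta}$; the right-hand side becomes $\delta(f)$ evaluated through the projections of Proposition \ref{deltamap}, giving the difference $f_\beta - f_\alpha$ of the local curvings $f_\alpha = s_\alpha^{-1}f$. This produces the required relation between $\nu_\alpha = f_\alpha$ and $d\theta_{\alpha \beta} = -dA_{\alpha \beta}$. The main obstacle, I expect, is not conceptual but a matter of disciplined sign-bookkeeping: the two compatibility computations must be carried out with a single fixed convention for the projections $\pi_i$ (Definition \ref{fprod}) and for the $d\log$ map, and it is precisely the minus sign placed on $A_{\alpha \beta}$ in the triple that reconciles the sign appearing in (\ref{eqn12}) with the one appearing in (\ref{eqn13}). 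Once the differentiation of (\ref{shorts}) is set up correctly and these conventions are pinned down, the three verifications are routine, and the triple defines a class in $H^3(M, \Z(3)_D)$ as claimed.
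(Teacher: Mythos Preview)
Your proposal is correct and follows essentially the same approach as the paper: both verify (\ref{eqn11}) by citing the \v{C}ech cocycle identity from Example~\ref{DDconstruction}, obtain (\ref{eqn12}) by pulling back the bundle gerbe connection compatibility (\ref{respectbgmultiplication}) along $(s_\alpha, s_\beta, s_\gamma)$ and comparing the two expressions for the connection acting on $m(\sigma_{\alpha\beta}\otimes\sigma_{\beta\gamma}) = g_{\alpha\beta\gamma}\sigma_{\alpha\gamma}$, and obtain (\ref{eqn13}) by pulling back $\delta(f) = F_\nabla$ along $(s_\alpha, s_\beta)$. The only cosmetic difference is that the paper invokes Remark~\ref{connectiononeformsrelationship} at the point where you apply the Leibniz rule directly.
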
 
\begin{proof}
We need only verify that equations (\ref{eqn11})--(\ref{eqn13}) are satisfied by $(g_{\alpha \beta \gamma}, -A_{\alpha \beta}, f_\alpha)$. Since we know $g_{\alpha \beta \gamma}$ to be a \v{C}ech cocycle, (\ref{eqn11}) is immediate. Next, observe that by pulling back equation (\ref{bundlegerbeconnection}) by $(s_\alpha, s_\beta, s_\gamma): U_\alpha \cap U_\beta \cap U_\gamma \to Y^{[3]}$ we obtain $$(s_\alpha, s_\gamma)^{*}\nabla m(\sigma_{\alpha \beta}, \sigma_{\beta \gamma} ) = m((s_\alpha,  s_\beta)^{*}\nabla \sigma_{\alpha \beta}, \sigma_{\beta \gamma}) + m(\sigma_{\alpha \beta}, (s_\beta, s_\gamma)^{*}\nabla \sigma_{\beta \gamma}).$$ Therefore $(s_\alpha, s_\gamma)^{*}\nabla m(\sigma_{\alpha \beta}, \sigma_{\beta \gamma} ) = (A_{\alpha \beta} + A_{\beta \gamma}) m(\sigma_{\alpha \beta}, \sigma_{\beta \gamma} ).$ Since $m(\sigma_{\alpha \beta}, \sigma_{\beta \gamma}) = g_{\alpha \beta \gamma} \sigma_{\alpha \gamma}$, by Remark \ref{connectiononeformsrelationship} we have $$A_{\alpha \beta} + A_{\beta \gamma} = A_{\alpha \gamma} + d\log g_{\alpha \beta \gamma}, $$ that is, $\delta'(A)_{\alpha \beta \gamma} = d\log g_{\alpha \beta \gamma}.$ So $-A_{\alpha \beta}$ satisfies equation (\ref{eqn12}). Finally, recall that the curving $f$ satisfies $\delta(f) = F_{\nabla},$ and $F_{(s_\alpha, s_\beta)^*\nabla} = dA_{\alpha \beta} = (s_\alpha, s_\beta)^*F_{\nabla}$, so  \begin{align*}
d A_{\alpha \beta} &= (s_\alpha, s_\beta)^*F_{\nabla} \\
&= (s_\alpha, s_\beta)^*\delta(f)\\
&= (s_\alpha, s_\beta)^*(\pi_1^*f - \pi_2^*f)\\
&= s_\beta^*f - s_\alpha^* f \\
&= \delta'(f)_{\alpha \beta}. 
\end{align*} This shows that $f_\alpha$ satisfies equation (\ref{eqn13}), hence the triple $(g_{\alpha \beta \gamma}, - A_{\alpha \beta}, f_\alpha)$ defines a class in $H^3(M, \Z(3)_D)$. 
\end{proof}
We present the next proposition without proof, and refer the reader to \cite{stableiso} for details. This result, combined with our above work, will allow us to present a sketch of the proof of Theorem \ref{delignetheorem}.
\begin{proposition}[{\cite[Proposition 4.2]{stableiso}}] \label{zerodeligneclass} 
Let $(P, Y, M)$ be a bundle gerbe with connective data and zero Deligne class. Then $(P, Y, M)$ is a trivial bundle gerbe with trivial connection and curving.
\end{proposition}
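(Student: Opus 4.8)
The plan is to unpack the vanishing of the Deligne class into local ``potential'' data and then use that data to promote the already-available triviality of the underlying bundle gerbe to triviality of its connective data. First I would fix a good cover $\mathcal{U} = \{U_\alpha\}_{\alpha\in I}$ and the Deligne representative $(g_{\alpha\beta\gamma}, -A_{\alpha\beta}, f_\alpha)$ constructed in Proposition \ref{classindeligne}. The hypothesis that the Deligne class is zero says precisely that this triple is a coboundary $D(h_{\alpha\beta}, \lambda_\alpha)$ for some pair $(h_{\alpha\beta}, \lambda_\alpha)$ in $C^1(\mathcal{U}, \underline{U(1)}) \oplus C^0(\mathcal{U}, \underline{\Omega}^1)$. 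Reading off the three components of $D$, in analogy with its formula on degree-two cochains, yields relations of the shape $g_{\alpha\beta\gamma} = \delta'(h)_{\alpha\beta\gamma}$, $\ -A_{\alpha\beta} = d\log h_{\alpha\beta} + (\lambda_\beta - \lambda_\alpha)$, and $f_\alpha = d\lambda_\alpha$, up to the sign conventions of the total differential.

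The first relation exhibits the Dixmier--Douady cocycle $g_{\alpha\beta\gamma}$ as a coboundary, so the Dixmier--Douady class vanishes; by Proposition \ref{triviffclassiszero} the gerbe is trivial, and by Proposition \ref{connectionsontrivialbundlegerbes} I may choose a trivialising line bundle $R \to Y$ together with a connection $\nabla_R$ with $\delta(\nabla_R) = \nabla$. Then $\delta(f - F_{\nabla_R}) = \delta(f) - \delta(F_{\nabla_R}) = F_\nabla - F_\nabla = 0$, so exactness of the fundamental complex (Proposition \ref{deltamap}) produces a unique $\mu \in \Omega^2(M)$ with $f - F_{\nabla_R} = \pi^*\mu$. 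The third relation, namely $s_\alpha^* f = d\lambda_\alpha$, forces the three-curvature to vanish: since $\pi\circ s_\alpha = \mathrm{id}_{U_\alpha}$ and $\pi^*\omega = df$, we get $\omega|_{U_\alpha} = s_\alpha^*\pi^*\omega = d(s_\alpha^* f) = d(d\lambda_\alpha) = 0$, whence $\omega = 0$ and in particular $d\mu = 0$.

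The technical heart of the argument is then to realise this closed form $\mu$ as the curvature of a line bundle connection on $M$. I would build $Q \to M$ directly from the coboundary data, taking connection $1$-forms assembled from the $\lambda_\alpha$ together with the local connection forms of $R$, and transition functions assembled from $h_{\alpha\beta}$ together with those of $R$; the middle relation $-A_{\alpha\beta} = d\log h_{\alpha\beta} + (\lambda_\beta - \lambda_\alpha)$ is exactly what should make these transition functions an honest $U(1)$-cocycle on $M$ and pin down $F_{\nabla_Q} = \mu$. With $Q$ and $\nabla_Q$ in hand I would set $R' := R \otimes \pi^{-1}Q$ and $\nabla_{R'} := \nabla_R + \pi^*\nabla_Q$. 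Since $\delta(\pi^{-1}Q)$ is canonically trivial with flat induced connection, $\delta(\nabla_{R'}) = \delta(\nabla_R) = \nabla$, while $F_{\nabla_{R'}} = F_{\nabla_R} + \pi^*F_{\nabla_Q} = F_{\nabla_R} + \pi^*\mu = f$. Thus $(\nabla, f)$ arises as $(\delta(\nabla_{R'}), F_{\nabla_{R'}})$, which is exactly the assertion that $(P, Y, M)$ is trivial with trivial connection and curving.

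The step I expect to be the main obstacle is the middle one: extracting the genuine base line bundle $Q$ from the local Deligne data and verifying both the $U(1)$-cocycle condition for its transition functions and the integrality of $\mu$, all while keeping careful track of the sign conventions built into the total-complex differential $D$. An alternative that sidesteps the two-step structure, and is closer to the original Murray--Stevenson argument, would be to glue $R$ and $\nabla_R$ directly out of $(h_{\alpha\beta}, \lambda_\alpha)$ so that $F_{\nabla_R} = f$ holds from the outset, bypassing the intermediate bundle $Q$ entirely.
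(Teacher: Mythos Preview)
The paper does not supply its own proof of this proposition: it is stated with the comment ``We present the next proposition without proof, and refer the reader to \cite{stableiso} for details,'' and then immediately used in the sketch of Theorem~\ref{delignetheorem}. There is therefore no in-paper argument to compare your proposal against.

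That said, your outline is along the lines of the standard argument one finds in \cite{stableiso}: read off the coboundary relations for $(g_{\alpha\beta\gamma}, -A_{\alpha\beta}, f_\alpha)$, use the first to trivialise the underlying gerbe, and then use the remaining two to correct the trivialising line bundle and its connection so that the curving becomes exactly its curvature. Your self-identified obstacle---extracting the honest base line bundle $Q$ with $F_{\nabla_Q} = \mu$ from the local data $(h_{\alpha\beta}, \lambda_\alpha)$ together with the chosen trivialisation of $R$---is indeed the place where care is required, and the alternative you mention (building $R$ and $\nabla_R$ directly from the Deligne coboundary data so that $F_{\nabla_R} = f$ from the start) is both cleaner and closer to how Murray--Stevenson actually proceed.
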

\noindent \noindent \textbf{Theorem \ref{delignetheorem}} ({\cite[Theorem 4.1]{stableiso}})\textbf{.} \textit{The Deligne class defines a bijection between $D$-stable isomorphism classes of bundle gerbes with connective data on $M$ and $H^3(M, \Z(3)_D)$.} 
\begin{proof} [Sketch of Proof] We follow the proof from \cite{stuartthesis}.
	Let $(P, Y, M)$ be a bundle gerbe with connective data $(\nabla, f)$. By Proposition \ref{classindeligne}, the triple $(g_{\alpha \beta \gamma}, A_{\alpha \beta}, f_{\alpha})$ associated to $(P, Y, M)$ defines a class in $H^3(M, \Z(3)_D)$. This is shown to be well-defined in \cite{stuartthesis}. The map which assigns to a bundle gerbe $(P, Y, M)$ the class $[(g_{\alpha \beta \gamma}, A_{\alpha \beta}, f_\alpha)]$  is clearly a homomorphism, and is injective by Proposition \ref{zerodeligneclass}. It remains to show how, given a Deligne class $(g_{\alpha \beta \gamma}, A_{\alpha \beta}, f_{\alpha}),$ one can construct a bundle gerbe. Define $$Y := \{(m, \alpha) \ | \ m\in U_\alpha\} \subseteq M\times I,$$ the disjoint union of sets in the open cover $\mathcal{U}$ of $M$. So $Y^{[2]}$ consists of triples $(m, \alpha, \beta)$ such that $m\in U_\alpha\cap U_\beta$. Let $P\to Y^{[2]}$ be the trivial bundle and define a bundle gerbe product by $$(m, \alpha, \beta, z) \cdot (m, \beta, \gamma, w) = (m, \alpha, \gamma, zwg_{\alpha \beta \gamma}(m))$$ for $z, w \in U(1)$. It can be shown that the resulting bundle gerbe $(P, Y, M)$ defines the desired Deligne class (in fact, $d+A_{\alpha \beta}$ defines a bundle gerbe connection on $(P, Y, M)$ with curving $f_\alpha$). 
\end{proof}
It can be shown by similar arguments to those above that bundle gerbes over $M$ with connection and no choice of curving are classified by the Deligne cohomology group $H^3(M, \Z(2)_D)$ \cite{stuartthesis}. This leads us to the following proposition, which summarises our classification results. The remarks following this proposition will conclude our preliminary study of bundle gerbes.
\begin{proposition} \label{bgclassification} Let $M$ be a smooth manifold. Then
\begin{enumerate}[(1),font=\upshape] 
\item  stable isomorphism classes of bundle gerbes over $M$ are in bijective correspondence with $H^3(M, \Z);$
\item stable isomorphism classes of bundle gerbes over $M$ with connection are in bijective correspondence with $H^3(M, \Z(2)_D);$
\item $D$-stable isomorphism classes of bundle gerbes over $M$ with connection and curving are in bijective correspondence with $H^3(M, \Z(3)_D)$. 
\end{enumerate}
\end{proposition}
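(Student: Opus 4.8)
The plan is to assemble the three parts from results already in hand, treating (1) and (3) as direct citations and devoting the real effort to (2), which interpolates between them. Part (1) is precisely Theorem \ref{main prop of this section}, so I would simply invoke it. Part (3) is precisely Theorem \ref{delignetheorem}, so it too requires no new argument. What remains is part (2): the classification of bundle gerbes with connection (but no chosen curving) by $H^3(M, \Z(2)_D) \cong \check{H}^2(\mathcal{U}, \mathcal{D}^1)$, where $\mathcal{D}^1 = \underline{U(1)}_M \xrightarrow{d\log} \underline{\Omega}^1_M$ is the truncation of $\mathcal{D}^2$ obtained by discarding the $\underline{\Omega}^2_M$ term. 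The strategy for (2) is to mirror the proof of Theorem \ref{delignetheorem} verbatim, one column shorter.

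Concretely, first I would associate to a bundle gerbe $(P, Y, M)$ with connection $\nabla$ the pair $(g_{\alpha\beta\gamma}, -A_{\alpha\beta}) \in C^2(\mathcal{U}, \underline{U(1)}) \oplus C^1(\mathcal{U}, \underline{\Omega}^1)$ built exactly as in the construction preceding Proposition \ref{classindeligne}, now omitting the curving data $f_\alpha$. The verification that this pair is a total-complex cocycle, i.e.\ that it satisfies equations (\ref{eqn11}) and (\ref{eqn12}), is already carried out in the proof of Proposition \ref{classindeligne}, and crucially neither of those two computations uses the curving; only equation (\ref{eqn13}), which I am discarding, did. Hence $(g_{\alpha\beta\gamma}, -A_{\alpha\beta})$ defines a class in $\check{H}^2(\mathcal{U}, \mathcal{D}^1) \cong H^3(M, \Z(2)_D)$.

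Next I would check that this assignment descends to a well-defined homomorphism on stable isomorphism classes of bundle gerbes with connection, where the equivalence is taken to be stable isomorphisms preserving connections, again repeating the independence-of-choices argument of Theorem \ref{delignetheorem} with the shorter complex. Surjectivity is handled by the Hitchin--Chatterjee-type construction of Example \ref{hitchinchatconstruction} together with the concluding paragraph of Theorem \ref{delignetheorem}: given a cocycle $(g_{\alpha\beta\gamma}, -A_{\alpha\beta})$, I form the local bundle gerbe over the nerve cover with product twisted by $g_{\alpha\beta\gamma}$ and connection $d + A_{\alpha\beta}$, which realises the prescribed class. Injectivity then reduces to the statement that a bundle gerbe with connection whose $H^3(M, \Z(2)_D)$-class vanishes is trivial with trivial connection, the connection-only analogue of Proposition \ref{zerodeligneclass}.

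The main obstacle is exactly this injectivity step, that is, establishing the truncated version of Proposition \ref{zerodeligneclass}. A vanishing class now furnishes local data $(h_{\alpha\beta}, \mu_\alpha)$ splitting $g_{\alpha\beta\gamma}$ and $-A_{\alpha\beta}$ in the $\mathcal{D}^1$ double complex, and I would use the $h_{\alpha\beta}$ to assemble a trivialising line bundle $R \to Y$ and the $\mu_\alpha$ to equip it with a connection $\nabla_R$ satisfying $\delta(\nabla_R) = \nabla$, thereby exhibiting a connection-preserving stable isomorphism to the trivial bundle gerbe. Care is needed to glue the local forms into a genuine connection rather than merely matching them modulo exact terms, the slack that the now-absent curving equation (\ref{eqn13}) would otherwise have absorbed; once this gluing is handled, the remainder of the argument is formal and parallels the work already done for Deligne cohomology.
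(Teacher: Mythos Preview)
Your proposal is correct and matches the paper's treatment. The paper does not give a formal proof of this proposition at all: it is presented as a summary, with (1) and (3) being Theorems \ref{main prop of this section} and \ref{delignetheorem} respectively, and (2) dispatched in the sentence preceding the proposition by ``similar arguments to those above'' together with a citation to \cite{stuartthesis}. Your detailed plan for (2) --- truncating the complex, reusing the cocycle computations (\ref{eqn11})--(\ref{eqn12}) from Proposition \ref{classindeligne}, and adapting Proposition \ref{zerodeligneclass} for injectivity --- is precisely what ``similar arguments'' means here, so you have simply unpacked what the paper leaves to the reference.

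One point worth noting: immediately after the proposition, Remark \ref{equivalenceof1and2} observes that (1) and (2) are in fact equivalent, since $H^3(M,\Z)\cong H^3(M,\Z(2)_D)$ and Proposition \ref{connectionsontrivialbundlegerbes} guarantees that any stable isomorphism can be made connection-preserving. This gives a much shorter route to (2) than the one you outline --- you get (2) for free from (1) --- and in particular sidesteps the gluing obstacle you flag in your final paragraph. Your longer argument is not wrong, but the paper's remark shows it is unnecessary.
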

\begin{remark}\label{equivalenceof1and2} 
Conditions $(1)$ and $(2)$ in Proposition \ref{bgclassification} are, in fact, equivalent. By Proposition \ref{connectionsontrivialbundlegerbes}, any two stably isomorphic bundle gerbes will have equal Deligne class in $H^3(M, \Z(2)_D)$ when they are each endowed with a connection. Conversely, stably isomorphic bundle gerbes with connection will certainly be stably isomorphic. Indeed, $H^3(M, \Z)\cong H^3(M, \Z(2)_D)$. In light of this, we see that the condition that the stable isomorphism in Proposition \ref{outofnamesforthings} preserve connections is redundant, since any stable isomorphism can be given a connection that does this.
\end{remark}
	\begin{remark}\label{remark deligne cohomology of line bundles and functions} 
	It is a standard fact that line bundles over $M$ with connection are classified by $H^2(M, \Z(2)_D)$. Therefore Proposition \ref{bgclassification} (2) formalises the notion of a bundle gerbe \textit{with connection} being a `higher' version of a line bundle \textit{with connection}.
\end{remark}
\begin{remark}\label{rrr}
	It is possible to use the theory of \textit{differential characters} \cite{chernsimmons} to relate the three-curvature and holonomy of a bundle gerbe to its Deligne class. \end{remark}
 \begin{remark} \label{remark real part of DD class} Consider a bundle gerbe $(P, Y, M)$ with connective data $(\nabla, f)$ and three-curvature $\omega$. Let $\{U_{\alpha}\}_{\alpha \in I}$ be a  Leray cover of $M$ and consider the data $g_{\alpha \beta \gamma}, A_{\alpha \beta}$ and $f_{\alpha}$ constructed above. Then
$$\left(\tfrac{1}{2\pi i}\omega|_{U_\alpha}, dA_{\alpha \beta}, d\log g_{\alpha \beta \gamma}, \delta' \log g_{\alpha \beta \gamma}\right) \in \Pi\, \Omega^3(U_\alpha)\oplus \Pi \, \Omega^2(U_{\alpha \beta}) \oplus\Pi\, \Omega^1(U_{\alpha \beta \gamma}) \oplus \Pi \, \Omega^0(U_{\alpha \beta \gamma \delta})$$ and by standard homological arguments this defines a an element in the total complex of the double complex seen in the proof of the \v{C}ech-de Rham isomorphism in \cite{botttu}. It is then a consequence of this isomorphism that the image of $\left[\tfrac{1}{2\pi i}\omega \right]\in H^3_{\text{dR}}(M)$ in $H^3(M, \R)$ is equal to $$[\delta' \log g_{\alpha \beta \gamma} ] = {r}(DD(P, Y, M))$$ as claimed in Remark \ref{curvatureisrealpartofclass}.

\end{remark}

	\chapter{The cup product bundle gerbe}\label{cupproductchapter} 
	\begin{chapquote}{J.-L. Brylinski}
Given a line bundle $L$ over a manifold $X$ and a smooth function $f : X \to \C^*$, there is a gerbe attached
to $L$ and $f$... this gerbe is given by a cup-product construction.
	\end{chapquote}
		\indent Having discussed the prerequisite bundle gerbe theory, we can now focus on the central question of this thesis. The three remaining chapters are dedicated to the study of the \textit{cup product bundle gerbe} $\cupbg$, the \textit{basic bundle gerbe} $\basicbgnopi$, and the relationship between them. We remind the reader that this latter relationship is our claim that the pullback of the basic bundle gerbe by the Weyl map $$p:T\times \su\to SU(n)$$ is $SU(n)$-stably isomorphic to the cup product bundle gerbe. Our aim in this chapter is to introduce the cup product bundle gerbe, study its equivariance, and explicitly describe its connective data in such a way as to simplify our computations in the final chapter.\\
\indent Our first section, \textit{The geometry of $\su$,} is distinct from the rest of this chapter. There is, in fact, no mention of bundle gerbes in this section, and as such its placement here may appear somewhat mysterious. The necessity for Section $3.1$ will become clear by the end of Section $3.3$. In short, it offers a description of the space $\su$ in terms of $n$-tuples of orthogonal projections. This will, in turn, allow us to describe the connective data of our bundle gerbes very explicitly  in the language of projections. We encourage the reader to skim read this section, and return to it after its application in Section $3.3$. \\ 
\indent To define the cup product bundle gerbe and compute its connective data, we utilise a three-tiered approach, progressing from \textit{general cup product bundle gerbes}, to the \textit{$i$-th cup product bundle gerbes}, and finally, to the cup product bundle gerbe $(P_c, X)$ itself. In Section \ref{difference cup}, we define general cup product bundle gerbes (Subsection \ref{subsectiondefinitionofgeneral}), consider stable isomorphisms between general cup product bundle gerbes (Subsection \ref{subsectionstableisocups}), and calculate their connective data (Subsection \ref{subsectionconnectivedata}). Subsection \ref{subsectionstableisocups} will not be used in this chapter, but will be crucial to our later work. In Section \ref{ithbg}, we restrict our study to \textit{$i$-th cup product bundle gerbes} for $i=1, ..., n$ (Definition \ref{ex 1}), which are general cup product bundle gerbes over $T\times \su$. The results from Section \ref{difference cup} will be applied to calculate the connective data on the $i$-th cup product bundle gerbes (Proposition \ref{connective data on ith cup product bundle gerbe}). In the final section of this chapter, Section \ref{cupprodbgsection}, the cup product bundle gerbe $(P_c, X)$ is defined as the product of the $i$-th cup product bundle gerbes (Definition \ref{ex 1}), and results from Section \ref{ithbg} are used to compute its connective data (Proposition \ref{corollary data on cup prod}).\\
\indent The cup product bundle gerbe constructed in this chapter is, to our knowledge, the first explicit description of this bundle gerbe in the literature. It is based loosely on the {cup product bundle gerbe construction} due to S. Johnson \cite{stuartthesis}. Recall that a function $f:M\to \C^*$ is classified by its winding number in $H^1(M, \Z)$. Therefore, if $f, g:M\to \C^*$ have winding class $\alpha, \beta$ respectively, then $\alpha \cup \beta$ is a class in $H^2(M, \Z)$ that determines a line bundle. In  \cite{beilinson, delignecuponlinebundles}, Deligne and Be\u{\i}linson exploited this to describe a line bundle with connection over $M$ in terms of functions $f, g:M\to \C^*$, thereby providing a geometric interpretation of the cup product of functions. In 1994, Brylinski expanded upon this idea, using the fact that the cup product of $\alpha \in H^1(M, \Z)$ and $\beta \in H^2(M, \Z)$ is an element of $H^3(M, \Z)$, the group that classifies gerbes. Since any $\alpha \in H^1(M, \Z)$ is the winding class of some $g:M\to U(1)$, and $\beta$ is the first Chern class of some line bundle $P\to M$, Brylinski was able to define the \textit{cup product gerbe} from a function and a line bundle \cite{brylinski, brylinskimcglaughlin}. We recommend \cite{brylinskiarticle} for a refined definition of this gerbe. Notably to this work, in 2000, Brylinski considered a cup product bundle gerbe over the product $T\times {G}/{T}$, showing it pushed forward to a gerbe on $G$ by the Weyl map \cite{brylinskiarticle}. Later, in 2002, S. Johnson (under the supervision of M. K. Murray) defined the \textit{cup product bundle gerbe} of a smooth map $M\to S^1$ and line bundle $L\to M$ in his PhD thesis \cite{stuartthesis}. Our work will draw upon the ideas of Johnson and Brylinski to define the cup product bundle gerbe over $T\times \su$.

\section{The geometry of $\su$}\label{flagmanifold}
\indent \indent We will temporarily divert our attention away from bundle gerbes to consider the left coset space $\su$ for $T$ the space of diagonal matrices in $SU(n)$. We define $T$ formally below. 
\begin{definition}\label{definitionofourtorus}	Let $T$ be the subgroup of $SU(n)$ consisting of diagonal matrices. Explicitly, $T := \left\{\text{diag}(e^{2\pi i x_1}, ..., e^{2\pi i x_n}) \ |  \ x_1 + \cdots + x_n = 0\right\}.$\end{definition}
It is a standard fact that $\su$ is a manifold, see, for instance \cite[Proposition 4.2]{kobayashi}. In this section, we will introduce the manifold $\text{Proj}_n$ consisting of $n$-tuples of orthogonal projections, and show it is diffeomorphic to $\su$. We will then consider a line bundle $J_i\to \su$, and its isomorphism  with a line bundle $K_i\to \text{Proj}_n$. The line bundle $J_i\to \su$ will be key to defining the cup product bundle gerbe, and its isomorphism with $K_i\to \text{Proj}_n$ will be crucial to defining the connective data on the cup product bundle gerbe in terms of projections. 
This section can be skim-read, and returned to at a later time. Let us begin by defining $\text{Proj}_n$. 
\begin{definition}
	For $n\in \N$, let $\text{Proj}_n$ be the set of $n$-tuples of ordered orthogonal projections $(P_1, ..., P_n)$, where, for each $i$, $P_i:\C^n\to W_i$, and the spaces $W_i$ are mutually orthogonal one-dimensional subspaces of $\C^n$.
\end{definition}
\begin{remark}
	Clearly if $(P_1, ..., P_n)\in \text{Proj}_n$, then $\sum_{i=1}^n P_i = 1$ and $P_iP_j = 0$ if $i\neq j$. These facts will be used extensively to simplify later calculations.
\end{remark}
We next prove the first key proposition in this section. 

\begin{proposition}\label{SU(n)/T is Proj_n}
There is a bijection $\su\cong\textup{Proj}_n$.
\end{proposition}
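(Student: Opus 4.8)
The plan is to exhibit an explicit map out of $SU(n)$ and show it descends to the claimed bijection. First I would define $\Phi:SU(n)\to\textup{Proj}_n$ by sending a matrix $g$, with columns $v_1,\dots,v_n\in\C^n$, to the $n$-tuple $(P_1,\dots,P_n)$ where $P_i$ is the orthogonal projection onto the line $W_i:=\C v_i$; since $\|v_i\|=1$ this is the outer product $P_i=v_i v_i^{*}$. Because the columns of a unitary matrix form an orthonormal basis of $\C^n$, the subspaces $W_i$ are mutually orthogonal and one-dimensional, so indeed $\Phi(g)\in\textup{Proj}_n$.

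Next I would check that $\Phi$ is constant on the left cosets $gT$, so that it descends to a map $\bar\Phi:\su\to\textup{Proj}_n$. Writing $t=\mathrm{diag}(e^{2\pi i x_1},\dots,e^{2\pi i x_n})\in T$, right multiplication $g\mapsto gt$ rescales the $k$-th column of $g$ by the phase $e^{2\pi i x_k}$, and this phase cancels in the outer product $v_k v_k^{*}$. Hence $\Phi(gt)=\Phi(g)$, and $\bar\Phi$ is well-defined on $\su$.

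It then remains to prove $\bar\Phi$ is a bijection. For injectivity, suppose $\Phi(g)=\Phi(g')$ with columns $v_k,v_k'$. The equality $P_k=P_k'$ forces $\C v_k=\C v_k'$, so $v_k'=\lambda_k v_k$ for unit scalars $\lambda_k$; assembling the columns gives $g'=gt$ with $t=\mathrm{diag}(\lambda_1,\dots,\lambda_n)$ unitary, and the constraint $\det g=\det g'=1$ forces $\det t=1$, so $t\in T$ and $gT=g'T$. For surjectivity, given $(P_1,\dots,P_n)\in\textup{Proj}_n$ I would choose a unit vector $w_i$ spanning each image line $W_i$; mutual orthogonality makes $(w_1,\dots,w_n)$ an orthonormal basis, so the matrix $u$ with these columns is unitary, and rescaling a single column by the phase $\overline{\det u}$ yields $g\in SU(n)$ with $\Phi(g)=(P_1,\dots,P_n)$, without altering any of the lines $W_i$.

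There is no serious obstacle here; the argument is essentially bookkeeping. The one point deserving care is the interplay between the phase ambiguity inherent in recovering a vector from a rank-one projection and the determinant-one constraint: it is precisely this constraint that cuts the full diagonal torus $U(1)^n$ (the stabiliser one would obtain in $U(n)$) down to $T$, so that the fibres of $\Phi$ are exactly the cosets $gT$ rather than larger orbits. I would finally remark that both $\bar\Phi$ and the inverse construction are manifestly smooth, which upgrades this bijection to the diffeomorphism used implicitly in the subsequent sections.
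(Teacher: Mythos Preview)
Your argument is correct and in substance identical to the paper's, though the packaging differs. The paper invokes the general homogeneous-space theorem (Theorem~\ref{theorem}): it defines the conjugation action $g\cdot(P_1,\dots,P_n)=(gP_1g^{-1},\dots,gP_ng^{-1})$ on $\textup{Proj}_n$, checks transitivity, and computes the isotropy group at the basepoint $(O_1,\dots,O_n)$ to be $T$, whence the bijection $gT\mapsto(gO_1g^{-1},\dots,gO_ng^{-1})$. Your map $\bar\Phi$ is literally the same map, since $gO_ig^{-1}=v_iv_i^{*}$ when $v_i$ is the $i$-th column of $g$; your surjectivity argument is the paper's transitivity step, and your injectivity argument (the fibre of $\Phi$ over a point is a single coset $gT$) is the paper's isotropy computation unwound. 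The only difference is that the paper appeals to a ready-made theorem while you verify the bijection by hand; neither approach gains or loses anything, and both defer the smooth-manifold structure on $\textup{Proj}_n$ to a separate step (the paper's Proposition~\ref{treehouse}).
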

\begin{proof}
	We will utilise Theorem \ref{theorem} (which can be applied to sets and functions to give us a bijection, rather than a diffeomorphism). Consider the smooth left action of $SU(n)$ on $\text{Proj}_n$ defined by \begin{align}\label{action} g \cdot (P_1, ..., P_n) = (gP_1g^{-1}, ..., gP_ng^{-1}).\end{align} We claim that this action is transitive. To see this, let $(P_1, ..., P_n), (Q_1, ..., Q_n)\in \text{Proj}_n$, and choose unit vectors $p_i \in \text{im}(P_i)$, $q_i\in \text{im}(Q_i)$ for each $i$. Then $(p_i)_{i=1}^n$ and $(q_i)_{i=1}^n$ are orthonormal bases of $\C^n$, hence are related by an element of $g\in U(n)$ which we can scale to make an element of $SU(n)$. It follows that $g\cdot (P_1, ..., P_n) =(Q_1, ..., Q_n)$. So this action is indeed transitive, and makes $\text{Proj}_n$ into a homogeneous $SU(n)$-space. \\ \indent For $i=1, ..., n$, let $O_i$ be the orthogonal projection onto the span of the $i$-th standard basis vector of $\C^n$. Set $e:= (O_1, ..., O_n)\in \text{Proj}_n$. It easy to verify that the isotropy group of $e$ is the set of all diagonal matrices in $SU(n)$, i.e. $(\text{Proj}_n)_e = T$. Therefore by Theorem \ref{theorem}, $\su \cong \text{Proj}_n$ via the map $gT \mapsto (gO_1g^{-1}, ..., gO_ng^{-1})$.\end{proof} This result gives rise to the following important proposition.
\begin{proposition}\label{treehouse}
	The space $\text{Proj}_n$ has a unique manifold structure making the $SU(n)$ action $(\ref{action})$ smooth.
\end{proposition}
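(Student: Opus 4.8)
The plan is to transport the smooth structure from $\su$ to $\text{Proj}_n$ along the bijection established in Proposition \ref{SU(n)/T is Proj_n}, and then to use Theorem \ref{theorem} to pin down uniqueness. Write $F\colon\su\to\text{Proj}_n$ for the bijection $gT\mapsto(gO_1g^{-1},\dots,gO_ng^{-1})$. The first thing I would record is that $F$ is $SU(n)$-equivariant: for the standard left action $g'\cdot(gT)=(g'g)T$ on $\su$ we have
$$F(g'\cdot gT)=\bigl((g'g)O_1(g'g)^{-1},\dots\bigr)=\bigl(g'(gO_1g^{-1})g'^{-1},\dots\bigr)=g'\cdot F(gT),$$
so $F$ intertwines the coset action with the conjugation action $(\ref{action})$. (One should also note this is consistent, since a diagonal $t\in T$ commutes with each coordinate projection $O_i$, so $gt\cdot e=g\cdot e$ and the formula descends to left cosets.)

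For existence I would declare $U\subseteq\text{Proj}_n$ to be open precisely when $F^{-1}(U)$ is open and pull back the charts of the manifold $\su$ through $F$; this makes $F$ a diffeomorphism by construction, and since $\su$ is a compact manifold, $\text{Proj}_n$ inherits a Hausdorff, second-countable smooth structure. Smoothness of the action then follows formally: writing $a$ for the smooth coset action $SU(n)\times\su\to\su$ and $b$ for the conjugation action $SU(n)\times\text{Proj}_n\to\text{Proj}_n$, equivariance of $F$ gives $b=F\circ a\circ(\mathrm{id}_{SU(n)}\times F^{-1})$, a composite of smooth maps.

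For uniqueness I would argue as follows. Suppose $\mathcal{S}_1,\mathcal{S}_2$ are two smooth structures on $\text{Proj}_n$ for which the action $(\ref{action})$ is smooth. The action is already transitive at the level of sets (this was shown in the proof of Proposition \ref{SU(n)/T is Proj_n}), so with either structure $\text{Proj}_n$ becomes a homogeneous $SU(n)$-space. Since the isotropy group of $e=(O_1,\dots,O_n)$ is $T$, Theorem \ref{theorem} applies to each structure and shows that the orbit map $gT\mapsto g\cdot e$ — which coincides with $F$ in both cases — is an equivariant diffeomorphism; write $F_j\colon\su\to(\text{Proj}_n,\mathcal{S}_j)$ for this diffeomorphism, $j=1,2$. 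Then $F_2\circ F_1^{-1}\colon(\text{Proj}_n,\mathcal{S}_1)\to(\text{Proj}_n,\mathcal{S}_2)$ is a diffeomorphism that is the identity on the underlying set, which forces $\mathcal{S}_1=\mathcal{S}_2$.

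I expect the only genuine subtlety to lie in the uniqueness step, namely being careful that the bare hypothesis ``makes the action smooth'' is enough to invoke Theorem \ref{theorem}. The point to emphasise is that transitivity is a purely set-theoretic property already in hand, so any smooth structure rendering the action smooth automatically presents $\text{Proj}_n$ as a homogeneous $SU(n)$-space with isotropy $T$ at $e$; the rigidity of homogeneous spaces then removes all remaining freedom. As throughout, I take ``manifold structure'' to mean Hausdorff and second countable, so that Theorem \ref{theorem} is genuinely applicable.
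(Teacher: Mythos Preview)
Your argument is correct and follows essentially the same route as the paper: the paper simply invokes \cite[Proposition~7.21]{LEE}, which packages exactly the existence-and-uniqueness statement for a transitive action with closed isotropy, whereas you unpack that proposition by transporting the smooth structure along $F$ for existence and appealing to Theorem~\ref{theorem} twice for uniqueness. The content is the same; you have just written out the proof of the cited result in this special case.
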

\begin{proof}
This follows from \cite[Proposition 7.21]{LEE}, by noting that the action $(\ref{action})$ is transitive from the above proof and the isotropy subgroup of $e\in \text{Proj}_n$, namely $T$, is a closed Lie subgroup of $SU(n)$.
\end{proof}
\begin{remark}
The reader familiar with flag manifolds will recognise $SU(n)/T$ as the manifold of all full flags in $\C^n$, i.e.  the space of $n$-tuples $(V_1, ..., V_n)$ of subspaces (called \textit{flags}) with $V_1\subset \cdots \subset V_n = \C^n$ and dim$V_k = k$. Equivalently, we can identify each flag $(V_1, ..., V_n)$ with the $n$-tuple of mutually orthogonal $1$-dimensional subspaces of $\C^n$, $(V_1, V_2\cap V_1^{\perp}, \ldots, V_n\cap V_{n-1}^{\perp})$. Such an $n$-tuple can, in turn, be identified with the $n$-tuple of orthogonal projections onto each $1$-dimensional subspace, yielding the bijection in Proposition \ref{SU(n)/T is Proj_n}.
\end{remark}

The remainder of this section is dedicated to introducing the line bundles $J_i\to \su$, and proving they are isomorphic to line bundles $K_i\to \text{Proj}_n$ for each $i$. We will show this using an intermediary isomorphism with a line bundle $X_i\to \text{Proj}_n$. These results will be used implicitly throughout the later parts of this chapter. Let us begin by defining $J_i\to \su$.
\begin{definition}\label{salad} The homomorphism ${p}_i: T\to S^1$ is defined by $$p_i\left(\text{diag}(e^{2\pi i x_1}, ..., e^{2\pi i x_n})\right) =e^{2\pi i x_i}.$$\end{definition}
\begin{definition}\label{definitionofithcupproduclinebundle}
For $i=1, ..., n$, let $J_i  := \C\times_{p_i} SU(n)$ be the set of equivalence class in $\C\times SU(n)$ under the relation $(z, s) \sim_{p_i} (p_i(t^{-1})z, st)$ for all $t\in T$. 
\end{definition}
Note that $J_i\to SU(n)/T$ is the $SU(n)$-homogeneous vector bundle associated to the linear representation on $T$ defined by $p_i$, following the construction from Section \ref{homogeneous vector bundles}. Here the $SU(n)$ action on $SU(n)/T$ is defined by left multiplication.\\
\indent We next introduce line bundles $X_i\to \text{Proj}_n$, and show they are isomorphic to $J_i\to \su$. It will then be shown that $X_i\to \text{Proj}_n$ is isomorphic to $K_i\to \text{Proj}_n$, thereby acting as our intermediary isomorphism to showing that $J_i\to \text{Proj}_n$ is isomorphic to $K_i\to \text{Proj}_n$.
\begin{definition}\label{definitionxi}
For $i\in \{1, ..., n\}$, let $e_i$ be the $i$-th standard basis vector of $\C^n$ and $O_i$ be orthogonal projection onto $\text{span}(e_i)$. Define $$X_i= \bigcup_{g\in SU(n)} \text{span}(ge_i)\times \left\{\left(gO_1g^{-1}, ..., gO_ng^{-1}\right)\right\} \subset \C^n \times \text{Proj}_n.$$ 
\end{definition}
\begin{remark}
	Projection onto the second factor makes $X_i\to \text{Proj}_n$ into a line bundle. This line bundle is $SU(n)$-equivariant with respect to the conjugation action on $\text{Proj}_n$ and the $SU(n)$-action on $\C^n$ defined by (left) multiplication.
\end{remark}
\begin{proposition}\label{XandJ}
For each $i= 1, ..., n$, there is an $SU(n)$-equivariant line bundle isomorphism from $J_i\to \su$ to $X_i\to \textup{Proj}_n$.
\end{proposition}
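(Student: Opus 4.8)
The plan is to write down the obvious candidate map explicitly and then verify its properties one at a time. Define $\Phi\colon J_i\to X_i$ on representatives by
\[
\Phi\big([(z,g)]\big)=\big(z\,ge_i,\ (gO_1g^{-1},\ldots,gO_ng^{-1})\big),
\]
which sends the class of $(z,g)$ to the vector $z\,ge_i\in\text{span}(ge_i)$ lying over the point $(gO_1g^{-1},\ldots,gO_ng^{-1})\in\text{Proj}_n$. By construction $\Phi$ covers the bijection $\su\cong\text{Proj}_n$ from Proposition \ref{SU(n)/T is Proj_n}, namely $gT\mapsto(gO_1g^{-1},\ldots,gO_ng^{-1})$.

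First I would establish well-definedness, which is the conceptual heart of the argument: it is exactly here that the representation $p_i$ used to build $J_i$ is matched to the geometry of $X_i$. Replacing $(z,g)$ by an equivalent representative $(p_i(t^{-1})z,gt)$ for $t\in T$, the first component becomes $p_i(t^{-1})z\cdot(gt)e_i$. Since $t$ is diagonal we have $te_i=p_i(t)e_i$ (Definition \ref{salad}), so $(gt)e_i=p_i(t)\,ge_i$, and the scalars $p_i(t^{-1})$ and $p_i(t)$ cancel to return $z\,ge_i$. For the second component, $t$ preserves each $\text{span}(e_j)$ and hence commutes with each $O_j$, so $(gt)O_j(gt)^{-1}=g(tO_jt^{-1})g^{-1}=gO_jg^{-1}$. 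Thus $\Phi$ is independent of the chosen representative.

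Next I would check the remaining structural properties. Fibrewise, over a fixed coset $gT$ the map is $z\mapsto z\,ge_i$, a linear isomorphism $\C\xrightarrow{\sim}\text{span}(ge_i)$ because $ge_i\neq 0$. For $SU(n)$-equivariance, I would use the left action $h\cdot[(z,g)]=[(z,hg)]$ on $J_i$ (covering left multiplication on $\su$) and the action on $X_i$ induced by conjugation on $\text{Proj}_n$ together with multiplication on $\C^n$; a direct substitution gives $\Phi(h\cdot[(z,g)])=\big(z\,hge_i,(hgO_jg^{-1}h^{-1})_j\big)=h\cdot\Phi([(z,g)])$. Smoothness I would verify using local sections of the principal $T$-bundle $SU(n)\to\su$: the map $\C\times SU(n)\to\C^n\times\text{Proj}_n$, $(z,g)\mapsto(z\,ge_i,(gO_jg^{-1})_j)$, is manifestly smooth and descends through the quotient, and precomposing with a local section yields a smooth local expression for $\Phi$.

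Finally, since $\Phi$ is a smooth bundle map covering the diffeomorphism $\su\cong\text{Proj}_n$ and restricts to a linear isomorphism on every fibre, it is automatically a smooth vector bundle isomorphism (the inverse is smooth because it amounts to inverting a nowhere-zero scalar varying smoothly over the base); combined with the equivariance just checked, this is the desired $SU(n)$-equivariant line bundle isomorphism. I expect the only genuine subtlety to be the well-definedness computation, where the cancellation of the $p_i$-factors is precisely the point, with a secondary (but routine) bookkeeping burden in producing smooth local trivialisations from sections of $SU(n)\to\su$.
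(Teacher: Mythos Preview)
Your proof is correct and follows exactly the same approach as the paper: the paper writes down the identical map $[z,g]\mapsto (zge_i,(gO_1g^{-1},\ldots,gO_ng^{-1}))$ and simply asserts it is a well-defined $SU(n)$-equivariant diffeomorphism covering $\su\xrightarrow{\sim}\text{Proj}_n$, leaving the verifications as an exercise. You have carried out precisely those verifications (well-definedness via the $p_i$-cancellation, fibrewise linearity, equivariance, and smoothness), so your proposal is a fleshed-out version of the paper's own argument.
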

\begin{proof}
It is easy to verify that the map $J_i\to X_i$, $
	[z, g]\mapsto \left(zge_i, \left(gO_1g^{-1}, ..., gO_ng^{-1}\right)\right)$
is a well-defined $SU(n)$-equivariant diffeomorphism covering $\su \xrightarrow{\sim} \text{Proj}_n$.
\end{proof}

Note that we can write $\text{span}(ge_i) = \text{im}(gO_ig^{-1})$ in Definition \ref{definitionxi}. This idea motivates us o the line bundles $K_i\to \text{Proj}_n$, which will be shown to be isomorphic to $J_i\to SU(n)/T$ using Proposition \ref{XandJ}. 

\begin{definition}
For each $i=1, ..., n$, let  $K_i \to \text{Proj}_n$ be the line bundle defined by $K_i = \{(w, P_1, ..., P_n) \ | \ w\in \text{im}P_i \}\subset \C^n\times \text{Proj}_n.$
\end{definition}
It follows that the fibre of $K_i$ at a point $(P_1, ..., P_n)$ is im$(P_i)$. Clearly $K_i\to \text{Proj}_n$ is $SU(n)$-equivariant with respect to the $SU(n)$-action on $K_i$ defined by $g\cdot (v, P_1, ..., P_n) = (gv, gP_1g^{-1}, ..., gP_ng^{-1})$.
\begin{proposition}
For each $i$, there is an $SU(n)$-equivariant line bundle isomorphism from $J_i\to \su$ to $K_i\to \textup{Proj}_n$. 
\end{proposition}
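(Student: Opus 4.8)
The plan is to route the isomorphism through the intermediary bundle $X_i \to \text{Proj}_n$, exploiting Proposition \ref{XandJ}, which already furnishes an $SU(n)$-equivariant line bundle isomorphism $J_i \to X_i$. It therefore suffices to produce an $SU(n)$-equivariant isomorphism $X_i \to K_i$ and compose. In fact I expect the two line bundles to coincide as subsets of $\C^n \times \text{Proj}_n$, so that the identity map does the job.

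First I would record the elementary linear-algebra fact that for any $g \in SU(n)$ the matrix $gO_ig^{-1}$ is orthogonal projection onto $\text{span}(ge_i)$, whence
$$\text{span}(ge_i) = \text{im}(gO_ig^{-1}).$$
This is the observation already flagged in the text preceding the statement, and it is the crux of identifying the fibre of $X_i$ with the fibre of $K_i$ over a common point of $\text{Proj}_n$.

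Next I would establish the set equality $X_i = K_i$ by double inclusion. For $X_i \subseteq K_i$, take a point $\left(v, (gO_1g^{-1},\ldots,gO_ng^{-1})\right) \in X_i$; writing $P_j = gO_jg^{-1}$, the displayed identity gives $v \in \text{span}(ge_i) = \text{im}(P_i)$, so the point lies in $K_i$. For the reverse inclusion, take $(w,(P_1,\ldots,P_n)) \in K_i$ and invoke transitivity of the $SU(n)$-action on $\text{Proj}_n$ (Proposition \ref{SU(n)/T is Proj_n}) to write $(P_1,\ldots,P_n) = (gO_1g^{-1},\ldots,gO_ng^{-1})$ for some $g \in SU(n)$; then $w \in \text{im}(P_i) = \text{span}(ge_i)$, placing the point in $X_i$. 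As the projections onto $\text{Proj}_n$ and the fibrewise vector-space structures plainly agree, this shows $X_i = K_i$ as line bundles over $\text{Proj}_n$.

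Finally I would observe that the two $SU(n)$-actions — conjugation on $\text{Proj}_n$ together with left multiplication on $\C^n$ — are literally the same on $X_i$ and on $K_i$, so the identity map is an $SU(n)$-equivariant line bundle isomorphism $X_i \to K_i$. Composing with the isomorphism of Proposition \ref{XandJ} then yields the desired $SU(n)$-equivariant isomorphism $J_i \to K_i$. I do not anticipate a genuine obstacle: the only content is the fibre identification $\text{span}(ge_i) = \text{im}(gO_ig^{-1})$ together with the appeal to transitivity, both immediate. Should one prefer not to assert a literal equality of total spaces, the same double-inclusion argument shows directly that the inclusion-induced map is a well-defined $SU(n)$-equivariant bundle isomorphism, which is all that is required.
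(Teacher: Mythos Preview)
Your proof is correct and follows the same overall shape as the paper's: both route through $X_i$ and invoke Proposition \ref{XandJ} for the leg $J_i \cong X_i$. The only difference is in the step $X_i \cong K_i$. The paper appeals to Theorem \ref{thrmbundles}, the categorical equivalence between homogeneous vector bundles over $\text{Proj}_n \cong SU(n)/T$ and linear representations of $T$: taking $(O_1,\ldots,O_n)$ as basepoint, both $X_i$ and $K_i$ have fibre $\text{span}(e_i)$ with the same $T$-action $v \cdot t = p_i(t^{-1})v$, so they are isomorphic as $SU(n)$-homogeneous line bundles. You instead argue directly that $X_i$ and $K_i$ coincide as subsets of $\C^n \times \text{Proj}_n$ via the fibre identification $\text{span}(ge_i) = \text{im}(gO_ig^{-1})$ and transitivity. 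Your route is more elementary and makes the equivariance immediately visible; the paper's route exercises the representation-theoretic machinery set up in Section \ref{homogeneous vector bundles}, which is perhaps the point given that chapter's stated purpose.
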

\begin{proof}
By setting $(O_1, ..., O_n)\in \text{Proj}_n$ to be the basepoint of $\text{Proj}_n$, it follows from Theorem \ref{thrmbundles} that $K_i$ and $X_i$ are isomorphic as line bundles over $\text{Proj}_n$. Therefore $J_i\to \su$ is isomorphic to $K_i\to \text{Proj}_n$ by Proposition \ref{XandJ}.
\end{proof}

Throughout the later parts of this chapter, we will continue to write $J_i\to \su$, but will in practice work with the line bundles $K_i\to \text{Proj}_n$. This is advantageous in that our computations will take place in the space of $n$-tuples of orthogonal projections, instead of the more abstract space $\su$. This concludes our discussion of $\su$ for the time being. 

\section{General cup product bundle gerbes}\label{difference cup}
%
\indent \indent In this section, we will consider a non-standard bundle gerbe construction, which, for lack of a better name, we call the \textit{general cup product bundle gerbe construction.} Just as the bundle gerbe constructions in Section \ref{sectionconstructions} relied on line bundle constructions from Section \ref{section constructions}, so too does the general cup product bundle gerbe. Namely, it relates to \textit{function powers} of line bundles (Example \ref{tereq}). \\
\indent General cup product bundle gerbes will have two key applications in this thesis. As we have mentioned, they will ultimately allow us to define the cup product bundle gerbe (via the $i$-th cup product bundle gerbes), and compute its connective data. The second, less obvious, application of general cup product bundle gerbes will be to the computations in our final chapter. As we shall see, the theory of general cup product bundle gerbes, and in particular the work from Subsection \ref{subsectionstableisocups}, will significantly simplify our central research question.
\subsection{Definition}\label{subsectiondefinitionofgeneral}
\indent \indent Let us begin with some motivation. Recall that smooth maps $f:M\to S^1$ are classified by $H^1(M, \Z)$, and line bundles with connection are classified by $H^{2}(M, \Z)$. We saw in the previous chapter that bundle gerbes are classified by $H^3(M, \Z)$. Therefore we expect, given a smooth map $f:M\to S^1$ and a line bundle $L\to M$, to be able to construct a bundle gerbe. This is achieved by Johnson's construction in \cite{stuartthesis}, and is the basis for our more general cup product bundle gerbe which we describe now.\\
\indent Consider a surjective submersion $Y\to M$, a line bundle $L\to N$, and a smooth map $g:Y^{[2]}\to \Z$ satisfying $\delta(g)=0$. By Example \ref{tereq}, $L^g\to Y^{[2]}\times N\cong (Y\times N)^{[2]}$ is a line bundle. We claim that the triple $(L^g, Y\times N, M\times N)$ is a bundle gerbe. To see this, we need only show that there is an associate multiplication on this triple. This follows by noting that $L^{g(y_1, y_2)}\otimes L^{g(y_2, y_3)}= L^{g(y_1, y_3)}$ for all $(y_1, y_2, y_3)\in Y^{[3]}$, since $\delta(g)= 0$, therefore this is indeed a bundle gerbe. 

\begin{definition}\label{definition of general cup product} 
Let $Y\to M$ be a surjective submersion, $L\to N$ be a line bundle, and $g:Y^{[2]}\to \Z$ be a smooth map satisfying $\delta(g) = 0$. The bundle gerbe $(L^g, Y\times N, M\times N)$ is called the \textit{general cup product bundle gerbe of $L\to N$ and $g$,} depicted in Figure \ref{figapple}. 	\begin{align*} \begin{array}{ccc}
	L^{g}      &          &        \\
	\downarrow &         &         \\
	Y^{[2]}\times N & \stackrel{}{\dra} &  Y\times N\\
	&               &      \downarrow \\
	&               &       M\times N     \\
	\end{array} \end{align*}
	\captionof{figure}{A general cup product bundle gerbe}
	\label{figapple}
\end{definition}

\begin{remark}
	The reader familiar with Johnson's cup product bundle gerbes will recall that they take the form $(f\cup L, f^{-1}(\R), M)$ for $f:M\to S^1$ a smooth map and $L\to M$ a line bundle. Fibrewise, the total space $f\cup L$ is defined by $(f\cup L)_{(m, x, y)} = L^{x-y}_{m}$. This is in fact the pullback of a general cup product bundle gerbe $(L^d, f^{-1}(\R)\times M, M\times M)$ by the map $M\to M\times M$, $m\mapsto (m, m)$, where $d$ is the difference map $d(x, y) = x-y$. 
\end{remark}
\subsection{Stable isomorphisms}\label{subsectionstableisocups}

\indent \indent Next, we will consider criterion for general cup product bundle gerbes to be stably isomorphic to one another. Although these results will not be referenced again in this chapter, they are critically important to our final chapter. As we shall see, both the cup product bundle gerbe and the pullback of the basic bundle gerbe are stably isomorphic to a reduced product of $n$ general cup product bundle gerbes. Therefore, in order to understand if these two bundle gerbes are stably isomorphic, we should consider when, more generally, reduced products of general cup product bundle gerbes are stably isomorphic. The reader should feel free to skip ahead to Subsection \ref{subsectionconnectivedata} if they wish, and return to these results after their application in the final chapter. \\
\indent This subsection consists of a key proposition followed by two corollaries. This proposition, Proposition \ref{tererererer}, describes a condition for a general cup product bundle gerbe to be trivial. Then, in Corollary \ref{pop}, we use this result to describe a condition for two general cup product bundle gerbes $(L^f, Y\times N, M\times N)$ and $(L^g, X\times N, M\times N)$ to be stably isomorphic. Finally, our main result of this subsection, Corollary \ref{cormaincupproductresult}, describes a condition for the reduced products of general cup product bundle gerbes to be stably isomorphic. 
\begin{proposition}\label{tererererer}
Consider a general cup product bundle gerbe $\left(L^g, Y\times N, M\times N\right)$. If there exists a smooth map $h: Y\to \Z$ such that \begin{align}\label{tree} g(y_1, y_2) = h(y_2)-h(y_1)\end{align} for all $(y_1, y_2)\in Y^{[2]}$, then the bundle gerbe $\left(L^g, Y\times N, M\times N\right)$ is trivial. 
\end{proposition}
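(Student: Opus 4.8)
The plan is to show that the bundle gerbe $\left(L^g, Y\times N, M\times N\right)$ admits a trivialising line bundle, i.e. a line bundle $R\to Y\times N$ together with a bundle gerbe isomorphism $\left(L^g, Y\times N\right)\xrightarrow{\sim}\left(\delta(R), Y\times N\right)$. The factorisation hypothesis $g(y_1,y_2)=h(y_2)-h(y_1)$ is precisely a statement that $g$ is a $\delta$-coboundary of the integer-valued function $h$ on $Y$, so my guiding expectation is that the trivialising line bundle is the function power $R:=L^h\to Y\times N$ built from $h:Y\to\Z$ via the construction of Example \ref{tereq} (with $M$ there replaced by $N$ and $N$ replaced by $Y$), so that $R|_{(y,n)}=L_n^{h(y)}$.

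First I would compute $\delta(R)$ explicitly over $(Y\times N)^{[2]}\cong Y^{[2]}\times N$. By Definition \ref{deltadefinition}(2), $\delta(R)=\pi_1^{-1}(R)\otimes\pi_2^{-1}(R)^*$, and fibrewise at a point $(y_1,y_2,n)$ this is
\begin{align*}
\delta(R)|_{(y_1,y_2,n)}&\cong R|_{(y_1,n)}\otimes R|_{(y_2,n)}^*\\
&\cong L_n^{h(y_1)}\otimes\left(L_n^{h(y_2)}\right)^*\cong L_n^{h(y_1)-h(y_2)}=L_n^{-g(y_1,y_2)}.
\end{align*}
Here I would be careful with the orientation of the difference coming from the dual: the construction in Example \ref{tereq} and the definition of $\delta$ fix the sign, and I expect the natural choice to match $L^g$ either directly or after taking a dual, which I can absorb by replacing $h$ with $-h$ if needed. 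Thus $\delta(R)\cong L^g$ (or $\cong(L^g)^*$) fibrewise over $(Y\times N)^{[2]}$, giving a line bundle isomorphism.

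The remaining step is to upgrade this fibrewise line bundle isomorphism to a genuine bundle gerbe isomorphism, which requires checking that it intertwines the bundle gerbe multiplication on $L^g$ (induced, as in Definition \ref{definition of general cup product}, from the relation $L^{g(y_1,y_2)}\otimes L^{g(y_2,y_3)}=L^{g(y_1,y_3)}$ since $\delta(g)=0$) with the natural multiplication on the trivial bundle gerbe $\delta(R)$ of Example \ref{trivialbundlegerbe}. Both multiplications arise from the same underlying tensor contractions of powers of $L$ over $N$, governed by the additive cocycle identities $g(y_1,y_2)+g(y_2,y_3)=g(y_1,y_3)$ and $(h(y_1)-h(y_2))+(h(y_2)-h(y_3))=h(y_1)-h(y_3)$, so compatibility should follow by associativity and functoriality of the tensor product, exactly as in Example \ref{trivialbundlegerbe}.

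The main obstacle I anticipate is bookkeeping rather than conceptual: ensuring the sign conventions in $\delta$ (Definition \ref{deltadefinition}) and the exponent arithmetic in the function-power construction (Example \ref{tereq}) line up so that $\delta(L^h)$ is genuinely $L^g$ and not its dual, and verifying carefully that the canonical isomorphisms $L_n^a\otimes L_n^b\cong L_n^{a+b}$ are associative and natural enough that the multiplication diagram of Definition \ref{bgdefn} commutes on the nose. Once the trivialising bundle $R=L^h$ is identified, these verifications are routine, and the conclusion that $\left(L^g,Y\times N,M\times N\right)$ is trivial in the sense of the triviality definition follows immediately.
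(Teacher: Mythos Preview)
Your proposal is correct and takes essentially the same approach as the paper: set $R=L^h\to Y\times N$ and verify $\delta(L^h)\cong L^g$ fibrewise, with compatibility of multiplication following from the tensor arithmetic of powers of $L$. One bookkeeping note on the sign issue you flagged: in the paper's convention (Definition~\ref{fprod}(3)) the map $\pi_i$ \emph{omits} the $i$-th coordinate, so $\pi_1(y_1,y_2,n)=(y_2,n)$ and hence $\delta(R)|_{(y_1,y_2,n)}=R|_{(y_2,n)}\otimes R|_{(y_1,n)}^*=L_n^{h(y_2)-h(y_1)}=L_n^{g(y_1,y_2)}$ directly, with no need to replace $h$ by $-h$.
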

\begin{proof}
Assume such an $h$ exists. Then $L^h\to Y\times N$ is a line bundle and $$\delta\left(L^h\right)_{(y_1, y_2, n)} = L_n^{h(y_2)-h(y_1)} = (L^g)_{(y_1, y_2, n)}$$ for all $(y_1, y_2, n)\in Y^{[2]}\times N$. So $L^g\to Y^{[2]}\times N$ and  $\delta\left(L^h\right)\to Y^{[2]}\times N$ are isomorphic. This line bundle isomorphism clearly preserves the bundle gerbe product. Therefore $\left(\delta\left(L^h\right), Y\times N, M\times N\right)$ is isomorphic to $\left(L^g, Y\times N, M\times N\right)$ and the general cup product bundle gerbe is trivial.
\end{proof}
\begin{remark}
	In Proposition \ref{tererererer}, it is crucial that the function $h$ is \textit{integer-valued}. Since $\delta(g) = 0$ by assumption, by exactness of the fundamental complex it will always be the case that there exists $f: Y\to \R$ satisfying $\delta(f) = g$. However, it is not possible to define the trivialising line bundle as we have done above unless this function is integer-valued, which in general will not be the case. 
\end{remark}
\begin{corollary}\label{pop}
	Consider two general cup product bundle gerbes $\left(L^f, Y\times N, M\times N\right)$ and $(L^g, X\times N, M\times N)$. If there exists a smooth map $h: (X\times_M Y)\times N\to \Z$ such that $$f(y_1, y_2)-g(x_1, x_2) = h(x_2, y_2, n)- h(x_1, y_1, n)$$ for $(x_1, y_1, x_2, y_2, n)\in (X\times_M Y)^{[2]}\times N$, then $$\left(L^f, Y\times N, M\times N\right) \cong_{\textup{stab}} (L^g, X\times N, M\times N)$$ with trivialising line bundle $\pi_N^{-1}(L)^h\to (X\times_M Y)\times N$ for $\pi_N: (X\times_MY)\times N\to N$ the natural projection.
\end{corollary}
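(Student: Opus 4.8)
The plan is to unwind Definition \ref{stableisodefn}, which reduces the claim to showing that the bundle gerbe $(L^f, Y\times N)^*\otimes(L^g, X\times N)$ is trivial and that $\pi_N^{-1}(L)^h$ is a trivialising line bundle. The first step is to identify this dual-tensor-product concretely. Dualising the first factor replaces $L^f$ by $(L^f)^* = L^{-f}$, and forming the product bundle gerbe produces the surjective submersion $(Y\times N)\times_{M\times N}(X\times N)$. I would first record, directly from Definition \ref{fprod}, the canonical identifications
\begin{align*}
(Y\times N)\times_{M\times N}(X\times N) \;\cong\; (X\times_M Y)\times N, \qquad \bigl((X\times_M Y)\times N\bigr)^{[2]} \;\cong\; (X\times_M Y)^{[2]}\times N,
\end{align*}
so that, writing $Z := X\times_M Y$ with its submersion $Z\to M$, the product bundle gerbe lives over $Z\times N\to M\times N$.

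Next I would compute the line bundle of the product bundle gerbe fibrewise. Over the point $(x_1,y_1,x_2,y_2,n)$ it is the tensor product of the pulled-back fibres of $L^{-f}$ and $L^g$, which by the function power construction (Example \ref{tereq}) equals $L_n^{\,g(x_1,x_2)-f(y_1,y_2)}$. Setting $G\colon Z^{[2]}\to\Z$ by $G\bigl((x_1,y_1),(x_2,y_2)\bigr) := f(y_1,y_2)-g(x_1,x_2)$, this exhibits the (dual of the) product as the general cup product bundle gerbe $(L^G, Z\times N, M\times N)$ of Definition \ref{definition of general cup product}; I would check that $G$ is a legitimate power function, i.e. $\delta(G)=0$ on $Z^{[3]}$, which is immediate since $\delta(G)=\delta(f)\circ\mathrm{pr}_Y-\delta(g)\circ\mathrm{pr}_X=0$ as $f$ and $g$ are $\delta$-closed by hypothesis, and that the product of the dual and ordinary bundle gerbe multiplications is exactly the multiplication of $(L^G, Z\times N)$. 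Since triviality is invariant under passing to the dual bundle gerbe (Propositions \ref{DDclass} and \ref{triviffclassiszero}) and stable isomorphism is symmetric, it is harmless to work with this choice of sign for $G$.

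The final step applies the triviality criterion of Proposition \ref{tererererer} to $(L^G, Z\times N, M\times N)$. The hypothesis of the corollary says precisely that $G\bigl((x_1,y_1),(x_2,y_2)\bigr) = h(x_2,y_2,n)-h(x_1,y_1,n)$, i.e. that $G$ is a $\delta$-coboundary of the integer-valued $h$ along $Z$. Mimicking the proof of Proposition \ref{tererererer}, but allowing $h$ to depend on the $N$-coordinate so that the relevant construction is the function power of Example \ref{tereq2} applied to $\pi_N^{-1}(L)\to Z\times N$, I would set $R := \pi_N^{-1}(L)^h$ and compute, via Definition \ref{deltadefinition}, that $\delta(R)_{(z_1,z_2,n)} = L_n^{\,h(z_2,n)-h(z_1,n)} = (L^G)_{(z_1,z_2,n)}$, the isomorphism manifestly respecting the bundle gerbe multiplication. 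Hence the product bundle gerbe is trivial with trivialising line bundle $\pi_N^{-1}(L)^h$, and the stable isomorphism follows. The main obstacle I anticipate is organisational rather than conceptual: keeping the several canonical fibre-product identifications and the sign conventions of the dual and of $\delta$ consistent throughout, so that the computed $\delta(R)$ matches the product bundle gerbe's line bundle on the nose; once the product is correctly recast as $(L^G, Z\times N)$, the triviality is a direct translation of Proposition \ref{tererererer}.
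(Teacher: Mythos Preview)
Your approach is correct and is precisely the argument the paper leaves implicit: the corollary is stated without proof as an immediate consequence of Proposition~\ref{tererererer}, and your unwinding---identifying $(L^f)^*\otimes L^g$ as a general cup product bundle gerbe over $(X\times_M Y)\times N$ and then invoking the triviality criterion with the mild extension that $h$ may depend on the $N$-coordinate via Example~\ref{tereq2}---is exactly how one fills in the details. Your caveat about sign and ordering conventions is well placed: the discrepancy between $L^G$ and $L^{-G}$ corresponds to the direction of the stable isomorphism and is absorbed by the symmetry you note.
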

%
%

\begin{corollary}\label{cormaincupproductresult}
	For each $i=1, ..., n$, let $(K_i^{f_i}, Y\times N, M\times N)$ and $(K_i^{g_i}, X\times N, M\times N)$ be general cup product bundle gerbes. If, for each $i= 1, ..., n$, there exists a smooth function $h_i: (X\times_M Y)\times N\to \Z$ such that \begin{align}\label{ddssss} f_i(y_1, y_2)-g_i(x_1, x_2) = h_i(x_2, y_2, n)-h_i(x_1, y_1, n)\end{align} for all $(x_1, y_1, x_2, y_2, n) \in (X\times_M Y)^{[2]}\times N$, then \begin{align}\label{fdsfss}\sideset{}{_\mathrm{red}}\bigotimes_{i=1}^n  (K_i^{f_i}, Y\times N, M\times N) \cong_{\textup{stab}} \sideset{}{_\mathrm{red}}\bigotimes_{i=1}^n (K_i^{g_i}, X\times N, M\times N)\end{align}with trivialising line bundle $$\Motimes_{i=1}^n \pi_N^{-1}(K_i)^{h_i} \to (X\times_M Y) \times N$$ for $\pi_N: (X\times_M Y)\times N\to N $ the natural projection. 
\end{corollary}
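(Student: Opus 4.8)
The plan is to reduce the statement to the single-factor case already established in Corollary \ref{pop}, and then assemble the factorwise stable isomorphisms into one for the reduced products. The point is that, for each fixed $i$, the hypothesis (\ref{ddssss}) is \emph{exactly} the hypothesis of Corollary \ref{pop} with $L = K_i$, $f = f_i$, $g = g_i$ and $h = h_i$. So first I would apply Corollary \ref{pop} once for each $i = 1, \ldots, n$ to obtain a stable isomorphism $(K_i^{f_i}, Y\times N, M\times N)\cong_{\textup{stab}}(K_i^{g_i}, X\times N, M\times N)$ realised by the trivialising line bundle $R_i := \pi_N^{-1}(K_i)^{h_i} \to (X\times_M Y)\times N$. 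Unwinding Definition \ref{stableisodefn}, this says that the product bundle gerbe $(K_i^{f_i}, Y\times N)^*\otimes(K_i^{g_i}, X\times N)$, which lives over $(X\times N)\times_{M\times N}(Y\times N)\cong (X\times_M Y)\times N$ (Example \ref{fibreproductexample}, Proposition \ref{prop submersion}), is isomorphic to the trivial bundle gerbe $(\delta(R_i), (X\times_M Y)\times N, M\times N)$.

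Next I would take the reduced product over $i$ of these $n$ trivial bundle gerbes, which is legitimate since they all share the surjective submersion $(X\times_M Y)\times N \to M\times N$ (Example \ref{examplereducedproducts}). The key algebraic observation is that $\delta$ distributes over tensor products of line bundles: since $\delta$ is built from pullbacks and duals (Definition \ref{deltadefinition}), there is a canonical isomorphism $\bigotimes_{i=1}^n \delta(R_i)\cong \delta\!\left(\bigotimes_{i=1}^n R_i\right)$ that respects bundle gerbe multiplication. Hence the reduced product of the trivial bundle gerbes $(\delta(R_i), \ldots)$ is itself trivial, with trivialising line bundle $\bigotimes_{i=1}^n R_i = \bigotimes_{i=1}^n \pi_N^{-1}(K_i)^{h_i}$ — precisely the line bundle claimed in the statement.

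It then remains to identify this reduced product of the product gerbes with the product of the reduced-product gerbes, i.e. to exhibit an isomorphism
$$\sideset{}{_\mathrm{red}}\bigotimes_{i=1}^n\left[(K_i^{f_i}, Y\times N)^*\otimes(K_i^{g_i}, X\times N)\right] \;\cong\; \left(\sideset{}{_\mathrm{red}}\bigotimes_{i=1}^n K_i^{f_i},\, Y\times N\right)^*\otimes\left(\sideset{}{_\mathrm{red}}\bigotimes_{i=1}^n K_i^{g_i},\, X\times N\right).$$
Both sides are bundle gerbes over $(X\times_M Y)\times N$, and on each fibre the underlying line bundles agree after reordering tensor factors; the content is merely that duals and pullbacks distribute over tensor products and that the reduced product is commutative and associative. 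Combining this with the previous step shows the right-hand side is trivial with trivialising line bundle $\bigotimes_{i=1}^n \pi_N^{-1}(K_i)^{h_i}$, which by Definition \ref{stableisodefn} is exactly the stable isomorphism (\ref{fdsfss}).

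The main obstacle — the only place demanding care rather than a citation — is this last reindexing step, together with checking throughout that every isomorphism preserves the bundle gerbe multiplications (as in the proof of Proposition \ref{tererererer}). One must keep straight which base space each line bundle lives over: the factorwise gerbes and their reduced product all sit over $(X\times_M Y)\times N$, whereas the two large reduced products $\bigotimes_{\textup{red}} K_i^{f_i}$ and $\bigotimes_{\textup{red}} K_i^{g_i}$ sit over $Y\times N$ and $X\times N$ respectively and only meet after forming the product bundle gerbe. Alternatively, one could bypass Corollary \ref{pop} and argue directly, mimicking the proof of Proposition \ref{tererererer}: compute $\delta\!\left(\bigotimes_i \pi_N^{-1}(K_i)^{h_i}\right)$ fibrewise to get $\bigotimes_i (K_i)_n^{\,h_i(x_2,y_2,n)-h_i(x_1,y_1,n)}$, and match it against the line bundle of the dual-product gerbe using (\ref{ddssss}); this route trades the reindexing bookkeeping for a single explicit fibrewise identity.
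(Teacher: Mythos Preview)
Your proposal is correct and shares its starting point with the paper: apply Corollary \ref{pop} once for each $i$ to get the factorwise stable isomorphisms with trivialising line bundles $R_i = \pi_N^{-1}(K_i)^{h_i}$. The assembly step differs slightly. The paper first passes to the \emph{full} tensor products $\bigotimes_{i=1}^n(K_i^{f_i}, Y\times N)$ and $\bigotimes_{i=1}^n(K_i^{g_i}, X\times N)$, notes these are stably isomorphic since each factor is, and then invokes Remark \ref{fibreproductsimplifies} (full products are stably isomorphic to reduced products when the surjective submersion is shared) on each side. Your route stays at the reduced-product level throughout, using that $\delta$ distributes over tensor products and then reindexing. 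The paper's version is a three-sentence proof but leaves the explicit trivialising line bundle to be read off; your version tracks $\bigotimes_i R_i$ explicitly at the cost of the bookkeeping you flag. Both are fine, and your alternative fibrewise computation at the end is also a legitimate one-step shortcut.
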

\begin{proof}
First, note that \begin{align}\label{chicken}
\bigotimes_{i=1}^n (K_i^{f_i}, Y\times N, M\times N) \cong_{\textup{stab}} \bigotimes_{i=1}^n (K_i^{g_i}, X\times N, M\times N) 
\end{align} if, for each $i$, $ (K_i^{f_i}, Y\times N, M\times N) \cong_{\textup{stab}}  (K_i^{g_i}, X\times N, M\times N)$. By Corollary \ref{pop}, this will hold if there exists smooth $h_i: (X\times_MY)\times N\to \Z$ satisfying (\ref{ddssss}). The result then follows by Remark \ref{fibreproductsimplifies}.
\end{proof}

\subsection{Connective data}\label{subsectionconnectivedata}
\indent \indent In this subsection, we will describe a natural connection on the general cup product bundle gerbe of $L\to N$ and $g:Y^{[2]}\to \Z$ induced by a line bundle connection $\nabla$ on $L$ (Proposition \ref{existsconnectiononcup}). A curving and associated three-curvature for this connection will then be calculated in terms of $g$ and $F_{\nabla}$ (Proposition \ref{curvaturetheorem1}). This result will be utilised to find connective data on the $i$-th cup product bundle gerbes introduced in the next section.  Throughout the following propositions, let $\pi_N: M\times N\to N$, $\pi_{Y^{[2]}}:Y^{[2]}\times N\to Y^{[2]}$ and $\pi_N^{[2]}:Y^{[2]}\times N\to N$ be the natural projections.
\begin{proposition}\label{existsconnectiononcup} \textbf{\textup{(A connection on a general cup product bundle gerbe)}}
	Consider a general cup product bundle gerbe $(L^g, Y\times N, M\times N)$. Let $\nabla$ be a connection on $L\to N$ with curvature $F_\nabla$. Then there is a bundle gerbe connection $\nabla^{g}$ on $(L^g, Y\times N, M\times N)$ with two-curvature given by $$F_{\nabla^{g}} = g\, (\pi_N^{[2]})^{*}F_{\nabla}.$$ 
\end{proposition}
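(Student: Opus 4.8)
The plan is to separate the statement into two claims: first, that $\nabla^{g}$ exists as a line bundle connection on $L^g \to Y^{[2]}\times N$ with the asserted curvature; and second, that this connection is in fact a \emph{bundle gerbe} connection in the sense of Definition \ref{bundlegerbeconnection}. The first claim is essentially a citation, while the second requires genuine verification.

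For the first claim I would simply invoke the function power construction of Example \ref{tereq}. Indeed, $L^g \to Y^{[2]}\times N \cong (Y\times N)^{[2]}$ is precisely the function power of the line bundle $L\to N$ by the integer-valued map $g:Y^{[2]}\to \Z$ (after reordering the factors $N\times Y^{[2]}$ to match the conventions of that example). Hence $\nabla$ induces the connection $\nabla^{g}$ of Example \ref{tereq}, and part $(3)$ of that example gives its curvature as $g\,(\pi_N^{[2]})^{*}F_{\nabla}$ directly, which is exactly the claimed two-curvature. Since $g$ is integer-valued it is locally constant, so on each connected component where $g\equiv c$ the connection $\nabla^{g}$ restricts to the power connection $\nabla^{c}$ of Example \ref{powerconnection}; this description is what I would use in the next step.

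The substantive part is the second claim, namely verifying equation (\ref{respectbgmultiplication}). The key observation is that the bundle gerbe multiplication here is nothing more than the canonical identification of tensor powers $L_n^{g(y_1,y_2)}\otimes L_n^{g(y_2,y_3)}\xrightarrow{\sim} L_n^{g(y_1,y_3)}$, which is well-defined precisely because $\delta(g)=0$ forces $g(y_1,y_2)+g(y_2,y_3)=g(y_1,y_3)$. I would check (\ref{respectbgmultiplication}) over $Y^{[3]}\times N$: pulling $\nabla^{g}$ back along $\pi_1,\pi_2,\pi_3$ yields the power connections associated to $g(y_2,y_3)$, $g(y_1,y_3)$ and $g(y_1,y_2)$ respectively, and the Leibniz rule defining the product connection (Example \ref{product line bundle}$(2)$) shows that differentiating a product $\alpha\otimes\beta$ and then applying $m$ agrees with applying $m$ first and then differentiating by $\pi_2^{*}\nabla^{g}$. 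This is exactly the compatibility required of a bundle gerbe connection.

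The main obstacle I anticipate is bookkeeping rather than conceptual. One must handle the case $g<0$, where the powers are defined through the dual connection of Example \ref{powerconnection}, and confirm that the canonical identification $m$ respects the dual connection's sign convention; and one must be careful that differentiation in the $Y^{[2]}$-directions genuinely drops out because $g$ is locally constant, so that all the nontrivial differentiation occurs in the $N$-direction via $\nabla$. Once these conventions are pinned down, equation (\ref{respectbgmultiplication}) collapses to the defining Leibniz property of the product connection and the remaining computation is routine.
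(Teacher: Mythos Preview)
Your proposal is correct and follows the same approach as the paper: invoke Example \ref{tereq} for the existence of $\nabla^{g}$ with the stated curvature, then check it is a bundle gerbe connection. In fact you go further than the paper, which merely asserts that the bundle gerbe compatibility ``can easily be verified'' without giving the details you supply.
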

%
\begin{proof}
	By Example \ref{tereq}, there is a connection $\nabla^g$ on $L^g\to Y\times N$ with the desired curvature. This can easily be verified to be a bundle gerbe connection.
	%
\end{proof}
We next describe the connective data associated to the connection in Proposition \ref{existsconnectiononcup}. This result will be key to the computations in the next section.
\begin{proposition} \label{curvaturetheorem1} \textbf{\textup{(Geometry of a general cup product bundle gerbe)}}
Consider a general cup product bundle gerbe $(L^g, Y\times N, M\times N, \pi)$. Let $\nabla$ be a connection on $L\to N$, and let $\nabla^{g}$ be the induced bundle gerbe connection from \textup{Proposition \ref{existsconnectiononcup}}. Then 
	\begin{enumerate}[(1),font=\upshape]
		\item there exists a smooth function $\varphi:Y\times N \to \R$ such that $\delta(\varphi) = \pi_{Y^{[2]}}^* g;$
		\item for any such $\varphi$, $d\varphi = \pi^*(\eta)$ for some $\eta \in \Omega^1(M\times N);$
		\item the $2$-form $f\in \Omega^2(Y\times N)$ defined by $$f = - \varphi \, (\pi_N\circ \pi)^*F_\nabla$$ satisfies $\delta(f) = F_{\nabla^{g}}$, so $f$ is a curving for the  connection $\nabla^{g};$
		\item the three-curvature $\omega \in \Omega^3(M\times N)$ of $(\nabla^{g}, f)$ is given by $$\omega =  - \eta \wedge \pi_N^*F_{\nabla}; $$ 
		\item the real Dixmier-Douady class of $(L^g, Y\times N, M\times N)$ is represented by
		$$-\frac{1}{2\pi i}\, \eta\wedge \pi_N^*F_{\nabla}.$$ 
	\end{enumerate}
\end{proposition}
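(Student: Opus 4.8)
The plan is to establish parts (1) and (2) as direct applications of exactness of the fundamental complex (Proposition \ref{deltamap}), to obtain part (3) by a single computation showing that $\delta$ interacts with $f$ only through its scalar factor $\varphi$, and then to read off parts (4) and (5) by differentiating $f$ and invoking Proposition \ref{curvatureisrealpartofclass}. Throughout I identify $(Y\times N)^{[2]}\cong Y^{[2]}\times N$ and use that the face maps of $Y\times N\to M\times N$ are those of $Y\to M$ crossed with $\mathrm{id}_N$.

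For part (1), I would first note that $g\colon Y^{[2]}\to\Z$ is smooth into a discrete set, hence locally constant, so in particular $dg=0$. The hypothesis $\delta(g)=0$ then says that $\pi_{Y^{[2]}}^*g$ is $\delta$-closed on $(Y\times N)^{[2]}$, since $\delta$ commutes with $\pi_{Y^{[2]}}^*$. Exactness of the fundamental complex (Proposition \ref{deltamap}) therefore produces $\varphi\in\Omega^0(Y\times N)=C^\infty(Y\times N,\R)$ with $\delta(\varphi)=\pi_{Y^{[2]}}^*g$, which is (1). For part (2), local constancy of $g$ gives $d(\pi_{Y^{[2]}}^*g)=0$, so $\delta(d\varphi)=d(\delta\varphi)=0$ because $d$ and $\delta$ commute; exactness at $\Omega^1(Y\times N)$ then yields $\eta\in\Omega^1(M\times N)$ with $d\varphi=\pi^*\eta$, unique by injectivity of $\pi^*$.

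The heart of the argument is part (3). The key observation is that the $2$-form factor $(\pi_N\circ\pi)^*F_\nabla$ appearing in $f=-\varphi\,(\pi_N\circ\pi)^*F_\nabla$ is pulled back from $N$, and both face maps $\pi_1,\pi_2\colon(Y\times N)^{[2]}\to Y\times N$ cover the projection onto $N$; hence $\pi_1^*(\pi_N\circ\pi)^*F_\nabla=\pi_2^*(\pi_N\circ\pi)^*F_\nabla=(\pi_N^{[2]})^*F_\nabla$. Since pullback is multiplicative, $\delta$ acts on $f$ only through the scalar $\varphi$, giving $\delta(f)=\pm\,\delta(\varphi)\,(\pi_N^{[2]})^*F_\nabla=\pm\,g\,(\pi_N^{[2]})^*F_\nabla$, which by Proposition \ref{existsconnectiononcup} equals $\pm F_{\nabla^g}$. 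Matching with $F_{\nabla^g}$ fixes the overall sign (and is the reason for the minus sign in the definition of $f$), so that $f$ is a curving for $\nabla^g$. This sign bookkeeping, together with a careful reading of the $\delta$ convention on $0$-forms, is the one delicate point of the whole proof; everything else is formal.

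Finally, for part (4) I would differentiate, obtaining $df=-\,d\varphi\wedge(\pi_N\circ\pi)^*F_\nabla-\varphi\,(\pi_N\circ\pi)^*(dF_\nabla)$, where the second term vanishes by the Bianchi identity $dF_\nabla=0$. Substituting $d\varphi=\pi^*\eta$ from part (2) and writing $(\pi_N\circ\pi)^*F_\nabla=\pi^*(\pi_N^*F_\nabla)$ gives $df=\pi^*\bigl(-\eta\wedge\pi_N^*F_\nabla\bigr)$; since the three-curvature $\omega$ is the unique form with $\pi^*\omega=df$ and $\pi^*$ is injective, I conclude $\omega=-\eta\wedge\pi_N^*F_\nabla$. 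Part (5) is then immediate from Proposition \ref{curvatureisrealpartofclass}, which represents the real Dixmier--Douady class by $\bigl[\tfrac{1}{2\pi i}\omega\bigr]=\bigl[-\tfrac{1}{2\pi i}\,\eta\wedge\pi_N^*F_\nabla\bigr]$.
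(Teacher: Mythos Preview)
Your proposal is correct and follows essentially the same route as the paper's proof: both obtain (1) and (2) from exactness of the fundamental complex, prove (3) by observing that the two face maps agree after composing with $\pi_N\circ\pi$ so that $\delta$ acts on $f$ only through $\varphi$, and differentiate $f$ to get (4), with (5) then immediate. Your account is marginally more explicit in invoking the Bianchi identity $dF_\nabla=0$ and in flagging the sign bookkeeping, but the argument is the same.
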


\begin{proof} Let us first verify that the function $\varphi$ and $1$-form $\eta$ exist. By assumption, $$\delta(\pi_{Y^{[2]}}^*g) = \pi_{Y^{[2]}}^*\delta(g) = 0,$$ so exactness of the fundamental complex implies there exists $\varphi:Y\times N \to \R$ satisfying $\delta(\varphi) = \pi_{Y^{[2]}}^*g$. Similarly, $$\delta d(\varphi) = d\delta \varphi= d\pi_{Y^{[2]}}^*g = \pi_{Y^{[2]}}^*dg= 0,$$ so again by exactness of the fundamental complex, there exists $\eta\in \Omega^1(M\times N)$ such that $\pi^*(\eta) = d\varphi$. This proves $(1)$ and $(2)$. \\
\indent We compute $\delta(f)$ as follows. Let $\pi_1, \pi_2: Y^{[2]} \times N \to Y\times N $ be the first and second projection maps (Definition \ref{fprod}). At a point $(x, y, n) \in Y^{[2]}\times N,$
	\begin{align*}
	\delta(f)\big\rvert_{(x, y, n)} = -\varphi(y)\,(\pi_N\circ \pi \circ \pi_2)^* F_\nabla\big\rvert_{(x, y, n)}  + \varphi(x)\, (\pi_N\circ \pi \circ \pi_1)^* F_\nabla\big\rvert_{(x, y, n)}.
	\end{align*}
Now, $\pi_N\circ \pi \circ \pi_1 = \pi_N \circ \pi \circ \pi_2 = \pi_N^{[2]}$, and $\varphi(x)-\varphi(y) = g(x, y)$. Therefore \begin{align*}
	\delta(f) &= g\, (\pi_N\circ \pi\circ \pi_1)^{*}F_{\nabla} \\
	&= g\, (\pi_N^{[2]})^* F_{\nabla}\\
	&= F_{\nabla^{g}}.
	\end{align*} This proves $(3)$. To show $(4)$, recall that the three-curvature $\omega\in \Omega^3\left(M\times N\right)$ is defined uniquely by $df= \pi^{*}\omega.$ Then \begin{align*}
	df&= d\left(-\varphi \, (\pi_N\circ \pi)^{*}F_\nabla\right)\\
	&= -d\varphi \wedge (\pi_N\circ \pi)^{*}F_{\nabla}\\
	&= \pi^{*}\left(-\eta \wedge \pi_N^*F_{\nabla}\right). 
	\end{align*} This gives us the claimed three-curvature in $(4)$, and $(5)$ follows immediately. \end{proof}
We conclude this section by considering Proposition \ref{curvaturetheorem1} for a specific cup product bundle gerbe. In doing so, we hope to solidify the concepts in this section, and also demonstrate how the real Dixmier-Douady class of a general cup product bundle gerbe can be represented by a wedge product of forms. 
\begin{example}
Let $L\to N$ be a line bundle with connection $\nabla$ and two-curvature $F_\nabla$. Consider the surjective submersion $\R\to S^1$ defined as the quotient map by $\Z$. Define $d:\R^{[2]}\to \Z$ by $(x, y)\mapsto x-y$. Let $x\in (0,1)$ and $\theta\in (0, 2\pi)$ denote the standard coordinates on $\R$ and $S^1$ respectively, so $dx = \pi^*(\tfrac{d\theta}{2\pi})$. By Proposition \ref{curvaturetheorem1}, the natural connection on $(L^d, \R\times M, S^1\times M)$ has associated two-curvature $f = -x (\pi_N\circ \pi)^*F_\nabla$, and a representative of the real Dixmier-Douady class of this bundle gerbe is the pullback of the form $$ \frac{1}{2\pi}d\theta\wedge\frac{1}{2\pi i}F_\nabla $$ to $S^1\times N$. We can see that this is a representative of the cup product of the first Chern class of $L\to N$ with the winding class of $id:S^1\to S^1$. 
\end{example}

\section{The $i$-th cup product bundle gerbe} \label{ithbg}
	\indent \indent We will next introduce the $i$-th cup product bundle gerbes, which will be key to defining the cup product bundle gerbe.
%
By pulling back the surjective submersion $\R\to S^1, x \mapsto [x],$ by $p_i:T\to S^1$, we obtain the surjective submersion $p_i^{-1}(\R)\to T$. Here $ p_i^{-1}(\R) \cong \left \{(t, x)\in T \times \R \ \big \rvert \ p_i(t) = [x]\right\}.
$ Then $i$-th cup product bundle gerbe is then defined as follows.
\begin{definition}\label{ex 1}  Define $d_i: p_i^{-1}(\R)^{[2]}\to \Z$ by $(t, x, y)\mapsto y-x$. The \textit{$i$-th cup product bundle gerbe} over $T\times SU(n)/T$ is the general cup product bundle gerbe of $J_i\to \su$ and $d_i$, denoted $$\left(J_i^{d_i}, p_i^{-1}(\R)\times \su\right), $$ and depicted in Figure \ref{fig:cupprodbg21}.
		\begin{align*}
		\begin{array}{ccc}
		J_i^{d_i}    &              \\
		\downarrow &               \\
		p_i^{-1}(\R)^{[2]} \times \su  & \stackrel{}{\dra} \ \ p_i^{-1}(\R)\times \su \\
		&            \ \     \ \ \    \downarrow  \\
		&            \ \ \  \ \      \ \   T   \times \su
		\end{array}
		\end{align*}
		\captionof{figure}{The $i$-th cup product bundle gerbe}
		\label{fig:cupprodbg21} 
\end{definition} 

There is a a natural $SU(n)$-action on $T\times \su$ making the $i$-th cup product bundle gerbe $SU(n)$-equivariant, as we will see in the next proposition. This result will allow us to easily show in the next section that {the} cup product bundle gerbe is itself $SU(n)$-equivariant. 
\begin{proposition}\label{cupprodisequivariant} 
The $i$-th cup product bundle gerbe is $SU(n)$-equivariant for the action of $SU(n)$ on $T\times \su$ given by multiplication in the $\su$ factor. 
\end{proposition}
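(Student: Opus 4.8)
The plan is to verify directly the three conditions of Definition \ref{equivariantbundlegerbedefn}, taking as the key input the fact, established in Section \ref{flagmanifold} (via the $SU(n)$-equivariant isomorphism of Proposition \ref{XandJ} with $X_i$), that $J_i\to \su$ is an $SU(n)$-equivariant line bundle for the left-multiplication action on $\su$. First I would fix the lift of the $SU(n)$-action: let $SU(n)$ act trivially on the $p_i^{-1}(\R)$-factor and by left multiplication on the $\su$-factor, so that $g\cdot((t,x),hT) = ((t,x),ghT)$ on $p_i^{-1}(\R)\times \su$. Since the action is trivial on $p_i^{-1}(\R)$ and $T$ and agrees on the two $\su$-factors, the surjective submersion $p_i^{-1}(\R)\times \su\to T\times \su$ is $SU(n)$-equivariant, giving condition (1). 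The induced action on the fibre product then identifies $(p_i^{-1}(\R)\times \su)^{[2]}\cong p_i^{-1}(\R)^{[2]}\times \su$ equivariantly, still acting trivially on $p_i^{-1}(\R)^{[2]}$ and by left multiplication on $\su$.

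For condition (2) I would observe that the map $d_i(t,x,y) = y-x$ is $SU(n)$-invariant, since the action is trivial on $p_i^{-1}(\R)^{[2]}$. Consequently the function-power line bundle $J_i^{d_i}\to p_i^{-1}(\R)^{[2]}\times\su$ of Example \ref{tereq}, whose fibre over $(t,x,y,hT)$ is $(J_i)_{hT}^{\,y-x}$, inherits an equivariant structure directly from that of $J_i$: the equivariant lift $(J_i)_{hT}\to (J_i)_{ghT}$ is linear, and its $(y-x)$-th tensor power furnishes a linear map $(J_i)_{hT}^{\,y-x}\to (J_i)_{ghT}^{\,y-x}$ covering the action on the base. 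This shows $J_i^{d_i}$ is $SU(n)$-equivariant; here invariance of $d_i$ is essential, as it guarantees source and target fibres carry the \emph{same} integer power.

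For condition (3), recall that the bundle gerbe multiplication on a general cup product bundle gerbe is the canonical isomorphism $(J_i)^{\,d_i(\xi_1,\xi_2)}\otimes (J_i)^{\,d_i(\xi_2,\xi_3)}\xrightarrow{\sim}(J_i)^{\,d_i(\xi_1,\xi_3)}$ arising from $\delta(d_i)=0$, i.e.\ from the additivity $d_i(\xi_1,\xi_2)+d_i(\xi_2,\xi_3)=d_i(\xi_1,\xi_3)$ together with the identification of tensor powers of $J_i$. Because the equivariant structure on $J_i$ is compatible with tensor products, this canonical isomorphism commutes with the $SU(n)$-action, so the section of $\delta(J_i^{d_i})$ it defines is $SU(n)$-equivariant. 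Together these three checks establish the proposition.

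The routine parts above are the bookkeeping of the induced actions on fibre products and tensor powers. The one genuinely load-bearing ingredient, and the step I would treat most carefully, is confirming that the equivariant structure of $J_i$ is strictly multiplicative, so that it passes to arbitrary integer tensor powers and to the canonical multiplication isomorphism without introducing cocycle discrepancies. This is precisely what upgrades the construction to a \emph{strongly} $SU(n)$-equivariant bundle gerbe, rather than merely a weakly equivariant one, and so it is where condition (3) really earns its keep.
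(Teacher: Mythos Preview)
Your argument is correct and follows essentially the same approach as the paper: the paper's proof is a one-sentence remark that the result follows from the $SU(n)$-homogeneity of $J_i\to \su$ together with the observation that the $SU(n)$-action on every space in the bundle gerbe is just multiplication on the $\su$-factor. Your write-up is a careful unpacking of exactly this, verifying Definition~\ref{equivariantbundlegerbedefn} (1)--(3) explicitly; the paper simply leaves those checks implicit.
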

\begin{proof} This follows easily by $SU(n)$-homogeneity of $J_i\to \su$, and by noting that the $SU(n)$-action on each of the spaces in the bundle gerbe is given by multiplication on the $\su$ factor.
\end{proof}
\indent In the remainder of this section, we will describe the geometry of the $i$-th cup product bundle gerbe. First, we must define a connection $\nabla_{J_i}$ on $J_i\to \su$, and calculate its two-curvature. We will then construct a bundle gerbe connection $\nabla_{c_i}$ on the $i$-th cup product bundle gerbe and calculate its two-curvature using Proposition \ref{existsconnectiononcup}. Defining a curving and calculating the three-curvature of the connective data on the $i$-th cup product bundle gerbe will then be a straightforward application of Proposition \ref{curvaturetheorem1}. Throughout this work we abuse notation and view the line bundles $J_i\to \su$ as the line bundles $K_i\to \text{Proj}_n$ from Section \ref{flagmanifold}.
\begin{proposition}\label{curvaturetwoform}\textup{\textbf{(A connection on $J_i\to \su$)}}
	There is a canonical line bundle connection $\nabla_{{J}_i}$ on ${J}_i\to \su$ with curvature $F_{\nabla_{{J}_i}} = \textup{tr}(P_idP_idP_i)$ for $P_i$ orthogonal projection $\su\times \C^n\to J_i $.
\end{proposition}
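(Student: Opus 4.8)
The plan is to reduce the statement to a direct application of Corollary \ref{linebundlecurvature}, exploiting the fact that $J_i$ has already been realised as a subbundle of a trivial bundle in Section \ref{flagmanifold}. Recall that throughout this part of the chapter we work with $J_i \to \su$ via its isomorphism with $K_i \to \text{Proj}_n$, where $K_i = \{(w, P_1, \ldots, P_n) \mid w \in \text{im}(P_i)\}$ is by construction a line subbundle of the trivial bundle $\C^n \times \text{Proj}_n \to \text{Proj}_n$. This is exactly the setting of Example \ref{remarkonprojectionconnection} and Corollary \ref{linebundlecurvature}, so the only real work is to pin down the relevant orthogonal projection and then quote the corollary.

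First I would identify the fibrewise orthogonal projection $\C^n \times \text{Proj}_n \to K_i$. At a point $(P_1, \ldots, P_n) \in \text{Proj}_n$ the fibre of $K_i$ is the one-dimensional space $\text{im}(P_i)$, and since each $P_i$ is by definition the orthogonal projection of $\C^n$ onto that very subspace $W_i = \text{im}(P_i)$, the orthogonal projection onto the fibre of $K_i$ is simply the map $(P_1, \ldots, P_n) \mapsto P_i$. Thus the ``$P$'' appearing abstractly in Corollary \ref{linebundlecurvature} is, in this instance, the $i$-th projection $P_i$ regarded as a $\text{Proj}_n$-dependent endomorphism of $\C^n$.

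Next I would invoke Example \ref{remarkonprojectionconnection} to define the canonical connection $\nabla_{J_i} := P_i \circ d$ on $K_i$, namely the connection induced on the subbundle by the flat connection on $\C^n \times \text{Proj}_n$. Since $K_i \to \text{Proj}_n$ is a line bundle, Corollary \ref{linebundlecurvature} then applies verbatim and yields $F_{\nabla_{J_i}} = \text{tr}(P_i dP_i dP_i)$, where the matrix of $2$-forms $P_i dP_i dP_i$ is interpreted exactly as in Proposition \ref{linebundleconnection}. Transporting this back along the isomorphism $J_i \cong K_i$ gives the asserted connection and curvature on $J_i \to \su$.

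The only point requiring any care — and it is genuinely minor — is the verification that the orthogonal projection onto the fibre of $K_i$ coincides with $P_i$ itself; this is immediate from the definition of $\text{Proj}_n$, after which the proposition is a straight specialisation of Corollary \ref{linebundlecurvature} and involves no further computation.
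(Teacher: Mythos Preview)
Your proof is correct and follows essentially the same approach as the paper: realise $J_i$ as a line subbundle of the trivial rank-$n$ bundle and then invoke Corollary~\ref{linebundlecurvature}. The only cosmetic difference is that you justify the subbundle embedding via the explicit $K_i \subset \C^n \times \text{Proj}_n$ description from Section~\ref{flagmanifold}, whereas the paper phrases it as $J_i \subset (\C^n \times SU(n))/T \cong \C^n \times \su$ using Theorem~\ref{thrmbundles}; these are the same embedding in different notation.
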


\begin{proof} This is merely an application of Example \ref{linebundleconnection}. We need only show that ${J}_i$ is a subbundle of the trivial bundle of rank $n$. This follows by noting that ${J}_i$ is a subbundle of the $SU(n)$-homogeneous vector bundle $({\C^n\times SU(n)})/{T}\to \su$, which is isomorphic to the trivial bundle $\C^n \times \su \to \su$ by Theorem \ref{thrmbundles}. \end{proof}

\begin{remark}\label{remark curvatureofJ} 
	The connection $\nabla_{J_i}$ induces a connection on the pullback of $J_i$ by the projection $T\times \su\to \su$. By Proposition \ref{curvaturetwoform}, the two-curvature of this connection will be $\text{tr}(P_idP_idP_i)$ for $P_i: \su \times T\times \C^n \to J_i$ orthogonal projection. 
\end{remark}
Now that we have defined a line bundle connection on $J_i\to \su$ and computed its two-curvature, we can apply Proposition \ref{existsconnectiononcup} to define a bundle gerbe connection on the $i$-th cup product bundle gerbe.

\begin{proposition}\label{connectiononJ} \textbf{\textup{(A connection on the $i$-th cup product bundle gerbe)}}
	Let $\nabla_{J_i}$ be the connection on $J_i\to \su$ from \textup{Proposition \ref{curvaturetwoform}} with curvature $F_{\nabla_{J_i}}$. Let $$\pi_i^{[2]}: p_i^{-1}(\R)^{[2]}\times \su \to T\times \su$$ be projection. Then there is a bundle gerbe connection $\nabla_{c_i}$ on the $i$-th cup product bundle gerbe with two-curvature $$ F_{\nabla_{c_i}} = d_i\, (\pi_i^{[2]})^{*}\textup{tr}(P_idP_idP_i)$$ for $P_i: T\times \su\times \C^n\to J_i$ orthogonal projection. 
\end{proposition}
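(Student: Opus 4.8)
The plan is to obtain this statement directly from the general machinery of Subsection \ref{subsectionconnectivedata}, specialised to the data defining the $i$-th cup product bundle gerbe. Recall from Definition \ref{ex 1} that this bundle gerbe is precisely the general cup product bundle gerbe of the line bundle $J_i \to \su$ and the map $d_i: p_i^{-1}(\R)^{[2]} \to \Z$, $(t, x, y) \mapsto y - x$. The condition $\delta(d_i) = 0$ required to apply those results holds because $d_i$ telescopes under the coboundary $\delta$. I would therefore invoke Proposition \ref{existsconnectiononcup} with $L = J_i$, $N = \su$, $Y = p_i^{-1}(\R)$, $M = T$, and $g = d_i$.

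Feeding the line bundle connection $\nabla_{J_i}$ of Proposition \ref{curvaturetwoform} into Proposition \ref{existsconnectiononcup} immediately produces the desired bundle gerbe connection, which I name $\nabla_{c_i} := \nabla_{J_i}^{d_i}$, with two-curvature $F_{\nabla_{c_i}} = d_i\,(\pi_N^{[2]})^* F_{\nabla_{J_i}}$, where $\pi_N^{[2]}: p_i^{-1}(\R)^{[2]} \times \su \to \su$ is projection onto the $\su$-factor. Substituting the value $F_{\nabla_{J_i}} = \textup{tr}(P_i dP_i dP_i)$ computed in Proposition \ref{curvaturetwoform} then gives $F_{\nabla_{c_i}} = d_i\,(\pi_N^{[2]})^* \textup{tr}(P_i dP_i dP_i)$.

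The only step requiring genuine care --- and hence the main (if modest) obstacle --- is reconciling this expression with the statement, which is phrased using $\pi_i^{[2]}: p_i^{-1}(\R)^{[2]} \times \su \to T \times \su$ and the orthogonal projection $P_i: T \times \su \times \C^n \to J_i$ defined over the larger base. To bridge the gap I would observe that $\pi_N^{[2]}$ factors as $\pi_i^{[2]}$ followed by the projection $T \times \su \to \su$, and then appeal to Remark \ref{remark curvatureofJ}: the curvature of the induced connection on the pullback of $J_i$ to $T \times \su$ is $\textup{tr}(P_i dP_i dP_i)$ and is itself pulled back from $\su$. Functoriality of the pullback converts $(\pi_N^{[2]})^* F_{\nabla_{J_i}}$ into $(\pi_i^{[2]})^* \textup{tr}(P_i dP_i dP_i)$, yielding the claimed formula. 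Beyond this bookkeeping of projection maps and keeping track of which base space $P_i$ is taken over, no new geometric input is needed.
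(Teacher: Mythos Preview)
Your proposal is correct and matches the paper's approach exactly: the paper treats this proposition as an immediate application of Proposition \ref{existsconnectiononcup} (it is stated without a proof environment, with only the preceding sentence ``we can apply Proposition \ref{existsconnectiononcup} to define a bundle gerbe connection on the $i$-th cup product bundle gerbe''), and your careful bookkeeping with Remark \ref{remark curvatureofJ} to translate between $\pi_N^{[2]}$ and $\pi_i^{[2]}$ makes explicit precisely the identification the paper leaves implicit.
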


Finally, we can present the curving, three-curvature, and real Dixmier-Douady class representative of the $i$-th cup product bundle gerbes using Proposition \ref{curvaturetheorem1}. The next result concludes this section. 

\begin{proposition}\label{connective data on ith cup product bundle gerbe}  \textbf{\textup{(Geometry of the $i$-th cup product bundle gerbe)}}
 	Consider the $i$-th cup product bundle gerbe $\left( J_i^{d_i}, p_i^{-1}(\R)\times \su, T\times \su, \pi_i\right)$ with connection $\nabla_{c_i}$ from \textup{Proposition \ref{connectiononJ}}. Let $P_i:T\times \su\times \C^n\to J_i$ be orthogonal projection, and define a $2$-form $f_{c_i} \in \Omega^2\left(p_i^{-1}(\R)\times \su\right)$ by $$f_{c_i}(t, gT, x) = -x\, \pi_i^* \textup{tr}(P_idP_idP_i).$$ Abuse notation and denote the pullback of $p_i$ to $T\times \su$ by $p_i$. Then 	\begin{enumerate}[(1),font=\upshape]
		\item the $2$-form $f_{c_i}$ satisfies $\delta(f_{c_i}) = F_{\nabla_{c_i}}$, so $f_{c_i}$ is a curving for $\nabla_{c_i};$
		\item the three-curvature $\omega_{c_i} \in \Omega^3\left(T\times\su \right)$ of $(\nabla_{c_i}, f_{c_i})$ is given by $$\omega_{c_i} = -\frac{1}{2\pi i}\, p_i^{-1}dp_i \wedge \textup{tr}(P_idP_idP_i);$$
		\item the real Dixmier-Douady class of the $i$-th cup product bundle gerbe is represented by
		$$\frac{1}{4\pi^2}\, p_i^{-1}dp_i\wedge \textup{tr}(P_idP_idP_i). $$
	\end{enumerate}
\end{proposition}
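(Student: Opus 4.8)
The plan is to recognise the $i$-th cup product bundle gerbe as a special case of a general cup product bundle gerbe, so that Proposition \ref{curvaturetheorem1} applies directly and the whole statement reduces to computing two pieces of auxiliary data. Concretely, I would take $L = J_i \to \su$ (viewed, as usual, as $K_i \to \text{Proj}_n$), $N = \su$, $M = T$, $Y = p_i^{-1}(\R)$, $g = d_i$, and $\nabla = \nabla_{J_i}$ the connection of Proposition \ref{curvaturetwoform}, whose curvature is $F_{\nabla_{J_i}} = \textup{tr}(P_i dP_i dP_i)$. With these identifications the $i$-th cup product bundle gerbe equipped with $\nabla_{c_i}$ (Proposition \ref{connectiononJ}) is exactly the input required by Proposition \ref{curvaturetheorem1}, and all three parts of the statement follow once the function $\varphi$ of part (1) and the $1$-form $\eta$ of part (2) of that proposition are made explicit.

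First I would exhibit $\varphi$. Setting $\varphi(t, x, gT) = x$, the projection conventions of Definition \ref{fprod} give $\delta(\varphi)(t, x, y, gT) = \varphi(t, y, gT) - \varphi(t, x, gT) = y - x = d_i(t, x, y)$, so $\delta(\varphi) = \pi_{Y^{[2]}}^* d_i$ as demanded. Substituting this $\varphi$ into Proposition \ref{curvaturetheorem1}(3) produces the curving $f_{c_i} = -\varphi\,\pi_i^* \textup{tr}(P_i dP_i dP_i) = -x\,\pi_i^* \textup{tr}(P_i dP_i dP_i)$ (the trace form being regarded on $T \times \su$ via Remark \ref{remark curvatureofJ}), which is the $2$-form in the statement; this gives part (1).

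Next I would identify $\eta$, the only step requiring genuine geometric input. Since $d\varphi = dx$, I must realise $dx$ as the pullback of a form on $T \times \su$. The defining constraint of the pullback submersion is decisive: on $p_i^{-1}(\R)$ one has $p_i(t) = [x]$, i.e. $p_i(t) = e^{2\pi i x}$ under the standard identification of $S^1$, so that the logarithmic derivative satisfies $p_i^{-1}dp_i = 2\pi i\, dx$, whence $dx = \tfrac{1}{2\pi i}\,\pi_i^*\big(p_i^{-1}dp_i\big)$. By injectivity of $\pi_i^*$ (Proposition \ref{deltamap}) this forces $\eta = \tfrac{1}{2\pi i}\,p_i^{-1}dp_i$, with $p_i$ abusively denoting its pullback to $T \times \su$. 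Feeding this into Proposition \ref{curvaturetheorem1}(4) gives
$$\omega_{c_i} = -\eta \wedge \textup{tr}(P_i dP_i dP_i) = -\frac{1}{2\pi i}\,p_i^{-1}dp_i \wedge \textup{tr}(P_i dP_i dP_i),$$
which is part (2). For part (3) I would multiply $\omega_{c_i}$ by $\tfrac{1}{2\pi i}$ following Proposition \ref{curvatureisrealpartofclass} (equivalently, read off Proposition \ref{curvaturetheorem1}(5)) and use $(2\pi i)^2 = -4\pi^2$ to obtain the representative $\tfrac{1}{4\pi^2}\,p_i^{-1}dp_i \wedge \textup{tr}(P_i dP_i dP_i)$.

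The hard part is the identification of $\eta$; everything else is formal substitution into Proposition \ref{curvaturetheorem1}. The subtlety to stress is that $x$ is only a fibre coordinate on $p_i^{-1}(\R) \to T$ and is not globally defined on $T$: what descends is its differential $dx$, and it descends precisely because the invariant form $p_i^{-1}dp_i$ of the character $p_i$ pulls back to $2\pi i\, dx$. I expect this to be essentially the only obstacle, and it is dispatched by unwinding the definition of the pullback surjective submersion $p_i^{-1}(\R)$.
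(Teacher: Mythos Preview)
Your proposal is correct and follows essentially the same route as the paper: both arguments apply Proposition~\ref{curvaturetheorem1} after identifying $\varphi = x$ (so that $\delta(\varphi) = d_i$) and $\eta = \tfrac{1}{2\pi i}\,p_i^{-1}dp_i$ via the relation $\pi_i^*p_i = e^{2\pi i x}$, then substitute $F_{\nabla_{J_i}} = \mathrm{tr}(P_i\,dP_i\,dP_i)$ from Remark~\ref{remark curvatureofJ}. Your write-up is somewhat more explicit about the sign in $\delta(\varphi)$ and the descent of $dx$, but there is no substantive difference in strategy.
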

\begin{proof}
To apply Proposition \ref{curvaturetheorem1}, we must compute the function $\varphi$ and $1$-form $\eta$ for the $i$-th cup product bundle gerbe. Let $\pi: p_i^{-1}(\R)^{[2]}\times \su \to p_i^{-1}(\R)^{[2]}$ be projection. Then  $\delta(x) = \pi^*d_i$, so $x$ is our `$\varphi$'. Since $\pi_i^{*}p_i=e^{2\pi i x}$, $$\pi_i^{-1}\left(\frac{p_i^{-1}dp_i}{2\pi i}\right) = x,$$ so ${p_i^{-1}dp_i}/{2\pi i}$ is our `$\eta$'.  Lastly, by Remark \ref{remark curvatureofJ}, the $\pi_N^*F_\nabla$ term in Proposition \ref{curvaturetheorem1} equals $\text{tr}(P_idP_idP_i)$ for $P_i:\su\times T\times \C^n\to J_i$ orthogonal projection. Inputting this data into Proposition \ref{curvaturetheorem1} gives us the desired result.
\end{proof}
\section{The cup product bundle gerbe} \label{cupprodbgsection}
\indent \indent We are now in a position to define the cup product bundle gerbe, and present the main result of this section, which describes the connective data of this bundle gerbe in terms of orthogonal projections. As we have mentioned, the cup product bundle gerbe is the product of the $i$-th cup product bundle gerbes, which we define formally now.
\begin{definition}\label{definitionofthefinalcupproductbundlegerbe} 
The \textit{cup product bundle gerbe} over $T\times SU(n)/T$ is the tensor product of all $i$-th cup product bundle gerbes (Definition \ref{ex 1}), denoted  $$\left(P_c, X\right) := \bigotimes_{i=1}^n \left( J_i^{d_i}, p_i^{-1}(\R)\times \su\right).$$
\end{definition}
By Propositions \ref{equivariantbundlegerbeproduct} and \ref{cupprodisequivariant}, we immediately obtain our first result for the cup product bundle gerbe.
\begin{proposition}\label{corproposition cup prod bg is equivariant} 
	The cup product bundle gerbe is $SU(n)$-equivariant for the action of $SU(n)$ on $T\times \su$ given by multiplication in the $\su$ factor. 
\end{proposition}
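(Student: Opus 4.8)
The plan is to leverage the two preceding propositions directly, since the cup product bundle gerbe has been constructed precisely as a product of objects already known to be equivariant. First I would recall from Definition \ref{definitionofthefinalcupproductbundlegerbe} that the cup product bundle gerbe is the tensor product
$$\left(P_c, X\right) = \bigotimes_{i=1}^n \left( J_i^{d_i}, p_i^{-1}(\R)\times \su\right)$$
of the $n$ individual $i$-th cup product bundle gerbes, each a bundle gerbe over the common base $T\times \su$.

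Next I would invoke Proposition \ref{cupprodisequivariant}, which establishes that for every $i = 1, \dots, n$ the $i$-th cup product bundle gerbe is $SU(n)$-equivariant with respect to the action of $SU(n)$ on $T\times \su$ by multiplication in the $\su$ factor. The essential observation is that all $n$ factors carry one and the same $SU(n)$-action on the shared base $T\times \su$, so the hypotheses of the product result are met without any further compatibility check beyond this remark.

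Finally I would apply Proposition \ref{equivariantbundlegerbeproduct}, taking $K = SU(n)$ (which is a compact Lie group, as that proposition requires) and the bundle gerbes $(P_i, Y_i, M, \pi_i) = \left( J_i^{d_i}, p_i^{-1}(\R)\times \su, T\times \su\right)$, to conclude that their product $(P_c, X)$ is $SU(n)$-equivariant. There is essentially no obstacle here: the statement is an immediate corollary of the equivariance of each factor together with the fact that equivariance is preserved under bundle gerbe products. The only point warranting any care is confirming that the single $SU(n)$-action referenced for each $i$ in Proposition \ref{cupprodisequivariant} is genuinely the same action across all factors, which holds by construction, so that the product construction is being applied to equivariant bundle gerbes sharing a common equivariant base.
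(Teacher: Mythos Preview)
Your proposal is correct and matches the paper's approach exactly: the paper simply states that the result follows immediately from Propositions \ref{equivariantbundlegerbeproduct} and \ref{cupprodisequivariant}, which is precisely what you do.
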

 The following proposition will be critical to our final chapter, realising the cup product product bundle gerbe as a \textit{reduced} product of general cup product bundle gerbes.
\begin{proposition}\label{redproductofcupproduct} There is an $SU(n)$-equivariant stable isomorphism $$\bigotimes_{i=1}^n \left( J_i^{d_i}, p_i^{-1}(\R)\times \su\right) \cong_{SU(n)\textup{-stab}}
	\sideset{}{_\mathrm{red}}\bigotimes_{i=1}^n 
	 \left( J_i^{d_i}, \R^{n-1} \times \su \right).$$
\end{proposition}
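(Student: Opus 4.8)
The plan is to realise the reduced product on the right as the restriction of the ordinary product bundle gerbe on the left to a single connected component of its surjective submersion, and then quote the stable isomorphism of Remark \ref{remarkexample2}. First I would compute the surjective submersion underlying the product $\bigotimes_{i=1}^n(J_i^{d_i}, p_i^{-1}(\R)\times\su)$. Because the product is formed over $T\times\su$ and the $\su$-factor is shared, this submersion is $W\times\su\to T\times\su$ with $W:=p_1^{-1}(\R)\times_T\cdots\times_Tp_n^{-1}(\R)$. Unwinding the definition of $p_i^{-1}(\R)$, a point of $W$ is a tuple $(t,x_1,\dots,x_n)$ with $t\in T$ and $p_i(t)=[x_i]$ for each $i$; since this forces $t=\text{diag}(e^{2\pi i x_1},\dots,e^{2\pi i x_n})$, the map $(t,x_1,\dots,x_n)\mapsto(x_1,\dots,x_n)$ identifies $W$ with $\{(x_1,\dots,x_n)\in\R^n : \sum_j x_j\in\Z\}$, a disjoint union of affine hyperplanes indexed by the value of $\sum_j x_j$, each diffeomorphic to $\R^{n-1}$.

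Next I would isolate the component $H:=\{\sum_j x_j=0\}\cong\R^{n-1}$. By Definition \ref{definitionofourtorus} every element of $T$ has a representative with $\sum_j x_j=0$, so the restriction of $W\to T$ to $H$ is the exponential map $\R^{n-1}\to T$, which is a surjective submersion; hence $H\times\su$ is a submanifold of $W\times\su$ still mapping onto $T\times\su$. Remark \ref{remarkexample2} then produces a stable isomorphism between the product bundle gerbe and its restriction to $H\times\su$. To identify this restriction with the reduced product, I would restrict the projection $\rho_i\colon W\to p_i^{-1}(\R)$ to $H$: a point $((a),(b))\in(H\times\su)^{[2]}$ is sent to $((t,a_i,b_i),g\su)$ with $t=\exp(a)=\exp(b)$, so the pulled-back line bundle $(\rho_i^{[2]})^{-1}J_i^{d_i}$ has fibre $(J_i|_{g\su})^{b_i-a_i}$, which is exactly $J_i^{d_i}$ over $\R^{n-1}\times\su$ with $d_i((a),(b))=b_i-a_i$ (matching Definition \ref{ex 1}). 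Tensoring over $i$ and checking that the multiplications coincide shows the restriction is $\sideset{}{_\mathrm{red}}\bigotimes_{i=1}^n(J_i^{d_i},\R^{n-1}\times\su)$.

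Finally, since the $SU(n)$-action is carried entirely on the $\su$-factor and acts trivially on $W$ (hence on $H$), the submanifold $H\times\su$ is $SU(n)$-invariant, and all the line bundles and bundle gerbe multiplications involved are equivariant by Propositions \ref{cupprodisequivariant} and \ref{equivariantbundlegerbeproduct}; the equivariant refinement of Remark \ref{remarkexample2} then upgrades the stable isomorphism to an $SU(n)$-stable isomorphism. I expect the main obstacle to lie in the bookkeeping of the middle step: recognising that the fibre product $W$ splits into infinitely many disjoint copies of $\R^{n-1}$ (so that the ``collapse'' of the fibre product into a reduced product is precisely the content of Remark \ref{remarkexample2}), and verifying carefully that restriction of the full product line bundle to the single component $H$ reproduces the reduced-product line bundle together with its multiplication. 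Everything else is routine identification of fibres.
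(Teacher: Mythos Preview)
Your proposal is correct and follows essentially the same approach as the paper: compute the fibre product underlying the ordinary product as $W=\{(x_1,\dots,x_n)\in\R^n:\sum_i x_i\in\Z\}$, embed $\R^{n-1}\cong\{\sum_i x_i=0\}$ as a submanifold whose projection to $T\times\su$ is still a surjective submersion, and invoke Remark~\ref{remarkexample2}. The paper is somewhat terser on the identification of the restricted bundle gerbe with the reduced product and on the equivariance check, but the argument is the same.
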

\begin{proof}
 It is not difficult to see that there are $SU(n)$-equivariant diffeomorphisms \begin{align*} p_1^{-1}(\R) \times_{T\times \frac{SU(n)}{T} }\times \cdots \times_{T\times \frac{SU(n)}{T}} p_n^{-1}(\R)&\cong \left\{(x_1, ..., x_n, t)\in \R^n \times T \ \big\rvert\ [x_i] = p_i(t)\ \forall \  i \right\}\\ 
&\cong \left\{(x_1, ..., x_n) \in \R^n \ \big\rvert \ \textstyle \sum_{i=1}^n x_i \in \Z\right\} \\ &=:W, \end{align*} using that $p_1(t)\cdots p_n(t) = 1$. Note that the surjective submersion $W\to T\times \su$ is given by $(x_1, ..., x_n, gT) \mapsto (\text{diag}([x_1], ..., [x_n]), gT).$ So $$\bigotimes_{i=1}^n \left( J_i^{d_i}, p_i^{-1}(\R)\times \su\right)\cong \left(\Motimes_{i=1}^n J_i^{d_i}, W\times \su \right).$$  Now, we can consider $\R^{n-1}$ as a submanifold of $W$ since \begin{align}\label{Rniso}\R^{n-1} \cong \left\{(x_1, ..., x_n) \in \R^n \ \big\rvert \  \textstyle\sum_{i=1}^n x_i = 0\right\}.\end{align} Clearly the restriction of the projection $W\times \su \to T\times \su$ to $\R^{n-1}\times \su$ is again a surjective submersion. Therefore by Remark \ref{remarkexample2}, $$\left(\Motimes_{i=1}^n J_i^{d_i}, W\times \su \right) \cong_{\textup{stab}} \left(\Motimes_{i=1}^n J_i^{d_i}, \R^{n-1} \times \su \right).$$ It is trivial to verify that this stable isomorphism is $SU(n)$-equivariant. The result follows by definition of the reduced product (Example \ref{examplereducedproducts}). 
\end{proof}

\begin{remark}\label{defndi}
 We abuse notation and understand $d_i: \left(\R^{n-1}\right)^{[2]}\to \Z$ in the bundle gerbes $\left(J_i^{d_i}, \R^{n-1}\times \su\right)$ to be the maps defined by $$d_i(x_1, ..., x_n, y_1, ..., y_n) =  y_i - x_i.$$ Clearly $\delta(d_i)=0$, so $\left(J_i^{d_i}, \R^{n-1}\times \su\right)$ are indeed general cup product bundle gerbes.
\end{remark}

\indent We next study the connective data on the cup product bundle gerbe, thereby concluding this chapter. Similar to the previous section, we begin by considering a bundle gerbe connection. This result follows immediately from Propositions \ref{product line bundle} and \ref{connectivestructureofproductbg}.

\begin{proposition}\label{jello} \textup{\textbf{(A connection on the cup product bundle gerbe)}} Let $$\pi: (\R^{n-1})^{[2]}\times \su \to T\times \su$$ be projection and $P_i: \su\times T \times \C^n \to {J}_i$ be orthogonal projection. The $i$-th cup product bundle gerbe connections $\nabla_{c_i}$ from \textup{Proposition \ref{connectiononJ}} induce a bundle gerbe connection $\nabla_c$ on  the cup product bundle gerbe with curvature  $$F_{\nabla_c}=  \sum_{i=1}^n d_i\, \pi^{*}\textup{tr}(P_idP_idP_i).$$
\end{proposition}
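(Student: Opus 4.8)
The plan is to construct $\nabla_c$ as the tensor product of the connections $\nabla_{c_i}$ and to read off its curvature from the product-line-bundle curvature identity, after first passing to the reduced-product realisation of the cup product bundle gerbe so that all factors share a single surjective submersion.

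First I would invoke Proposition \ref{redproductofcupproduct} to replace $(P_c, X) = \bigotimes_{i=1}^n \left(J_i^{d_i}, p_i^{-1}(\R)\times \su\right)$ by the $SU(n)$-equivariantly stably isomorphic reduced product $\sideset{}{_\mathrm{red}}\bigotimes_{i=1}^n \left(J_i^{d_i}, \R^{n-1}\times \su\right)$. The payoff is that every factor is now a bundle gerbe over the single surjective submersion $\R^{n-1}\times \su \to T\times \su$, so the total line bundle is $\Motimes_{i=1}^n J_i^{d_i} \to (\R^{n-1})^{[2]}\times \su$ and no fibre-product bookkeeping between distinct submersions $\pi_i^{[2]}$ is required; the single projection $\pi\colon (\R^{n-1})^{[2]}\times \su\to T\times \su$ of the statement does all the work. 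Each factor $\left(J_i^{d_i}, \R^{n-1}\times \su\right)$ is a general cup product bundle gerbe by Remark \ref{defndi}, so I would apply Proposition \ref{existsconnectiononcup} to the line bundle connection $\nabla_{J_i}$ from Proposition \ref{curvaturetwoform}. Using Remark \ref{remark curvatureofJ} to identify the relevant pulled-back curvature of $\nabla_{J_i}$ with $\textup{tr}(P_idP_idP_i)$, this yields the bundle gerbe connection $\nabla_{c_i}$ with two-curvature $d_i\,\pi^*\textup{tr}(P_idP_idP_i)$.

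Next I would form $\nabla_c := \Motimes_{i=1}^n \nabla_{c_i}$ on $\Motimes_{i=1}^n J_i^{d_i}$ via Example \ref{product line bundle}. Iterating the product-connection curvature formula $F_{\nabla_{E_1\otimes E_2}} = F_{\nabla_{E_1}}\otimes id_{E_2} + id_{E_1}\otimes F_{\nabla_{E_2}}$ over the $n$ line-bundle factors and identifying each summand with the corresponding $2$-form gives $F_{\nabla_c} = \sum_{i=1}^n F_{\nabla_{c_i}} = \sum_{i=1}^n d_i\,\pi^*\textup{tr}(P_idP_idP_i)$, exactly as claimed. To confirm that $\nabla_c$ is a genuine \emph{bundle gerbe} connection rather than merely a line bundle connection, I would apply Proposition \ref{curving of product} inductively across the $n$ reduced-product factors (equivalently, Proposition \ref{connectivestructureofproductbg} in the product realisation before specialising through Proposition \ref{redproductofcupproduct}); its reduced-product construction is precisely $\Motimes \nabla_{c_i}$, which guarantees compatibility with the bundle gerbe multiplication.

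The computation is essentially routine once these ingredients are in place, so I do not expect a serious obstacle. The only point requiring genuine care is the change of realisation: the connections $\nabla_{c_i}$ are originally defined in Proposition \ref{connectiononJ} over $p_i^{-1}(\R)\times \su$ with the projections $\pi_i^{[2]}$, whereas the asserted curvature lives over $(\R^{n-1})^{[2]}\times \su$ with the single $\pi$. I would therefore check that re-deriving $\nabla_{c_i}$ directly on the reduced form via Proposition \ref{existsconnectiononcup} agrees with transporting the original $\nabla_{c_i}$ along the stable isomorphism of Proposition \ref{redproductofcupproduct}, so that $\pi$ legitimately replaces the family $\pi_i^{[2]}$ in every curvature term.
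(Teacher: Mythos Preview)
Your proposal is correct and follows the same approach as the paper, which simply notes that the result is immediate from Example \ref{product line bundle} and Proposition \ref{connectivestructureofproductbg}. You have unpacked the details more fully, in particular making explicit the passage to the reduced-product realisation via Proposition \ref{redproductofcupproduct} so that the single projection $\pi$ replaces the family $\pi_i^{[2]}$, which the paper leaves implicit in the statement of the proposition.
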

\begin{proposition}\label{corollary data on cup prod} \textup{\textbf{(Geometry of the cup product bundle gerbe)}}
Let $\nabla_{c}$ be the connection on $\left(P_c, X, T\times \su, \pi_c\right)$ from \textup{Proposition \ref{jello}}. Let  $P_i$ be orthogonal projection $T\times \su\times \C^n\to J_i$, and define a $2$-form $f_c \in \Omega^2\left(X\right)$ by $$ f_c(x_1, ..., x_n, gT) := -\sum_{i=1}^n x_i \, \pi_c^*\textup{tr}(P_idP_idP_i).$$ Abuse notation and denote the pullback of $p_i$ to $T\times \su$ by $p_i$. Then  \begin{enumerate}[(1),font=\upshape]
		\item the $2$-form $f_{c}$ satisfies $\delta(f_c) = F_{\nabla_c}$, so $f_c$ is a curving for the connection $\nabla_c;$
		\item the three-curvature $\omega_c \in \Omega^3\left(T\times \su\right)$ of $(\nabla_{c}, f_c)$ is given by $$\omega_c = -\frac{1}{2\pi i}\sum_{i=1}^n p_i^{-1} dp_i\wedge \textup{tr}(P_idP_idP_i);$$
		\item the real Dixmier-Douady class of the cup product bundle gerbe is represented by $$\frac{1}{4\pi^2}\sum_{i=1}^n p_i^{-1} d p_i\wedge \textup{tr}(P_idP_idP_i).$$ 
	\end{enumerate}
\end{proposition}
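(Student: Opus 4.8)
The plan is to reduce everything to the reduced-product description of the cup product bundle gerbe and then apply the connective-data formula for reduced products. By Proposition \ref{redproductofcupproduct}, the cup product bundle gerbe is $SU(n)$-stably isomorphic to $\sideset{}{_\mathrm{red}}\bigotimes_{i=1}^n ( J_i^{d_i}, \R^{n-1} \times \su )$, where now $d_i(x,y) = y_i - x_i$ (Remark \ref{defndi}). I would work throughout with this reduced-product model, whose connection is precisely the $\nabla_c$ of Proposition \ref{jello}, and then transfer the resulting curving, three-curvature and real Dixmier-Douady class back along the stable isomorphism.

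First I would treat each factor $(J_i^{d_i}, \R^{n-1}\times \su)$ as a general cup product bundle gerbe and apply Proposition \ref{curvaturetheorem1}. The only inputs required are the function $\varphi_i$ with $\delta(\varphi_i) = \pi^* d_i$ and the $1$-form $\eta_i$ with $d\varphi_i = \pi^*\eta_i$. Taking $\varphi_i = x_i$ gives $\delta(x_i) = \pi^* d_i$ at once, and since the submersion sends $x_i$ to $e^{2\pi i x_i}$ in the $i$-th diagonal entry we have $\pi^* p_i = e^{2\pi i x_i}$, so $\eta_i = \tfrac{1}{2\pi i}\,p_i^{-1} dp_i$, exactly as in the proof of Proposition \ref{connective data on ith cup product bundle gerbe}. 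With the curvature $F_{\nabla_{J_i}} = \textup{tr}(P_i dP_i dP_i)$ supplied by Proposition \ref{curvaturetwoform}, Proposition \ref{curvaturetheorem1} then yields the per-factor curving $f_{c_i} = -x_i\,\pi_c^*\textup{tr}(P_i dP_i dP_i)$ and three-curvature $\omega_{c_i} = -\tfrac{1}{2\pi i}\,p_i^{-1}dp_i \wedge \textup{tr}(P_i dP_i dP_i)$.

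Next I would assemble the factors using Proposition \ref{curving of product} in its evident $n$-fold form, which states that the curving of a reduced product is the sum of the curvings and the three-curvature is the sum of the three-curvatures. Summing gives $f_c = \sum_{i=1}^n f_{c_i} = -\sum_{i=1}^n x_i\,\pi_c^*\textup{tr}(P_i dP_i dP_i)$ and $\omega_c = \sum_{i=1}^n \omega_{c_i} = -\tfrac{1}{2\pi i}\sum_{i=1}^n p_i^{-1}dp_i \wedge \textup{tr}(P_i dP_i dP_i)$, establishing $(1)$ and $(2)$; compatibility of this curving with $\nabla_c$ is automatic, since $\nabla_c$ is the reduced-product connection of Proposition \ref{jello}. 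For $(3)$ I would invoke Proposition \ref{curvatureisrealpartofclass}: the real Dixmier-Douady class is represented by $\tfrac{1}{2\pi i}\omega_c$, and using $(2\pi i)^2 = -4\pi^2$ this equals $\tfrac{1}{4\pi^2}\sum_{i=1}^n p_i^{-1}dp_i \wedge \textup{tr}(P_i dP_i dP_i)$.

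The computations themselves are routine once the reduction is set up; the only point requiring genuine care is the transfer of connective data across the stable isomorphism of Proposition \ref{redproductofcupproduct}. I expect the main obstacle to be confirming that this stable isomorphism is connection- and curving-preserving, so that three-curvature and the real Dixmier-Douady class are genuinely unchanged, rather than being merely a stable isomorphism of the underlying bundle gerbes. This rests on the facts that it arises from restriction to the submanifold $\R^{n-1}\times \su$ of $W\times\su$ via Remark \ref{remarkexample2}, and that all the forms in question descend to the common base $T\times \su$; verifying these carefully is where the essential work lies.
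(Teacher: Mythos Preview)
Your approach is correct and follows the same overall strategy as the paper --- compute the per-factor connective data and then sum. The paper's proof is a one-liner citing Propositions \ref{connectivestructureofproductbg}, \ref{jello}, and \ref{connective data on ith cup product bundle gerbe}: it invokes the already-proved geometry of the $i$-th cup product bundle gerbe (rather than re-deriving it from Proposition \ref{curvaturetheorem1} as you do) and then applies the connective-data-on-products formula directly on $X$.

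The one place you diverge is the detour through the reduced-product model on $\R^{n-1}\times\su$ and the accompanying worry about transferring connective data back across the stable isomorphism of Proposition \ref{redproductofcupproduct}. This is unnecessary. The cup product bundle gerbe is \emph{defined} as the (full) tensor product $\bigotimes_{i=1}^n(J_i^{d_i}, p_i^{-1}(\R)\times\su)$, so $X$ itself is the fibre product of the $p_i^{-1}(\R)\times\su$ over $T\times\su$, which (as in the proof of Proposition \ref{redproductofcupproduct}) is identified with $W\times\su$ and carries coordinates $(x_1,\dots,x_n,gT)$. Proposition \ref{connectivestructureofproductbg} then gives $f_c = \sum_i \pi_i^* f_{c_i}$ and $\omega_c = \sum_i \omega_{c_i}$ directly on $X$ and $T\times\su$, with no stable isomorphism to traverse. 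Your computation of $\varphi_i = x_i$ and $\eta_i = \tfrac{1}{2\pi i}p_i^{-1}dp_i$ is exactly what underlies Proposition \ref{connective data on ith cup product bundle gerbe}, so the content is the same; you have simply unpacked one level further than the paper and added a transfer step that can be dropped.
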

\begin{proof}
This follows immediately from Propositions \ref{connectivestructureofproductbg}, \ref{jello}, and \ref{connective data on ith cup product bundle gerbe}.
\end{proof}


	\chapter{The basic bundle gerbe and the Weyl map \label{ch:three}}
	\begin{chapquote}{E. Meinrenken}
	For the special unitary group $G=SU(d+1)$, the construction of the basic gerbe simplifies... in fact, the gerbe is presented as a Chatterjee-Hitchin gerbe... 
	\end{chapquote}
\indent The \textit{basic gerbe} is defined in full generality over a compact, simple, simply connected Lie group $G$. It is defined to be any gerbe whose Dixmier-Douady class is a generator of $H^3(G, \Z)$. There are a number of ways to construct the basic gerbe. The first of these constructions was due to Brylinski in 1993, who  utilised the path fibration $PG\to G$ in his construction \cite[Theorem 5.4.7]{brylinski}. A similar (infinite-dimensional) construction for bundle gerbes which also used path fibrations was provided in 1996 by Murray \cite{1996bundlegerbes}. Here, Murray defined the so-called \textit{tautological bundle gerbe} over a 2-connected manifold $M$, whose curvature was equal to a given integral, closed $3$-form. The special case when $M$ was a compact, simple, simply connected Lie group was later considered in the 1997 paper \cite{higherbgs}, and in more detail in \cite{dannythesis}, to define the basic bundle gerbe. The basic bundle gerbe over $G$ is an example of a \textit{lifting bundle gerbe} (Example \ref{exampleliftingbundlegerbe}). The central extension in this construction is taken to be $$ U(1) \to \widehat{\Omega G} \to \Omega G$$ for $\Omega G$ the group of based, smooth loops in $G$, and $\widehat{\Omega G}$ the Kac-Moody group.\\
\indent In this work, we will restrict our study to the basic bundle gerbe over $SU(n)$. This is advantageous in that there is a much simpler, finite-dimensional construction of this bundle gerbe due to Murray--Stevenson. There were several influential papers leading up to this construction. In 2002, while studying the WZW model, Gawedzki and Reis provided a representation-theoretic construction of a gerbe over $SU(n)$ \cite{branesandgerbes}. Meinrenken extended the work of Gawedzki and Reis in \cite{Meinrenken} to describe a finite-dimensional construction of an equivariant gerbe over $G$. In this paper, Meinrenken (and later Mickelsson \cite{mickelsson03}) considered an explicit example of a local bundle gerbe over $SU(n)$. In 2008, Murray--Stevenson generalised Meinrenken and Mickelsson's constructions to describe the aforementioned finite-dimensional construction of the globally defined \text{basic bundle gerbe} over $SU(n)$ \cite{53UNITARY}. In this work, Murray--Stevenson explicitly described a connection and curving on the basic bundle gerbe over $SU(n)$, and showed its three-curvature was $$-\frac{1}{24\pi^2}\, \text{tr}(g^{-1}dg)^3,$$the basic $3$-form on $SU(n)$. These results will be used extensively in our work. \\
\indent We begin this chapter with a brief study of the Weyl map $p:T\times {G}/{T}\to G$ (Section \ref{c2}), with an emphasis on the case when $G=SU(n)$. In Section \ref{section: definition of basic bg}, we define the basic bundle gerbe over $SU(n)$ following the construction in \cite{53UNITARY}. We show this bundle gerbe is $SU(n)$-equivariant (Proposition \ref{proposition basic bg is equivariant}), and present its connective data and three-curvature (Proposition \ref{curvature of basic bg}, Theorem \ref{theorem basic bundle gerbe data}). The pullback of the basic bundle gerbe by $p$ will then be constructed in Section \ref{c3}. Here, the description of the Weyl map on $\text{Proj}_n$ from Section \ref{c2} will be used to describe the induced connective data on the pullback bundle gerbe in terms of orthogonal projections (Propositions \ref{proposition connective structure of pullback}, \ref{curvature}). Doing so will enable us to easily compare the connective data on the cup and pullback basic bundle gerbes later. Finally, we present a series of computations in Section \ref{subsection stable isos}, culminating in Proposition \ref{pullbackstableisomainresult}. This result describes the pullback of the basic bundle gerbe by the Weyl map as a reduced product of general cup product bundle gerbes, and will be crucial to the proof of this thesis' main result.

	\section{The Weyl map} \label{c2}
	\indent \indent The Weyl map $p:T\times {G}/{T} \to G, (t, gT)\mapsto gtg^{-1}$ is defined for $G$ a compact, connected Lie group and $T$ a \textit{maximal torus} of $G$. This map first appeared in the fundamental works of H. Weyl (1925-1926) \cite{weyl1, weyl2, weyl3} to prove the \textit{Weyl Integral Formula}, which describes the integral of a continuous function over $G$ as a product of integrals over ${G}/{T}$ and $T$. This formula has a variety of applications in representation theory. Most notably, it was used to prove \textit{Weyl's character formula}, a famous formula which expresses the character of an irreducible representation in terms of its highest weight. We refer the reader to \cite{dieck, rossmann} for more on these topics. The Weyl map would continue to appear in proofs throughout the 20-th century. Notably, in 1965, M. F. Atiyah made use of the Weyl map to provide an alternative proof of a $K$-theoretic result on compact Lie groups due to L. Hodgkin \cite{atiyahusingweylmap}. These are just a few of the many applications of the Weyl map in Lie theory.\\
	\indent	In this section we briefly introduce maximal tori and formally define the Weyl map. A geometric interpretation of the Weyl map when $G=SU(n)$ will also be considered. We conclude this section by considering the image of the Weyl map on $T\times \text{Proj}_n\cong T\times \su$, which will be used implicitly to describe the connective data on the pullback of the basic bundle gerbe later. Although the remainder of this chapter requires little more than the definition of the Weyl map, we encourage the reader to explore this rich topic further in \cite{ADAMS, rossmann, repfinitecompactgroups}. Let us begin with the definition of tori in Lie groups.
	\begin{definition}
		Let $G$ be a compact connected Lie group.\begin{enumerate}[(1)]
			\item A \textit{torus} of $G$ is a subgroup $T$ of $G$ that is isomorphic to a product of $U(1)$ factors.
			\item A torus $T$ is called a \textit{maximal torus} of $G$ if there is no other torus in $G$ properly containing $T$. 
		\end{enumerate}  
	\end{definition}
	\begin{remark}[\cite{ADAMS}]
		A maximal torus of a compact connected Lie group $G$ is a maximal abelian subgroup of $G$. 
	\end{remark}
	\begin{example}
		Let $G= SU(n)$ and $T$ be the subgroup of $SU(n)$ consisting of all diagonal matrices. Clearly $T \cong U(1)^{n-1}$ and $T$ is a \textit{maximal} torus of $SU(n)$ (see \cite{ADAMS}). 
	\end{example}
	With this, we can define the Weyl map, and consider its image on $T\times \su$ for $T$ the group of diagonal matrices in $SU(n)$. As we have mentioned, the Weyl map in this case has a highly geometric interpretation. For more on this, we recommend \cite{rossmann}. 
	
	\begin{definition}
		Let $G$ be a compact, connected Lie group and $T$ be a maximal torus of $G$. Define the \textit{Weyl map} by \begin{align*}
		p: T\times {G}/{T} &\to G \\
		(t, gT)&\mapsto gtg^{-1}.
		\end{align*}
	\end{definition}
	\begin{remark}
		It is easy to verify the Weyl map is well-defined using the fact that $T$ is abelian. 
	
	\end{remark} 
\begin{remark}\label{weylmapisequiv}
The Weyl map is $G$-equivariant with respect to the $G$-action on $G/T$ defined by left multiplication and the $G$-action on $G$ defined by conjugation. To see this, let $t\in T$ and $g, h\in G$. Then $$p(t,hgT) = hgtg^{-1}h^{-1} = h p(t, gT) h^{-1}.$$
\end{remark}
	\begin{example}
		When $G= SU(n)$ and $T<SU(n)$ is the subgroup of diagonal matrices, the Weyl map has several interpretations. For instance, the image of the Weyl map on $\{t\}\times SU(n)/T$ for a fixed $t\in T$ is $$p(\{t\}\times SU(n)/T) = \{gtg^{-1} \ | \ g\in SU(n)\},$$ the set of all elements in $SU(n)$ whose eigenvalues are the diagonal entries of $t$. Equivalently, $p(\{t\}\times SU(n)/T)$ is the orbit of $t$ under the conjugation action in $SU(n)$. We can alternatively understand the restriction of the Weyl map to be a covering space over the dense open subset $SU(n)_{\text{reg}}\subset SU(n)$ consisting of matrices whose eigenvalues are distinct (see \cite{53UNITARY}).
	\end{example}
	\begin{remark}
		Throughout the remainder of this work, any use of the term `Weyl map' shall be referring the the Weyl map on $T\times \su$ for $T$ the maximal torus of $SU(n)$ consisting of diagonal matrices. 
	\end{remark}
\indent As we have seen in the previous chapter, the curving and connection on the cup product bundle gerbe can be described by orthogonal projections using that $\su \cong \text{Proj}_n$, where $\text{Proj}_n$ is the space of $n$-tuples of orthogonal projections with mutually orthogonal one-dimensional image. We wish to describe the connective data on the basic bundle gerbe and pullback of the basic bundle gerbe by the Weyl map similarly. To do so, we must understand the image of the Weyl map on $T\times \text{Proj}_n\cong T\times \su$. This is given to us in the next proposition, which concludes this section. For $i=1, ..., n$, let $p_i:T\to S^1$ be the homomorphism mapping $t\in T$ to its $i$-th diagonal (Definition \ref{salad}).
	\begin{proposition}\label{WEyl map on projections}
		Under the identification $\su \cong \text{Proj}_n$ in \textup{Proposition \ref{SU(n)/T is Proj_n}}, the Weyl map $p:T\times \text{Proj}_n\to SU(n)$ is given by \begin{align} \label{weylmapdescription} p: (t, P_1, ..., P_n)\mapsto \sum_{i=1}^np_i(t)P_i. \end{align}
	\end{proposition}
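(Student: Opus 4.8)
The plan is to unwind the chain of identifications that define both sides of equation (\ref{weylmapdescription}) and verify they agree on a convenient representative, then invoke well-definedness to conclude for all cosets. The Weyl map sends $(t, gT)$ to $gtg^{-1}$, while the identification $\su \cong \text{Proj}_n$ from Proposition \ref{SU(n)/T is Proj_n} sends $gT$ to the $n$-tuple $(gO_1g^{-1}, \ldots, gO_ng^{-1})$, where $O_i$ is orthogonal projection onto $\text{span}(e_i)$. So the content of the proposition is that conjugating $t$ by $g$ produces the same element of $SU(n)$ as the weighted sum $\sum_i p_i(t)\, gO_ig^{-1}$, where $P_i = gO_ig^{-1}$ and $p_i(t)$ is the $i$-th diagonal entry of $t$.

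First I would write $t = \text{diag}(p_1(t), \ldots, p_n(t))$ using Definition \ref{salad}, and observe the elementary spectral identity $t = \sum_{i=1}^n p_i(t)\, O_i$, which holds because the $O_i$ are the spectral projections of the diagonal matrix $t$ onto its eigenspaces $\text{span}(e_i)$. Conjugating both sides by $g$ gives
\begin{align*}
p(t, gT) = gtg^{-1} = g\left(\sum_{i=1}^n p_i(t)\, O_i\right)g^{-1} = \sum_{i=1}^n p_i(t)\, gO_ig^{-1} = \sum_{i=1}^n p_i(t)\, P_i,
\end{align*}
which is exactly the claimed formula once we identify $P_i = gO_ig^{-1}$ under Proposition \ref{SU(n)/T is Proj_n}. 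This is the whole computation; the conjugation passes through the scalars $p_i(t)$ trivially since they are complex numbers.

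The one genuine subtlety to address is consistency, i.e. that the right-hand side depends only on the coset $gT$ and on the pair $(t, P_1, \ldots, P_n)$, not on the chosen representative $g$. I would note that replacing $g$ by $gh$ for $h \in T$ leaves each projection fixed, since $hO_ih^{-1} = O_i$ (as $h$ is diagonal, it preserves each $\text{span}(e_i)$), so $(gh)O_i(gh)^{-1} = gO_ig^{-1} = P_i$; thus the tuple $(P_1, \ldots, P_n)$ is genuinely a function of $gT$, matching the well-definedness already recorded in Proposition \ref{SU(n)/T is Proj_n}. I expect essentially no obstacle here: the only thing to be careful about is bookkeeping the two identifications (the coset-to-tuple map and the homomorphisms $p_i$) so that the symbols $P_i$ and $p_i(t)$ in the target formula are interpreted correctly. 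The proof is therefore short, with the spectral decomposition $t = \sum_i p_i(t)\,O_i$ as its only real ingredient and equivariance (Remark \ref{weylmapisequiv}) available as an optional cross-check if one wishes to confirm the formula transforms correctly under the $SU(n)$-action.
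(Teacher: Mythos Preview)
Your proposal is correct and follows essentially the same route as the paper: both use the spectral decomposition $t = \sum_{i=1}^n p_i(t)\,O_i$ and conjugate by $g$ to obtain $gtg^{-1} = \sum_i p_i(t)\,gO_ig^{-1} = \sum_i p_i(t)\,P_i$. The paper closes by invoking transitivity of the $SU(n)$-action on $\text{Proj}_n$ to reach an arbitrary tuple $(P_1,\ldots,P_n)$, whereas you instead check representative-independence directly; these are equivalent bookkeeping moves.
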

	\begin{proof}
		Let $(t, gT)\in T\times \su$. Under the diffeomorphism in Proposition \ref{SU(n)/T is Proj_n}, $(t, gT)$ maps to $(t, gO_1g^{-1}, ..., gO_ng^{-1})\in T\times \text{Proj}_n$. Since $O_i$ is the matrix with a $1$ in the $i$-th diagonal and zeros elsewhere, $t = \sum_{i=1}^n p_i(t)O_i$ and  $$p(t, gT) = gtg^{-1} = g\left(\sum_{i=1}^n p_i(t)O_i\right)g^{-1} = \sum_{i=1}^n p_i(t) \left(gO_ig^{-1}\right).$$ Therefore $p(t, gO_1g^{-1}, ..., gO_ng^{-1}) = \sum_{i=1}^n p_i(t) gO_ig^{-1}$. Since the action of $SU(n)$ on $\text{Proj}_n$ is transitive, any element $(P_1, ..., P_n)\in \text{Proj}_n$ can be written as $(gO_1g^{-1}, ..., gO_ng^{-1})$ for some $g\in SU(n)$, and the result follows.
	\end{proof}

	\section{The basic bundle gerbe}\label{section: definition of basic bg} 
	\indent \indent In this section, we will construct the basic bundle gerbe on $SU(n)$, prove it is $SU(n)$-equivariant, and study its connective data. Our construction of the basic bundle gerbe $\basicbgnopi$ follows closely that in \cite{53UNITARY}. First, the space $Y$ will be introduced. We will then consider an ordering on $Y^{[2]}$, which will provide us with a straightforward geometric description of the disconnected sets $Y_-^{[2]}, Y_0^{[2]}$ and $Y_+^{[2]}$ that cover $Y^{[2]}$. Finally, the line bundle $P_b\to Y^{[2]}$ will be defined over these components as the determinant of a vector bundle.\\
	\indent The space $Y$ is defined as follows. Let $Z := U(1)\backslash \{1\}$. This is an open subset of $U(1)$, hence a manifold.  Denote the set of eigenvalues of an operator $g\in SU(n)$ by spec$(g)$. Define $$Y:= \left\{(z, g) \in Z\times SU(n) \ | \ z\notin \text{spec}(g)\right\}.$$ This is an open subset of $Z\times SU(n)$, hence is a manifold. Let $\pi: Y\to SU(n)$ be projection onto the second factor. This is a surjective submersion, and clearly $$Y^{[2]} \cong \left\{(z_1, z_2, g) \in Z^2 \times SU(n) \  | \ z_1,z_2 \notin \text{spec}(g) \right\}.$$ More generally, $Y^{[p]}$ consists of $(p+1)$-tuples  $(z_1, ..., z_p, g)$ such that $z_1, ..., z_n \notin \text{spec}(g)$. \\ 
	\indent To define the line bundle $\Pbasic \to Y^{[2]}$, we must understand the aforementioned disconnected cover of $Y^{[2]}$. These sets can be described explicitly using the ordering on $Z$ induced from the natural ordering of elements in $(0, 2\pi)$. Namely, if $\text{arg}: Z \to (0, 2\pi)$ is the bijection satisfying $\text{exp}(i\text{arg}(z)) = z$ for all $z\in Z$, we can define an ordering on $Z$ by setting $z_1 < z_2$ if and only if $\text{arg}(z_1) < \text{arg}(z_2)$. The set in our cover containing $(z_1, z_2, g)\in Y^{[2]}$ will then be determined by the positions of the eigenvalues of $g$ relative to $z_1$ and $z_2$ with respect to this ordering. To better explain this, we introduce some terminology.

	\begin{definition}
		Let $(z_1, z_2, g)\in Y^{[2]}$ and $\lambda$ be an eigenvalue of $g$. Say that $\lambda \in Z$ is \textit{between} $z_1$ and $z_2$ if $z_1 < \lambda < z_2$ or $z_2 < \lambda < z_1$ (or, equivalently, if $\lambda$ is in the connected component of $U(1)\backslash\{z_1, z_2\}$ not containing $\{1\}$). Call a triple $(z_1, z_2, g)\in Y^{[2]}$ \textit{positive} if there exist eigenvalues of $g$ between $z_1 > z_2$, \textit{null} if there are no eigenvalues of $g$ between $z_1$ and $z_2$, and \textit{negative} if there exist eigenvalues of $g$ between $z_1<z_2$. 
	\end{definition}
Denote the set of all positive, null, and negative triplets in $Y^{[2]}$ by $Y^{[2]}_+, Y^{[2]}_0$ and $Y^{[2]}_{-}$. Note that $(z_1, z_2, g)\in Y_+^{[2]}$ if and only if $(z_2, z_1, g)\in Y_-^{[2]}$. Elements in each of these sets are depicted in Figure \ref{fig:circleimage2}, where we assume for simplicity that all eigenvalues of $g$ are in the connected component of $Z\backslash \{z_1, z_2\}$ containing $\lambda$. 
	
	\begin{figure} \centering 
		\begin{tikzpicture}[scale=0.8]  
		\draw (0, 0) circle (1.5 cm);
		\draw[fill = black] (1, 1.118) circle (2pt);
		\draw[fill = black] (0, -1.5) circle (2pt);
		\draw[fill = white, draw = black] (1.5, 0) circle (2pt);
		\node[] at (1.28, 1.3) {$\ z_1$};
		\node[] at (0, -1.85) {$z_2$};
		\draw[fill = black] (-1, 1.118) circle (2pt);
		\node[] at (-1.2, 1.35) {$\lambda \ \  $};
		\node[] at (0, -2.7) {$(z_1, z_2, g) \in Y_-^{[2]}$};

		\draw (6, 0) circle (1.5 cm);
		\draw[fill = black] (7, 1.118) circle (2pt);
		\draw[fill = black] (4.585, -.5) circle (2pt);
		\draw[fill = black] (5, 1.118) circle (2pt);
		\draw[fill = white, draw = black] (7.5, 0) circle (2pt);
		\node[] at (7.28, 1.3) {$\ z_2$};
		\node[] at (4.3, -.6) {$\lambda$};
		\node[] at (4.8, 1.35) {$z_1\ \ $};
		\node[] at (1, 1) {};
		\node[] at (6, -2.7) {$(z_1, z_2, g) \in Y_0^{[2]}$};
		
		\draw (12, 0) circle (1.5 cm);
		\draw[fill = black] (12, -1.5) circle (2pt);
		\draw[fill = black] (12, 1.5) circle (2pt);
		\draw[fill = black] (12.9, 1.2) circle (2pt);
		\draw[fill = white, draw = black] (13.5, 0) circle (2pt);
		\node[] at (12, 1.85) {$\lambda $};
		\node[] at (12, -1.85) {$\ z_1$};
		\node[] at (13.17, 1.37)  {$\ z_2$};
		\node[] at (1, 1) {};
		\node[] at (12, -2.7) {$(z_1, z_2, g) \in Y_+^{[2]}$};
		\end{tikzpicture}	\captionof{figure}{Components of $Y^{[2]}$ }
		\label{fig:circleimage2} \end{figure}
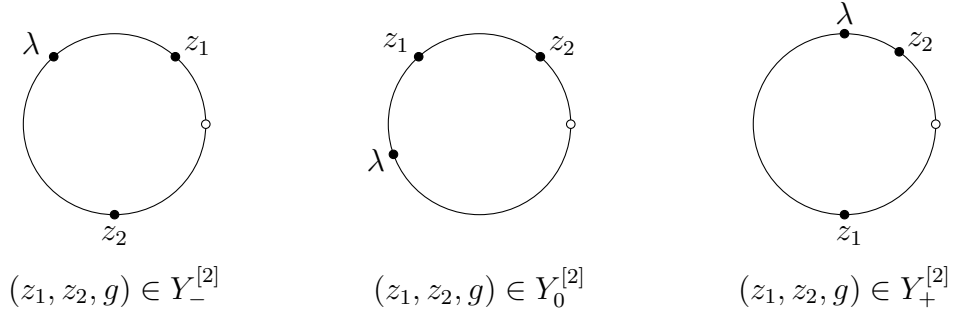

	\indent With this description of $Y^{[2]}$, we can define $P_b\to Y^{[2]}$ as follows. For $\lambda$ an eigenvalue of $g\in SU(n)$, let $E_{(g, \lambda)}$ denote the $\lambda$-eigenspace of $g$. Define the vector bundle $L\to Y^{[2]}_{+}$ fibrewise by $$L_{(z_1, z_2, g)} = \Moplus_{z_1>\lambda > z_2} E_{(g, \lambda)}.$$ 
	For a proof that this is indeed a vector bundle we refer the reader to \cite{53UNITARY}. Note that $L_{(z_1, z_2, g)}$ has finite dimension as a finite sum of finite-dimensional spaces. Therefore we can define $\Pbasic \to Y^{[2]}$ fibrewise by $$(\Pbasic)_{(z_1, z_2, g)} = \begin{cases} \text{det}(L_{(z_1, z_2, g)}) &\text{if } (z_1, z_2, g)\in Y^{[2]}_+\\ 
	\C&\text{if } (z_1, z_2, g)\in Y^{[2]}_0 \\
	\text{det}(L_{(z_2, z_1, g)})^* & \text{if } (z_1, z_2, g) \in Y^{[2]}_{-}. \end{cases} $$
	By \cite{53UNITARY}, $\Pbasic \to Y^{[2]}$ is a smooth locally trivial hermitian line bundle.
	
\begin{remark}
The realisation of $P_b$ as a determinant line bundle (Example \ref{determinant line bundle }) was our primary motivation to define bundle gerbes using line bundles rather than principal $U(1)$-bundles (Remark \ref{principalbundle}).
\end{remark}
	 It remains to be shown that there is an associative multiplication operation that will endow $(P_b, Y, SU(n))$ with a bundle gerbe structure. This is provided to us in the next proposition. 
	\begin{proposition}\label{teawoke}
		There exists an associative multiplication operation on $P_b\to Y^{[2]}$ making $(P_b, Y, SU(n))$ a bundle gerbe. 
	\end{proposition}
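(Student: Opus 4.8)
The plan is to build the multiplication $m$ from the canonical behaviour of the determinant functor under direct sums, and then to verify associativity. Recall from Example \ref{determinant line bundle } that for finite-dimensional vector spaces there is a canonical isomorphism $\det(A\oplus B)\cong \det(A)\otimes\det(B)$, natural in $A$ and $B$; dualising gives the analogous isomorphisms involving $\det(\cdot)^{*}$. For a triple $(z_1,z_2,z_3,g)\in Y^{[3]}$ the projections $\pi_3,\pi_1,\pi_2$ return the pairs $(z_1,z_2,g)$, $(z_2,z_3,g)$ and $(z_1,z_3,g)$ respectively, so a bundle gerbe multiplication is a smooth isomorphism $(\Pbasic)_{(z_1,z_2,g)}\otimes(\Pbasic)_{(z_2,z_3,g)}\to(\Pbasic)_{(z_1,z_3,g)}$. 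The tool above is exactly what relates these three determinant lines once we understand how the sums of eigenspaces decompose.

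First I would record the symmetry $(\Pbasic)_{(z_1,z_2,g)}\cong(\Pbasic)_{(z_2,z_1,g)}^{*}$, which is immediate from the three-component definition of $\Pbasic$ together with the fact that $(z_1,z_2,g)\in Y_+^{[2]}$ if and only if $(z_2,z_1,g)\in Y_-^{[2]}$; this cuts down the number of genuinely distinct cases. The conceptual picture guiding the whole construction is that, locally on $Y$ (away from eigenvalues of $g$ passing through $1$), one may set $F(z,g):=\det\big(\bigoplus_{0<\arg\lambda<\arg z}E_{(g,\lambda)}\big)$, and then $(\Pbasic)_{(z_1,z_2,g)}\cong F(z_1,g)\otimes F(z_2,g)^{*}$ uniformly across $Y_+^{[2]},Y_0^{[2]},Y_-^{[2]}$. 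The multiplication is then the telescoping map contracting $F(z_2,g)^{*}\otimes F(z_2,g)$ by the evaluation pairing, which renders both smoothness and associativity transparent. Crucially, $F$ does \emph{not} glue to a global line bundle on $Y$ — an eigenspace is gained or lost whenever an eigenvalue crosses $1$, and this is precisely the obstruction detecting nontriviality of the basic bundle gerbe — yet the jumps of $F(z_1,g)$ and $F(z_2,g)^{*}$ cancel, so $\Pbasic$ and $m$ are globally well defined.

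To make this rigorous I would define $m$ directly on each piece of $Y^{[3]}$ cut out by the linear ordering of $z_1,z_2,z_3$ under $\arg$, using in each case the splitting of the relevant sum of eigenspaces into the eigenvalues lying in the two sub-arcs determined by $z_2$ (no eigenvalue meets any $z_i$, since $z_i\notin\text{spec}(g)$), together with the determinant isomorphism and its duals. For instance, on the locus $z_1>z_2>z_3$ one has $L_{(z_1,z_3,g)}=L_{(z_1,z_2,g)}\oplus L_{(z_2,z_3,g)}$ (with the convention $\det 0=\C$ absorbing the null subcases), giving $m$ as $\det(L_{(z_1,z_2,g)})\otimes\det(L_{(z_2,z_3,g)})\xrightarrow{\sim}\det(L_{(z_1,z_3,g)})$; the remaining orderings follow by the same device after inserting the duality isomorphism of the previous paragraph. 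Smoothness of $m$ then follows from smoothness of $L\to Y_+^{[2]}$ (established in \cite{53UNITARY}) and functoriality of $\det$, and $m$ is unitary for the induced hermitian metrics.

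The main obstacle, and the bulk of the work, will be verifying associativity over $Y^{[4]}$: for $(z_1,z_2,z_3,z_4,g)$ one must check that the two composites in the square of Definition \ref{bgdefn} agree. I would organise this by the ordering of the four points, reducing to the statement that the determinant contractions are independent of the order in which intermediate eigenspaces are split off, i.e. to the coherence of $\det(A\oplus B\oplus C)\cong\det(A)\otimes\det(B)\otimes\det(C)$. The $F$-description reduces associativity to associativity of the evaluation pairing, which is automatic; so the genuine content is the compatibility lemma showing that the case-by-case definition of $m$ coincides with the telescoping map in every configuration. Once that compatibility is established, associativity — and hence the claim that $(\Pbasic,Y,SU(n))$ is a bundle gerbe — follows at once.
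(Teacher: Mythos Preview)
Your proposal is correct and follows essentially the same route as the paper: both define the multiplication via the canonical isomorphism $\det(A\oplus B)\cong\det(A)\otimes\det(B)$ applied to the splitting $L_{(z_1,z_3,g)}=L_{(z_1,z_2,g)}\oplus L_{(z_2,z_3,g)}$, handle the remaining orderings by duality, and deduce associativity from the coherence of the wedge product. Your local ``$F(z,g)$'' heuristic is a nice conceptual addition not present in the paper's (briefer) argument, but it is not a genuinely different method---just a cleaner packaging of the same telescoping mechanism.
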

	\begin{proof} Let $(z_1, z_2, z_3, g) \in Y^{[3]}$ with $z_1>z_2>z_3$. Assume there are eigenvalues of $g$ between $z_1$ and $z_2$ and also between $z_2$ and $z_3$. Clearly $$\Moplus_{z_1 >\lambda > z_2 } E_{(g, \lambda)} \ \oplus \Moplus_{z_2>\lambda > z_3 } E_{(g, \lambda)} = \Moplus_{z_1 >\lambda > z_3 } E_{(g, \lambda)},$$ so for this choice of $(z_1, z_2, z_3, g)\in Y^{[3]}$ the fibres of $L$ satisfy \begin{align}\label{eqn3} L_{(z_1, z_2, g)}\oplus L_{(z_2, z_3, g)} = L_{(z_1, z_3, g)}.\end{align} It is a standard fact that, for vector spaces $V$ and $W$, the wedge product defines a canonical isomorphism $\text{det}(V)\otimes \text{det}(W)\to \text{det}(V\oplus W)$. Therefore the wedge product defines a multiplication map $$(\Pbasic)_{(z_1, z_2, g)}\otimes (\Pbasic)_{(z_2, z_3, g)}= (\Pbasic)_{(z_1, z_3, g)}$$ for this choice of $(z_1, z_2, z_3, g)$. By considering the other connected components of $Y^{[3]}$, we see that this equation holds for all $(z_1, z_2, z_3, g) \in Y^{[3]}$ (see \cite{53UNITARY} for details). Associativity of this multiplication follows by associativity of the wedge product.
	\end{proof}
	\begin{definition}
		Call the bundle gerbe $\basicbg$ constructed above the \textit{basic bundle gerbe over $SU(n)$}, or simply the \textit{basic bundle gerbe}, depicted in Figure \ref{figbbg}. 	\begin{figure}[bth!]
			$$ \begin{array}{ccc}
			P_b    &          &        \\
			\downarrow &         &         \\
			Y^{[2]}& \stackrel{}{\dra} &  Y\\
			&               &      \downarrow \\
			&               &      SU(n)   \\
			\end{array} $$
			
			\captionof{figure}{The basic bundle gerbe}
			\label{figbbg}
		\end{figure}
	\end{definition}
\begin{remark}
The construction of the basic bundle gerbe due to Murray--Stevenson in \cite{53UNITARY} that we have replicated here is defined more generally over a group $G$ equipped with a unitary action on a Hilbert space. The possible choices for $G$ include the group $U(H)$ of unitary operators on a finite dimensional Hilbert space $H$, the diagonal torus $T\subset U(n)$, and the Banach Lie groups $U_p(H)$ for $H$ an infinite dimensional separable complex Hilbert space. The limitation of the basic bundle gerbe construction to these groups is a consequence of the space $Y$ being defined in terms of eigenvalues of unitary operators. This choice of $Y$ by Murray--Stevenson in \cite{53UNITARY} was based on a space used in Meinrenken and Mickelsson's construction of local bundle gerbe over $SU(n)$ in \cite{Meinrenken, mickelsson03}. Although Murray--Stevenson's construction relies on $G$ being unitary, it is advantageous in that it is finite-dimensional, unlike the construction of the basic bundle gerbe over an arbitrary compact, simple, simply connected Lie group $G$ as in \cite{dannythesis}.
\end{remark}

	\indent We now begin to explore the properties of this bundle gerbe. By noting that $SU(n)$ acts on itself by conjugation, it is natural to ask if the basic bundle gerbe over $SU(n)$ is itself $SU(n)$-equivariant (Definition \ref{equivariantbundlegerbedefn}). This is indeed the case, leading us to our next proposition.
	\begin{proposition}\label{proposition basic bg is equivariant} 
		The basic bundle gerbe $\basicbg$ is an $SU(n)$-equivariant bundle gerbe for the action of $SU(n)$ on itself by conjugation. 
	\end{proposition}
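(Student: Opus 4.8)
The plan is to verify the three requirements of Definition~\ref{equivariantbundlegerbedefn} for $K = SU(n)$ acting on itself by conjugation, exploiting throughout that conjugation by a fixed $h \in SU(n)$ preserves the eigenvalues of an operator and carries its eigenspaces isomorphically onto those of the conjugated operator. The entire argument is natural rather than computational; the only real work lies in the smoothness bookkeeping at the end.

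First I would define the action on the total space $Y$ by $h \cdot (z, g) := (z, hgh^{-1})$. Since $\mathrm{spec}(hgh^{-1}) = \mathrm{spec}(g)$, the defining condition $z \notin \mathrm{spec}(g)$ is preserved, so this is a well-defined $SU(n)$-action on $Y$; and as $\pi(h \cdot (z,g)) = hgh^{-1} = h \cdot \pi(z,g)$, the projection $\pi$ is equivariant, giving condition~(1). The induced action on $Y^{[2]}$ fixes the $Z^2$-coordinates and conjugates $g$. Because the eigenvalues of $hgh^{-1}$ lying strictly between $z_1$ and $z_2$ are exactly those of $g$, with the same multiplicities, this action preserves each of the disjoint pieces $Y_+^{[2]}, Y_0^{[2]}, Y_-^{[2]}$ covering $Y^{[2]}$, hence restricts to an action on each.

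For condition~(2) I would build the equivariant structure on $P_b$ piece by piece. If $gv = \lambda v$, then $(hgh^{-1})(hv) = \lambda(hv)$, so $v \mapsto hv$ maps $E_{(g, \lambda)}$ isomorphically onto $E_{(hgh^{-1}, \lambda)}$. Summing over the eigenvalues between $z_1$ and $z_2$ yields a linear isomorphism $L_{(z_1, z_2, g)} \to L_{h \cdot (z_1, z_2, g)}$ covering the action on $Y_+^{[2]}$, making $L \to Y_+^{[2]}$ an $SU(n)$-equivariant vector bundle. Passing to the top exterior power exactly as in Example~\ref{determinant line bundle } equips $P_b|_{Y_+^{[2]}} = \det(L)$ with an equivariant structure $v_1 \wedge \cdots \wedge v_k \mapsto hv_1 \wedge \cdots \wedge hv_k$. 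Over $Y_0^{[2]}$, where $P_b$ is the trivial bundle $\C$, I take $h \cdot (z_1, z_2, g, c) = (z_1, z_2, hgh^{-1}, c)$; over $Y_-^{[2]}$, where $P_b = \det(L_{(z_2, z_1, g)})^*$, I take the dual of the structure already defined over $Y_+^{[2]}$, using that $(z_1,z_2,g) \in Y_-^{[2]}$ if and only if $(z_2, z_1, g) \in Y_+^{[2]}$. Since the three pieces are disjoint and preserved by the action, these assemble into a single equivariant line bundle $P_b \to Y^{[2]}$. Condition~(3) is then immediate: the bundle gerbe multiplication of Proposition~\ref{teawoke} is the wedge-product isomorphism $\det(V) \otimes \det(W) \to \det(V \oplus W)$, and since $h$ acts by $v \mapsto hv$ on every eigenspace we have $h \cdot (\omega \wedge \eta) = (h \cdot \omega) \wedge (h \cdot \eta)$; hence the multiplication map, and therefore the section $s$ of $\delta(P_b) \to Y^{[3]}$ it determines, is $SU(n)$-equivariant.

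I expect the main obstacle to be technical rather than conceptual: checking that the fibrewise maps $v \mapsto hv$ assemble into a \emph{smooth} action on the total space of $P_b$, given that $P_b$ is defined by cases and that the rank of $L$ is only locally constant on $Y_+^{[2]}$. On each region of constant rank the eigenprojections vary smoothly and the action is induced from the standard smooth linear $SU(n)$-representation on $\C^n$, so smoothness of the equivariant action should reduce to, and follow from, the smoothness of $P_b$ itself already established in \cite{53UNITARY}; the care required is simply in confirming this compatibility uniformly across the three components of the cover.
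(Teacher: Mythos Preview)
Your proposal is correct and follows essentially the same approach as the paper: lift the conjugation action to $Y$ by $h\cdot(z,g)=(z,hgh^{-1})$, observe that $v\mapsto hv$ carries eigenspaces of $g$ to those of $hgh^{-1}$ and hence gives an equivariant structure on $L$ and then on $P_b=\det(L)$ over $Y_+^{[2]}$, with the other components handled by duality and triviality. If anything you are more thorough than the paper, which presents only an outline and defers the details (in particular smoothness and the explicit verification of condition~(3)) to \cite{53UNITARY}, whereas you address both of these points directly.
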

	\begin{proof} We provide an outline of this proof and refer the reader to \cite{53UNITARY} for details. First, we see that the conjugation action of $SU(n)$ on itself lifts to an action on $Y$ defined by $g \cdot(z, s) :=
		(z, gsg^{-1})$. This is well-defined since $\text{spec}(s) = \text{spec}(gsg^{-1})$. Clearly $\pi:Y\to M$ is $SU(n)$-equivariant, so we need only verify that $\Pbasic \to Y^{[2]}$ is an $SU(n)$-equivariant line bundle. We show this is true for the restriction of $\Pbasic\to Y^{[2]}$ to $Y^{[2]}_+$ and claim the other cases proceed similarly. Let $(z_1, z_2, g) \in Y_+^{[2]}$ and $h\in SU(n)$. Consider an eigenvector $v\in L_{(z_1, z_2, g)}$ with eigenvalue $\lambda$. Then $hv$ is an eigenvector of $hgh^{-1}$ with eigenvalue $\lambda$, so multiplication by $h$ defines an action $L_{(z_1, z_2, g)}\to L_{(z_1, z_2, hgh^{-1})}$. Since $\Pbasic|_{Y_+^{[2]}} = \text{det}(L)$, this induces an action on $\Pbasic|_{Y_+^{[2]}}$ which is linear on fibres. It can be verified that the line bundle projection is equivariant, so this is an $SU(n)$-equivariant line bundle.
	\end{proof}
	\indent The remainder of this section will focus on the geometry of the basic bundle gerbe. This work will allow us to describe the induced geometry on the pullback of the basic bundle gerbe later in the chapter. As a first observation, note that a vector bundle connection $\nabla$ on $L$ will induce a line bundle connection det$(\nabla)$ on $\Pbasic$ (Example \ref{determinant line bundle }). We claim that there is a choice of $\nabla$ such that this induced connection on $P_b$ is a bundle gerbe connection on $\basicbgnopi$. Let $M_n$ be the space of $n\times n$ matrices. 
	
	\begin{proposition}[{\cite[p.$\,$1577]{53UNITARY}}]  \label{curvature of basic bg} \textbf{\textup{(A connection on the basic bundle gerbe)}}
	Consider the orthogonal projection $P: Y_+^{[2]}\to M_n$ associated to the line bundle $L\to Y^{[2]}_+$. There exists a connection $\nabla$ on $L \to Y^{[2]}_+$ that induces a bundle gerbe connection $\nablabasic := \textup{det}(\nabla)$ on the basic bundle gerbe $(\Pbasic, Y, SU(n))$. The two-curvature $F_{\nabla_b}$ of this connection satisfies $$F_{\nabla_b}\big|_{Y_+^{[2]}} = \textup{tr}(PdPdP), \ F_{\nabla_b}\big|_{Y_-^{[2]}} = -\textup{tr}(PdPdP),\ F_{\nabla_b}\big|_{Y_0^{[2]}} = 0.$$
	\end{proposition}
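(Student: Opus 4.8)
The plan is to take $\nabla$ to be the connection induced on the subbundle $L\to Y_+^{[2]}$ by the flat connection on the trivial bundle. Since each fibre $L_{(z_1,z_2,g)}=\Moplus_{z_1>\lambda>z_2}E_{(g,\lambda)}$ is a subspace of $\C^n$, the vector bundle $L$ is a subbundle of the trivial bundle $\C^n\times Y_+^{[2]}$, and the orthogonal projection $P\colon Y_+^{[2]}\to M_n$ onto $L$ is smooth. This smoothness is the genuinely delicate point, established in \cite{53UNITARY} via holomorphic functional calculus, which I would cite rather than reprove. Taking $\nabla:=P\circ d$ as in Example \ref{remarkonprojectionconnection} and setting $\nablabasic:=\det(\nabla)$ as in Example \ref{determinant line bundle }, it then remains to compute the two-curvature and to verify that $\nablabasic$ is a bundle gerbe connection.

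The curvature computation is immediate from the earlier machinery. By Proposition \ref{linebundleconnection} the curvature of $\nabla$ is the matrix of $2$-forms $PdPdP$, and by Example \ref{determinant line bundle } the determinant connection has curvature $F_{\det(\nabla)}=\textup{tr}(F_\nabla)$; hence $F_{\nablabasic}\big|_{Y_+^{[2]}}=\textup{tr}(PdPdP)$. On $Y_0^{[2]}$ the line bundle $\Pbasic$ is the trivial bundle $\C$ with its flat connection, so its curvature vanishes. On $Y_-^{[2]}$ I would use that $\Pbasic=\det(L_{(z_2,z_1,g)})^*$, that is, the dual of the pullback of $\Pbasic\big|_{Y_+^{[2]}}$ under the swap diffeomorphism $(z_1,z_2,g)\mapsto(z_2,z_1,g)$; by Example \ref{example dual line bundle} the dual connection negates the curvature, giving $F_{\nablabasic}\big|_{Y_-^{[2]}}=-\textup{tr}(PdPdP)$.

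The main obstacle is showing that $\nablabasic$ is a bundle gerbe connection, i.e. that it satisfies equation (\ref{respectbgmultiplication}). Pulling everything back to $Y^{[3]}$ and abbreviating $L_{ij}:=L_{(z_i,z_j,g)}$ with corresponding projections $P_{ij}$ and connections $\nabla_{ij}:=P_{ij}\circ d$, the bundle gerbe multiplication is the wedge-product isomorphism $\det(L_{12})\otimes\det(L_{23})\to\det(L_{13})$ arising from the orthogonal eigenspace decomposition $L_{13}=L_{12}\oplus L_{23}$ of (\ref{eqn3}). Condition (\ref{respectbgmultiplication}) then reduces to the identity $\det(\nabla_{13})=\det(\nabla_{12})\otimes\textup{id}+\textup{id}\otimes\det(\nabla_{23})$ under this isomorphism. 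The subtlety is that $\nabla_{13}=P_{13}\circ d$ does \emph{not} preserve the splitting $L_{13}=L_{12}\oplus L_{23}$, so one cannot simply argue that $\nabla_{13}$ is block-diagonal.

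The resolution I would use is that the determinant connection sees only the \emph{trace} of the connection matrix. Writing the connection $1$-form of $\nabla_{13}$ in a frame adapted to the splitting $L_{13}=L_{12}\oplus L_{23}$ as a block matrix, the off-diagonal blocks are annihilated on passing to the trace, so the connection $1$-form of $\det(\nabla_{13})$ is the sum of the traces of the two diagonal blocks. Using the relations $P_{12}P_{13}=P_{12}$ and $P_{23}P_{13}=P_{23}$ (which hold because $L_{12},L_{23}\subseteq L_{13}$), one checks that for a local section $e$ of $L_{12}$ the $L_{12}$-component of $\nabla_{13}e=P_{13}\,de$ equals $P_{12}\,de=\nabla_{12}e$, so the $L_{12}$-diagonal block is exactly the connection matrix of $\nabla_{12}$, and symmetrically for $L_{23}$. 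This yields the desired identity and hence (\ref{respectbgmultiplication}). The equivalent curvature-level check that $\delta(F_{\nablabasic})=0$ follows from the same relations together with $P_{12}P_{23}=P_{23}P_{12}=0$, which kill the cross terms in the expansion of $\textup{tr}\bigl((P_{12}+P_{23})\,d(P_{12}+P_{23})\,d(P_{12}+P_{23})\bigr)$; this orthogonality bookkeeping, rather than any conceptual difficulty, is where the real work of the proof lies.
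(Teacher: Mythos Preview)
Your proof is correct and follows the same approach as the paper: take $\nabla = P\circ d$ on $L\subset \C^n\times Y_+^{[2]}$, set $\nablabasic=\det(\nabla)$, extend to $Y_-^{[2]}$ and $Y_0^{[2]}$ by duality and triviality, and verify the bundle gerbe condition via the orthogonal eigenspace splitting $L_{13}=L_{12}\oplus L_{23}$. You are in fact more careful than the paper at the key step: the paper simply asserts $\pi_2^*\nabla=\pi_3^*\nabla\oplus\pi_1^*\nabla$, whereas you correctly note that $\nabla_{13}$ need not preserve the splitting and argue instead that only the diagonal blocks (equal to $\nabla_{12}$, $\nabla_{23}$ via $P_{12}P_{13}=P_{12}$, $P_{23}P_{13}=P_{23}$) survive on passing to the trace, which is what the determinant connection sees.
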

	
	\begin{proof}
		Let	$\nabla$ be the connection on $L\subset \C^n\times Y_+^{[2]}$ induced by the trivial connection. That is, $\nabla := P\circ d$ for $P:\C^n\to L$ orthogonal projection (Example \ref{remarkonprojectionconnection}). This gives rise to a connection det$(\nabla)|_{Y_+^{[2]}}$ on $\Pbasic|_{Y_+^{[2]}}$ (Example \ref{determinant line bundle }). The induced dual and trivial connections define connections on $\Pbasic|_{Y_-^{[2]}}$ and $\Pbasic|_{Y_0^{[2]}}$ respectively, thereby defining a global connection $\nablabasic := \text{det}(\nabla)$. We claim this is a bundle gerbe connection on $(\Pbasic, Y, SU(n))$. To see this, consider a connected component of $U\subset Y^{[3]}$ containing some $(z_1, z_2, z_3, g)$ with $z_1>z_2>z_3$. Further assume there exist eigenvalues of $g$ between $z_1>z_2$ and $z_2>z_3$. By equation (\ref{eqn3}), 
		$$\pi_2^{-1}(L) = \pi_3^{-1}(L)\oplus \pi_1^{-1}(L)$$ over $U$. Since this is an orthogonal splitting and $\nabla$ is defined by orthogonal projection, over $U$ we have $$\pi_2^* \nabla = \pi_3^*\nabla \oplus \pi_1^*\nabla.$$ Hence the wedge product induces an isomorphism $$\pi_2^* \nablabasic = \pi_3^* \nablabasic \otimes id_1 + id_3 \otimes \pi_1^* \nablabasic$$ over $U$, where $id_1, id_3$ denote the identity maps on $\pi_1^{-1}(L)$ and $\pi_3^{-1}(L)$, respectively. Similar arguments applied to the other connected components show this equation holds over all of $Y^{[3]}$. Therefore $\nablabasic$ is a bundle gerbe connection on $\basicbgnopi$. By Proposition \ref{linebundleconnection} and Corollary \ref{linebundlecurvature}, $F_{\nablabasic} = \text{tr}(PdPdP)$ on $Y^{[2]}_+$. By similar arguments applied to the dual and trivial connections on $\Pbasic$, the result follows.
	\end{proof}
	We conclude our preliminary work on the basic bundle gerbe by presenting its curving and curvature, as calculated in \cite[Theorem 5.1]{53UNITARY}. We omit the proof here as it is highly non-trivial and requires results from holomorphic functional calculus. This result is provided only for completeness, and will not be referenced again. It is significant because it allowed the authors of \cite{53UNITARY} to compute the curving and curvature on the pullback of the basic bundle gerbe in a very explicit form, as we will see later. \\ 
	\indent For $z\in \C^*$, define $R_z$ to be the closed ray from the  origin through $z$. Define a branch of the logarithm $\text{log}_z:\C\backslash{R_z}\to \C$ by setting $\text{log}_z(1) = 0$. For $(z, g)\in Y$, let $C(z, g)$ be an anti-clockwise closed contour in $\C\backslash R_z$ enclosing spec$(g)$.  Let $\nabla_b$ be the connection from Proposition \ref{curvature of basic bg}.

	\begin{theorem}[{\cite[Theorem 5.1]{53UNITARY}}]  \label{theorem basic bundle gerbe data} \textbf{\textup{(Geometry of the basic bundle gerbe)}}
		Let $\nabla_b$ be the connection from \textup{Proposition \ref{curvature of basic bg}}. Define a $2$-form $\fbasic$ on $Y$ by the contour integral 
		$$\fbasic(z, g) := \frac{1}{8 \pi^2} \int_{C_{(z,g)}} \textup{log}_z\zeta \textup{tr}\left((\zeta-g)^{-1}dg(\zeta-g)^{-2}dg\right)d\zeta. $$ Then \begin{enumerate}[(1),font=\upshape]
			\item the $2$-form $\fbasic$ satisfies $\delta(\fbasic) = F_{\nablabasic}$, so $\fbasic$ is a curving for $\nablabasic;$
			\item the three-curvature $\omega_b \in \Omega^3(SU(n))$ of $(\nablabasic, f_b)$ is given by $$\omegabasic = -\frac{i}{12\pi}\textup{tr}(g^{-1}dg)^3 $$ for $g\in SU(n)$ the standard coordinate chart\,$;$
			\item the real Dixmier-Douady class of the basic bundle gerbe is represented by $$\nubasic= -\frac{1}{24\pi^2}\textup{tr}(g^{-1}dg)^3. $$ 
		\end{enumerate}
	\end{theorem}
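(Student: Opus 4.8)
Parts (1)--(3) are of strictly decreasing difficulty, so the plan is to prove the curving identity (1), extract the three-curvature (2) from it, and then read off (3) for free. The engine of the whole argument is holomorphic functional calculus: the spectral projection of $g$ onto the eigenspaces enclosed by a contour $C$ is $P = \frac{1}{2\pi i}\oint_{C}(\zeta-g)^{-1}\,d\zeta$, and the resolvent differentiates as $d\big[(\zeta-g)^{-1}\big] = (\zeta-g)^{-1}\,dg\,(\zeta-g)^{-1}$. These two facts convert the Chern--Weil quantities attached to the determinant bundle $\Pbasic$ into contour integrals of the type appearing in the definition of $\fbasic$.

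First I would compute $\delta(\fbasic)$ directly. With the projections $\pi_1,\pi_2:Y^{[2]}\to Y$ given by $\pi_1(z_1,z_2,g)=(z_2,g)$ and $\pi_2(z_1,z_2,g)=(z_1,g)$, Proposition \ref{deltamap} gives $\delta(\fbasic)(z_1,z_2,g) = \fbasic(z_2,g) - \fbasic(z_1,g)$. The two terms are contour integrals which I would first bring onto a common contour encircling $\textup{spec}(g)$, legitimate since each integrand is holomorphic in $\zeta$ off its branch cut. The decisive geometric input is that $\log_{z_2}\zeta - \log_{z_1}\zeta$ vanishes outside the sector between the rays $R_{z_1}$ and $R_{z_2}$ and equals $2\pi i$ inside it --- and that sector is, by definition, where the eigenvalues ``between'' $z_1$ and $z_2$ sit, i.e.\ exactly the eigenspaces making up $L_{(z_1,z_2,g)}$. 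The difference therefore collapses to $\frac{2\pi i}{8\pi^2}\oint_{C'}\textup{tr}\big((\zeta-g)^{-1}dg(\zeta-g)^{-2}dg\big)\,d\zeta$ over a contour $C'$ enclosing precisely those eigenvalues.

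The heart of the proof is then the functional-calculus identity
\[
\textup{tr}(P\,dP\,dP) \;=\; \frac{i}{4\pi}\oint_{C'}\textup{tr}\big((\zeta-g)^{-1}dg\,(\zeta-g)^{-2}dg\big)\,d\zeta,
\]
with $P$ the spectral projection of the eigenvalues inside $C'$. I would prove it by inserting $dP = \frac{1}{2\pi i}\oint_{C'}(\zeta-g)^{-1}dg(\zeta-g)^{-1}d\zeta$ and collapsing the resulting iterated contour integrals by the resolvent identity and residue calculus (the factor $(\zeta-g)^{-2}$ arising as a merged double pole). Since $\frac{2\pi i}{8\pi^2}=\frac{i}{4\pi}$, comparison with $F_{\nablabasic}|_{Y^{[2]}_+}=\textup{tr}(PdPdP)$ from Proposition \ref{curvature of basic bg} yields $\delta(\fbasic)=F_{\nablabasic}$ on $Y^{[2]}_+$; the $Y^{[2]}_-$ and $Y^{[2]}_0$ cases come out of the very same computation, with sector jump $-2\pi i$ and $0$ respectively, matching the two-curvature recorded in Proposition \ref{curvature of basic bg}. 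This proves (1).

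For (2), the three-curvature is the unique $\omegabasic$ with $d\fbasic=\pi^*\omegabasic$, so I would differentiate under the integral sign in $\fbasic$. The dependence on $z$ (through both $\log_z$ and the contour) must drop out, confirming that $d\fbasic$ descends to $SU(n)$; differentiating the integrand with the resolvent-derivative formula yields a cubic expression in $dg$ whose contour integral I would evaluate by residues at the eigenvalues of $g$, the answer simplifying to the bi-invariant form $-\frac{i}{12\pi}\textup{tr}(g^{-1}dg)^3$. Part (3) is then immediate: by Proposition \ref{curvatureisrealpartofclass} the real Dixmier--Douady class is represented by $\frac{1}{2\pi i}\omegabasic = -\frac{1}{24\pi^2}\textup{tr}(g^{-1}dg)^3 = \nubasic$, using $\frac{1}{2\pi i}\cdot(-\frac{i}{12\pi}) = -\frac{1}{24\pi^2}$. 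The main obstacle throughout is exactly the deferred ``delicate holomorphic functional calculus'': establishing the displayed identity and controlling the branch-cut and contour deformations uniformly in $g$. Everything else is a geometric identification of the relevant sectors or a formal differentiation.
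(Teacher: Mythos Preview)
The paper does not supply its own proof of this theorem: it explicitly states ``We omit the proof here as it is highly non-trivial and requires results from holomorphic functional calculus'' and refers the reader to \cite[Theorem~5.1]{53UNITARY}. So there is no in-paper argument to compare your proposal against.

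That said, your outline is precisely the strategy used in the cited Murray--Stevenson paper. The three ingredients you isolate --- the branch-jump identity $\log_{z_2}\zeta-\log_{z_1}\zeta\in\{0,\pm 2\pi i\}$ picking out the eigenvalues between $z_1$ and $z_2$, the resolvent identity expressing $\textup{tr}(P\,dP\,dP)$ as a contour integral, and differentiation under the integral for the three-curvature --- are exactly the moves in \cite{53UNITARY}. Your identification of the ``main obstacle'' (making the contour deformations and the displayed functional-calculus identity precise uniformly in $g$) is also accurate; that is where the real work in the original proof lies. One small point: you should be prepared for the sign of the branch jump to depend on the ordering of $z_1,z_2$ (this is what produces the $\pm\textup{tr}(PdPdP)$ on $Y^{[2]}_\pm$), and the contour-deformation step needs the observation that the integrand is holomorphic in $\zeta$ on $\C\setminus(\textup{spec}(g)\cup R_z)$, so only the branch cuts and the spectrum obstruct moving $C_{(z,g)}$.
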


\begin{remark}
Recall that there is a notion of \textit{equivariant connective structures} on equivariant bundle gerbes (Remark \ref{remarkonequiariantconnectivestructure}). It is shown in \cite[Theorem 5.2]{equivariantbundlegerbes} that the connective data on the basic bundle gerbe defined above is equivariant. We will see that the cup product bundle gerbe and pullback of the basic bundle gerbe by the Weyl map are not $D$-stably isomorphic, hence cannot be isomorphic as bundle gerbes with equivariant data. For this reason, we do not explore this concept further. 
\end{remark}
	\section{The pullback of the basic bundle gerbe by the Weyl map} \label{c3}
	\indent \indent This is the penultimate section of this chapter. Here, we will describe the pullback of the basic bundle gerbe by the Weyl map and consider its induced bundle gerbe connection using Proposition \ref{curvature of basic bg}. The connective data associated to this induced connection will then be presented in Proposition \ref{curvature} using results from \cite{53UNITARY}. Let us begin by defining the pullback of the basic bundle gerbe by the Weyl map.
	
	\begin{definition}\label{definition of pullback of bbg} 
		Consider the Weyl map $p:T\times \su\to SU(n)$. The \textit{pullback of the basic bundle gerbe by the Weyl map} $p$, depicted in Figure \ref{figbbgpullback}, is the bundle gerbe $$\pullbackbasicbg := \left(	(\hat{p}^{[2]})^{-1}(\Pbasic), p^{-1}(Y), T\times \su\right)$$ where $\hat{p}:p^{-1}(Y) \to Y$ is the canonical projection.
	\end{definition}
\begin{figure}
	$$ \begin{array}{ccc}
	(\hat{p}^{[2]})^{-1}(\Pbasic)      &          &        \\
	\downarrow &         &         \\
	p^{-1}(Y)^{[2]}& \stackrel{}{\dra} &  p^{-1}(Y)\\
	&               &      \downarrow \\
	&               &       T\times \su    \\
	\end{array} $$
	\captionof{figure}{Pullback of the basic bundle gerbe}
	\label{figbbgpullback}\end{figure}
	\begin{remark}\label{pullbackofbasicbundlegerbeisequivariant} 
		By Propositions \ref{proposition pullback equivariant bg is equivariant}, \ref{proposition basic bg is equivariant} and Remark \ref{weylmapisequiv}, the pullback of the basic bundle gerbe by the Weyl map is $SU(n)$-equivariant with respect to the action of $SU(n)$ on $T\times \su$ given by multiplication in the $\su$ component. 
	\end{remark}
	\begin{remark}\label{remark spaces in pullback bundle gerbe} 
	The spaces present in the pullback of the basic bundle gerbe can be described as follows.
	\begin{align} p^{-1}(Y) &\cong \left\{(gT, t, z) \in \su \times T \times Z\ | \ z\notin \text{spec}(t)\right\}  \label{eq:test} \\ 
	p^{-1}(Y)^{[2]} &\cong \left\{(gT, t, z_1, z_2)\in \su \times T \times Z^2 \ | \ z_1, z_2\notin \text{spec}(t)\right\}  \label{eq:test2} \\
	\LL\hat{p}^{[2]}\RR^{-1}(\Pbasic) &\cong \left \{ (gT, t, z_1, z_2, v) \in p^{-1}(Y)^{[2]}\times \Pbasic \ | \ v \in (\Pbasic)_{(z_1, z_2, gtg^{-1})}\right\}  \label{eq:test3} 
	\end{align} 
\end{remark}

	\indent Naturally, the connective data on the basic bundle gerbe induces connective data on the pullback. Let $(\nabla_b, f_b)$ be the connective data on the basic bundle gerbe from Proposition \ref{curvature of basic bg} and $\hat{p}:p^{-1}(Y)\to Y$ be the map from Definition \ref{definition of pullback of bbg}. Recall that $M_n$ is the space of $n\times n$ matrices. 
	%
	
	\begin{proposition}\label{proposition connective structure of pullback} \textbf{\textup{(A connection on the pullback of the basic bundle gerbe)}} 
		Let  $P: p^{-1}(Y^{[2]}_+)\to \text{M}_n$ be orthogonal projection defined as the pullback by $p$ of the orthogonal projection $Y^{[2]}_+\to \text{M}_n$ associated to the line bundle $L\to Y^{[2]}_+$. The pullback of the connective data on the basic bundle gerbe $(\pnablabasic, \pfbasic):= (\hat{p}, p)^*(\nablabasic, \fbasic)$ is connective data on $\pullbackbasicbg$. Moreover, the two-curvature  of $\nabla_{p^*b}$ satisfies $$F_{\pnablabasic}\big |_{p^{-1}\left(Y^{[2]}_+\right)}  = \textup{tr}(PdPdP), \ F_{\pnablabasic}\big |_{p^{-1}\left(Y^{[2]}_-\right)}  = -\textup{tr}(PdPdP), \ F_{\pnablabasic}\big |_{p^{-1}\left(Y^{[2]}_0\right)}  = 0.$$
		%
	\end{proposition}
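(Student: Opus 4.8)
The plan is to deduce both assertions from the corresponding facts for the basic bundle gerbe itself, using only the naturality of the pullback operation. The first claim---that $(\pnablabasic, \pfbasic) = (\hat{p}, p)^*(\nablabasic, \fbasic)$ genuinely is connective data on $\pullbackbasicbg$---is an immediate instance of Proposition \ref{curving of pullback}, with the morphism of surjective submersions $(\hat{g},g)$ there taken to be $(\hat{p}, p)$. Since $\hat{p}:p^{-1}(Y)\to Y$ covers the Weyl map $p:T\times \su\to SU(n)$, that proposition supplies the canonical connective data $((\hat{p}^{[2]})^*\nablabasic,\, \hat{p}^*\fbasic)$ together with its three-curvature $p^*\omegabasic$, and nothing further is needed for this part.

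For the two-curvature I would first invoke Example \ref{line bundle pullback}(3), which gives $F_{f^*\nabla}=f^*F_\nabla$ for any pullback connection; applied to $\pnablabasic=(\hat{p}^{[2]})^*\nablabasic$ this yields the single identity
$$F_{\pnablabasic} = (\hat{p}^{[2]})^* F_{\nablabasic}.$$
The remaining work is to pull back, stratum by stratum, the piecewise formula for $F_{\nablabasic}$ from Proposition \ref{curvature of basic bg}. The key structural point is that $\hat{p}^{[2]}$ respects the decomposition $Y^{[2]}=Y^{[2]}_+\sqcup Y^{[2]}_0\sqcup Y^{[2]}_-$: by the description (\ref{eq:test2}), a point $(gT,t,z_1,z_2)$ of $p^{-1}(Y)^{[2]}$ maps to $(z_1,z_2,gtg^{-1})$, and since $\text{spec}(gtg^{-1})=\text{spec}(t)$ the positions of the eigenvalues relative to $z_1$ and $z_2$ are unchanged by conjugation. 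Hence $\hat{p}^{[2]}$ sends $p^{-1}(Y^{[2]}_{\pm})$ into $Y^{[2]}_{\pm}$ and $p^{-1}(Y^{[2]}_0)$ into $Y^{[2]}_0$, so the pullback of each branch of $F_{\nablabasic}$ lands on the matching branch of $F_{\pnablabasic}$.

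It then remains to identify $(\hat{p}^{[2]})^*$ of the basic-gerbe curvature with the expression $\textup{tr}(PdPdP)$ built from the \emph{pullback} projection $P$. By definition $P=Q\circ\hat{p}^{[2]}$, where $Q:Y^{[2]}_+\to \text{M}_n$ is the orthogonal projection associated to $L\to Y^{[2]}_+$ (the operator denoted $P$ in Proposition \ref{curvature of basic bg}). Since exterior differentiation of a matrix-valued function commutes with pullback, $dP=(\hat{p}^{[2]})^*(dQ)$, and as the pullback of forms is an algebra homomorphism that commutes with matrix multiplication and with the trace, one gets $\textup{tr}(PdPdP)=(\hat{p}^{[2]})^*\textup{tr}(QdQdQ)$. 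Combined with $F_{\nablabasic}|_{Y^{[2]}_+}=\textup{tr}(QdQdQ)$ this gives $F_{\pnablabasic}|_{p^{-1}(Y^{[2]}_+)}=\textup{tr}(PdPdP)$; the negative stratum (using the dual branch, whence the sign) and the null stratum (where the curvature vanishes) are handled identically, with $P$ on $Y^{[2]}_-$ understood via the order-swapped projection exactly as in Proposition \ref{curvature of basic bg}. A cleaner alternative for this last step is to note that $(\hat{p}^{[2]})^{-1}(L)$ is again a subbundle of the trivial rank-$n$ bundle with orthogonal projection $P$, that $\pnablabasic$ restricts to the connection $P\circ d$ induced by $P$, and then to apply Corollary \ref{linebundlecurvature} directly.

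The only genuine subtlety---and the step I would treat most carefully---is this final naturality claim: that pulling back the projection-induced connection on $L$ yields the projection-induced connection on $(\hat{p}^{[2]})^{-1}(L)$, so that the expression $\textup{tr}(PdPdP)$ of Proposition \ref{linebundleconnection} transforms as a bona fide pullback of forms. Everything else reduces to direct citation, and I expect no difficulty beyond this bookkeeping, since the three defining components of $\Pbasic$ are treated independently and the spectrum-preserving property of conjugation makes the stratification argument routine.
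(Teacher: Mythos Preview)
Your proposal is correct and follows the same approach as the paper: cite Proposition~\ref{curving of pullback} for the connective-data claim and Proposition~\ref{curvature of basic bg} for the two-curvature. The paper's own proof is a two-sentence citation of exactly these results, whereas you have (correctly) unpacked the bookkeeping---the compatibility of $\hat p^{[2]}$ with the $Y^{[2]}_\pm, Y^{[2]}_0$ decomposition and the naturality $(\hat p^{[2]})^*\mathrm{tr}(QdQdQ)=\mathrm{tr}(PdPdP)$---that the paper leaves implicit.
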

	\begin{proof}
		It follows from Lemma \ref{curving of pullback}  that $(\pnablabasic, \pfbasic)$ is connective data on the pullback bundle gerbe. The result about the two-curvature follows by Proposition \ref{curvature of basic bg}.
	\end{proof}
	
	\indent The pullback of the connective data from Theorem \ref{theorem basic bundle gerbe data} defines connective data for the pullback connection. 
	In the next proposition, we present this data in terms of orthogonal projections, using implicitly Proposition \ref{WEyl map on projections} and Theorem \ref{theorem basic bundle gerbe data}. This proposition, together with some remarks, concludes this section. 

	\indent Recall that, for $i=1, ..., n$, $p_i: T\to S^1$ is projection onto the $i$-th diagonal. Let $P_i: T\times  \su \to M_n$ be orthogonal projection. Abuse notation and denote the pullback of these maps to $p^{-1}(Y)$ by $p_i$ and $P_i$. Let $\nabla_{p^*b}$ be the bundle gerbe connection from Proposition \ref{proposition connective structure of pullback}.
	
	\begin{proposition}[{\cite[Appendix B]{53UNITARY}}] \label{curvature} \textbf{\textup{(Geometry of the pullback of the basic bundle gerbe)}} Let $\nabla_{p^*b}$ be the connection from \textup{Proposition \ref{proposition connective structure of pullback}}. Define $f_{p^*b}\in \Omega^2(p^{-1}(Y))$ by 
		$$f_{p^*b} := \frac{i}{4\pi} \sum_{i\neq k} (\log_z p_i - \log_z p_k + (p_k - p_i)p_k^{-1})\textup{tr}(P_i dP_kdP_k).$$ Then \begin{enumerate}[(1),font=\upshape]
			\item the $2$-form $f_{p^*b}$ satisfies $\delta(f_{p^*b}) = F_{\nabla_{p^*b}}$, so $f_{p^*b}$ is a curving for $\nabla_{p^*b};$
			\item the three-curvature $\omega_{p^*b} \in \Omega^3\left(T\times \su \right)$ of $(\nabla_{p^*b}, f_{p^*b})$ is given by $$ \displayindent0pt
			\displaywidth\textwidth  \ \ \ \ \frac{i}{4\pi} \sum_{\substack{i,k=1 \\ i\neq k}}^n \left(p_i^{-1} dp_i - p_k^{-1}dp_k -p_k^{-1}dp_i + p_k^{-1}dp_k p_k^{-1}p_i\right) \textup{tr}(P_idP_kdP_k) - p_ip_k^{-1} \textup{tr}(dP_idP_kdP_k); $$
			\item a representative $\nu_{p^*b}$ of the real Dixmier-Douady class of the pullback of the basic bundle gerbe is given by $$\displayindent0pt
			\displaywidth\textwidth  \ \ \ \ \frac{1}{8\pi^2} \sum_{\substack{i, k=1 \\ i\neq k}}^n \left(p_i^{-1} dp_i - p_k^{-1}dp_k -p_k^{-1}dp_i + p_k^{-1}dp_k p_k^{-1}p_i \right) \textup{tr}(P_idP_kdP_k)-  p_ip_k^{-1}\textup{tr}(dP_idP_kdP_k).$$ 
		\end{enumerate}
	\end{proposition}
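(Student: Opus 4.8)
The plan is to identify the explicitly-defined $2$-form $\pfbasic$ with the pullback $\hat{p}^*\fbasic$ of the curving on the basic bundle gerbe, and then to differentiate. Since $\LL\pnablabasic, \pfbasic\RR$ is, by construction, the pullback of the connective data $\LL\nablabasic, \fbasic\RR$ along $\LL\hat{p}, p\RR$, Proposition \ref{curving of pullback} already guarantees that the pullback of any curving is a curving. Consequently, once we verify that the formula defining $\pfbasic$ coincides with $\hat{p}^*\fbasic$, part $(1)$ follows immediately, and parts $(2)$ and $(3)$ reduce to a direct exterior-derivative computation.

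First I would substitute the spectral description of the Weyl map into the contour-integral formula for $\fbasic$ from Theorem \ref{theorem basic bundle gerbe data}. By Proposition \ref{WEyl map on projections}, the pullback of the coordinate $g$ along the Weyl map is $g = \sum_{i=1}^n p_i P_i$, whose eigenvalues are the $p_i$ with eigenprojections $P_i$. The resolvent therefore decomposes as $(\zeta - g)^{-1} = \sum_{j} (\zeta - p_j)^{-1} P_j$ and $(\zeta - g)^{-2} = \sum_{j} (\zeta - p_j)^{-2} P_j$, while $dg = \sum_i \LL dp_i\, P_i + p_i\, dP_i\RR$. I would insert these into $\textup{tr}\LL(\zeta - g)^{-1}dg(\zeta - g)^{-2}dg\RR$, expand using the cyclicity of the trace together with the projection identities $P_iP_j = \delta_{ij}P_i$, $\sum_i P_i = 1$, $\sum_i dP_i = 0$, and $P_i dP_i P_i = 0$ (the last obtained by differentiating $P_i^2 = P_i$ and sandwiching by $P_i$). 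The integrand then becomes a sum of rational functions of $\zeta$ with poles precisely at the enclosed eigenvalues $p_i$, multiplied by $\log_z\zeta$ and by traces of the projection-valued forms.

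The hardest step is the evaluation of the resulting contour integral, which is the content of \cite[Appendix B]{53UNITARY}. Here one applies the residue theorem on the contour $C_{(z,g)}$: the simple poles contribute the values $\log_z p_i$, and the double poles contribute derivatives of $\log_z$, producing the terms in $p_k^{-1}$ since $\tfrac{d}{d\zeta}\log_z\zeta = \zeta^{-1}$. After collecting residues and simplifying once more with the projection relations, I expect to recover exactly
$$\pfbasic = \frac{i}{4\pi}\sum_{i\neq k}\LL \log_z p_i - \log_z p_k + (p_k - p_i)p_k^{-1}\RR \textup{tr}(P_i\, dP_k\, dP_k),$$
thereby proving $(1)$. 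The delicate points are the bookkeeping of which projection-valued one-forms survive under the trace and the correct treatment of the branch cut of $\log_z$ relative to the contour, which is why this calculation is deferred to \cite{53UNITARY}.

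For $(2)$ I would compute $d\pfbasic$ directly from the explicit formula, differentiating the logarithmic and rational coefficients --- using $d\log_z p_i = p_i^{-1}dp_i$ and the Leibniz rule on the $\textup{tr}(P_i\,dP_k\,dP_k)$ factors --- and collecting terms to read off the unique $\pomegabasic \in \Omega^3(T\times \su)$ with $d\pfbasic = \pi^*\pomegabasic$; this yields the three-curvature displayed in $(2)$. Finally, part $(3)$ is immediate from Proposition \ref{curvatureisrealpartofclass}: the real Dixmier--Douady class is represented by $\tfrac{1}{2\pi i}\pomegabasic$, which is precisely the form $\pnubasic$ in $(3)$.
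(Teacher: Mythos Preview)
Your proposal is correct and mirrors the paper's treatment: the paper does not give its own proof of this proposition but explicitly omits it, remarking that deriving the formula for $f_{p^*b}=\hat{p}^*f_b$ from the contour-integral curving of Theorem~\ref{theorem basic bundle gerbe data} is a strenuous holomorphic-functional-calculus computation carried out in \cite[Appendix~B]{53UNITARY}. Your outline---substituting the spectral decomposition $g=\sum_i p_iP_i$ into the resolvent, evaluating residues, and then differentiating the explicit $f_{p^*b}$---is exactly the route that reference takes, so there is nothing to add.
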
 
	\begin{remark}
	
		The reader will recall that the curving in Theorem \ref{theorem basic bundle gerbe data} was described in terms of an integral over a curve. To derive the above equation for $f_{p^*b} := p^*f_b$ is a strenuous calculation, requiring knowledge of holomorphic functional calculus, see \cite[Appendix B]{53UNITARY}. For this reason we omit the proof of this result.
	\end{remark} 
	\begin{remark} Recall in the previous chapter that the connective data on the cup product bundle gerbe was also be described in terms of orthogonal projections $P_i$ (Proposition \ref{jello}). Therefore Proposition \ref{curvature} will enable us to easily compare the connective data of both of our bundle gerbes in the next chapter. \end{remark} 
	
	\section{Isomorphisms of the pullback of the basic bundle gerbe} \label{subsection stable isos}

	\indent \indent To conclude this chapter, we explore equivalent descriptions of the pullback of the basic bundle gerbe, culminating in the following important result.
		\begin{proposition}\label{pullbackstableisomainresult} 
		There is an $SU(n)$-equivariant isomorphism $$\pullbackbasicbg \cong_{SU(n)} \sideset{}{_\mathrm{red}}\bigotimes_{i=1}^n \left( J_i^{\varepsilon_i}, Y_T\times \su, T\times \su\right).$$
	\end{proposition}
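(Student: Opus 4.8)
The plan is to produce an honest $SU(n)$-equivariant \emph{isomorphism} (not merely a stable isomorphism), exploiting the fact that the surjective submersions on the two sides literally coincide once the Weyl map is pulled back. The guiding observation is Proposition \ref{WEyl map on projections}: under $\su \cong \textup{Proj}_n$ a conjugate $gtg^{-1}$ has the same spectrum as $t$, with eigenvalue $p_i(t)$ on the line $\text{im}(P_i) = \text{im}(gO_ig^{-1})$. Consequently the vector bundle $L \to Y^{[2]}_+$ defining the basic bundle gerbe pulls back to a direct sum of the one-dimensional spaces $\text{im}(P_i)$, and the canonical isomorphism $\det(V \oplus W) \cong \det(V) \otimes \det(W)$ turns its determinant into a tensor product of the line bundles $K_i \cong J_i$ from Section \ref{flagmanifold}.

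First I would match the spaces. Since $\text{spec}(gtg^{-1}) = \text{spec}(t)$, description (\ref{eq:test}) gives $p^{-1}(Y) \cong Y_T \times \su$, where $Y_T = \{(z,t) \in Z \times T \mid z \notin \text{spec}(t)\}$ is the submersion of the torus basic bundle gerbe $\basicbgtorus$; likewise (\ref{eq:test2}) gives $p^{-1}(Y)^{[2]} \cong Y_T^{[2]} \times \su$, all compatibly with the projections to $T \times \su$ and with Example \ref{fibreproductexample}. These are exactly the spaces underlying the general cup product bundle gerbes $(J_i^{\varepsilon_i}, Y_T \times \su, T \times \su)$ and their reduced product. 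Next I would define $\varepsilon_i : Y_T^{[2]} \to \Z$ by setting $\varepsilon_i(z_1, z_2, t)$ equal to $+1$ if $p_i(t)$ lies between $z_1 > z_2$, to $-1$ if it lies between $z_1 < z_2$, and to $0$ otherwise. The required cocycle condition $\delta(\varepsilon_i) = 0$ over $Y_T^{[3]}$, namely $\varepsilon_i(z_1,z_2,t) - \varepsilon_i(z_1,z_3,t) + \varepsilon_i(z_2,z_3,t) = 0$, is the scalar shadow of the eigenspace additivity (\ref{eqn3}) and follows from the elementary additivity of ``between-ness''; hence each $(J_i^{\varepsilon_i}, Y_T \times \su, T \times \su)$ is a bona fide general cup product bundle gerbe in the sense of Definition \ref{definition of general cup product}.

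The core step is the fibrewise identification of the line bundles. On the positive locus the pullback of $L$ has fibre $\Moplus_{z_1 > p_i(t) > z_2} \text{im}(P_i)$, so
\[
\big((\hat{p}^{[2]})^{-1}(\Pbasic)\big)_{(z_1,z_2,t,gT)} = \det\!\Big(\Moplus_{z_1 > p_i(t) > z_2} \text{im}(P_i)\Big) \cong \Motimes_{z_1 > p_i(t) > z_2} \text{im}(P_i) \cong \Motimes_{i=1}^n (J_i)_{gT}^{\varepsilon_i(z_1,z_2,t)},
\]
using that each $\text{im}(P_i)$ is a line and $K_i \cong J_i$. On the null locus both sides are canonically $\C$ (all $\varepsilon_i = 0$), and on the negative locus the dualization built into the definition of $\Pbasic$ matches the negative values of $\varepsilon_i$, so the same formula holds there. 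This exhibits $(\hat{p}^{[2]})^{-1}(\Pbasic)$ as $\Motimes_{i=1}^n J_i^{\varepsilon_i}$ over $Y_T^{[2]} \times \su$, which is precisely the line bundle of the reduced product $\sideset{}{_\mathrm{red}}\bigotimes_{i=1}^n (J_i^{\varepsilon_i}, Y_T \times \su, T \times \su)$.

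Finally I would verify that this fibrewise identification is smooth and respects the extra structure. Compatibility with the bundle gerbe products is immediate because the product on $\Pbasic$ is the wedge product, which is intertwined with the tensor-product multiplication on $\Motimes_i J_i^{\varepsilon_i}$ by $\det(V \oplus W) \cong \det(V) \otimes \det(W)$; equivariance follows from Remark \ref{weylmapisequiv} and Proposition \ref{proposition pullback equivariant bg is equivariant} on the left, from Proposition \ref{equivariantbundlegerbeproduct} together with the $SU(n)$-equivariance of each $J_i$ and the independence of $\varepsilon_i$ from the $\su$-factor on the right, and from the fact that every action in sight is conjugation in the $\su$-factor. The hard part will be verifying, simultaneously across the three disconnected components $Y^{[2]}_{\pm}$ and $Y^{[2]}_0$, that the componentwise canonical isomorphisms glue to a \emph{single smooth} line bundle map --- in particular near the boundary loci where an eigenvalue $p_i(t)$ crosses $z_1$ or $z_2$ --- while keeping track of the sign bookkeeping forced by the ordering-dependence of the determinant and the dualization on $Y^{[2]}_-$. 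This smoothness is where the local triviality of $\Pbasic$ established in \cite{53UNITARY} must be invoked.
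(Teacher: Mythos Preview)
Your argument is correct and reaches the same conclusion, but the paper takes a different, less direct route: it factors the isomorphism through the intermediate bundle gerbe $\left(\PbasicT \times_T SU(n),\, Y_T\times \su\right)$, first identifying $p^{-1}(P_b,Y)$ with this associated-bundle construction (Proposition~\ref{iso1}) and then identifying $\PbasicT$ fibrewise with $\Motimes_i \C_{p_i}^{\varepsilon_i}$ as $T$-representations, so that passing to the associated bundle over $\su$ produces $\Motimes_i J_i^{\varepsilon_i}$ via Theorem~\ref{thrmbundles} (Proposition~\ref{exactly}). Your approach bypasses this by observing directly that the $p_i(t)$-eigenspace of $gtg^{-1}$ is $\mathrm{im}(P_i)=(K_i)_{gT}\cong (J_i)_{gT}$, which collapses the two steps into one. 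The paper's detour has the virtue of making the homogeneous-bundle machinery of Section~\ref{homogeneous vector bundles} do the work and keeping the $T$-equivariance explicit, whereas your route is shorter and more geometric; indeed the paper itself uses exactly your direct eigenspace identification later, in the proof of Proposition~\ref{tired}, to check that the isomorphism preserves connections.

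One minor point: your closing worry about gluing ``near the boundary loci where an eigenvalue $p_i(t)$ crosses $z_1$ or $z_2$'' is overstated. On $Y_T^{[2]}$ the conditions $z_1,z_2\notin\mathrm{spec}(t)$ force the sets $Y_{T,+}^{[2]}$, $Y_{T,0}^{[2]}$, $Y_{T,-}^{[2]}$ to be open and disjoint, hence genuinely disconnected, so no eigenvalue ever crosses and there is no boundary to glue across. The only real bookkeeping is fixing once and for all the ordering in $\det(V_1\oplus\cdots\oplus V_k)\cong \det(V_1)\otimes\cdots\otimes\det(V_k)$ (take the index order $i=1,\dots,n$), after which smoothness on each component is automatic from local triviality of $L$.
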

	The significance of Proposition \ref{pullbackstableisomainresult} is that it realises the pullback of the basic bundle gerbe as a reduced product of general cup product bundle gerbes, similar to the cup product bundle gerbe from the previous chapter. This will allow us to directly apply the results from Subsection \ref{subsectionstableisocups} (which detailed stable isomorphisms of reduced products of general cup product bundle gerbes) to simplify our central research question. We will also show that the isomorphism in Proposition \ref{pullbackstableisomainresult} preserves the bundle gerbes connections, which will be useful in our final chapter. The reader should feel free to skim the results from this section, and return to them after their application in Chapter \ref{ch:four}. \\
	\indent  The proof of Proposition \ref{pullbackstableisomainresult} relies on a series of intermediary isomorphisms, which we outline as follows.  \begin{align*} p^{-1}(P_b, Y) \ &\stackrel{\text{Prop \ref{iso1}  }}{\cong_{SU(n)}   }    \left(  \PbasicT \times_T SU(n), Y_T\times \su \right)\\
	&\stackrel{\text{Prop \ref{exactly} }}{\cong_{SU(n)}} \left(\Motimes_{i=1}^nJ_i^{\varepsilon_i}, Y_T\times \su\right)\\
	&\stackrel{\text{Ex \ref{examplereducedproducts}}}{=: \  \, } \sideset{}{_\mathrm{red}}\bigotimes_{i=1}^n \left(J_i^{\varepsilon_i}, Y_T\times \su\right).
	\end{align*}

We begin with a dual proposition and definition. The proof of this result is left as an exercise to the reader. Let $Y_T:= Y|_{Z\times T}$ and $\PbasicT := (P_b)|_{Y_T^{[2]}}$. Denote the restriction of the basic bundle gerbe to $T$ by $\basicbgtorus$. 
	\begin{proposition}[{\cite[p.$\,$1582]{53UNITARY}}] 
		Define $\PbasicT\times_T SU(n)$ to be the set of equivalence classes in $\PbasicT\times SU(n)$ under the relation $$(v_1\wedge \cdots \wedge v_k, g) \sim (tv_1 \wedge \cdots \wedge tv_k, gt^{-1})$$ for all $t\in T$. Then ${\PbasicT \times_T SU(n)}$ is a line bundle over $Y_T^{[2]}\times \su$, and there is an associative multiplication induced by that on the bundle gerbe $(P_{b, T}, Y_T)$ over $T$ making \begin{align}\label{vege} \left(  \PbasicT \times_T SU(n), Y_T\times \su \right)\end{align} a bundle gerbe over $T\times \su$. 
	\end{proposition}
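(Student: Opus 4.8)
The plan is to recognise $\PbasicT\times_T SU(n)$ as a free quotient of a $T$-equivariant line bundle --- an instance of the associated bundle construction of Section \ref{homogeneous vector bundles}, carried out fibrewise over $Y_T^{[2]}$ --- and then to show that the wedge-product multiplication on $\basicbgtorus$ descends through this quotient. First I would fix the surjective submersion and the relevant fibre product. The restriction of $\pi\colon Y\to SU(n)$ gives a surjective submersion $Y_T\to T$, whence $Y_T\times\su\to T\times\su$ is a surjective submersion; since its fibres are computed entirely in the $Y_T$ factor, one checks directly that $(Y_T\times\su)^{[2]}\cong Y_T^{[2]}\times\su$, which is the asserted base of the line bundle.

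Next I would establish the line bundle structure by descent. The crucial point is that, as $T$ is abelian, the conjugation action of Proposition \ref{proposition basic bg is equivariant} restricts on $Y_T$ to the \emph{trivial} action of $T$ (because $tst^{-1}=s$ for $s\in T$); hence $\PbasicT\to Y_T^{[2]}$ is a $T$-equivariant line bundle covering a trivial $T$-action on its base, with $t\in T$ acting fibrewise by $v_1\wedge\cdots\wedge v_k\mapsto tv_1\wedge\cdots\wedge tv_k$. Pulling $\PbasicT$ back along the projection $Y_T^{[2]}\times SU(n)\to Y_T^{[2]}$ yields a $T$-equivariant line bundle for the diagonal $T$-action (fibrewise on $\PbasicT$, and by $g\mapsto gt^{-1}$ on $SU(n)$). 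Since right multiplication makes this action free and proper, the quotient is a line bundle over $(Y_T^{[2]}\times SU(n))/T = Y_T^{[2]}\times\su$; this quotient is exactly $\PbasicT\times_T SU(n)$, and local triviality is inherited from local sections of the principal bundle $SU(n)\to\su$ together with local trivialisations of $\PbasicT$.

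It remains to descend the multiplication. Writing $m_T$ for the wedge-product multiplication on $\PbasicT$ (Proposition \ref{teawoke}), and using freeness of the $T$-action to represent two composable classes over a common $g$, I would set
$$m\bigl([v,g]\otimes[w,g]\bigr):=\bigl[m_T(v\otimes w),\,g\bigr].$$
Well-definedness reduces to the identity $m_T(tv\otimes tw)=t\cdot m_T(v\otimes w)$, which is immediate from multilinearity of the wedge product and the fact that $t$ scales every eigenvector in the same way; the relation $(v,g)\sim(tv,gt^{-1})$ then shows the value is independent of the chosen representative. Associativity of $m$ follows from associativity of $m_T$, once a single coset representative $g$ is chosen coherently across the factors indexed by $Y_T^{[3]}\times\su$ and $Y_T^{[4]}\times\su$.

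The main obstacle I anticipate is the bookkeeping in this last step: verifying that the wedge-product map intertwines the diagonal $T$-action so that $m$ genuinely descends, and that one coset representative can be used simultaneously for all the factors appearing in the multiplication and associativity diagrams. By contrast, the line bundle claim is essentially automatic once it is phrased as a free quotient of a $T$-equivariant bundle, so the real content lies in checking that the algebraic bundle gerbe structure survives the quotient.
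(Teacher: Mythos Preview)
The paper explicitly leaves this proof as an exercise to the reader, so there is no paper proof to compare against. Your outline is the standard and correct approach: realise the construction as an associated bundle (a free $T$-quotient of a $T$-equivariant line bundle over $Y_T^{[2]}\times SU(n)$), and then check that the wedge-product multiplication descends.

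One phrase needs correcting. You write that well-definedness of $m$ follows from ``the fact that $t$ scales every eigenvector in the same way.'' This is false: if $s\in T$ and $v\in E_{(s,p_k(s))}$, then $v$ is a multiple of the standard basis vector $e_k$ and $t\cdot v = p_k(t)v$, so distinct eigenspaces are scaled by \emph{different} characters. The identity $m_T(t\cdot v\otimes t\cdot w)=t\cdot m_T(v\otimes w)$ holds for a simpler reason: by definition the $T$-action on $\PbasicT$ applies $t$ to each factor of the wedge, so
\[
(tv_1\wedge\cdots\wedge tv_k)\wedge(tw_1\wedge\cdots\wedge tw_l)=tv_1\wedge\cdots\wedge tw_l=t\cdot(v_1\wedge\cdots\wedge w_l)
\]
directly, with no need for the individual scalars to agree. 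Equivalently, this is the naturality of the isomorphism $\det(V)\otimes\det(W)\xrightarrow{\sim}\det(V\oplus W)$ with respect to linear maps acting diagonally on $V\oplus W$. With this correction your argument goes through.
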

	\begin{remark}\label{remark equivariance of other bundle gerbe} 
		It is not difficult to show that the bundle gerbe (\ref{vege}) is $SU(n)$-equivariant with respect to the $SU(n)$ action on $T\times \su$ given by multiplication on the $\su$ component. 
	
	\end{remark}

	We next show that the bundle gerbe (\ref{vege}) is isomorphic to the pullback of the basic bundle gerbe. To do so, we will make use of the explicit descriptions of $p^{-1}(Y), \ p^{-1}(Y)^{[2]}$ and $	\LL\hat{p}^{[2]}\RR^{-1}(\Pbasic)$ from Remark \ref{remark spaces in pullback bundle gerbe}. A pictorial description of the following result is provided in Figure \ref{fig:pullbacktriple}. 
	\begin{proposition}[{\cite[Proposition 7.3]{53UNITARY}}] \label{iso1} 
		There exists an $SU(n)$-equivariant bundle gerbe isomorphism $$ \left(  \PbasicT \times_T SU(n), Y_T\times \su \right)\cong_{SU(n)} p^{-1}(P_b, Y).$$  
	\end{proposition}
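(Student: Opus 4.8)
The plan is to write down an explicit $SU(n)$-equivariant bundle gerbe isomorphism and verify the three conditions of Definition \ref{bundle gerbe morphism} together with equivariance. Using the descriptions in Remark \ref{remark spaces in pullback bundle gerbe}, I would first observe that the two surjective submersions agree on the nose: since $\text{spec}(gtg^{-1}) = \text{spec}(t)$, the space $Y_T \times \su = \{(z, t, gT) : z \notin \text{spec}(t)\}$ is literally $p^{-1}(Y)$ after reordering factors, and likewise $(Y_T \times \su)^{[2]} \cong p^{-1}(Y)^{[2]}$. Hence the covering map $\hat{f}$ on base spaces may be taken to be this reordering diffeomorphism, which covers the identity on $T \times \su$; this gives condition (1) of a bundle gerbe isomorphism for free and reduces the problem to producing a line bundle isomorphism over $p^{-1}(Y)^{[2]}$.

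The key geometric fact driving the construction is that left multiplication by $g \in SU(n)$ carries the $\lambda$-eigenspace $E_{(t, \lambda)}$ of $t$ isomorphically onto the $\lambda$-eigenspace $E_{(gtg^{-1}, \lambda)}$ of $gtg^{-1}$, since $(gtg^{-1})(gv) = \lambda(gv)$. Consequently $g$ maps $L_{(z_1, z_2, t)} = \bigoplus_{z_1 > \lambda > z_2} E_{(t, \lambda)}$ onto $L_{(z_1, z_2, gtg^{-1})}$ and so induces a map of determinant lines. I would therefore define the fibrewise map over the positive stratum by
$$[v_1 \wedge \cdots \wedge v_k, g] \longmapsto gv_1 \wedge \cdots \wedge gv_k \in \det(L_{(z_1, z_2, gtg^{-1})}) = (\Pbasic)_{(z_1, z_2, gtg^{-1})},$$
with the analogous definition using duals on the negative stratum and the identity of $\C$ on the null stratum. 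Well-definedness with respect to the relation $(v_\bullet, g) \sim (t'v_\bullet, g(t')^{-1})$ is immediate: applying the map to the right-hand representative gives $(g(t')^{-1})(t'v_1) \wedge \cdots \wedge (g(t')^{-1})(t'v_k) = gv_1 \wedge \cdots \wedge gv_k$, using that $t'$ commutes with $t$ so that each $t'v_i$ still lies in $E_{(t, \lambda_i)}$. The assignment is $\C$-linear on fibres with evident inverse $w_1 \wedge \cdots \wedge w_k \mapsto [g^{-1}w_1 \wedge \cdots \wedge g^{-1}w_k, g]$, so it is a line bundle isomorphism; smoothness and local triviality are inherited from those of $\Pbasic$ and can be checked on local sections.

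It then remains to verify conditions (2)--(3) of Definition \ref{bundle gerbe morphism} and equivariance. Compatibility with bundle gerbe multiplication follows because both products are induced by the canonical wedge isomorphism $\det(V) \otimes \det(W) \to \det(V \oplus W)$ (Proposition \ref{teawoke}) and $g$ respects the eigenspace splitting (\ref{eqn3}); this is a routine diagram chase on each connected component of $Y^{[3]}$. For $SU(n)$-equivariance, the action on $\PbasicT \times_T SU(n)$ is left multiplication on the $SU(n)$ factor, $h \cdot [v_\bullet, g] = [v_\bullet, hg]$, while the action on $p^{-1}(P_b, Y)$ is pulled back from the conjugation action on $\Pbasic$ of Proposition \ref{proposition basic bg is equivariant}, i.e. $h \cdot w = hw$; both routes yield $(hg)v_1 \wedge \cdots \wedge (hg)v_k$, so the map is equivariant. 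The main obstacle is not any single step but the bookkeeping needed to make the eigenspace-shifting map consistent across the three disconnected strata $Y_+^{[2]}, Y_0^{[2]}, Y_-^{[2]}$ and to confirm smoothness there; since this is carried out in \cite[Proposition 7.3]{53UNITARY}, I would present the positive stratum in full and indicate that the remaining cases are analogous.
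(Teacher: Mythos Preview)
Your proposal is correct and follows essentially the same argument as the paper: identify the surjective submersions via $\text{spec}(gtg^{-1})=\text{spec}(t)$, then define the line bundle isomorphism on the positive stratum by $[v_1\wedge\cdots\wedge v_k,\,g]\mapsto gv_1\wedge\cdots\wedge gv_k$, extend by duality and triviality to the other strata, and note compatibility with the wedge-product multiplication and $SU(n)$-equivariance. If anything, you supply more detail than the paper does (the explicit well-definedness computation, the inverse, and the equivariance check), which is fine.
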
 
	\begin{proof} It is clear from equation \eqref{eq:test} that $Y_T\times \su \cong p^{-1}(Y)$, and that this identification is a smooth isomorphism of surjective submersions over $T\times \su$. It follows that $Y_T^{[2]} \times \su \cong p^{-1}(Y)^{[2]}$ and this diffeomorphism covers $Y_T\times \su \xrightarrow{\sim} p^{-1}(Y).$ 
		It remains to define a line bundle isomorphism as in Figure \ref{fig:pullbackpf} that respects the bundle gerbe multiplication.  \begin{figure}[h!] 
			\centering
			\begin{tikzcd}
				\PbasicT \times_T SU(n) \arrow[r, "f"]  \arrow[d]	& 	(\hat{p}^{[2]})^{-1}(\Pbasic)  \arrow[d]	  \\		
				Y_T^{[2]}\times \su \arrow[r, "\sim"]&	{p}^{-1}(Y)^{[2]}
			\end{tikzcd} 
			\caption{Isomorphism of line bundles }
			\label{fig:pullbackpf} \end{figure}
		We do so by considering the disconnected components of $Y_T^{[2]}$. Let $Y_{T, +}^{[2]}, Y_{T, 0}^{[2]}$ and $Y_{T, -}^{[2]}$ denote the restriction of $Y_{+}^{[2]}, Y_{0}^{[2]}$ and $Y_{-}^{[2]}$ to $Y_T^{[2]}$ respectively.
		Consider $(z_1, z_2, t) \in Y^{[2]}_{T, +}$ and $ v_1\wedge \cdots \wedge v_k\in (\PbasicT)_{(z_1, z_2, t)}$. Then each $v_i$ is an eigenvector of $t$ with eigenvalue $\lambda_i$, say. Recall from the proof of Proposition \ref{proposition basic bg is equivariant} that multiplication by $g$ induces a map $L_{(z_1, z_2, t)}\to L_{(z_1, z_2, gtg^{-1})}$. This in turn induces a (linear) map $(\PbasicT)_{(z_1, z_2, g)} \to (\PbasicT)_{(z_1, z_2, gtg^{-1})}$ defined by
		\begin{align*}
		v_1\wedge \cdots \wedge v_k \mapsto gv_1\wedge \cdots \wedge gv_k.
		\end{align*} 
		Define $f:\PbasicT\times_T SU(n) \to (\hat{p}^{[2]})^{-1}(\Pbasic)$ over fibres $(gT, z_1, z_2, t) \in Y_{T, +}^{[2]}\times \su$ by  $$[v_1\wedge \cdots \wedge v_k, g]  \mapsto (gT, t, z_1, z_2, gv_1 \wedge \cdots \wedge gv_k).$$
		This map is well-defined by definition of the $T$-action on $\PbasicT\times SU(n)$. It is also clearly a linear isomorphism on fibres covering $Y_{T, +}^{[2]}\times \su  \xrightarrow{\sim} p^{-1}(Y)$. By duality, we can extend $f$ to $Y_{T, -}^{[2]}\times \su$, and we can trivially extend $f$ to $Y_{T, 0}^{[2]}\times \su$. These extensions will remain linear isomorphisms. It can be verified that $f$ commutes with the bundle gerbe multiplication, so is the desired line bundle isomorphism. It is not difficult to show that this isomorphism is $SU(n)$-equivariant.\end{proof} 

		\begin{figure} \centering
			\begin{tikzcd}
				\PbasicT \times_T SU(n) \arrow[r, "\sim"]  \arrow[d]	& 	(\hat{p}^{[2]})^{-1}(\Pbasic) \arrow[r]  \arrow[d]		  &\Pbasic \ \  \arrow[d, shift right = .7ex]   \\
				Y_T^{[2]}\times \su \arrow[r, "\sim"] \arrow[d, shift right=1.2ex] \arrow[d,shift left=.1ex]&	{p}^{-1}(Y)^{[2]} \arrow[r, "\hat{p}^{[2]}"] \arrow[d, shift right=1.2ex] \arrow[d,shift left=.1ex]& Y^{[2]}  \arrow[d, shift right=1.2ex] \arrow[d,shift left=.1ex] \\
				Y_T \times \su \arrow[r, "\sim"]  \arrow[d]&	p^{-1}(Y)  \arrow[r, "\hat{p}"] \arrow[d] &  Y \  \  \arrow[d, shift right = .7ex] \\ 
				T\times \su \arrow[r, equal] &	T\times \su \arrow[r, " \  p"] & SU(n) \ \ 
			\end{tikzcd} 
			\captionof{figure}{(left-right) bundle gerbe (\ref{vege}), $\pullbackbasicbg$, and $(P_b, Y, SU(n))$ }
			\label{fig:pullbacktriple} \end{figure}

	\indent Next, we define the maps $\varepsilon_i$ and the $T$-spaces $\C_{p_i}$ which will be crucial to the remaining propositions. Recall that $T$ is the maximal torus of $SU(n)$ consisting of diagonal matrices, and $p_i:T\to S^1$ is the homomorphism sending $t\in T$ to its $i$-th diagonal. To define $\varepsilon_i$ we use the ordering on $Z$ from Section \ref{section: definition of basic bg}. Regard $i$ as an integer between $1$ and $n$ inclusive throughout.  
		\begin{definition}\label{defnepsilon}
		Define $\varepsilon_i: Y_T^{[2]}\to \Z$ by $$\varepsilon_i(z_1, z_2, t) = \begin{cases}
		1 & \text{if } z_1 > p_i(t) > z_2 \\
		-1 & \text{if } z_2 > p_i(t) > z_1 \\ 
		0 & \text{otherwise}.
		\end{cases} $$
	\end{definition}
		\begin{definition}
	Let $\C_{p_i}$ be the space $\C$ equipped with the right $T$-action $v\cdot t := p_i(t^{-1})v$. 
	\end{definition}
\begin{remark}
Throughout this section, let $\C_{p_i}^1:=\C_{p_i}, \C_{p_i}^{-1}:= \C_{p_i}^*,$ and $\C_{p_i}^0:=\C$, where $\C_{p_i}^0$ is equipped with the identity action. The space $\C_{p_i}^*$ can be understood as the dual of $\C_{p_i}$, or equivalently as the space $\C$ equipped with the dual action $v\cdot t = p_i(t)v$. 
\end{remark}
	\indent We can now introduce the general cup product bundle gerbes of $J_i\to \su$ and $\varepsilon_i$ in preparation for Proposition \ref{exactly}. Recall that $J_i\to \su$ is the $SU(n)$-homogeneous line bundle defined by setting $J_i  := \C\times_{p_i} SU(n)$ under the relation $(z, s) \sim_{p_i} (p_i(t^{-1})z, st)$ for all $t\in T$. 
\begin{proposition}\label{equivarianceofepsilon}
	There is an associative multiplication making \begin{align} \label{biscut}\left(J_i^{\varepsilon_i}, Y_T\times \su \right)\end{align} a bundle gerbe over $T\times \su$. Moreover, this bundle gerbe is $SU(n)$-equivariant with respect to the $SU(n)$-action on $T\times \su$ given by multiplication on the $\su$ component.
\end{proposition}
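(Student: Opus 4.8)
The plan is to establish that $\left(J_i^{\varepsilon_i}, Y_T\times \su\right)$ is a bundle gerbe by constructing an associative multiplication, and then to verify its $SU(n)$-equivariance using the criteria of Definition \ref{equivariantbundlegerbedefn}. Note that this is \emph{not} an instance of the general cup product bundle gerbe construction of Definition \ref{definition of general cup product}, since $\varepsilon_i$ does not satisfy $\delta(\varepsilon_i) = 0$ on all of $Y_T^{[3]}$; rather, $\varepsilon_i$ is a genuine \v{C}ech-type cocycle adapted to the eigenvalue ordering, so the multiplication must be built by hand from the additivity of $\varepsilon_i$ across the components of $Y_T^{[3]}$.

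First I would make precise the line bundle $J_i^{\varepsilon_i}\to Y_T^{[2]}\times \su$, where $\varepsilon_i: Y_T^{[2]}\to \Z$ and $J_i\to \su$, using the function-power construction of Example \ref{tereq}: fibrewise $(J_i^{\varepsilon_i})_{(z_1, z_2, t, gT)} = (J_i)_{gT}^{\varepsilon_i(z_1, z_2, t)}$. The heart of the argument is the multiplication. For $(z_1, z_2, z_3, t)\in Y_T^{[3]}$ I would verify the key additivity relation
\begin{align}\label{epsiloncocycle}
\varepsilon_i(z_1, z_2, t) + \varepsilon_i(z_2, z_3, t) = \varepsilon_i(z_1, z_3, t).
\end{align}
This is a direct case analysis on the positions of $p_i(t)$ relative to $z_1, z_2, z_3$ in the ordering on $Z$, exactly parallel to the eigenvalue-counting argument establishing equation (\ref{eqn3}) for the basic bundle gerbe: $\varepsilon_i$ records whether the single ``eigenvalue'' $p_i(t)$ lies between two of the $z_j$, and betweenness is additive when passing through an intermediate point. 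Granting (\ref{epsiloncocycle}), the canonical isomorphism $(J_i)_{gT}^{a}\otimes (J_i)_{gT}^{b}\cong (J_i)_{gT}^{a+b}$ on integer powers of a line bundle furnishes a multiplication $m:\pi_3^{-1}(J_i^{\varepsilon_i})\otimes \pi_1^{-1}(J_i^{\varepsilon_i})\to \pi_2^{-1}(J_i^{\varepsilon_i})$, and its associativity follows from associativity of the tensor product together with (\ref{epsiloncocycle}) evaluated on $Y_T^{[4]}$.

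For $SU(n)$-equivariance I would check the three conditions of Definition \ref{equivariantbundlegerbedefn} for the action on $T\times \su$ by left multiplication on the $\su$ factor (with $T$ fixed). Condition (1) is immediate since $SU(n)$ acts on $Y_T\times \su$ through the $\su$ factor and the projection to $T\times \su$ is equivariant. Condition (2) follows because $J_i\to \su$ is $SU(n)$-homogeneous, hence $SU(n)$-equivariant, and $\varepsilon_i$ depends only on the $Y_T$ coordinates $(z_1, z_2, t)$ which are $SU(n)$-invariant; therefore the induced action on each integer power $J_i^{\varepsilon_i}$ is well-defined and linear on fibres. Condition (3) amounts to $SU(n)$-invariance of the multiplication section, which holds because both the additivity relation (\ref{epsiloncocycle}) and the canonical tensor-power isomorphisms are manifestly independent of the $SU(n)$-action on $J_i$.

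The main obstacle I anticipate is the bookkeeping in the case analysis verifying (\ref{epsiloncocycle}): one must carefully track, for each ordering of $z_1, z_2, z_3$ and each position of $p_i(t)$, which connected component of $Y_T^{[3]}$ the triple lies in and confirm the signed counts add correctly (including the subtle cases where $p_i(t)$ coincides with the boundary behaviour or where the three points wrap around $1\in U(1)$). This is the equivariant, torus-restricted analogue of the decomposition used in Proposition \ref{teawoke}, so I would lean on that structure; once the additivity is secured, the remainder of the proof is a routine transfer of the determinant-bundle arguments from the basic bundle gerbe to the line-bundle-power setting.
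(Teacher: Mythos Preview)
Your argument is correct in substance and follows the same line as the paper, but you have tangled yourself up at the outset with a false claim. You assert that this is \emph{not} an instance of the general cup product bundle gerbe of Definition~\ref{definition of general cup product} because ``$\varepsilon_i$ does not satisfy $\delta(\varepsilon_i)=0$ on all of $Y_T^{[3]}$,'' and then you proceed to verify the additivity relation $\varepsilon_i(z_1,z_2,t)+\varepsilon_i(z_2,z_3,t)=\varepsilon_i(z_1,z_3,t)$ --- which \emph{is} precisely the condition $\delta(\varepsilon_i)=0$ (compare Definition~\ref{deltadefinition} with $p=3$). The paper's proof simply observes that verifying this cocycle condition shows $\left(J_i^{\varepsilon_i}, Y_T\times\su\right)$ \emph{is} a general cup product bundle gerbe, with $Y_T\to T$ as the surjective submersion and $J_i\to\su$ as the line bundle; the associative multiplication then comes for free from Definition~\ref{definition of general cup product} rather than needing to be built ``by hand.'' Your case analysis for the cocycle condition and your treatment of equivariance are both sound and match the paper (which checks one representative case and invokes equivariance of $J_i\to\su$); only the framing is off, and it causes you to duplicate the work already packaged in the general construction.
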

\begin{proof}
	To see that this is a general cup product bundle gerbe (Definition \ref{definition of general cup product}), and hence a bundle gerbe, it suffices to show that $\varepsilon_i: Y_T^{[2]}\to \Z$ satisfies the cocycle condition $$\varepsilon_i(z_1, z_2, t) + \varepsilon_i(z_2, z_3, t) = \varepsilon_i(z_1, z_3, t) $$ for all $(z_1, z_2, z_3, t)\in Y_T^{[3]}$. We verify this holds for one case and leave the remaining cases as an exercise. Let $(z_1, z_2, z_3, t)\in Y_T^{[3]}$, and assume $z_1<z_2<z_3$ with $z_1<p_i(t)<z_2$. Then $\varepsilon_i(z_1, z_2, t) = -1, \ \varepsilon_i(z_2, z_3, t) = 0$ and $\varepsilon_i(z_1, z_3, t) = -1$, so the cocycle condition is satisfied. Equivariance follows easily from equivariance of $J_i\to \su$. 
\end{proof}
	\indent Now we can present the second key result in this section, Proposition \ref{exactly}. Recall that, for $\lambda$ an eigenvalue of $g\in SU(n)$, $E_{(g, \lambda)}$ is the $\lambda$-eigenspace of $g$, $L_{(z_1, z_2, g)} = \oplus_{z_1>\lambda>z_2}E_{(g, \lambda)}$, and the line bundle $P_b\to Y^{[2]}$ is defined as the determinant of $L$ (Section \ref{section: definition of basic bg}). 

\begin{proposition}\label{exactly}
There exists an $SU(n)$-equivariant bundle gerbe isomorphism \begin{align}\label{yerp} \left(  \PbasicT \times_T SU(n), Y_T\times \su \right)\cong_{SU(n)} \left( \Motimes_{i=1}^nJ_i^{\varepsilon_i}, Y_T\times \su\right).\end{align}
\end{proposition}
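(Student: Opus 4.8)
The plan is to construct the $SU(n)$-equivariant isomorphism (\ref{yerp}) fibrewise over $Y_T^{[2]}\times \su$, treating the three disconnected components $Y_{T,+}^{[2]}$, $Y_{T,0}^{[2]}$, $Y_{T,-}^{[2]}$ separately and then verifying compatibility with the bundle gerbe multiplication. The guiding idea is that for diagonal $t\in T$, the eigenspaces of $t$ are precisely the coordinate lines, so the eigenvalue $\lambda = p_i(t)$ corresponds to the $i$-th projection $O_i$. Thus the direct summand $E_{(t,\lambda)}$ appearing in $L_{(z_1,z_2,t)}$ is nonzero exactly when $z_1 > p_i(t) > z_2$, which is exactly the condition $\varepsilon_i(z_1,z_2,t)=1$. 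The determinant $\det(L_{(z_1,z_2,t)})$ therefore factors as a tensor product over those $i$ for which $\varepsilon_i = 1$, and crossing to the associated bundle $\PbasicT\times_T SU(n)$ should turn each such factor into the fibre of $J_i$ raised to the power $\varepsilon_i$.

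First I would unwind the fibre of the left-hand side over $(gT, z_1, z_2, t)\in Y_{T,+}^{[2]}\times \su$. Using $L_{(z_1,z_2,t)} = \bigoplus_{z_1>\lambda>z_2} E_{(t,\lambda)}$ and the identification of eigenspaces with coordinate lines, I would write $\det(L_{(z_1,z_2,t)}) \cong \bigotimes_{\{i \,:\, \varepsilon_i(z_1,z_2,t)=1\}} E_{(t,p_i(t))}$, where the canonical isomorphism $\det(V)\otimes \det(W)\cong \det(V\oplus W)$ from the proof of Proposition \ref{teawoke} handles the decomposition. Passing to the quotient by the $T$-action $(v, g)\sim (tv, gt^{-1})$, the $i$-th coordinate line equipped with this action becomes precisely $\C_{p_i}$, so the associated-bundle fibre reproduces the fibre of $J_i^{\varepsilon_i}$ (using that $J_i = \C\times_{p_i}SU(n)$ and Definition \ref{definitionofithcupproduclinebundle}). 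This gives the fibrewise isomorphism on $Y_{T,+}^{[2]}\times \su$; on $Y_{T,0}^{[2]}\times \su$ both sides are canonically $\C$ (all $\varepsilon_i = 0$), and on $Y_{T,-}^{[2]}\times \su$ I would extend by duality exactly as in the proof of Proposition \ref{iso1}, matching $\varepsilon_i = -1$ with $\C_{p_i}^* = \C_{p_i}^{-1}$.

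Next I would check that these fibrewise maps assemble into a smooth line bundle isomorphism covering the identity of $Y_T^{[2]}\times \su$, and that they respect the bundle gerbe multiplications. Smoothness follows from the local triviality of $\Pbasic$ established in \cite{53UNITARY} together with smoothness of $J_i$. For the multiplicative compatibility, I would exploit that the bundle gerbe product on $\PbasicT\times_T SU(n)$ is induced by the wedge product (Proposition \ref{teawoke}), while the product on $\Motimes_i J_i^{\varepsilon_i}$ is the tensor product of the cup-product multiplications (which use $\delta(\varepsilon_i)=0$, i.e. the cocycle identity verified in Proposition \ref{equivarianceofepsilon}); the key combinatorial fact is that on $Y_T^{[3]}$ the eigenspace decomposition $L_{(z_1,z_2,t)}\oplus L_{(z_2,z_3,t)} = L_{(z_1,z_3,t)}$ restricts index-by-index to the additivity $\varepsilon_i(z_1,z_2,t)+\varepsilon_i(z_2,z_3,t)=\varepsilon_i(z_1,z_3,t)$, so the wedge-product isomorphism matches the tensor multiplication factor by factor. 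Finally $SU(n)$-equivariance is inherited from the equivariant structures already recorded in Remark \ref{remark equivariance of other bundle gerbe} and Proposition \ref{equivarianceofepsilon}, with the $g$-action on eigenvectors intertwining the two sides as in Proposition \ref{iso1}.

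The main obstacle I anticipate is bookkeeping the ordering and sign conventions so that the canonical determinant splitting $\det(V\oplus W)\cong \det(V)\otimes \det(W)$ matches the \emph{ordered} tensor product $\bigotimes_{i=1}^n$ on the right-hand side without introducing an uncontrolled sign. Because the summands in $L$ are indexed by eigenvalues lying between $z_1$ and $z_2$ in the circular ordering, whereas the factors $J_i^{\varepsilon_i}$ are indexed by $i=1,\dots,n$, I must fix a consistent ordering of basis vectors (say, by the value of $p_i(t)$ inherited from $\mathrm{arg}$) and check that the wedge-to-tensor identification is independent of this choice up to the canonical isomorphism. Once the ordering is pinned down, the eigenspace-to-coordinate-line dictionary makes every remaining verification a direct, if tedious, computation; hence I would state the ordering convention explicitly early in the proof and defer the routine sign checks.
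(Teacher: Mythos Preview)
Your proposal is correct and follows essentially the same approach as the paper: identify the eigenspaces $E_{(t,p_i(t))}$ with the $T$-modules $\C_{p_i}$, factor the determinant accordingly as $\bigotimes_i \C_{p_i}^{\varepsilon_i}$, pass to the associated bundle to obtain $\bigotimes_i J_i^{\varepsilon_i}$, handle the null and negative components by triviality and duality, and check that the wedge-product multiplication matches the tensor multiplication via the cocycle identity for $\varepsilon_i$. The paper organises the argument slightly differently by first establishing the trivialisation $P_{b,T}\cong \bigotimes_i \C_{p_i}^{\varepsilon_i}\times Y_T^{[2]}$ before applying $(-)\times_T SU(n)$ and invoking Theorem \ref{thrmbundles}, but the content is the same; your anticipated ordering/sign bookkeeping is a fair concern that the paper simply does not address explicitly.
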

\begin{proof}
First, we show there is a line bundle isomorphism $\PbasicT \cong \Motimes \C_{p_i}^{\varepsilon_i} \times Y_T^{[2]},$ which will allow us to show that the line bundles in (\ref{yerp}) are isomorphic in a moment. Consider $(z_1, z_2, t)\in Y_T^{[2]}$ with $z_1>z_2$. Suppose there are eigenvalues of $t$ between $z_1$ and $z_2$. Denote these eigenvalues by $p_{k_1}(t), ..., p_{k_m}(t)$ for $1\leq k_1\leq \cdots \leq k_m \leq n$. Then \begin{align}\label{choc} L_{(z_1, z_2, t)} = E_{(t, p_{k_1}(t))} \oplus \cdots \oplus E_{(t, p_{k_m}(t))} \end{align}	and $$
(\PbasicT)_{(z_1, z_2, t)} = \text{det}(L_{(z_1, z_2, t)}) = \Motimes_{z_1 >\lambda > z_2} \text{det} \left(E_{(t, \lambda)}\right)=E_{(t, p_{k_1}(t))}\otimes \cdots \otimes E_{\left(t,p_{k_m}(t)\right)}.
$$ 	Now, each eigenspace $E_{(t, p_k(t))} \cong \C$ is equipped with a $T$-action $v\cdot s := p_k(s^{-1}) v$, hence $E_{(t, p_k(t))}\cong \C_{p_k}$ as $T$-spaces for each $k$. Since $\varepsilon_{k_i}(z_1, z_2, t)=1$ for $i=1, ..., m$ and $\varepsilon_k=0$ otherwise, we see that $$ \left(\PbasicT\right)_{(z_1, z_2, t)} \cong \C_{p_{k_1}} \otimes \cdots \otimes \C_{p_{k_m}} \cong \C_{p_1}^{\varepsilon_1(z_1, z_2, t)}\otimes \cdots \otimes \C_{p_n}^{\varepsilon_n(z_1, z_2, t)}.$$ 
By almost identical arguments, this holds over the other components of $Y_T^{[2]}$. Therefore we have an isomorphism $\PbasicT \cong \Motimes_{i=1}^n \C_{p_i}^{\varepsilon_i} \times Y_T^{[2]},$ as claimed. \\
\indent With this result, we can show that the line bundles in (\ref{yerp}) are isomorphic. Now, the isomorphism $\PbasicT \cong \Motimes_{i=1}^n \C_{p_i}^{\varepsilon_i} \times Y_T^{[2]}$ implies we have an isomorphism \begin{align}\label{nope}\PbasicT\times_T SU(n) \cong \left(\Motimes \C_{p_i}^{\varepsilon_i} \times Y_T^{[2]}\right)\times_TSU(n),\end{align} where the latter line bundle is $SU(n)$-equivariant with $T$-action defined by $$(z_1, ..., z_n, u, g) \cdot t= (p_1(t^{-1})z_1, ..., p_n(t^{-1})z_n, u, gt).$$ It can be verified that the line bundle isomorphism (\ref{nope}) is $SU(n)$-equivariant. This will act as our `intermediary isomorphism'. Next, consider the space $\left(\Motimes_{i=1}^n \C_{p_i}^{\varepsilon_i} \right) \times_T SU(n)$ of equivalence classes under the $T$-action $$(z_1, ..., z_n, g)\cdot t = (p_1(t^{-1}) z_1, ..., p_n(t^{-1})z_n, gt). $$ This is an $SU(n)$-homogeneous line bundle over $\su$, and it can be verified that the natural map \begin{align}\label{pert} {\left(\C_{p_1}^{\varepsilon_1} \otimes \cdots \otimes \C_{p_n}^{\varepsilon_n}\times Y_T^{[2]}\right) \times_T SU(n) } \to Y_T^{[2]}\times \left(\C_{p_1}^{\varepsilon_1} \otimes \cdots \otimes \C_{p_n}^{\varepsilon_n} \times_T SU(n)\right) \end{align} is a well defined, $SU(n)$-equivariant line bundle isomorphism over $Y_T^{[2]} \times \su$. Now, by Theorem \ref{thrmbundles} and the definition of $J_i$, there are $SU(n)$-equivariant line bundle isomorphisms \begin{align*} \C_{p_1}^{\varepsilon_1} \otimes \cdots \otimes \C_{p_n}^{\varepsilon_n} \times_T SU(n) &\cong (\C_{p_1}^{\varepsilon_1}\times_TSU(n)) \otimes \cdots \otimes (\C_{p_n}^{\varepsilon_n} \times_T SU(n))\\  &\cong J_1^{\varepsilon_1}\otimes \cdots \otimes J_n^{\varepsilon_n}. \end{align*} This, combined with (\ref{pert}), implies there is an $SU(n)$-equivariant line bundle isomorphism $$\left(\Motimes_{i=1}^n \C_{p_i}^{\varepsilon_i} \times Y_T^{[2]} \right) \times_T SU(n)\cong   \Motimes_{i=1}^n J_i^{\varepsilon_i}$$ and hence, by (\ref{nope}),  we have an $SU(n)$-equivariant line bundle isomorphism $$P_{b, T}\times_T SU(n) \cong \Motimes_{i=1}^n J_i^{\varepsilon_i}. $$ It remains to show that this isomorphism preserves the bundle gerbe product. Consider $z_1 > z_2 > z_3$ as in Proposition \ref{teawoke}. Then the basic bundle gerbe product is induced from $$\text{det}(L_{(z_1, z_2, t)}) \otimes \text{det}(L_{(z_2, z_3, t)}) \cong \text{det}(L_{(z_1, z_2, t)} \oplus L_{(z_2, z_3, t)}) \cong \text{det}(L_{(z_1, z_3, t)}). $$ From the discussion above and equation (\ref{choc}), each $L_{(z_i, z_j, t)}$ decomposes into appropriate sums of the $J_l$ to powers, so this becomes $$\Motimes_{i=1}^n J_i^{\varepsilon_i(z_1, z_2, t)}\otimes \Motimes_{i=1}^n J_i^{\varepsilon_i(z_2, z_3, t)} \cong \Motimes_{i=1}^n J_i^{\varepsilon_i(z_1, z_3, t)},$$ which is the cup product multiplication as in the proof of Proposition \ref{equivarianceofepsilon}. The other cases proceed similarly.
\end{proof}

	Clearly, the reduced product of the $SU(n)$-equivariant bundle gerbes $(\ref{biscut})$ will be an $SU(n)$-equivariant bundle gerbe. This leads us to our final isomorphism, which follows from Propositions \ref{iso1} and \ref{exactly}.\\

\noindent \noindent \textbf{Proposition \ref{pullbackstableisomainresult}.}\textit{
	There is an $SU(n)$-equivariant isomorphism $$p^{-1}(P_b, Y) \cong_{SU(n)} \sideset{}{_\mathrm{red}}\bigotimes_{i=1}^n \left( J_i^{\varepsilon_i}, Y_T\times \su\right).$$}
\indent To conclude this section, we show that the isomorphism in Proposition \ref{pullbackstableisomainresult} preserves the bundle gerbe connections. This will be crucial to the proof of a key result in Chapter \ref{ch:four}, namely that the holonomies of the pullback of the basic bundle gerbe and the cup product bundle gerbe are not equal. 
\begin{proposition}\label{tired}
The bundle gerbe isomorphism in \textup{Proposition \ref{pullbackstableisomainresult}} preserves connections, that is, $$\nabla_{p^*b} = \nabla_{\otimes J_i^{\varepsilon_1}}$$ for $\nabla_{p^*b}$ the pullback connection from \textup{Proposition \ref{proposition connective structure of pullback}} and $\nabla_{\otimes J_i^{\varepsilon_1}}$ the product connection induced by the general cup product bundle gerbe connections from \textup{Proposition \ref{existsconnectiononcup}}.
\end{proposition}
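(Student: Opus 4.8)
The plan is to track the two bundle gerbe connections through each of the isomorphisms composing the equivalence of Proposition \ref{pullbackstableisomainresult}, namely $\pnablabasic$ on $p^{-1}(P_b,Y)$ and $\nabla_{\otimes J_i^{\varepsilon_i}}$ on $\Motimes_{i=1}^n J_i^{\varepsilon_i}$. Since a line bundle connection is determined by its local connection $1$-forms (Remark \ref{proposition two line bundle connections differ}), it suffices to check that these $1$-forms agree under the identifications of Propositions \ref{iso1} and \ref{exactly}. The decisive observation is that both connections are, at bottom, assembled from orthogonal-projection connections of the type in Example \ref{remarkonprojectionconnection}, so the whole argument reduces to a single compatibility between the determinant connection and the tensor product connection.

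First I would reduce to the open component $Y_{T,+}^{[2]}\times\su$ (identified with $p^{-1}(Y_+^{[2]})$ under Proposition \ref{iso1}), the cases over $Y_{T,-}^{[2]}$ and $Y_{T,0}^{[2]}$ then following by duality (Proposition \ref{connectivestructureondualbg}) and triviality respectively. Over this component the fibre of $L$ pulls back under the Weyl map to $\Moplus_{z_1>p_k(t)>z_2}\operatorname{im}(P_k)$, where $P_k = gO_kg^{-1}$ is the orthogonal projection of Proposition \ref{WEyl map on projections}; this is precisely the orthogonal eigenspace decomposition used in Proposition \ref{exactly}. Thus $\pnablabasic = \det(\nabla)$ is the determinant of the projection connection $\nabla = P\circ d$ on $L$, with $P=\sum_{z_1>p_k(t)>z_2}P_k$.

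The crux is the following frame computation. Choosing a unit section $e_k$ of each eigenline $\operatorname{im}(P_k)\cong J_k$ yields a frame of $L$ adapted to the decomposition, and the connection $1$-form of $\det(\nabla)$ is the trace of the connection matrix $A$ of $\nabla$, i.e. $\operatorname{tr}(A)=\sum_k\langle e_k,P\,de_k\rangle=\sum_k\langle e_k,\nabla_{J_k}e_k\rangle$. The off-diagonal second-fundamental-form terms $P_j(dP_k)$ with $j\neq k$ do not enter the trace, so the determinant connection $1$-form is exactly the sum of the projection-connection $1$-forms of the individual eigenlines. Since $\varepsilon_k(z_1,z_2,t)=1$ precisely when $z_1>p_k(t)>z_2$, this sum equals $\sum_k\varepsilon_k\,(\text{$1$-form of }\nabla_{J_k})$, which under the function-power construction of Example \ref{tereq} and Proposition \ref{existsconnectiononcup} is the connection $1$-form of the product connection $\nabla_{\otimes J_i^{\varepsilon_i}}$. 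Identifying $\det\bigl(\Moplus_k\operatorname{im}(P_k)\bigr)=\Motimes_k\operatorname{im}(P_k)$ (each summand a line) then intertwines $\det(\nabla)$ with $\Motimes_k\nabla_{J_k}^{\varepsilon_k}$, giving the claimed equality of $1$-forms.

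The main obstacle is two-fold. First, one must verify rigorously that the trace is computed in a frame compatible with both the isomorphism $f$ of Proposition \ref{iso1} and the decomposition of Proposition \ref{exactly}; concretely, this amounts to checking that these fibrewise maps, defined through the action $v\mapsto gv$ and the wedge product, transport $\nabla=P\circ d$ to the projection connections $\nabla_{J_k}$, equivalently that multiplication by $g$ intertwines the trivial connections so that conjugation carries the projection connection on the eigenlines of $t$ to that on the eigenlines of $gtg^{-1}$. Second, care is needed to confirm that forming the determinant commutes with pullback by $(\hat p,p)$ and with the $\times_T SU(n)$ quotient at the level of connections, so that the identity established over $Y_{T,+}^{[2]}\times\su$ genuinely describes $\pnablabasic$, rather than a connection differing from it by a $\delta$-closed $1$-form (cf. Proposition \ref{differenceofbundlegerbeconnections}). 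Once the frame computation and the naturality of the determinant are secured, the remaining components and the assembly into the reduced product are routine.
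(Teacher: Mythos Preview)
Your proposal is correct and follows essentially the same line as the paper's proof: both hinge on the orthogonal decomposition $p^{-1}(L)\cong\bigoplus J_{i_k}$, the fact that the projection connection $P\circ d$ on an orthogonal direct sum splits as the direct sum of the individual projection connections $\nabla_{J_k}$, and then passing to determinants to obtain the tensor product connection. Your explicit frame computation showing that the off-diagonal terms drop out of the trace is a more detailed version of the paper's one-line observation that ``orthogonal projection onto $L$ will be equal to the sum of orthogonal projections onto each space $J_i$''; the paper also leaves the naturality checks you flag as obstacles implicit.
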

\begin{proof}
Recall that the total space of the basic bundle gerbe, $P_b$, is defined to be the determinant of $L\to Y_+^{[2]}$ where $L$ is the sum of eigenspaces $L_{(z_1, z_2, g)} = \bigoplus_{z_1>\lambda >z_2} E_{(g, \lambda)}$. We claim that, fibrewise, $p^{-1}(L)$ is isomorphic to a sum of spaces $J_i$. Fix $(z_1, z_2, h)\in Y^{[2]}$. Write $h=gtg^{-1}$ for some $g\in SU(n)$ and $t\in T$. Denote the eigenvalues of $t$ between $z_1$ and $z_2$ by $t_{i_1}, ..., t_{i_k}$ for $i_1 \leq \cdots \leq i_k$. Now, $(z_1, z_2, t, gT)\in Y_T^{[2]}\times \su$ and $$p^{-1}(L)_{(z_1, z_2, t, gT)} \cong L_{(z_1, z_2, h)} \cong {SU(n)\times_T L_{(z_1, z_2, t)}}$$ where the final isomorphism is defined by $v\mapsto [g, g^{-1}v]$. Using similar arguments to those in the proof of Proposition \ref{exactly}, we find that $$SU(n)\times_T L_{(z_1, z_2, t)} \cong SU(n)\times_T \left(\C_{p_{i_1}}\oplus \cdots \oplus \C_{p_{i_k}}\right) \cong J_{i_1}\oplus \cdots \oplus J_{i_k} \big|_{gT}.$$ Therefore $p^{-1}(L)_{(z_1, z_2, t, gT)}  \cong J_{i_1}\oplus \cdots \oplus J_{i_k} \big|_{gT}$. 
Since the connection on $L$ is defined by orthogonal projection, so too is the pullback connection on $p^{-1}(L)$. Moreover, as $p^{-1}(L)$ decomposes into a sum of orthogonal spaces $J_{i}$, orthogonal projection onto $L$ will be equal to the sum of orthogonal projections onto each space $J_i$. Therefore the connection on $p^{-1}(L)$ will decompose into a product of connections $\nabla_{i}$ defined by orthogonal projection onto $J_{i}$. Finally, by taking the determinant of these connections, we obtain the result.  
\end{proof}

\chapter{The stable isomorphism\label{ch:four}}

\indent \indent In this chapter, we combine the work of Chapters $2$, $3$, and $4$ to prove that the pullback of the basic bundle gerbe by the Weyl map is $SU(n)$-stably isomorphic to the cup product bundle gerbe, i.e.  \begin{align}\label{keyequation} \cupbg \cong_{SU(n)\text{-stab}} \pullbackbasicbg.\end{align} The significance of this result is that it deepens our understanding of the basic bundle gerbe over $\su$. As we have seen, the basic bundle gerbe over a Lie group $G$ is a standard example of a non-trivial bundle gerbe, and is, in some sense, the canonical bundle gerbe associated to $H^3(G, \Z)$. For this reason, it is one of the more interesting examples of the many gerbes and bundle gerbes that have been constructed over compact, simple, simply connected Lie groups, such as those studied in \cite{brylinski, 1996bundlegerbes, higherbgs, Meinrenken, mickelsson03}. To offer another perspective, the result (\ref{keyequation}) illustrates how the relatively modern cup product bundle gerbe construction can be exploited to understand classical bundle gerbes such as the basic bundle gerbe. \\
\indent There is a range of possible extensions of our result. The most obvious would be to replace $SU(n)$ with a general compact, simple, simply connected Lie group $G$ using the basic bundle gerbe defined in terms of path fibrations from \cite{dannythesis}. We are confident that there is an analogous result to (\ref{keyequation}) for the basic and cup product gerbes in \cite{brylinski}, which, to our knowledge, has not been explored in the literature. One could also consider our work in the holomorphic category using \cite{brylinskiarticle}. Another possible extension relates to descent problems for bundle gerbes - that is, when, given a fibration $M\to N$ and a bundle gerbe over $M$, does the bundle gerbe descend to $N$? We can interpret (\ref{keyequation}) to mean that the cup product bundle gerbe descends to $SU(n)$ via the Weyl map. The Weyl map is, however, not a fibration. This could be indicative of a more general descent result for bundle gerbes, which would require further research. \\
\indent We begin the first section of this chapter by explicitly describing the requirements for our bundle gerbes to be stably isomorphic. This discussion, which relies heavily on results from Chapters $3$ and $4$, culminates in the main result of this section, Proposition \ref{mainresult}. This proposition states a sufficient condition for our bundle gerbes to be $SU(n)$-stably isomorphic in terms of continuous integer-valued functions $h_i$, thereby simplifying our problem significantly. \\
\indent  In the second section of this chapter, we endeavour to prove the existence of the integer-valued functions $h_i$ from Proposition \ref{mainresult}. The tool we make use of to do so is the geometry of the basic and cup product bundle gerbes calculated in the previous chapters. In Chapter $2$, we described relationships obeyed by the connective data of stably isomorphic bundle gerbes (Proposition \ref{outofnamesforthings}). By inputting the connective data of our bundle gerbes into these relations, we are able to find explicit descriptions of the integer-valued functions $h_i$. We then show these $h_i$ satisfy the desired conditions (Proposition \ref{hisright}), thereby proving our bundle gerbes are $SU(n)$-stably isomorphic.\\
\indent We conclude this chapter with a brief study of the holonomy of our bundle gerbes. Using results from Chapter $2$, together with a short computation, we find that the holonomies of the pullback of the basic bundle gerbe and the cup product bundle gerbe are proportional, but not equal (Proposition \ref{holonomycomputation}). Therefore the pullback of the basic bundle gerbe and the cup product bundle gerbe are stably isomorphic (hence have the same Dixmier-Douady class), but are \textit{not} stably isomorphic as bundle gerbes with connection and curving (so do not have the same Deligne class) (Theorem \ref{finalresult}). 
\section{Set up of the problem}\label{d1}
\indent \indent In the proceeding discussion, we aim to simplify our central research problem using that the pullback of the basic bundle gerbe and cup product bundle gerbes are stably isomorphic to reduced products of general cup product bundle gerbes. Now, by Proposition \ref{redproductofcupproduct}, there is an $SU(n)$-equivariant stable isomorphism of the cup product bundle gerbe and a reduced product of general cup product bundle gerbes $$(P_c, X) \cong_{SU(n)\textup{-stab}}	\sideset{}{_\mathrm{red}}\bigotimes_{i=1}^n 
\left( J_i^{d_i}, \R^{n-1} \times \su \right).$$ Also, by Proposition \ref{pullbackstableisomainresult}, there is an $SU(n)$-equivariant isomorphism of the pullback of the basic bundle gerbe with a reduced product of general cup product bundles  \begin{align}\label{qqq}p^{-1}(P_b, Y)\cong_{SU(n)} \sideset{}{_\mathrm{red}}\bigotimes_{i=1}^n \left( J_i^{\varepsilon_i}, Y_T\times \su\right).\end{align} Therefore (\ref{keyequation}) is equivalent to \begin{align*} \sideset{}{_\mathrm{red}}\bigotimes_{i=1}^n \left( J_i^{d_i}, \R^{n-1}\times {SU(n)}/{T}\right) \cong_{SU(n)\textup{-stab}}  \sideset{}{_\mathrm{red}}\bigotimes_{i=1}^n \left( J_i^{\varepsilon_i}, Y_T\times \su \right).\end{align*} Since both of these bundle gerbes are reduced products of general cup product bundle gerbes, Corollary \ref{cormaincupproductresult} applies. Namely, to prove these bundle gerbes are stably isomorphic, it suffices to show that there exist smooth functions $h_i: \left(\R^{n-1} \times_T Y_T \right)\times \su \to \Z$ such that $$d_i(x, y, t) - \varepsilon_i(z, w, t) = h_i(y, w, t, gT) - h_i(x, z, t, gT)$$ for all $(x, y, z, w, t, gT)\in \left(\R^{n-1}\times_T Y_T\right)^{[2]} \times \su$, where $d_i$ and $\varepsilon_i$ are the maps defined in Remark \ref{defndi} and Definition \ref{defnepsilon}, respectively. Equivariance of the stable isomorphism will then follow trivially by equivariance of the general cup product bundle gerbes. This result, stated formally below, will be exploited in the next section to prove (\ref{keyequation}).

\begin{proposition}\label{mainresult}
	The pullback of the basic bundle gerbe is $SU(n)$-stably isomorphic to the cup product bundle gerbe if, for each $i=1, ..., n$, there exist smooth functions $h_i: \left(\R^{n-1}\times_T Y_T\right) \times \su \to \Z$ such that \begin{align}\label{equationofh} y_i-x_i - \varepsilon_i(z, w, t) = h_i(y, w, t, gT) - h_i(x, z, t, gT)\end{align} for all $(x, y, z, w, t, gT)\in \left(\R^{n-1} \times_T Y_T\right)^{[2]} \times \su $ with $x= (x_1, ..., x_n), y= (y_1, ..., y_n)$.
\end{proposition}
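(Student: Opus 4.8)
The plan is to assemble this statement directly from the three structural results already in place, with transitivity of stable isomorphism serving as the connective tissue. First I would invoke Proposition \ref{redproductofcupproduct}, which supplies an $SU(n)$-equivariant stable isomorphism
$$\cupbg \cong_{SU(n)\textup{-stab}} \sideset{}{_\mathrm{red}}\bigotimes_{i=1}^n \left( J_i^{d_i}, \R^{n-1}\times \su \right),$$
together with Proposition \ref{pullbackstableisomainresult}, which gives an $SU(n)$-equivariant isomorphism
$$\pullbackbasicbg \cong_{SU(n)} \sideset{}{_\mathrm{red}}\bigotimes_{i=1}^n \left( J_i^{\varepsilon_i}, Y_T\times \su \right).$$
Since an isomorphism is in particular a stable isomorphism (Remark \ref{remark isoimplies stable iso}) and stable isomorphisms compose (Remark \ref{transitivityofstableiso}), establishing the target equivalence (\ref{keyequation}) reduces to exhibiting an $SU(n)$-equivariant stable isomorphism between the two reduced products appearing on the right-hand sides above.

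Next I would match the present setting to the hypotheses of Corollary \ref{cormaincupproductresult}. Taking the common base to be $T$, the auxiliary manifold $N$ to be $\su$, the line bundles $K_i$ to be $J_i\to\su$, the first surjective submersion to be $\R^{n-1}\to T$ with $f_i = d_i$, and the second to be $Y_T\to T$ with $g_i = \varepsilon_i$, the corollary asserts precisely that the two reduced products are stably isomorphic provided there exist smooth $h_i : (\R^{n-1}\times_T Y_T)\times\su \to \Z$ satisfying $d_i(x,y) - \varepsilon_i(z,w,t) = h_i(y,w,t,gT) - h_i(x,z,t,gT)$. (The fibre product $\R^{n-1}\times_T Y_T$ is of course the same manifold as the $Y_T\times_T \R^{n-1}$ appearing in the corollary, written in the opposite order.) By Remark \ref{defndi} we have $d_i(x,y) = y_i - x_i$, so this coboundary condition is literally equation (\ref{equationofh}). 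Thus the hypothesis of the proposition is nothing but the hypothesis of Corollary \ref{cormaincupproductresult} transcribed into the coordinates of our problem, and the stable isomorphism follows at once.

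The only step demanding genuine care is tracking equivariance through the chain, and I expect this to be the main (if modest) obstacle within the proof itself. Each general cup product bundle gerbe $(J_i^{d_i},\ldots)$ and $(J_i^{\varepsilon_i},\ldots)$ is $SU(n)$-equivariant by Propositions \ref{cupprodisequivariant} and \ref{equivarianceofepsilon}, and equivariance is preserved under products and reduced products by Proposition \ref{equivariantbundlegerbeproduct} and Remark \ref{fibreproductsimplifies}. The trivialising line bundle produced by Corollary \ref{cormaincupproductresult} is the product $\Motimes_{i=1}^n \pi_N^{-1}(K_i)^{h_i}$ of equivariant bundles, so the resulting stable isomorphism is $SU(n)$-equivariant; composing it with the equivariant (stable) isomorphisms of Propositions \ref{redproductofcupproduct} and \ref{pullbackstableisomainresult} then delivers an $SU(n)$-equivariant stable isomorphism $\cupbg \cong_{SU(n)\textup{-stab}} \pullbackbasicbg$, as required.

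It is worth stressing that the real mathematical difficulty does not reside in this proposition at all, which is a clean reduction, but in its unverified hypothesis: the actual construction of the integer-valued functions $h_i$. The hard part, to be confronted in the next section, will be the integrality constraint emphasised in the remark following Proposition \ref{tererererer} --- exactness of the fundamental complex always yields a real-valued primitive, but forcing it into $\Z$ is exactly what makes the $h_i$ non-trivial to produce. This is where the explicit connective-data computations of Chapters \ref{cupproductchapter} and \ref{ch:three}, fed through Proposition \ref{outofnamesforthings}, will have to do the substantive work.
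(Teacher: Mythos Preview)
Your proposal is correct and follows essentially the same route as the paper: reduce via Propositions \ref{redproductofcupproduct} and \ref{pullbackstableisomainresult} to a comparison of two reduced products of general cup product bundle gerbes, then apply Corollary \ref{cormaincupproductresult}, with equivariance following from equivariance of the constituent bundle gerbes. Your write-up is in fact more detailed than the paper's own discussion preceding the proposition (particularly on the equivariance bookkeeping and the transitivity of stable isomorphism), but the underlying argument is the same.
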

\section{Finding the stable isomorphism}\label{d2}

\indent \indent We have now simplified our central research problem to one of finding $n$ integer-valued functions $h_i$ satisfying equation (\ref{equationofh}). To find these functions, we utilise the connective data on our bundle gerbes presented in the previous chapters. By inputting this connective data into the relations we would expect to obtain if our bundle gerbes were stably isomorphic, we will find an explicit description for the functions $h_i$, and show that they satisfy the desired condition. \\
\indent Recall that the basic bundle gerbe and the cup product bundle gerbe have connective data $(\nabla_{p^*b}, f_{p^*b})$ and $(\nabla_c, f_c)$ respectively with associated three-curvatures $\omega_{p^*b}$ and $\omega_c$. By Proposition \ref{outofnamesforthings}, if the pullback of the basic bundle gerbe is stably isomorphic to the cup product bundle gerbe, there exists $\beta \in \Omega^2\left(T\times \su\right)$ and a trivialising line bundle $R$ with connection $\nabla_R$ such that \begin{align}\label{curvingscompare} f_{p^*b} - f_c = F_{\nabla_R} + \pi^*\beta\end{align} and \begin{align}\label{curvaturescompare}\omega_{p^*b} - \omega_c = d\beta\end{align} where $\pi$ is projection $ \left(\R^{n-1}\times_T Y_T\right) \times \su \to T\times \su$. Throughout this section we will take $R$ to be the line bundle \begin{align}\label{oragne} R:= \Motimes_{i=1}^n \pi_{\smath{{SU(n)}/{T}}}^{-1}(J_i)^{h_i}\to (\R^{n-1}\times_T Y_T) \times \su \end{align} for $\pi_{\smath{{SU(n)}/{T}}}: (\R^{n-1} \times_T Y_T) \times \su \to  \su$ projection and $h_i: (\R^{n-1}\times_TY_T)\times \su \to \Z$ the smooth maps that we aim to define. This is the trivialising line bundle that we will obtain from Corollary \ref{cormaincupproductresult} if Proposition \ref{mainresult} holds. 

Throughout this section, we abuse notation and let the homomorphisms $p_i$ and projections $P_i$ be defined on the spaces $p^{-1}(Y)$, $X$, or $X\times_{T\times {SU(n)}/{T}} p^{-1}(Y)$ depending on the context. Before commencing our calculations, recall by Propositions \ref{corollary data on cup prod} and \ref{curvature} that \begin{enumerate}[(1),wide, labelwidth=!, labelindent=0pt]
\item the curving of the cup product bundle gerbe is \begin{align} \label{fc} f_c = -\sum_{i=1}^n x_i \, \textup{tr}(P_i dP_idP_i);\end{align}
\item the three-curvature of the cup product bundle gerbe is \begin{align}\label{omegac} \omega_c = - \frac{1}{2\pi i}\sum_{i=1}^n p_i^{-1} dp_i\, \textup{tr}(P_idP_idP_i);\end{align}
\item the curving of the pullback of the basic bundle gerbe by the Weyl map is \begin{align}\label{fb} f_{p^*b} = \frac{i}{4\pi} \sum_{\substack{i=1 \\ i\neq k}}^n (\log_z p_i - \log_z p_k + (p_k - p_i)p_k^{-1})\textup{tr}(P_i dP_kdP_k);\end{align}
\item the three-curvature $\omega_{p^*b}$ of the pullback of the basic bundle gerbe by the Weyl map is \begin{align}\label{omegap} \frac{i}{4\pi} \sum_{\substack{i,k=1 \\ i\neq k}}^n \left(p_i^{-1} dp_i - p_k^{-1}dp_k -p_k^{-1}dp_i + p_k^{-1}dp_k p_k^{-1}p_i\right) \textup{tr}(P_idP_kdP_k) - p_ip_k^{-1} \textup{tr}(dP_idP_kdP_k). \end{align}
\end{enumerate}
\indent \indent Let us begin by comparing the three-curvatures of our bundle gerbes, and verifying that their difference is an exact form. This will provide us with an expression for $\beta$, which we will then input into equation (\ref{curvingscompare}) to obtain a formula for $F_{\nabla_R}$, and hence for $h_i$. The proceeding proofs rely on a series of lemmas that we relegate to the appendix in an effort to streamline this discussion. 
\begin{proposition}\label{terer}
 The three-curvature of the pullback of the basic bundle gerbe by the Weyl map \textup{(\ref{omegap})} can be rewritten as \begin{align}\label{hdf} \omega_{p^*b} = -\frac{1}{2\pi i}\sum_{k=1}^n p_k^{-1}dp_k \textup{tr}(P_kdP_kdP_k) + d\beta \end{align} for \begin{align}\label{equationforbeta1}\beta =  -\frac{i}{4\pi}\sum_{\substack{i=1 \\ i\neq k}}^n  \sum_{k=1}^n p_ip_k^{-1}\textup{tr}(P_idP_kdP_k).\end{align}
\end{proposition}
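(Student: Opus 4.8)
The plan is to verify the identity by computing $d\beta$ directly and then reducing the leftover discrepancy to a pair of trace identities for the projections. First I would differentiate (\ref{equationforbeta1}) termwise with the Leibniz rule. Since each $p_i$ is a $U(1)$-valued function, $d(p_ip_k^{-1}) = p_ip_k^{-1}(p_i^{-1}dp_i - p_k^{-1}dp_k)$ is a scalar $1$-form, and differentiating the $2$-form factor gives $d\,\textup{tr}(P_idP_kdP_k) = \textup{tr}(dP_idP_kdP_k)$ because $d(dP_k)=0$. This produces
$$d\beta = -\frac{i}{4\pi}\sum_{\substack{i=1 \\ i\neq k}}^n\sum_{k=1}^n\Big[(p_k^{-1}dp_i - p_ip_k^{-2}dp_k)\,\textup{tr}(P_idP_kdP_k) + p_ip_k^{-1}\,\textup{tr}(dP_idP_kdP_k)\Big].$$

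Next I would assemble $\omega_{p^*b} - d\beta$ using (\ref{omegap}) and watch the cancellations. The two $\textup{tr}(dP_idP_kdP_k)$ contributions have opposite coefficients $\mp p_ip_k^{-1}$ and cancel; the $\mp p_k^{-1}dp_i$ terms cancel; and, since the $p_j$ are scalars, the term $p_k^{-1}dp_k\,p_k^{-1}p_i$ from $\omega_{p^*b}$ cancels the term $p_ip_k^{-2}dp_k$ from $-d\beta$. All that survives is
$$\omega_{p^*b} - d\beta = \frac{i}{4\pi}\sum_{\substack{i,k=1 \\ i\neq k}}^n (p_i^{-1}dp_i - p_k^{-1}dp_k)\,\textup{tr}(P_idP_kdP_k),$$
so it remains to show this collapses to $-\tfrac{1}{2\pi i}\sum_k p_k^{-1}dp_k\,\textup{tr}(P_kdP_kdP_k) = \tfrac{i}{2\pi}\sum_k p_k^{-1}dp_k\,\textup{tr}(P_kdP_kdP_k)$, i.e. that the off-diagonal double sum equals twice the diagonal sum.

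The collapse is where the geometry of $\text{Proj}_n$ enters, and I expect it to be the main obstacle. Using $\sum_i P_i = 1$ I would write $\sum_i \textup{tr}(P_idP_kdP_k) = \textup{tr}(dP_kdP_k)$, and a short computation with $P_k^2 = P_k$, $P_kdP_kP_k = 0$, and the graded (signed) cyclicity of the trace of matrix-valued forms gives $\textup{tr}(dP_kdP_k) = 0$; hence $\sum_{i\neq k}\textup{tr}(P_idP_kdP_k) = -\textup{tr}(P_kdP_kdP_k)$, which handles the $-p_k^{-1}dp_k$ part of the sum. For the $p_i^{-1}dp_i$ part I would invoke the identity $\sum_{k}\textup{tr}(P_idP_kdP_k) = 2\,\textup{tr}(P_idP_idP_i)$, equivalently $\sum_{k\neq i}\textup{tr}(P_idP_kdP_k) = \textup{tr}(P_idP_idP_i)$. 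Combining the two yields $2\sum_k p_k^{-1}dp_k\,\textup{tr}(P_kdP_kdP_k)$, and the prefactor $\tfrac{i}{4\pi}\cdot 2 = \tfrac{i}{2\pi} = -\tfrac{1}{2\pi i}$ matches the claim. These two trace identities are purely algebraic consequences of the relations $\sum_i P_i = 1$ and $P_iP_j = 0$ for $i\neq j$, and I would isolate them as the appendix lemmas. The delicate point throughout is bookkeeping the signs coming from the graded commutativity of matrix-valued differential forms under the trace, since a single sign error would break the exact cancellations needed in both the second and third steps.
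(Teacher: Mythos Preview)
Your approach is correct and matches the paper's proof essentially step for step: the paper packages your computation of $d\beta$ and the first trace collapse (using $\sum_i P_i=1$ and $\textup{tr}(dP_kdP_k)=0$) into Lemma~\ref{lemma111}, and your second identity $\sum_{k\neq i}\textup{tr}(P_idP_kdP_k)=\textup{tr}(P_idP_idP_i)$ into Lemma~\ref{projections}~(3), then cites both to conclude. The only difference is organisational, not mathematical.
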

\begin{proof}
This is an immediate consequence of Lemma \ref{lemma111} and Lemma \ref{projections} (3). 
\end{proof}
\begin{proposition}	The $2$-form $\beta$ \textup{(\ref{equationforbeta1})} {satisfies} \textup{equation (\ref{curvaturescompare})}. \end{proposition}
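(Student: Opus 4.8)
The plan is to deduce the identity (\ref{curvaturescompare}) directly from Proposition \ref{terer}, which has just furnished the key rewriting of $\omega_{p^*b}$. First I would recall the three-curvature of the cup product bundle gerbe from (\ref{omegac}), namely
$$\omega_c = -\frac{1}{2\pi i}\sum_{i=1}^n p_i^{-1}dp_i\, \textup{tr}(P_idP_idP_i).$$
Since the summation index is a dummy variable, relabelling $i$ as $k$ shows that $\omega_c$ is precisely the first summand appearing in the expression (\ref{hdf}) for $\omega_{p^*b}$ established in Proposition \ref{terer}. Both forms live on $T\times \su$, so the comparison is legitimate.

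With this observation in hand, the argument is a one-line comparison of formulas. Proposition \ref{terer} states
$$\omega_{p^*b} = -\frac{1}{2\pi i}\sum_{k=1}^n p_k^{-1}dp_k\, \textup{tr}(P_kdP_kdP_k) + d\beta,$$
and substituting $\omega_c$ for the leading sum gives $\omega_{p^*b} = \omega_c + d\beta$. Rearranging yields $\omega_{p^*b} - \omega_c = d\beta$, which is exactly equation (\ref{curvaturescompare}) with the explicit primitive $\beta$ of (\ref{equationforbeta1}).

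The hard part has already been dealt with, so there is no genuine obstacle remaining at this stage: all of the analytic effort was absorbed into Proposition \ref{terer} and its supporting Lemmas \ref{lemma111} and \ref{projections}, where the cross terms of (\ref{omegap}) were collected into the exact form $d\beta$ and the diagonal contribution was matched against $\omega_c$. The present statement merely records that, once $\omega_{p^*b}$ has been put into the form (\ref{hdf}), its non-exact part is identically $\omega_c$, so that the difference of the two three-curvatures is exact with the claimed $\beta$.
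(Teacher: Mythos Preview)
Your proposal is correct and follows essentially the same approach as the paper: both simply compare the rewritten expression (\ref{hdf}) for $\omega_{p^*b}$ from Proposition \ref{terer} with the formula (\ref{omegac}) for $\omega_c$, observe that the first term of (\ref{hdf}) is exactly $\omega_c$, and conclude that $\omega_{p^*b}-\omega_c=d\beta$.
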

\begin{proof}
This follows by comparing our expression for $\omega_{p^*b}$ in equation (\ref{hdf}) with that of $\omega_c$ in equation (\ref{omegac}). 
\end{proof}

This result shows that the three-curvatures of our bundle gerbes behave as we would expect if they are to be stably isomorphic. Furthermore, we now have an explicit expression for $\beta$, which we can substitute into equation (\ref{curvingscompare}). This in turn determines an expression for the functions $h_i$, as seen in the next proposition.

\begin{proposition}\label{findingtheh}
The curvings $f_{p^*b}$ \textup{(\ref{fb})}, $f_c$ \textup{(\ref{fc})}, $2$-form $\beta$ \textup{(\ref{equationforbeta1})}, and line bundle $R$ \textup{(\ref{oragne})} satisfy \textup{equation} \textup{(\ref{curvingscompare})} if, for each $i=1, ..., n$, $$h_i(x, z, t, gT) = x_i - \frac{1}{2\pi i}\log_z p_i(t)$$ for all $(x, z, t, gT) \in (\R^{n-1}\times_TY_T)\times \su$ with $x = (x_1, ..., x_n)$. 
\end{proposition}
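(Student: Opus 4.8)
The plan is to verify the $2$-form identity (\ref{curvingscompare}) directly, by computing each of the three terms $f_{p^*b}-f_c$, $F_{\nabla_R}$, and $\pi^*\beta$ explicitly in the projection variables and checking that they balance. The claimed formula for $h_i$ is exactly what one is forced into once the curvature of $R$ is known, so the proof is really a consistency check of the ansatz coming from Proposition \ref{outofnamesforthings}.

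First I would compute $F_{\nabla_R}$. Since $R = \Motimes_{i=1}^n \pi^{-1}(J_i)^{h_i}$ over $(\R^{n-1}\times_T Y_T)\times \su$, the function--power curvature formula of Example \ref{function power}, together with the two--curvature $F_{\nabla_{J_i}} = \textup{tr}(P_idP_idP_i)$ from Proposition \ref{curvaturetwoform}, gives
\begin{equation*}
F_{\nabla_R} = \sum_{i=1}^n h_i\,\textup{tr}(P_idP_idP_i).
\end{equation*}
Substituting $h_i = x_i - \tfrac{1}{2\pi i}\log_z p_i(t)$ and splitting the sum, the $\sum_i x_i\,\textup{tr}(P_idP_idP_i)$ part cancels exactly against $-f_c$ by (\ref{fc}). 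Hence, using $-\tfrac{1}{2\pi i}=\tfrac{i}{2\pi}$, equation (\ref{curvingscompare}) reduces to the purely logarithmic identity
\begin{equation*}
f_{p^*b} - \pi^*\beta = \tfrac{i}{2\pi}\sum_{i=1}^n \log_z p_i\,\textup{tr}(P_idP_idP_i).
\end{equation*}

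Next I would insert the explicit curving $f_{p^*b}$ from (\ref{fb}) and the form $\beta$ from (\ref{equationforbeta1}). Both are sums over $i\neq k$ of scalar multiples of $\textup{tr}(P_idP_kdP_k)$, so they combine into a single sum whose coefficient is $\log_z p_i - \log_z p_k + (p_k-p_i)p_k^{-1} + p_ip_k^{-1}$. The rational part collapses via $(p_k-p_i)p_k^{-1}+p_ip_k^{-1}=1$, leaving
\begin{equation*}
f_{p^*b}-\pi^*\beta = \tfrac{i}{4\pi}\sum_{i\neq k}\bigl(\log_z p_i - \log_z p_k + 1\bigr)\textup{tr}(P_idP_kdP_k).
\end{equation*}

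The crux, and the step I expect to be the main obstacle, is to show this off--diagonal sum equals the diagonal expression $\tfrac{i}{2\pi}\sum_i \log_z p_i\,\textup{tr}(P_idP_idP_i)$. This is where the appendix projection lemmas do the work: from $\sum_i P_i = \mathbf 1$, $P_iP_k=0$ for $i\neq k$, and the vanishing $\textup{tr}(dP_kdP_k)=0$ one obtains $\sum_{i\neq k}\textup{tr}(P_idP_kdP_k) = -\textup{tr}(P_kdP_kdP_k)$ and the reshuffling identity $\sum_{k\neq i}\textup{tr}(P_idP_kdP_k)=\textup{tr}(P_idP_idP_i)$ (which in turn forces $\sum_i\textup{tr}(P_idP_idP_i)=0$). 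Splitting the combined sum into its $\log_z p_i$, $-\log_z p_k$, and constant pieces and applying these identities to each, the constant piece vanishes and the two logarithmic pieces add to give precisely the diagonal form, completing the check. I would isolate $\sum_{k\neq i}\textup{tr}(P_idP_kdP_k)=\textup{tr}(P_idP_idP_i)$ as the one genuinely non-formal input, citing Lemma \ref{projections} and Lemma \ref{lemma111}; the rest is bookkeeping. I would also note that the $h_i$ produced here are real- rather than integer-valued, so this proposition only settles the differential equation (\ref{curvingscompare}); the integrality required for $R$ to be an honest trivialising line bundle is separate, being the statement that the differences $h_i(y,w,t,gT)-h_i(x,z,t,gT)$ reduce to $y_i-x_i-\varepsilon_i(z,w,t)$ in the sense of (\ref{equationofh}), with $\varepsilon_i$ as in Definition \ref{defnepsilon}.
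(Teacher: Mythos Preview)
Your proof is correct and follows essentially the same route as the paper: compute $F_{\nabla_R}=\sum_i h_i\,\textup{tr}(P_idP_idP_i)$ via Example~\ref{function power}, combine $f_{p^*b}$ and $\pi^*\beta$ into a single off-diagonal sum, and then invoke the projection identities of Lemma~\ref{projections} (in particular $\sum_{k\neq i}\textup{tr}(P_idP_kdP_k)=\textup{tr}(P_idP_idP_i)$ and $\sum_{i\neq k}\textup{tr}(P_idP_kdP_k)=0$) to collapse everything to diagonal form. The only cosmetic difference is that the paper first folds $-f_c$ into the off-diagonal sum (giving the $-4\pi i x_i$ term) and then equates coefficients to \emph{derive} $h_i$, whereas you cancel $-f_c$ directly against the $x_i$ part of $F_{\nabla_R}$ and \emph{verify} the stated $h_i$; your closing remark that integrality of $h_i$ is handled separately is exactly the content of Proposition~\ref{hisright}.
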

\begin{proof} First, note that $$f_{p^*b} - f_c - \pi^*\beta=-\frac{1}{4\pi i} \sum_{\substack{i=1 \\ i\neq k}}^n \sum_{k=1}^n (\log_z p_i - \log_z p_k - 4\pi i x_i +1)\textup{tr}(P_i dP_kdP_k).$$ Since $ \sum_{\substack{i=1 \\ i\neq k}}^n \sum_{k=1}^n \text{tr}(P_idP_kdP_k)=\sum_{k=1}^n \text{tr}(P_kdP_kdP_k) = 0$,  this becomes $$\sum_{i=1}^n x_i  \text{tr}(P_idP_idP_i) - \frac{1}{2\pi i}\sum_{i=1}^n\log_z p_i \text{tr}(P_idP_idP_i).$$ Now, the induced connection $\nabla_R$ on $R$ has curvature $F_{\nabla_R} = \sum_{i=1}^n (h_i\circ \pi_i) \textup{tr}(P_idP_idP_i).$ So if equation (\ref{curvingscompare}) holds,  $$\sum_{i=1}^n \left(x_i - \frac{1}{2\pi i}\log_z p_i\right) \text{tr}(P_idP_idP_i) = \sum_{i=1}^n h_i\circ \pi_i \text{tr}(P_idP_idP_i).$$
By equating coefficients we obtain the result. \end{proof}

\indent It remains to be shown that the $h_i$ from Proposition \ref{findingtheh} satisfy equation (\ref{equationofh}). To do so, we require the following key lemma. Recall that, for $z\in \C^*$, $R_z$ is the closed ray from the origin through $z$, and the logarithm $\text{log}_z:\C\backslash{R_z}\to \C$ is defined by setting $\text{log}_z(1) = 0$. 
\begin{lemma}\label{lemmalogandepsilon} 
	For each $i=1, ..., n$ and $(z, w, t) \in Y_T^{[2]}$, $$\varepsilon_i(z, w, t) = {\frac{1}{2\pi i}}\left(\log_zp_i(t) - \log_wp_i(t)\right). $$
\end{lemma}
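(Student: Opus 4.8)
The plan is to reduce the statement to an explicit computation of the branch $\log_z$ on the unit circle, and then to match the resulting integer against the three cases in the definition of $\varepsilon_i$ (Definition \ref{defnepsilon}). First I would parametrise $Z = U(1)\setminus\{1\}$ by the bijection $\arg\colon Z \to (0,2\pi)$, writing $z = e^{i\arg(z)}$, $w = e^{i\arg(w)}$, and $p_i(t) = e^{i\phi}$ with $\phi = \arg(p_i(t))$. The key preliminary observation is that $p_i(t)$ is a diagonal entry of $t$ and hence an eigenvalue, so the hypothesis $(z,w,t)\in Y_T^{[2]}$ (i.e. $z,w\notin\operatorname{spec}(t)$) guarantees $p_i(t)\neq z$ and $p_i(t)\neq w$. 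Thus $p_i(t)$ avoids both rays $R_z$ and $R_w$, and both $\log_z p_i(t)$ and $\log_w p_i(t)$ are well-defined; in particular no boundary case can occur.

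Next I would compute $\log_z$ on the circle explicitly. Since the cut $R_z$ sits at angle $\arg(z)\in(0,2\pi)$ and the normalisation $\log_z(1)=0$ forces the imaginary part of $\log_z$ to lie in the interval $(\arg(z)-2\pi,\ \arg(z))$, one finds that $\log_z(e^{i\phi}) = i\phi$ when $\phi < \arg(z)$, and $\log_z(e^{i\phi}) = i(\phi - 2\pi)$ when $\phi > \arg(z)$, and similarly for $w$. Subtracting these two expressions, the difference $\log_z p_i(t) - \log_w p_i(t)$ vanishes when $\phi$ lies on the same side of both $\arg(z)$ and $\arg(w)$, equals $2\pi i$ when $\arg(w) < \phi < \arg(z)$, and equals $-2\pi i$ when $\arg(z) < \phi < \arg(w)$. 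Dividing by $2\pi i$ therefore yields $1$, $-1$, or $0$ in precisely these three regimes.

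Finally I would translate these angular inequalities back through the ordering on $Z$ (recall $z_1 < z_2 \iff \arg(z_1) < \arg(z_2)$): the condition $\arg(w)<\phi<\arg(z)$ is exactly $z > p_i(t) > w$, while $\arg(z)<\phi<\arg(w)$ is exactly $w > p_i(t) > z$. Comparing with Definition \ref{defnepsilon} (where $\varepsilon_i(z,w,t)$ is $1$ for $z>p_i(t)>w$, $-1$ for $w>p_i(t)>z$, and $0$ otherwise) shows the two expressions agree case by case, proving the lemma. The only genuinely delicate point, and the step I would write out most carefully, is pinning down the correct branch interval for $\log_z$ from the normalisation $\log_z(1)=0$ and confirming it produces the stated piecewise formula; everything downstream is a direct comparison of inequalities, with the well-definedness already secured by $p_i(t)\notin\{z,w\}$.
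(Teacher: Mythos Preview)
Your proposal is correct and follows essentially the same approach as the paper's proof: both arguments compute the jump of the branch $\log_z$ across the cut and match the resulting $\pm 2\pi i$ or $0$ against the three cases defining $\varepsilon_i$. Your version is simply more explicit---you spell out the branch interval $(\arg(z)-2\pi,\arg(z))$ and the resulting piecewise formula for $\log_z(e^{i\phi})$, whereas the paper asserts the key identity $\log_z p_i(t)-\log_w p_i(t)\in\{0,\pm 2\pi i\}$ directly and then reads off the cases; your added observation that $p_i(t)$ is an eigenvalue of $t$ (hence avoids $z,w$) is a nice point the paper leaves implicit.
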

\begin{proof}  Let $(z, w, t)\in Y_T^{[2]}$ with $z>w$. If $w<p_i(t)<z$, $\log_zp_i(t) - \log_wp_i(t) = 2\pi i$. Otherwise, this difference is zero. Therefore in general $$\text{log}_zp_i(t) - \text{log}_wp_i(t) = \begin{cases}
	2\pi i &\text{if } z>p_i(t) > w\\
	-2\pi i & \text{if } w>p_i(t) > z\\
	0 &\text{otherwise}.
	\end{cases}$$ Dividing through by $2\pi i$, we see that this is precisely the definition of $\varepsilon_i$. \end{proof}
With this, we can prove that the functions $h_i$ satisfy the desired condition. 
\begin{proposition}\label{hisright}
For $i= 1, ..., n$, define $h_i: \left(\R^{n-1} \times_T Y_T\right) \times \su \to \Z$ by $$h_i(x, z, t, gT) = x_i - \frac{1}{2\pi i}\log_z p_i(t)$$ where $x = (x_1, ..., x_n)$. Then $h_i$ is smooth and satisfies $$ y_i-x_i - \varepsilon_i(z, w, t) = h_i(y, w, t, gT) - h_i(x, z, t, gT)$$ for all $(x, y, z, w, t, gT)\in \left(p_i^{-1}(\R) \times_T Y_T\right)^{[2]} \times \su $ with $x = (x_1, ..., x_n), y= (y_1, ..., y_n)$.
\end{proposition}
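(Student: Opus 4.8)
The plan is to verify the two assertions of the proposition --- that each $h_i$ is a well-defined smooth $\Z$-valued function, and that it satisfies the coboundary identity --- largely separately, with the arithmetic heart of the identity already supplied by Lemma \ref{lemmalogandepsilon}. First I would check that $h_i$ genuinely takes values in $\Z$. The key observation is geometric: a point of $\R^{n-1}\times_T Y_T$ is a triple $(x, z, t)$ whose $\R^{n-1}$- and $Y_T$-components agree over $T$, which forces $p_i(t) = e^{2\pi i x_i}$ for every $i$. Thus $x_i$ is a lift of the circle element $p_i(t)$ along the covering $\R\to S^1$, $x\mapsto e^{2\pi i x}$. Since $e^{\log_z p_i(t)} = p_i(t)$, the quantity $\tfrac{1}{2\pi i}\log_z p_i(t)$ is a second lift of $p_i(t)$ along the same covering, so the two differ by an integer, giving $h_i(x, z, t, gT) = x_i - \tfrac{1}{2\pi i}\log_z p_i(t)\in\Z$ (and note $h_i$ does not actually depend on the $gT$ coordinate).

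Next I would establish smoothness. The coordinate $x_i$ is manifestly smooth, so the only possible obstruction is the branch cut of $\log_z$ along the ray $R_z$. However, the defining condition on $Y_T$ is precisely that $z\notin\text{spec}(t)$; since $p_i(t)$ is an eigenvalue of $t$, this yields $p_i(t)\neq z$, hence $p_i(t)\notin R_z$, so $(z, t)\mapsto \log_z p_i(t)$ is smooth on all of $Y_T$. Therefore $h_i$ is smooth, and being a smooth $\Z$-valued function it is automatically locally constant --- a sanity check consistent with the trivialising line bundle $R$ of \eqref{oragne} being assembled from integer powers.

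For the coboundary identity I would then substitute the definition directly into the right-hand side. Writing out $h_i(y, w, t, gT) - h_i(x, z, t, gT)$, the first-coordinate terms collapse to $y_i - x_i$, while the logarithmic terms combine into $\tfrac{1}{2\pi i}\bigl(\log_z p_i(t) - \log_w p_i(t)\bigr)$. By Lemma \ref{lemmalogandepsilon} this last expression equals $\varepsilon_i(z, w, t)$, and reorganising the resulting relation produces the asserted identity $y_i - x_i - \varepsilon_i(z, w, t) = h_i(y, w, t, gT) - h_i(x, z, t, gT)$.

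I expect the substantive content of the argument to lie not in this final computation --- which is immediate once Lemma \ref{lemmalogandepsilon} is available --- but in correctly reading off the geometry of the fibre product $\R^{n-1}\times_T Y_T$ that makes $h_i$ integer-valued, together with the branch-cut bookkeeping underlying the smoothness claim. The one place I would proceed with genuine care is the ordering of the arguments $z, w$ relative to $x, y$ and the direction of the difference $h_i(\text{second point}) - h_i(\text{first point})$, since transposing these flips the sign of $\varepsilon_i$; I would fix these conventions at the outset (matching the coboundary form in Corollary \ref{cormaincupproductresult}) and track them consistently through to the conclusion.
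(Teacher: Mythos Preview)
Your proposal is correct and matches the paper's proof essentially step for step: the paper also first exponentiates $h_i$ to show it is integer-valued via the fibre-product condition $e^{2\pi i x_i}=p_i(t)$, then obtains smoothness from $z\neq p_i(t)$, and finally substitutes and invokes Lemma~\ref{lemmalogandepsilon} to identify the logarithmic difference with $\varepsilon_i(z,w,t)$. Your added remarks on locally-constant behaviour and on tracking the $(x,z)\leftrightarrow(y,w)$ ordering are sound and slightly more explicit than the paper, but the underlying argument is the same.
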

\begin{proof}
We first show $h_i$ is integer-valued. Now, $(x, z, t, gT) \in \left(\R^{n-1} \times_T Y_T\right) \times \su$ implies $[x_i] = e^{2\pi i x_i} = p_i(t)$. Therefore, exponentiating $h_i$, we obtain  $e^{2\pi i h_i} = e^{2\pi i x_i}p_i(t)^{-1}= 1$, and $h_i$ must be integer valued. Smoothness of $h_i$ follows by noting that $\log$ is smooth over the given range as $z\neq p_i(t)$ for $(x, z, t, gT)\in \left(\R^{n-1} \times_T Y_T\right) \times \su$. Finally, by Lemma \ref{lemmalogandepsilon}, \begin{align*}
 h_i(y, w, t, gT) - h_i(x, z, t, gT) &= y_i-x_i - \frac{1}{2\pi i}\left(\log_{z}p_i(t) - \log_{w}p_i(t)\right)\\
&= y_i - x_i - \varepsilon_i(z, w, t),
\end{align*} so these are the desired functions $h_i$.
\end{proof}
The following result then follows immediately from Propositions \ref{mainresult} and \ref{hisright}. A more precise statement of this result will be provided in Theorem \ref{finalresult}.
\begin{proposition}\label{prp}
	The cup product bundle gerbe is $SU(n)$-stably isomorphic to the pullback of the basic bundle gerbe by the Weyl map, i.e. $$\cupbg \cong_{SU(n)\textup{-stab}} \pullbackbasicbg.$$
\end{proposition}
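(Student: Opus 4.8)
The plan is to obtain the statement as an immediate consequence of the two preceding propositions, since the substantive constructions have already been completed in the earlier sections. First I would recall the reduction carried out in Proposition \ref{mainresult}. Combining the $SU(n)$-equivariant stable isomorphism of the cup product bundle gerbe with a reduced product of general cup product bundle gerbes (Proposition \ref{redproductofcupproduct}) and the $SU(n)$-equivariant isomorphism of the pullback of the basic bundle gerbe with a reduced product of general cup product bundle gerbes (Proposition \ref{pullbackstableisomainresult}), together with transitivity of stable isomorphisms (Remark \ref{transitivityofstableiso}), the target equivalence (\ref{keyequation}) reduces to showing that the two reduced products $\sideset{}{_\mathrm{red}}\bigotimes_{i=1}^n (J_i^{d_i}, \R^{n-1}\times \su)$ and $\sideset{}{_\mathrm{red}}\bigotimes_{i=1}^n (J_i^{\varepsilon_i}, Y_T\times \su)$ are $SU(n)$-stably isomorphic. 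By Corollary \ref{cormaincupproductresult}, this holds provided there exist smooth functions $h_i: (\R^{n-1}\times_T Y_T)\times \su \to \Z$ satisfying the coboundary relation (\ref{equationofh}).

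The second step is to invoke Proposition \ref{hisright}, which exhibits explicit functions $h_i(x, z, t, gT) = x_i - \frac{1}{2\pi i}\log_z p_i(t)$ and verifies every hypothesis required by Proposition \ref{mainresult}: that each $h_i$ is integer-valued (using $e^{2\pi i x_i} = p_i(t)$ on the fibre product, so that $e^{2\pi i h_i} = 1$), that each $h_i$ is smooth (since $\log_z$ is smooth away from $z = p_i(t)$, which is excluded on the relevant domain), and that the relation (\ref{equationofh}) holds by Lemma \ref{lemmalogandepsilon}. Feeding these functions back into Proposition \ref{mainresult} yields the desired $SU(n)$-stable isomorphism at once, with the trivialising line bundle $R = \Motimes_{i=1}^n \pi_{\smath{{SU(n)}/{T}}}^{-1}(J_i)^{h_i}$ from (\ref{oragne}).

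I would stress that the genuine difficulty lies not in this final deduction but in discovering the correct form of the $h_i$, which was the content of the earlier propositions in this section. The hard part was to express the connective data of both bundle gerbes in the common language of orthogonal projections (Propositions \ref{corollary data on cup prod} and \ref{curvature}) and then exploit the relations obeyed by the connective data of stably isomorphic bundle gerbes (Proposition \ref{outofnamesforthings}). Matching three-curvatures through (\ref{curvaturescompare}) pins down the $2$-form $\beta$ of (\ref{equationforbeta1}), and substituting $\beta$ into the curving relation (\ref{curvingscompare}) then forces the expression for $h_i$ (Proposition \ref{findingtheh}). Once this candidate is certified to be integer-valued and to satisfy the coboundary condition, Proposition \ref{prp} follows as a one-line combination of Propositions \ref{mainresult} and \ref{hisright}.
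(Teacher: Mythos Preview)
Your proposal is correct and matches the paper's approach exactly: the paper states that Proposition \ref{prp} ``follows immediately from Propositions \ref{mainresult} and \ref{hisright}'', which is precisely the one-line combination you describe. Your additional exposition of how the $h_i$ were discovered via the connective-data comparison is accurate context but not part of the proof itself.
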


\section{Comparing holonomies} \label{d3}

\indent\indent We conclude this chapter by briefly studying the holonomies of our bundle gerbes. Recall that $D$-stably isomorphic bundle gerbes over a surface have the same holonomy (Proposition \ref{holonomyequalforstabisobg}). Therefore if we can show our bundle gerbes have different holonomies on a surface $\Sigma \subset T\times \su$, then the restriction of our bundle gerbes to $\Sigma$ (and hence the original bundle gerbes) cannot be $D$-stably isomorphic, and their Deligne classes will not be equal (Proposition \ref{bgclassification}).\\
\indent By Proposition \ref{findingtheh}, the curvings of the pullback of the basic bundle gerbe and cup product bundle gerbes satisfy \begin{align}\label{wer} f_{p^*b} = f_c + F_{\nabla_R} + \pi^*\beta_n\end{align} for \begin{align}\label{werrr} \beta_n =  -\frac{i}{4\pi}\sum_{\substack{i=1 \\ i\neq k}}^n  \sum_{k=1}^n p_ip_k^{-1}\text{tr}(P_idP_kdP_k).\end{align} Here, we introduce the notation $\beta_n$ to emphasise that $\beta_n$ is defined on $T\times SU(n)$. Let $\Sigma \subset T\times \su$ be a surface. By Proposition \ref{tired}, $\nabla_{p^*b} = \nabla_c$. So by Proposition \ref{proposition holonomy for different curvings} and equation (\ref{wer}), \begin{align}\label{aadff} \text{hol}((\nabla_{p^*b}, f_{p^*b}), \Sigma) = \text{hol}((\nabla_c, f_c+F_{\nabla_R}+\pi^*\beta_n), \Sigma) = \text{exp}\left(\int_{\Sigma} \beta_n\right) \text{hol}((\nabla_c, f_c), \Sigma).\end{align} That is, the holonomy of the pullback of the basic bundle gerbe by the Weyl map differs from the holonomy of the cup product bundle gerbe by a multiple of $\text{exp}\left(\int_\Sigma \beta_n\right)$. It could, of course, be the case that $\int_\Sigma \beta_n = k2\pi i$ for some $k\in \Z$, implying these holonomies are equal. In the following proposition, we show that there exists a surface $\Sigma_2 \subset T\times {SU(2)}/{T}$ for which $\int_{\Sigma_2} \beta_2 \neq k2\pi i$ for any $k\in \Z$. We will then generalise this result to obtain a surface $\Sigma_n \subset T\times {SU(n)}/{T}$  for which $\text{hol}((\nabla_{p^*b}, f_{p^*b}), \Sigma_n)  \neq \text{hol}((\nabla_c, f_c), \Sigma_n)$.

\begin{proposition}\label{holonomycomputation}
Define a surface $\Sigma_2\subset T\times {SU(2)}/{T}\cong S^1\times S^1$ by $\Sigma_2 := \{e^{{\pi i}/{4}}\}\times S^2$. Then the holonomies of the pullback of the basic bundle gerbe over $SU(2)$ and the cup product bundle gerbe over $T\times {SU(2)}/{T}$ are not equal over $\Sigma_2$.
\end{proposition}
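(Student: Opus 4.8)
The plan is to reduce the equality-of-holonomies question to the single scalar computation of $\int_{\Sigma_2}\beta_2$, and to show this integral does not lie in $2\pi i\,\Z$. By equation~(\ref{aadff}), the two holonomies over $\Sigma_2$ differ exactly by the factor $\exp\left(\int_{\Sigma_2}\beta_2\right)$, so the holonomies are equal if and only if $\int_{\Sigma_2}\beta_2 \in 2\pi i\,\Z$. Hence everything comes down to evaluating $\int_{\Sigma_2}\beta_2$ and checking it is not an integer multiple of $2\pi i$.

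First I would specialise all the data to $n=2$. Here $\Su = SU(2)/T \cong \CP^1 \cong S^2$, and under the identification with $\text{Proj}_2$ a point is a pair $(P_1, P_2)$ of orthogonal rank-one projections with $P_1 + P_2 = 1$, so $P_2 = 1 - P_1$ and the geometry is governed entirely by $P_1$. I would write out $\beta_2$ from~(\ref{werrr}): the double sum over $i\neq k$ with $i,k\in\{1,2\}$ has just two terms, giving
\begin{align*}
\beta_2 = -\frac{i}{4\pi}\left(p_1 p_2^{-1}\,\textup{tr}(P_1 dP_2 dP_2) + p_2 p_1^{-1}\,\textup{tr}(P_2 dP_1 dP_1)\right).
\end{align*}
On $\Sigma_2 = \{e^{\pi i/4}\}\times S^2$ the torus coordinate is fixed, so $p_1, p_2$ are constants determined by $e^{\pi i/4}$ (with $p_1 p_2 = 1$ in $SU(2)$), and $\beta_2$ restricts to a fixed scalar multiple of a $2$-form on $S^2$ built from the projections. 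Using $P_2 = 1 - P_1$ and the projection identities from the remark after the definition of $\text{Proj}_n$ (namely $P_iP_j=0$ for $i\neq j$ and $\sum P_i = 1$), together with Lemma~\ref{lemmalemon}-type manipulations, I would simplify $\textup{tr}(P_1 dP_2 dP_2)$ and $\textup{tr}(P_2 dP_1 dP_1)$ to a common multiple of $\textup{tr}(P_1 dP_1 dP_1)$, which by Corollary~\ref{linebundlecurvature} is the curvature two-form of the tautological line bundle $K_1 \to \text{Proj}_2 \cong S^2$.

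Next I would use the fact that this curvature integrates, over $S^2$, to $2\pi i$ times the first Chern number of $K_1$, which is the degree-one generator of $H^2(\CP^1,\Z)$; that is, $\int_{S^2}\textup{tr}(P_1 dP_1 dP_1) = 2\pi i \cdot c$ for a definite nonzero integer $c$ (I expect $c = \pm 1$). Combining this with the constant prefactor $-\tfrac{i}{4\pi}$ and the evaluated constants $p_1 p_2^{-1}, p_2 p_1^{-1}$ at $e^{\pi i/4}$ yields an explicit complex number for $\int_{\Sigma_2}\beta_2$. The main obstacle — and the step requiring genuine care — is this constant: since $p_1 = e^{\pi i/4}$ forces $p_2 = e^{-\pi i/4}$, the factors $p_1 p_2^{-1} = e^{\pi i/2} = i$ and $p_2 p_1^{-1} = -i$ do not combine to a real rational multiple of $2\pi i$ after multiplication by $-\tfrac{i}{4\pi}$ and the Chern contribution. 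I would verify that the resulting value is of the form (nonzero rational, times $2\pi i$, times an irrational or non-integer factor coming from $e^{\pi i/4}$), so that it cannot equal $k\,2\pi i$ for any $k\in\Z$. Concretely, I anticipate the two terms either reinforce or partially cancel to leave a value like $\tfrac{1}{2}\cdot 2\pi i \cdot c$ or an irrational multiple, and the choice of the specific point $e^{\pi i/4}$ is presumably engineered precisely to make the prefactor land strictly between consecutive integer multiples of $2\pi i$. Once $\int_{\Sigma_2}\beta_2 \notin 2\pi i\,\Z$ is confirmed, the inequality of holonomies follows immediately from~(\ref{aadff}), completing the proof.
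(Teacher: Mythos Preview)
Your approach is essentially the same as the paper's: both reduce via~(\ref{aadff}) to showing $\int_{\Sigma_2}\beta_2\notin 2\pi i\,\Z$, simplify $\beta_2$ for $n=2$ using $P_2=1-P_1$ and $p_2=p_1^{-1}$ to a scalar multiple of $\textup{tr}(P_1dP_1dP_1)$, and then integrate using that this is the curvature of the tautological line bundle on $S^2$ with Chern number $\pm 1$. The only gap is that you stop short of the arithmetic: the paper actually carries it out, obtaining (with $p=e^{\pi i/4}$, so $p^2-p^{-2}=2i$) an explicit value $\int_{\Sigma_2}\beta_2=\tfrac{1}{\pi i}$, which is manifestly not in $2\pi i\,\Z$ --- so your speculation about ``irrational multiples'' or half-integers is unnecessary once you just compute.
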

\begin{proof}
By equation (\ref{aadff}), we need only show $\int_{\Sigma_2} \beta_2 \neq k2\pi i$ for any $k\in \Z$. Now, $$\beta_2 = \frac{1}{4\pi i}\left(p_2p_1^{-1} \text{tr}(P_2dP_1dP_1) + p_1p_2^{-1}\text{tr}(P_1dP_2dP_2)\right) .$$ Since $P_1+P_2=1$, we can set $P:= P_1$, so $P^{-1}:= P_2$. Similarly, $p_2 = p_1^{-1}$, so we can set $p := p_1$ and $p^{-1}:= p_2$. Then a simple calculation gives us $$\beta_2 = \frac{i}{4\pi}(p^2-p^{-2})\text{tr}(PdPdP). $$ It is a standard fact that $\text{tr}(PdPdP)$ is the curvature of the tautological line bundle over $S^2$, which has Chern class minus one, i.e. $\frac{i}{2\pi}\int_{S^2} \text{tr}(PdPdP) = -1.$ Therefore \begin{align*}
	\int_{\Sigma_2} \beta_2 &= \frac{ie^{\tfrac{i\pi}{2}} - ie^{-\tfrac{i\pi}{2} }}{4\pi} \int_{S^2} \text{tr}(PdPdP) \\
	&= 	 \frac{-e^{\tfrac{i \pi}{2} } + e^{-\tfrac{i\pi}{2}}}{2\pi}\\
	&= \frac{\sin(\tfrac{\pi}{2})}{\pi i}\\
	&= \frac{1}{\pi i} \\ &\neq k2\pi i \ \forall \ k \in \Z,
\end{align*}
hence $\text{exp}\left(\int_{\Sigma_2} \beta_2\right)\neq 1$ and the holonomies are not equal over this surface.
\end{proof}
\begin{corollary}
There exists a surface $\Sigma_n \subset T\times \su$ such that \begin{align}\label{raining} \textup{hol}((\nabla_{p^*b}, f_{p^*b}), \Sigma_n)  \neq \textup{hol}((\nabla_c, f_c), \Sigma_n).\end{align}
\end{corollary}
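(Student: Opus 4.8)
The goal is to lift the $SU(2)$ result of Proposition \ref{holonomycomputation} to general $n$, producing a surface $\Sigma_n \subset T\times \su$ on which the holonomies disagree. The plan is to embed the $SU(2)$ situation into $SU(n)$ in a way that pulls the relevant $2$-form $\beta_n$ back to the $2$-form $\beta_2$ on the $SU(2)$ surface. First I would exploit the block inclusion $SU(2)\hookrightarrow SU(n)$ sending $A\mapsto \text{diag}(A, 1, \ldots, 1)$, which induces compatible inclusions $T_{SU(2)}\hookrightarrow T$ and $SU(2)/T_{SU(2)}\hookrightarrow \su$. Under these maps the surface $\Sigma_2 = \{e^{\pi i/4}\}\times S^2$ from Proposition \ref{holonomycomputation} maps to a surface $\Sigma_n \subset T\times \su$. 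The essential point is that this inclusion is $SU(2)$-equivariant and respects the projection/eigenvalue data defining $p_i$ and $P_i$, so that the restriction of the forms is controlled.

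The key computational step is to show that, under this embedding, the pullback of $\beta_n$ (equation (\ref{werrr})) to $\Sigma_2$ coincides with $\beta_2$. The idea is that on the image of $SU(2)/T_{SU(2)}$, the projections $P_3, \ldots, P_n$ are constant (they project onto the fixed standard basis directions $e_3, \ldots, e_n$), so $dP_j = 0$ for $j \geq 3$ on this subspace. Consequently every term $\text{tr}(P_i dP_k dP_k)$ in $\beta_n$ with $k\geq 3$ vanishes, and the terms with $i\geq 3$, $k\in\{1,2\}$ vanish after tracing because $P_i$ for $i\geq 3$ is orthogonal to the image of $dP_k dP_k$ for $k\leq 2$. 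What survives is exactly the $i,k\in\{1,2\}$ part, which by construction is $\beta_2$. I would verify this using Lemma \ref{projections}-type identities (the vanishing $P_i dP_k = P_i$-orthogonality relations), reducing the sum to the two-index expression computed in Proposition \ref{holonomycomputation}.

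Granting that reduction, the integral computation is immediate: since $\Sigma_2$ lies in the image and $\beta_n|_{\Sigma_2} = \beta_2$, we obtain
\begin{align*}
\int_{\Sigma_n} \beta_n = \int_{\Sigma_2} \beta_2 = \frac{1}{\pi i} \neq k\,2\pi i \quad \forall\, k \in \Z.
\end{align*}
By equation (\ref{aadff}), which expresses $\text{hol}((\nabla_{p^*b}, f_{p^*b}), \Sigma_n)$ as $\text{exp}\left(\int_{\Sigma_n}\beta_n\right)$ times $\text{hol}((\nabla_c, f_c), \Sigma_n)$, this gives $\text{exp}\left(\int_{\Sigma_n}\beta_n\right)\neq 1$, and hence the inequality (\ref{raining}).

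The main obstacle I anticipate is the careful bookkeeping in the reduction $\beta_n|_{\Sigma_2} = \beta_2$: one must confirm both that the embedding genuinely sends $\Sigma_2$ into $T\times\su$ with the correct identification of the eigenvalue maps $p_i$, and that all the cross terms involving the ``frozen'' projections $P_j$ for $j\geq 3$ vanish under pullback. This is where the geometry of $\text{Proj}_n$ from Section \ref{flagmanifold} does the real work, since the statement that $dP_j = 0$ along the embedded $SU(2)$-orbit is a statement about how the orthogonal projections vary. Once the vanishing of these terms is established, the remainder is a direct appeal to the already-computed $SU(2)$ integral, so no genuinely new analytic difficulty arises.
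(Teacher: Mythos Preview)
Your proposal is correct and matches the paper's proof essentially line for line: the paper uses the same block-diagonal embedding $\iota: SU(2)/T_1 \hookrightarrow SU(n)/T_{n-1}$, sets $\Sigma_n = \iota(\Sigma_2)$, and proves $\iota^*\beta_n = \beta_2$ by observing that $p_i\circ\iota = 1$ and $P_i\circ\iota = O_i$ (constant) for $i>2$, so that $dP_i = 0$ kills the $k>2$ terms while Lemma~\ref{projections}~(2) (your ``orthogonality'' observation) kills the $i>2$ terms. The conclusion then follows from equation~(\ref{aadff}) and the $SU(2)$ computation exactly as you indicate.
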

\begin{proof}
First, note that surface $\Sigma_2 = \{e^{{\pi i}/{4}}\}\times S^2$ from Proposition \ref{holonomycomputation} is an embedded submanifold of $T\times \su$ with respect to the inclusion $\iota: {SU(2)}/{T_1} \hookrightarrow {SU(n)}/{T_{n-1}}$ defined by $$XT_1 \mapsto \left[
\begin{array}{c|c}
X & 0 \\
\hline
0 & \textbf{I}_{n-2}
\end{array}
\right]T_{n-1}.$$ Here, $T_1, T_{n-1}$ denote the subgroups of diagonal matrices in $SU(2)$ and $SU(n)$ respectively, and $\textbf{I}_{n-2}$ is the $(n-2)\times (n-2)$ identity matrix. Let $\Sigma_n := \iota\left(\Sigma\right)$. By equation (\ref{aadff}), to prove $(\ref{raining})$, it suffices to show that $$\int_{\Sigma_n} \beta_n = \int_{\Sigma_2} \iota^*\beta_n \neq k2\pi i$$ for any $k\in \Z$. To do so, we prove that $\iota^*\beta_n = \beta_2$, hence $\int_{\Sigma_n} \beta_n \neq k2\pi i$ by the proof of Proposition \ref{holonomycomputation}. We compute $\iota^*\beta_n$ as follows. Recall that the maps $p_i:T\to S^1$ were defined as projection onto the $i$-th diagonal (Definition \ref{salad}). Clearly $$p_i\circ \iota = \begin{cases}
p_i & \text{if} \ i = 1, 2\\
1 & \text{if} \ 2< i \leq n.
\end{cases} $$
Further recall that $P_i$ was defined to be orthogonal projection onto $J_i := \C\times_{p_i}SU(n)$, where $p_i$ was the relation $(z, s)\sim_{p_i}(p_i(t^{-1})z, st)$ for all $(z, s)\in \C\times SU(n)$. Now, when the maps $p_i$ are the constant value $1$, this relation is equality, and $J_i\to \su$ is isomorphic to the trivial line bundle over $\su$. In this case, $P_i$ will be the constant projection onto the span of $e_i$, the $i$-th standard basis vector of $\C^n$. That is, $P_i = O_i$ for $O_i$ the matrix with a $1$ in the $(i, i)$ position and zeros elsewhere. Therefore $$P_i\circ \iota = \begin{cases}
P_i & \text{if} \ i=1, 2\\
O_i & \text{if} \ 2< i \leq n.
\end{cases} $$ Of course, $dO_i = 0$, so any term of the form $\text{tr}(P_k dP_i dP_i)$ for $i>2$ in our expression for $\beta_n$ in (\ref{werrr}) will equal zero. Furthermore, any term of the form $\text{tr}(P_i dP_k dP_k)$ for $i>2$ will also be zero, by Lemma \ref{projections} (2). So $\iota^*\beta_n = \beta_2$, and $(\ref{raining})$ holds. \end{proof} 
We can now refine Proposition \ref{prp} as follows, thereby concluding this chapter. 
\begin{theorem}\label{finalresult}
	The cup product bundle gerbe is $SU(n)$-stably isomorphic to the pullback of the basic bundle gerbe by the Weyl map, i.e. $$\cupbg \cong_{SU(n)\textup{-stab}} \pullbackbasicbg,$$ with trivialising line bundle $$ \Motimes_{i=1}^n \pi_{\smath{{SU(n)}/{T}}}^{-1}(J_i)^{h_i}\to (\R^{n-1}\times_T Y_T) \times \su$$ for $\pi_{\smath{{SU(n)}/{T}}}: (\R^{n-1} \times_T Y_T) \times \su \to  \su$ projection and the smooth maps $h_i: \left(\R^{n-1} \times_T Y_T\right) \times \su \to \Z$ defined by $$h_i(x, z, t, gT) = x_i - \frac{1}{2\pi i}\log_z p_i(t)$$ for $x = (x_1, ..., x_n)$. Moreover, these bundle gerbes are not $D$-stably isomorphic with respect to the connective data $(\nabla_c, f_c)$ and $(\nabla_{p^*b}, f_{p^*b})$ from \textup{Propositions \ref{corollary data on cup prod}} and \textup{\ref{curvature}}.
\end{theorem}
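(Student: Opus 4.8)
The statement consolidates the results assembled across Sections \ref{d1}--\ref{d3}, so the plan is to verify that each clause follows from an earlier proposition and to record the explicit trivialising data. The proof splits into two independent halves: the existence of the $SU(n)$-stable isomorphism together with its trivialising line bundle, and the failure of $D$-stable isomorphism for the stated connective data.

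For the first half I would invoke Proposition \ref{mainresult}, which reduces the stable isomorphism to producing, for each $i$, a smooth integer-valued function $h_i$ on $(\R^{n-1}\times_T Y_T)\times \su$ satisfying equation (\ref{equationofh}). Proposition \ref{findingtheh} singles out the candidates $h_i(x,z,t,gT) = x_i - \tfrac{1}{2\pi i}\log_z p_i(t)$ by comparing the curvings of the two bundle gerbes, and Proposition \ref{hisright} confirms that these $h_i$ are smooth, integer-valued (the constraint $e^{2\pi i x_i} = p_i(t)$ forces $e^{2\pi i h_i} = 1$), and satisfy (\ref{equationofh}) via the combinatorial identity of Lemma \ref{lemmalogandepsilon}. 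This is precisely the content of Proposition \ref{prp}. To pin down the trivialising line bundle, I would trace the chain of equivalences through Proposition \ref{redproductofcupproduct} (cup product side), Proposition \ref{pullbackstableisomainresult} (pullback side), and Corollary \ref{cormaincupproductresult}: the latter outputs the trivialisation $\Motimes_{i=1}^n \pi_N^{-1}(K_i)^{h_i}$, which under the identification $J_i \cong K_i$ from Section \ref{flagmanifold} becomes the asserted bundle $\Motimes_{i=1}^n \pi_{SU(n)/T}^{-1}(J_i)^{h_i} \to (\R^{n-1}\times_T Y_T)\times \su$.

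For the second half I would argue by contraposition from Proposition \ref{holonomyequalforstabisobg}. If the two bundle gerbes were $D$-stably isomorphic with respect to $(\nabla_c, f_c)$ and $(\nabla_{p^*b}, f_{p^*b})$, then the same would hold after restriction to any surface $\Sigma \subset T\times \su$, and Proposition \ref{holonomyequalforstabisobg} would force their holonomies over $\Sigma$ to agree. But the corollary following Proposition \ref{holonomycomputation} exhibits a surface $\Sigma_n = \iota(\Sigma_2)$ on which $\textup{hol}((\nabla_{p^*b}, f_{p^*b}), \Sigma_n) \neq \textup{hol}((\nabla_c, f_c), \Sigma_n)$, a contradiction; hence no such $D$-stable isomorphism exists and, by Proposition \ref{bgclassification}, the Deligne classes differ.

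I expect the genuine obstacle to lie in this second half, specifically in certifying that the holonomy difference is nontrivial rather than merely apparent. Combining $\nabla_{p^*b} = \nabla_c$ (Proposition \ref{tired}) with the curving relation (\ref{wer}) and Propositions \ref{proposition holonomy for different curvings} and \ref{hol of trivial bundle gerbe} yields equation (\ref{aadff}), which shows the two holonomies differ exactly by the factor $\exp(\int_\Sigma \beta_n)$; the subtlety is that this factor could a priori equal $1$. Ruling this out requires the explicit embedding $\iota : {SU(2)}/{T} \hookrightarrow \su$, the reduction $\iota^*\beta_n = \beta_2$, and the computation $\int_{\Sigma_2}\beta_2 = \tfrac{1}{\pi i} \notin 2\pi i\,\Z$, which rests on the tautological-bundle normalisation $\tfrac{i}{2\pi}\int_{S^2}\textup{tr}(PdPdP) = -1$. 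Careful bookkeeping of the constants and signs in this final integral is where the real work sits; the stable-isomorphism half is essentially formal once the functions $h_i$ are in hand.
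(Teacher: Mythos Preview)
Your proposal is correct and follows essentially the same route as the paper: the theorem is a summary statement, and the paper's proof is precisely the chain Proposition~\ref{mainresult} $\to$ Proposition~\ref{findingtheh} $\to$ Proposition~\ref{hisright} (via Lemma~\ref{lemmalogandepsilon}) $\to$ Proposition~\ref{prp} for the stable isomorphism, with the trivialising bundle read off from Corollary~\ref{cormaincupproductresult}, followed by the holonomy argument of Section~\ref{d3} (Proposition~\ref{tired}, equation~(\ref{aadff}), Proposition~\ref{holonomycomputation} and its corollary) for the failure of $D$-stable isomorphism. Your identification of where the genuine work lies---the explicit integral $\int_{\Sigma_2}\beta_2 = \tfrac{1}{\pi i}$ and the reduction $\iota^*\beta_n = \beta_2$---matches the paper exactly.
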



\appendix
\appendix

\chapter{Computational lemmas}

\indent \indent Here, we present the lemmas used to prove various results in Chapter \ref{ch:four}. Most importantly, these results allowed us to express the three-curvature $\omega_{p^*b}$ in terms of $\beta_n$ (Proposition \ref{terer}). Recall that projections $P_i$ and homomorphisms $p_i$ from Chapter \ref{ch:four}.
\begin{lemma}\label{lemma111}
	The three-curvature of the pullback of the basic bundle gerbe can be rewritten as  \begin{align}\label{eqn4} 
	\omega_{p^*b} = \frac{i}{4\pi}\sum_{\substack{i=1 \\ i\neq k}}^n  \sum_{k=1}^n 
	p_i^{-1} dp_i  \textup{tr}(P_i dP_kdP_k) +   \frac{i}{4\pi}\sum_{k=1}^n p_k^{-1}dp_k \textup{tr}(P_kdP_kdP_k) + d\beta_n
	\end{align} for $\beta_n :=  -\frac{i}{4\pi}\sum_{\substack{i=1 \\ i\neq k}}^n  \sum_{k=1}^n p_ip_k^{-1}\textup{tr}(P_idP_kdP_k)$.
\end{lemma}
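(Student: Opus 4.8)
The plan is to prove this purely by direct computation, starting from the explicit formula for $\omega_{p^*b}$ recorded in Proposition \ref{curvature}(2) and evaluating $d\beta_n$ by hand, then checking that the two sides of (\ref{eqn4}) agree term by term. The one structural observation that makes everything tractable is that each $p_i : T \to S^1$ is \emph{scalar}-valued, so the logarithmic derivatives $p_i^{-1}dp_i$ are ordinary ($\C$-valued) $1$-forms which commute with one another and with the matrix-valued $2$-forms $\textup{tr}(P_idP_kdP_k)$ up to the usual Koszul signs. In particular the awkward mixed terms in Proposition \ref{curvature} can be rewritten using $p_k^{-1}dp_i = (p_ip_k^{-1})\,p_i^{-1}dp_i$ and $p_k^{-1}dp_k\,p_k^{-1}p_i = (p_ip_k^{-1})\,p_k^{-1}dp_k$, so that the coefficient of $\textup{tr}(P_idP_kdP_k)$ in $\omega_{p^*b}$ factors neatly as
$$\left(p_i^{-1}dp_i - p_k^{-1}dp_k\right)\left(1 - p_ip_k^{-1}\right).$$

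Next I would compute $d\beta_n$ via the graded Leibniz rule applied to each summand $p_ip_k^{-1}\,\textup{tr}(P_idP_kdP_k)$. Since $d(dP_k)=0$, the exterior derivative passes through the trace to give $d\,\textup{tr}(P_idP_kdP_k) = \textup{tr}(dP_idP_kdP_k)$, while the logarithmic derivative of the scalar coefficient is $d(p_ip_k^{-1}) = p_ip_k^{-1}\left(p_i^{-1}dp_i - p_k^{-1}dp_k\right)$. Hence
$$d\beta_n = -\frac{i}{4\pi}\sum_{\substack{i=1\\ i\neq k}}^n\sum_{k=1}^n \Big[\, p_ip_k^{-1}\left(p_i^{-1}dp_i - p_k^{-1}dp_k\right)\textup{tr}(P_idP_kdP_k) + p_ip_k^{-1}\,\textup{tr}(dP_idP_kdP_k)\,\Big].$$

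The heart of the argument is then the subtraction $\omega_{p^*b} - d\beta_n$. Using the factored coefficient above, the terms carrying $p_ip_k^{-1}\,\textup{tr}(dP_idP_kdP_k)$ cancel exactly against those coming from $d\beta_n$, and the coefficient $(p_i^{-1}dp_i - p_k^{-1}dp_k)(1 - p_ip_k^{-1})$ collapses: adding back the $p_ip_k^{-1}(p_i^{-1}dp_i - p_k^{-1}dp_k)$ contribution from $-d\beta_n$ leaves simply $p_i^{-1}dp_i - p_k^{-1}dp_k$. This yields
$$\omega_{p^*b} - d\beta_n = \frac{i}{4\pi}\sum_{\substack{i=1\\ i\neq k}}^n\sum_{k=1}^n \left(p_i^{-1}dp_i - p_k^{-1}dp_k\right)\textup{tr}(P_idP_kdP_k).$$
Splitting this into two sums, the $p_i^{-1}dp_i$ part is already the first term of (\ref{eqn4}). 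For the $p_k^{-1}dp_k$ part I would sum over $i\neq k$ first and invoke the completeness relation $\sum_{i\neq k}P_i = 1 - P_k$ (from the remark that $\sum_i P_i = 1$ on $\textup{Proj}_n$), together with $\textup{tr}(dP_kdP_k)=0$, to rewrite $-\sum_{i\neq k}p_k^{-1}dp_k\,\textup{tr}(P_idP_kdP_k)$ as $+\sum_k p_k^{-1}dp_k\,\textup{tr}(P_kdP_kdP_k)$, which is exactly the diagonal term of (\ref{eqn4}).

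The computation is essentially routine, so I expect the main obstacle to be purely organisational: keeping the double sum $\sum_{i\neq k}\sum_k$ consistent throughout, tracking the signs when the scalar $1$-forms $p_i^{-1}dp_i$ are wedged against the $2$-forms, and confirming that the exact-form ($\textup{tr}(dP_idP_kdP_k)$) contributions cancel with no stray terms. The only genuinely non-formal input is the projection identity $\textup{tr}(dP_kdP_k)=0$, which I would either cite from the projection lemmas or dispatch in one line from the antisymmetry of the wedge product together with the cyclicity of the trace (so that $\textup{tr}(dP_k\wedge dP_k) = -\textup{tr}(dP_k\wedge dP_k)$, forcing it to vanish).
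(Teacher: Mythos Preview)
Your proof is correct and follows essentially the same route as the paper: both recognise that the complicated coefficient in Proposition~\ref{curvature}(2) splits as $(p_i^{-1}dp_i - p_k^{-1}dp_k)$ plus terms that, together with the $-p_ip_k^{-1}\textup{tr}(dP_idP_kdP_k)$ piece, assemble into $-d\bigl(p_ip_k^{-1}\textup{tr}(P_idP_kdP_k)\bigr) = d\beta_n \cdot \tfrac{4\pi}{i}$, and both then use $\sum_{i\neq k}P_i = 1-P_k$ with $\textup{tr}(dP_kdP_k)=0$ to convert the $p_k^{-1}dp_k$ sum into the diagonal term. The only difference is presentational: you compute $d\beta_n$ explicitly and subtract, whereas the paper compresses the same cancellation into a single ``observe that'' displayed identity.
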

\begin{proof}
	Observe that 
	$$\omega_{p^*b} =  
	\frac{i}{4\pi}\sum_{\substack{i=1 \\ i\neq k}}^n  \sum_{k=1}^n \left(
	p_i^{-1} dp_i - p_k^{-1}dp_k\right) \text{tr}(P_i dP_kdP_k) - d\left(p_ip_k^{-1}\text{tr}(P_idP_kdP_k)\right).$$
	Now, $\sum_{i=1}^n P_i = 1$ implies that $\sum_{\substack{i=1 \\ i\neq k}}^n  \sum_{k=1}^n  P_i = 1 - P_k$, and linearity of trace implies $\textup{tr}(dP_idP_i)= 0$. Therefore the second term in the above expression for $\omega_{p^*b}$ can be rewritten as $\sum_{k=1}^n p_k^{-1}dp_k \text{tr}(P_kdP_kdP_k),$  and the result follows.
\end{proof}
%
\begin{lemma}\label{projections} 
	Let $i, j, k \in \{1, ..., n\}$. Then \begin{enumerate}[(1),font=\upshape]
		\item for distinct $i, j, k$, $\textup{tr}(P_idP_jdP_k) = 0$;
		\item if $i\neq j$, $\textup{tr}(P_idP_jdP_j) = -\textup{tr}(P_jdP_idP_i)$;
		\item $\sum_{\substack{i=1 \\ i\neq k}}^n  \sum_{k=1}^n  p_i^{-1}dp_i \textup{tr}(P_idP_kdP_k) = \sum_{i=1}^n p_i^{-1}dp_i \textup{tr}(P_idP_idP_i).$
	\end{enumerate}
\end{lemma}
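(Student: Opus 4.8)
The plan is to reduce each trace to a scalar differential form by diagonalising the projections with a local orthonormal frame. Since $SU(n)\to\su$ admits local sections, over any sufficiently small open set I would choose a smooth lift $g$ and set $v_l:=ge_l$, obtaining a smooth orthonormal frame $v_1,\dots,v_n$ of $\C^n$ with $P_l=v_lv_l^{*}$ for each $l$ (this is exactly the description used in Proposition~\ref{SU(n)/T is Proj_n}). All three identities are pointwise identities of smooth forms, so it suffices to verify them wherever such a frame exists, i.e.\ locally. Throughout I would use the relations obtained by differentiating the orthonormality conditions $v_a^{*}v_b=\delta_{ab}$ together with $P_iv_j=\delta_{ij}v_j$ and its differential.

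For part $(1)$, writing $P_i=v_iv_i^{*}$ and using that $\textup{tr}(v_iv_i^{*}M)=v_i^{*}Mv_i$ gives $\textup{tr}(P_idP_jdP_k)=(v_i^{*}dP_j)\wedge(dP_kv_i)$. For distinct indices, differentiating $v_i^{*}P_j=0$ and $P_kv_i=0$ yields $v_i^{*}dP_j=-(dv_i^{*}v_j)\,v_j^{*}$ and $dP_kv_i=-v_k\,(v_k^{*}dv_i)$; substituting, the scalar factor $v_j^{*}v_k=0$ appears and the expression vanishes. For part $(2)$ the same substitutions with $k=j$ (now using $v_j^{*}v_j=1$) give $\textup{tr}(P_idP_jdP_j)=(dv_i^{*}v_j)\wedge(v_j^{*}dv_i)$. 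I would then differentiate $v_i^{*}v_j=0$ to get $dv_i^{*}v_j=-v_i^{*}dv_j$ and $v_j^{*}dv_i=-dv_j^{*}v_i$, so this equals $(v_i^{*}dv_j)\wedge(dv_j^{*}v_i)$; since these are scalar $1$-forms, anticommutativity of the wedge rewrites it as $-(dv_j^{*}v_i)\wedge(v_i^{*}dv_j)=-\textup{tr}(P_jdP_idP_i)$, which is the claim.

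Part $(3)$ I would deduce from $(1)$ purely algebraically, without the frame. Since $\sum_{l}P_l=1$ we have $dP_i=-\sum_{k\neq i}dP_k$, hence $\textup{tr}(P_idP_idP_i)=\sum_{k,l\neq i}\textup{tr}(P_idP_kdP_l)$. Every term with $k\neq l$ has three distinct indices and vanishes by part $(1)$, leaving $\textup{tr}(P_idP_idP_i)=\sum_{k\neq i}\textup{tr}(P_idP_kdP_k)$. Multiplying by $p_i^{-1}dp_i$ and summing over $i$ then identifies the double sum on the left of $(3)$ with $\sum_{i}p_i^{-1}dp_i\,\textup{tr}(P_idP_idP_i)$, as required. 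The computations are routine; the only points demanding care are the graded signs when commuting matrix-valued forms under the trace and the anticommutativity of $1$-forms used in part $(2)$, along with the standard remark that a pointwise identity of smooth forms may be checked in a local frame. I expect the sign bookkeeping in $(2)$ to be the main place an error could creep in, so I would cross-check it by computing $\textup{tr}(P_jdP_idP_i)$ directly from the frame expression and comparing.
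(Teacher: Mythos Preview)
Your argument is correct. The frame computation in parts~(1) and~(2) works exactly as you describe, and your derivation of~(3) from~(1) via $dP_i=-\sum_{k\neq i}dP_k$ is clean. The sign bookkeeping in~(2) is fine: the two scalar $1$-forms $(v_i^{*}dv_j)$ and $(dv_j^{*}v_i)$ anticommute under $\wedge$, which gives the minus sign.

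Your route differs from the paper's in a genuine way. The paper never chooses a local frame; it argues entirely from the projection identities $P_iP_j=0$, $P_i^2=P_i$, $\sum_iP_i=1$ and their differentials, together with cyclicity of the trace. For~(1) it expands $dP_j=P_jdP_j+dP_jP_j$ inside the trace and uses $P_iP_j=0$ and $P_jP_k=0$ repeatedly to kill every term; for~(2) it differentiates $P_iP_j=0$ to get $dP_iP_j=-P_idP_j$, then uses~(1) and $\sum_l dP_l=0$; and for~(3) it invokes~(2) together with $\textup{tr}(dP_idP_i)=0$, whereas you invoke~(1). The paper's approach has the advantage of being manifestly global and coordinate-free---no need for the remark about local sections of $SU(n)\to\su$---and generalises immediately to any family of mutually orthogonal projections. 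Your frame approach is more concrete and makes the scalar nature of the result visible, at the cost of that localisation step. Both are short; yours for~(3) is arguably slicker since it avoids~(2) and the auxiliary fact $\textup{tr}(dP_idP_i)=0$.
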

\begin{proof}
	To prove (1), note that $P_iP_j = 0$ if $i\neq j$, and $dP_i = dP_iP_i + P_i dP_i$ (where we obtain the second equation by differentiating $P_i^2 = P_i$). So for distinct $i, j$ and $k$ we have \begin{align*}
	\text{tr}(P_i dP_jdP_k) &= \text{tr}(P_i (P_jdP_j + dP_jP_j) dP_k)\\
	&= \text{tr}(P_idP_jP_jdP_k)\\
	&= \text{tr}(P_idP_jP_j(P_kdP_k + dP_kP_k))\\
	&= \text{tr}(P_idP_jP_jdP_kP_k)\\
	&= \text{tr}(P_kP_idP_jP_jdP_k)=0,\end{align*} thereby proving $(1)$. 
	Next, by differentiating the identity $P_iP_j = 0$, we obtain $dP_iP_j = -P_idP_j$ for $i\neq j$. Therefore, using $(1)$ and that $\sum_{i=1}^n dP_i = 0$, we obtain \begin{align*}
	\text{tr}(P_idP_jdP_j) &= -\text{tr}(dP_iP_jdP_j) \\
	&= \text{tr}(P_jdP_jdP_i)\\
	&= \text{tr}\left(P_j \left(-\textstyle \sum_{k\neq j} dP_k\right)dP_i \right)\\
	&= -\sum_{k\neq j} \text{tr}(P_jdP_kdP_i)\\
	&= -\text{tr}(P_jdP_idP_i),
	\end{align*} thereby proving $(2)$. By $(2)$ and since $\sum_{i=1}^n P_i=1$, $\text{tr}(dP_idP_i) = 0$, we have \begin{align*}
	\sum_{\substack{i=1 \\ i\neq k}}^n  \sum_{k=1}^n  p_i^{-1}dp_i \text{tr}(P_idP_kdP_k) &= \sum_{i=1}^n  p_i^{-1}dp_i  \cdot \sum_{\substack{k=1 \\ k\neq i}}^n \text{tr}(P_idP_kdP_k)\\
	&= \sum_{i=1}^n  p_i^{-1}dp_i  \cdot \sum_{\substack{k=1 \\ k\neq i}}^n \text{tr}(-P_kdP_idP_i) \\
	&= \sum_{i=1}^n  p_i^{-1}dp_i  \text{tr}\left(\left(P_i-1\right) dP_idP_i \right)\\
	&= \sum_{i=1}^n  p_i^{-1}dp_i  \text{tr}(P_idP_idP_i).
	\end{align*} 
\end{proof}

\backmatter
\addcontentsline{toc}{chapter}{Bibliography}

\nocite{*}


\end{document}